\documentclass{article}
\usepackage[utf8]{inputenc}


\usepackage{amsfonts}
\usepackage{amssymb}
\usepackage{amsmath}
\usepackage{amsthm}
\usepackage{mathtools}
\usepackage{cleveref}
\usepackage{url}
\usepackage{wasysym}
\usepackage{tikz}
\usepackage{tikz-cd}
\usepackage{adjustbox}
\usepackage{stmaryrd}
\usepackage{enumitem}
\usepackage{subcaption}
\usepackage{graphicx}
\usepackage{afterpage}
\usepackage{changepage}
\usepackage{authblk}



\usepackage[
    backend=biber,
    style=alphabetic,
    sorting=nyt,
    doi=false,
    url=false,
    isbn=false,
    maxbibnames=10,
]{biblatex}
\DeclareFieldFormat[misc]{title}{``#1"}


\theoremstyle{plain}
    \newtheorem{theorem}{Theorem}
    \newtheorem{proposition}{Proposition}[section]
    \newtheorem{corollary}[proposition]{Corollary}
    \newtheorem{lemma}[proposition]{Lemma}

\theoremstyle{definition}
    \newtheorem{definition}[proposition]{Definition}
    \newtheorem{algorithm}[proposition]{Algorithm}
    
    \newtheorem{example}[proposition]{Example}
    \newtheorem{question}[proposition]{Question}

\theoremstyle{remark}
	\newtheorem{remark}[proposition]{Remark}%

\newtheoremstyle{repeat-theorem}
    {\topsep}{\topsep}              
    {\itshape}                      
    {}                              
    {\bfseries}                     
    {.}                             
    { }                             
    {\thmname{#1}\thmnote{ \bfseries #3}}
    
\theoremstyle{repeat-theorem}
    \newtheorem{repeat-theorem}{Theorem}
    \newtheorem{repeat-proposition}{Proposition}
    \newtheorem{repeat-corollary}{Corollary}
    
\numberwithin{equation}{section}


\newcommand{\ZZ}{\mathbb{Z}}
\newcommand{\QQ}{\mathbb{Q}}
\newcommand{\RR}{\mathbb{R}}

\newcommand{\FF}{\mathbb{F}}

\newcommand{\id}{\mathit{id}}
\newcommand{\isom}{\cong}

\renewcommand{\epsilon}{\varepsilon}

\DeclareMathOperator{\Hom}{Hom}

\DeclareMathOperator{\Tor}{Tor}
\DeclareMathOperator{\Ima}{Im}
\DeclareMathOperator{\Ker}{Ker}

\DeclareMathOperator{\fchar}{char}

\DeclareMathOperator{\gr}{gr}

\makeatletter
\newcommand*\fatcdot{\mathpalette\fatcdot@{.5}}
\newcommand*\fatcdot@[2]{\mathbin{\vcenter{\hbox{\scalebox{#2}{$\m@th#1\bullet$}}}}}
\makeatother

\DeclarePairedDelimiterX\set[1]\lbrace\rbrace{\,\setaux#1\,}
 \def\setaux#1|{#1\;\delimsize\vert\;}

\newcommand{\Kh}{\mathit{Kh}}

\newcommand{\Lee}{\mathit{Lee}}
\newcommand{\BN}{\mathit{BN}}

\newcommand{\ca}{\alpha}
\newcommand{\cb}{\beta}

\newcommand{\cz}{\zeta}

\renewcommand{\tilde}{\widetilde}

\renewcommand{\ss}{\mathit{ss}}
\newcommand{\ssr}{\tilde{\ss}}

\title{
    A family of slice-torus invariants from the divisibility of Lee classes
}

\author{Taketo Sano and Kouki Sato}

\newcommand{\addresses}{{
  \bigskip
  Taketo Sano, \textsc{RIKEN iTHEMS, Wako, Saitama 351-0198, Japan }\par\nopagebreak
  \textit{E-mail address}: \url{taketo.sano@riken.jp}

  \medskip

  Kouki Sato, \textsc{Meijo University,Tempaku, Nagoya 468-8502, Japan}\par\nopagebreak
  \textit{E-mail address}: \url{satokou@meijo-u.ac.jp}
}}

\addbibresource{bibliography.bib}

\begin{document}
    \maketitle
    \begin{abstract}
    We give a family of slice-torus invariants $\ssr_c$, each defined from the $c$-divisibility of the reduced Lee class in a variant of reduced Khovanov homology, parameterized by prime elements $c$ in any principal ideal domain $R$. For the special case $(R, c) = (F[H], H)$ where $F$ is any field, we prove that $\ssr_c$ coincides with the Rasmussen invariant $s^F$ over $F$. 
    Compared with the unreduced invariants $\ss_c$ defined by the first author in a previous paper, we prove that $\ss_c = \ssr_c$ for $(R, c) = (F[H], H)$ and $(\ZZ, 2)$. However for $(R, c) = (\ZZ, 3)$, computational results show that $\ss_3$ is not slice-torus, which implies that it is linearly independent from the reduced invariants, and particularly from the Rasmussen invariants. 
\end{abstract}
    
    
    \section{Introduction}
\label{sec:intro}

Rasmussen's \textit{$s$-invariant} is an integer valued knot invariant obtained from a variant of Khovanov homology \cite{Khovanov:2000,Lee:2005,Rasmussen:2010}. Its major applications are Rasmussen's alternative proof of the Milnor conjecture \cite{Milnor:1968} and Piccirillo's proof of the non-sliceness of the Conway knot \cite{Piccirillo:2020}. Both problems arose in the intersection of knot theory and 4-dimensional topology and remained unsolved for decades. The $s$-invariant belongs to a class called the \textit{slice-torus invariants} \cite{Livingston:2004,Lewark:2014}. The existence of a slice-torus invariant is itself nontrivial, for it immediately implies the Milnor conjecture. 

\begin{definition}
\label{def:slice-torus-inv}
    A \textit{slice-torus invariant} $\nu$ is an abelian group homomorphism
    \[
        \nu: \mathit{Conc}(S^3) \rightarrow \RR,
    \]
    satisfying the following two conditions: 
    \begin{enumerate}[leftmargin=3.5\parindent]
        \item[(Slice)] \ $|\nu(K)| \leq 2g_4(K)$ for any knot $K$,
        \item[(Torus)] \ $\nu(T_{p, q}) = (p - 1)(q - 1)$ for the positive $(p, q)$-torus knot $T_{p, q}$.
    \end{enumerate}
    Here $\mathit{Conc}(S^3)$ denotes the smooth concordance group of knots in $S^3$, and $g_4(K)$ the slice genus of a knot $K$. 
\end{definition}

The $s$-invariant is generalized over any field $F$, and is known that $s^\QQ,\ s^{\FF_2},\ s^{\FF_3}$ are distinct \cite{MTV:2007,LS:2014_rasmussen,Lewark-Zibrowius:2022,Schuetz:2022}. Other examples of slice-torus invariants are: (i) the $\tau$-invariant obtained from knot Floer homology \cite{OS:2003}, (ii) the $s_n$-invariants $(n \geq 2)$ from $\mathfrak{sl}_n$ Khovanov--Rozansky homologies \cite{Wu:2009,Lobb:2009,Lobb:2012}, (iii) the $\tau^\#$-invariant from framed instanton Floer homology \cite{Baldwin-Sivek:2021}, and (iv) the $\tilde{s}$-invariant from equivariant singular instanton Floer homology \cite{DISST:2022}. Studies on general slice-torus invariants are given in \cite{Cavallo:2020,Feller:2022}.

In this paper we introduce a family of slice-torus invariants, each defined from the divisibility of the \textit{reduced Lee class} $\tilde{\ca}_c(D)$.

\begin{theorem}
\label{mainthm:1}
    For each prime element $c$ in a principal integral domain $R$, the value defined by 
    \[
        \ssr_c(K) = 2d_c(D) + w(D) - r(D) + 1
    \]
    is a slice-torus invariant. Here $K$ is a knot with diagram $D$, $d_c(D)$ the $c$-divisibility of the reduced Lee class $\tilde{\ca}_c(D)$, $w(D)$ the writhe and $r(D)$ the number of Seifert circles of $D$. 
\end{theorem}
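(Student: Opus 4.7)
The plan is to adapt the four-step strategy used for the unreduced invariants $\ss_c$ to the reduced setting. The properties to verify are: (i) diagram independence of $\ssr_c(K)$; (ii) additivity under connected sum, $\ssr_c(K_1 \# K_2) = \ssr_c(K_1) + \ssr_c(K_2)$; (iii) the slice genus bound $|\ssr_c(K)| \leq 2 g_4(K)$; and (iv) the torus knot formula $\ssr_c(T_{p, q}) = (p-1)(q-1)$. Together (i)--(iii) will imply that $\ssr_c$ descends to an integer-valued homomorphism on $\mathit{Conc}(S^3)$ satisfying the Slice condition, and (iv) supplies the Torus condition.

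For diagram invariance, I would show that under each Reidemeister move the reduced Lee class transforms to a unit multiple of the new Lee class, up to a quantum grading shift that exactly compensates the change in $w(D) - r(D)$. Since $c$-divisibility depends only on the class of $\tilde{\ca}_c(D)$ up to units, the combination $2 d_c(D) + w(D) - r(D) + 1$ is invariant. For additivity under connected sum, I would establish a K\"unneth-type identity $\tilde{\ca}_c(D_1 \# D_2) = \tilde{\ca}_c(D_1) \otimes \tilde{\ca}_c(D_2)$ (up to a unit) using the tensor decomposition of the reduced complex at a chosen marked arc on the connect-sum sphere, giving $d_c(D_1 \# D_2) = d_c(D_1) + d_c(D_2)$. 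Combined with $w(D_1 \# D_2) = w(D_1) + w(D_2)$ and $r(D_1 \# D_2) = r(D_1) + r(D_2) - 1$, this yields additivity of $\ssr_c$.

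The slice bound, which also absorbs concordance invariance once additivity is in hand, is the heart of the argument. I would use functoriality of the reduced Lee theory under marked oriented cobordisms: a cobordism $\Sigma \subset [0, 1] \times S^3$ of genus $g$ with boundary $-K_0 \sqcup K_1$ induces a chain map sending $\tilde{\ca}_c(D_0)$ to $c^{k} \cdot u \cdot \tilde{\ca}_c(D_1)$ for some unit $u \in R^\times$ and integer $k \geq 0$, while shifting quantum grading by $-2g$. Decomposing $\Sigma$ into elementary births, deaths, and saddles lets me track both $k$ and the grading shift in terms of $g$. Taking $K_0$ to be the unknot and $\Sigma$ a slice surface of minimal genus then yields $|\ssr_c(K)| \leq 2 g_4(K)$.

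Finally, for the torus knot formula, the slice bound combined with the Milnor conjecture $g_4(T_{p, q}) = (p - 1)(q - 1)/2$ gives $\ssr_c(T_{p, q}) \leq (p - 1)(q - 1)$, and the matching lower bound follows by evaluating $\ssr_c$ directly on the standard positive braid diagram of $T_{p, q}$: there $w - r + 1 = (p - 1)(q - 1)$, and $d_c = 0$ because the reduced Lee class is represented by the positive canonical generator and is not divisible by $c$. The main obstacle I anticipate is the slice bound step, namely controlling $c$-divisibility under the elementary cobordism maps uniformly over all PIDs $R$ and primes $c$ (including positive characteristic), together with correctly propagating the marked basepoint along the cobordism — a complication absent from the unreduced theory of $\ss_c$.
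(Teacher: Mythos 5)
Your plan is the paper's proof in the same order: Reidemeister invariance of $2\tilde{d}_c + w - r + 1$, connected-sum additivity from the K\"unneth factorization of the reduced complex, the slice bound from cobordism maps, and the torus computation on positive diagrams. Two cautions about the details. First, your Reidemeister step as worded is self-contradictory and does not generalize: if $\tilde{\ca}(D)$ transformed by a \emph{unit}, then $\tilde{d}_c$ would be unchanged while $w - r$ changes, so the combination could not be invariant; and ``quantum grading shift'' has no meaning over a PID such as $(R, c) = (\ZZ, 3)$, one of the principal cases the theorem covers. The correct statement --- which you do write correctly in your cobordism paragraph --- is $\tilde{\ca}(D) \mapsto \pm c^{j}\, \tilde{\ca}(D')$ with $j = \tfrac12(\delta w - \delta r)$, so that $\tilde{d}_c$ shifts by exactly $j$ and cancels the change in $w - r$. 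Second, the paper does not invoke the Milnor conjecture: it proves $\tilde{d}_c(D) = 0$ for any positive diagram $D$ by quotienting onto the all-$0$ resolution (a disjoint union of circles, where the divisibility is trivially $0$), and then forces $\ssr_c(K) = 2g_4(K) = 2g(K)$ for every positive knot $K$ through the chain $2g_4 \leq 2g \leq 2g(S) = \ssr_c(K) \leq 2g_4$, obtaining the Milnor conjecture as a byproduct rather than as an input; your bare claim that $d_c = 0$ on the positive braid diagram of $T_{p,q}$ requires that quotient argument, not just the observation that the cycle is the positive canonical generator.
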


The definition of $\ssr_c$ is formally identical to that of the invariant $\ss_c$ given in \cite{Sano:2020}, which is \textit{not} proved to be slice-torus in general. Arguments of \cite{Sano:2020} run in parallel, and from the simplicity that the reduced homology $\tilde{H}_c(D)$ has rank $1$, the slice-torus properties for $\ssr_c$ follow straightforwardly. The following theorem states that our family contains the Rasmussen invariants $s^F$.

\begin{theorem}
\label{mainthm:2}
    For $(R, c) = (F[H], H)$ where $F$ a field of any characteristic, we have $\ssr_c = \ss_c = s^F$.
\end{theorem}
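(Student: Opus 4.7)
The plan is to establish the two equalities of Theorem \ref{mainthm:2} separately. Writing $H$ for $c$ in this setting, the equality $\ss_H = s^F$ is the main result of \cite{Sano:2020}, so the new content is $\ssr_H = \ss_H$. Since the formulas defining $\ssr_H$ and $\ss_H$ differ only in the divisibility term, this first equality reduces to
\[
    d_H(\tilde{\ca}_H(D)) = d_H(\ca_H(D))
\]
for every diagram $D$ of a knot $K$, that is, the $H$-divisibilities of the reduced and unreduced Lee classes agree over $F[H]$.

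To prove this divisibility identity, I would exploit the Frobenius algebra structure of Bar-Natan homology. Fixing a basepoint on $D$, the unreduced chain complex $C_H(D; F[H])$ carries an action of $A := F[H][X]/(X^2 - HX)$ via the basepoint-bearing circle. As an $F[H]$-module this yields a decomposition (up to grading shifts) $C_H(D; F[H]) \isom \tilde{C}_H(D; F[H]) \oplus X \cdot \tilde{C}_H(D; F[H])$, which passes to homology. The unreduced Lee class $\ca_H(D)$ can then be expressed in terms of $\tilde{\ca}_H(D)$ and the reduced Lee class for the reverse orientation. Since reversing the orientation of a knot does not affect the $H$-divisibility of the reduced Lee class, all such components share the same divisibility as $\tilde{\ca}_H(D)$, and hence so does $\ca_H(D)$.

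The main obstacle will be ensuring that this correspondence between reduced and unreduced Lee classes is faithful at the \emph{integral} level over $F[H]$, since the natural idempotent decomposition of $A$ requires inverting $H$, which would destroy divisibility information. A careful chain-level argument, identifying explicit cycle representatives for $\ca_H(D)$ and $\tilde{\ca}_H(D)$ relative to the basepoint, should suffice. An alternative route is via the long exact sequence arising from a short exact sequence $0 \to \tilde{C}_H(D) \to C_H(D) \to \tilde{C}_H(D) \to 0$ (with appropriate grading shifts) induced by the basepoint action, tracking how $\tilde{\ca}_H(D)$ lifts to $\ca_H(D)$ and deducing the divisibility equality from naturality of the connecting maps with respect to multiplication by $H$.
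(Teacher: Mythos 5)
Your reduction of $\ssr_H = \ss_H$ to the chain-level identity $d_H(\tilde{\ca}(D)) = d_H(\ca(D))$ is correct, but the paper does not attack this identity head-on, and your plan for it has a concrete gap. The decomposition $C_H(D) \isom \tilde{C}_H(D) \oplus X\cdot\tilde{C}_H(D)$ is not available as a decomposition of \emph{chain complexes} over $F[H]$: the short exact sequence $0 \to \tilde{C}^+ \to C \to \tilde{C}^- \to 0$ of \Cref{prop:C(D)-SES} splits only when $c=H$ is invertible, and inverting $H$ destroys exactly the divisibility you need to track. So the module-level splitting does \emph{not} pass to homology; the long exact sequence has a nontrivial connecting map, and nothing in the proposal controls how that connecting map interacts with $H$-divisibility. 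You flag this obstacle yourself, but neither of the suggested remedies is carried out — the ``careful chain-level argument'' is precisely the missing content. Note also that the identity you want is genuinely ring-dependent: it fails for $(\ZZ,3)$ (that is \Cref{mainthm:3}), so any correct argument must exploit something specific to $F[H]$, and the outline does not.

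For comparison, the paper's route is indirect and sidesteps the splitting issue entirely. It proves $\ssr_H = s^F$ directly in \Cref{prop:ssr-rasmussen}, using the identification of $s^F$ with the $q$-grading of the generator of $\tilde{H}_{\BN}(K)/\Tor$ together with \Cref{prop:red-H-canon} (the refined reduced Lee class is that generator). It then proves $\ssr_c = \ss_c$ in \Cref{prop:unred-eq-red} by a mirror-symmetry argument: $\epsilon_c := \ss_c - \ssr_c$ takes values in $\{0,2\}$, so once $\ss_c$ (like $\ssr_c$) is known to satisfy $\nu(K) = -\nu(m(K))$, the non-negative function $\epsilon_c$ is forced to vanish. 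The homomorphism property of $\ss_c$ is imported from the unreduced splitting of \cite[Section~3.3]{Sano:2020} when $\fchar F \neq 2$, with a separate splitting (following Wigderson) for $\fchar F = 2$; this last point also shows that taking $\ss_H = s^F$ as a black box from \cite{Sano:2020} does not immediately cover characteristic $2$, which your opening sentence glosses over.
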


Now the question becomes whether $\ssr_c$ and $\ss_c$ are distinct, and whether they contain an invariant that is independent from $s^F$. For $(R, c) = (\ZZ, 2)$, we can similarly prove that $\ssr_2 = \ss_2$. However for $(R, c) = (\ZZ, 3)$, computational results show that there are knots $K$ such that $\ss_3(K) \neq -\ss_3(m(K))$, where $m$ denotes the mirror. These are knots that satisfy $s^\QQ(K) \neq s^{\FF_3}(K)$, found by Schuetz \cite{Schuetz:2022}. Thus we have

\begin{theorem}
\label{mainthm:3}
    The unreduced invariant $\ss_3$ is not a slice-torus invariant. Thus $\ss_3$ is linearly independent from $\ssr_c$, and particularly from the Rasmussen invariants.
\end{theorem}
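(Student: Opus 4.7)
The plan is to exploit a mirror-antisymmetry property enjoyed by every slice-torus invariant, and then invoke Schuetz's computational examples to exhibit an explicit failure of this property for $\ss_3$; linear independence from the reduced family follows formally.

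I would first establish the lemma that every slice-torus invariant $\nu$ satisfies $\nu(K) + \nu(m(K)) = 0$ for all knots $K$. This is standard and follows purely from the axioms: $\nu$ is a homomorphism on $\mathit{Conc}(S^3)$, and $[K] + [m(K)] = 0$ in the concordance group because $K \# m(K)$ bounds a ribbon disk in $B^4$ (under the standard orientation convention for the mirror). Since $\ss_3$ is manifestly invariant under reversal of the knot orientation (the Bar-Natan and Lee differentials, writhe, and Seifert-circle count are all preserved), no ambiguity in the convention for $m$ affects the statement to be refuted.

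Next, I would apply this lemma to the knots found by Schuetz \cite{Schuetz:2022} satisfying $s^\QQ(K) \neq s^{\FF_3}(K)$. For these knots, computing $\ss_3(K)$ and $\ss_3(m(K))$ via the formula of Theorem \ref{mainthm:1} --- that is, determining the $3$-divisibility $d_3$ of the unreduced Lee class in the integral Bar-Natan complex --- yields $\ss_3(K) + \ss_3(m(K)) \neq 0$. Combined with the lemma, this immediately rules out $\ss_3$ being a slice-torus invariant.

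For the linear-independence assertion, I would observe that the set of real-valued knot functions satisfying $\nu(K) + \nu(m(K)) = 0$ is an $\RR$-linear subspace of all knot functions. By the lemma every $\ssr_c$ lies in this subspace, and hence so does every $\RR$-linear combination of them; in particular every Rasmussen invariant $s^F$ lies in it by Theorem \ref{mainthm:2}. Since $\ss_3$ does not lie in this subspace, it cannot be written as any such linear combination, which is precisely the claimed independence.

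The main obstacle is the explicit computation: verifying $\ss_3(K) + \ss_3(m(K)) \neq 0$ for a concrete knot requires implementing the $3$-divisibility of the integral Lee class on Schuetz's knots and their mirrors. Conceptually this is straightforward linear algebra over $\ZZ$ followed by reduction modulo $3$, but the chain complexes are large enough that a computer implementation is needed. Everything else in the argument is formal, flowing from the axioms of slice-torus invariants together with Theorems \ref{mainthm:1} and \ref{mainthm:2}.
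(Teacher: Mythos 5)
Your proposal matches the paper's own argument: the paper observes (Proposition 6.6, following the table of computed values) that for the two $18$-crossing knots of Proposition 6.4(2) one has $\ss_3(K)\neq -\ss_3(m(K))$, and concludes immediately that $\ss_3$ violates the mirror-antisymmetry forced on any slice-torus invariant by the homomorphism axiom, hence cannot lie in the span of the $\ssr_c$. Your write-up simply makes explicit the standard lemma on mirror-antisymmetry and the linear-algebra step for independence, both of which the paper leaves implicit, but the underlying route and the reliance on the computer verification are identical.
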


It remains open whether the reduced invariants $\ssr_c$ contain one that is independent from $s^F$. 

\medskip

The advantage of reformulating $s^F$ as in \Cref{mainthm:2} is that, compared to the original definition by the quantum filtration, it enables us to treat the invariant more directly via the Lee class.
For example, the mysterious phenomena that $s^\QQ \neq s^{\FF_2}$ for some knots can be observed directly by considering the Lee class for $(R, c) = (\ZZ[H], H)$, and then relating it to the cases $(R, c) = (\QQ[H], H)$ and $(\FF_2[H], H)$. This is discussed in \Cref{subsec:ZH-theory}.

For another example, we obtain another reformulation of $s^F$ which is analogous to the one for $s^\QQ$ given by Kronheimer--Mrowka in \cite{KM:2011}. Note that the formula is more simple, since we are considering the reduced theory.

\begin{proposition}
    Let $K$ be a knot, and $S$ a connected cobordism from the unknot $U$ to $K$. Consider the induced map between the reduced homology groups
    \[
        \phi_S: \tilde{H}_H(U; F[H]) = F[H] \rightarrow \tilde{H}_H(K; F[H]).
    \]
    Then
    \[
        s^F(K) = 2d_c(z) + \chi(S),
    \]
    where $z = \phi_S(1)$, $d_c(z)$ the $c$-divisibility of $z$ (modulo torsion), and $\chi(S)$ the Euler characteristic of $S$. (Stated more precisely in \Cref{prop:s_c-by-cobordism}.)
\end{proposition}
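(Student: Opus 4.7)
By \Cref{mainthm:2}, we have $s^F = \ssr_c$ when $(R,c)=(F[H],H)$, so it suffices to prove
\[
\ssr_c(K) = 2 d_c(\phi_S(1)) + \chi(S)
\]
for every connected cobordism $S \colon U \to K$. My plan is in two stages: first verify the identity for one specific cobordism built from a diagram, then show that the right-hand side is independent of $S$.

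For the first stage, fix a diagram $D$ of $K$ and let $S_D$ be the cobordism from $U$ to $K$ obtained by splitting $U$ into the $r(D)$ Seifert circles of the oriented resolution via $r(D)-1$ saddles, followed by a saddle move at each of the $n(D)$ crossings. Then $\chi(S_D)=1-r(D)-n(D)$. A chain-level computation in the reduced Bar--Natan complex identifies $\phi_{S_D}(1)$, modulo torsion and up to a unit, with $c^{n_+(D)}\tilde\ca_c(D)$, so $d_c(\phi_{S_D}(1))=d_c(D)+n_+(D)$. Together with \Cref{mainthm:1} and the relations $n(D)=n_+(D)+n_-(D)$, $w(D)=n_+(D)-n_-(D)$, one computes
\[
2 d_c(\phi_{S_D}(1)) + \chi(S_D) = 2 d_c(D) + 2n_+(D) + 1 - r(D) - n(D) = 2 d_c(D) + w(D) - r(D) + 1 = \ssr_c(K).
\]

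For the second stage, take two connected cobordisms $S, S' \colon U \to K$ and glue along $K$ to form $T=\overline{S'}\cup_K S \colon U \to U$, which is connected with $\chi(T)=\chi(S)+\chi(S')$. Modulo torsion $\tilde H_H(U;F[H])\isom F[H]$, so the induced endomorphism $\phi_T$ on the free quotient is multiplication by some element of $F[H]$. The filtered functoriality of Bar--Natan theory (each cobordism map shifts the quantum filtration by its Euler characteristic, and divisibility by $c$ corresponds to filtration degree $-2$) forces this element to have $c$-divisibility exactly $-\chi(T)/2$. Combining with the functoriality $\phi_T = \phi_{\overline{S'}}\circ \phi_S$ and the $F[H]$-linearity of cobordism-induced maps, one deduces $d_c(\phi_S(1))+\chi(S)/2=d_c(\phi_{S'}(1))+\chi(S')/2$, i.e., the right-hand side is invariant.

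The main obstacle is the chain-level identification $\phi_{S_D}(1)\sim c^{n_+(D)}\tilde\ca_c(D)$ in the first stage, which demands a careful accounting of how the reduced Lee class transforms under the elementary cobordisms (births, splits, and merges) that make up $S_D$, and how the crossing signs introduce the power of $c$. The invariance argument in the second stage is essentially formal once one has the one-dimensional module structure of $\tilde H_H(U;F[H])$ modulo torsion and the standard filtration-shift property of Bar--Natan cobordism maps; it mirrors the corresponding step in Kronheimer--Mrowka's original argument but is simpler thanks to the reduced setting.
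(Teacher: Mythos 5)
Your two-stage plan is workable but takes a longer road than the paper, and it also has a subtle gap in the second stage.

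The paper proves this in one step: Proposition~\ref{prop:k-cobordism} says that for \emph{any} cobordism $S$ satisfying the mild boundary hypothesis (automatic here since $S$ is connected with $U$ on one end), the induced map on $\tilde{H}_{h,t}/\Tor$ sends the refined Lee class $\tilde{\cz}(U)=1$ to $\pm c^{l}\,\tilde{\cz}(K)$ with $l = (\ssr_c(K)-\chi(S))/2$; since $\tilde{\cz}(K)$ generates the free quotient (\Cref{prop:red-H-canon}), $d_c(z)=l$ and the formula drops out. There is no need to single out a diagram-built cobordism and then argue independence: the Lee-class cobordism formula already treats every $S$ uniformly. Your stage~1 is, unpacked, nothing but \Cref{prop:cab-cobordism-red} specialized to the split-then-saddle cobordism $S_D$ (and the arithmetic does check out: $j = (\delta w - \delta r - \chi(S_D))/2 = n_+(D)$, so $d_c(\phi_{S_D}(1)) = d_c(D)+n_+(D)$ and $2d_c(\phi_{S_D}(1))+\chi(S_D)=\ssr_c(K)$).

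The gap is in stage~2. You assert that ``filtered functoriality forces $\phi_T(1)$ to have $c$-divisibility \emph{exactly} $-\chi(T)/2$.'' Grading alone only tells you that $\phi_T(1)$ is homogeneous of quantum degree $\chi(T)$, hence a scalar multiple of $H^{-\chi(T)/2}$; it does not tell you that the scalar is nonzero. If $\phi_T(1)$ vanished (mod torsion), the divisibility would be infinite, not $-\chi(T)/2$. The nonvanishing is precisely what \Cref{prop:cab-cobordism-red} supplies, via $\phi_T(\tilde{\ca}(U)) = \pm c^{-\chi(T)/2}\tilde{\ca}(U)$. So stage~2 secretly relies on the same Lee-class cobordism formula that would have finished the proof directly. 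Since you already need that input, you might as well apply it to $S$ itself, which collapses both stages into the paper's one-line argument.
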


As a future prospect, we expect that our reformulation of $s$ reveals its connection with the invariant $s^\#$, a gauge theoretic analogue of $s$ introduced in \cite{KM:2011}. It was originally thought that $s$ and $s^\#$ are equal, but turned out to be distinct; in fact $s^\#$ is \textit{not} slice-torus \cite{Gong:2021}. A reduced version invariant $\tilde{s}^\#$ is introduced in \cite{DISST:2022}%
\footnote{
    The invariant is denoted $\tilde{s}$ in the paper, but here we put $\#$ for distinguishability.
}%
, which is defined by the divisibility of the \textit{special cycles} in the reduced equivariant singular instanton Floer complex $\tilde{C}^\#$. The invariant $\tilde{s}^\#$ is proved to be slice-torus, and approximates $s^\#$ as 
\[
    s^\#(K) = 2\tilde{s}^\#(K) - \tilde{\epsilon}(K)
\]
with $\tilde{\epsilon}(K) \in \{0, \pm 1\}$. As instanton gauge theory is deeply connected to Khovanov theory \cite{KM:unknot,KM:2014,KM:2021}, we believe that there is also a reason for the similarity between the two unreduced-reduced pairs of invariants.

This paper is organized as follows. In \Cref{sec:khovanov} we briefly review the basics of Khovanov homology theory, including the construction of the Lee classes and results obtained in \cite{Sano:2020}. In \Cref{sec:module-str} we setup the foundation for the reduced theory in general. The reduced Lee classes are defined therein. In \Cref{sec:divisibility} the invariant $\ssr_c$ is defined as the $c$-divisibility of the reduced Lee class. Its coincidence with $s^F$ is proved and a classification result is given. In \Cref{sec:unreduced} we compare $\ssr_c$ with the unreduced counterpart $\ss_c$. Finally in \Cref{sec:computations} we briefly explain the algorithm of our computer program and list several computational results. Considerations for the case $(R, c) = (\ZZ[H], H)$ is also given therein.

\setcounter{theorem}{0}
\setcounter{conjecture}{0}

    \subsection*{Acknowledgements}

We thank Dirk Sch{\"u}tz for kindly providing us the input knot data and giving us many useful insights. We thank Naoya Umezaki, Melissa Zhang and Tomohiro Asano for useful discussions. We thank Yuto Horikawa and Toru3 for helping us developing the computer program. 
The work of TS was supported by JSPS KAKENHI Grant Numbers 23K12982, RIKEN iTHEMS Program and academist crowdfunding.

    \section{Khovanov homology}
\label{sec:khovanov}


Throughout this paper we work in the smooth category, and assume that all links and link diagrams are oriented. 

\subsection{Khovanov homology} \label{subsec:khovanov}

\begin{definition}
    Let $R$ be a commutative ring with unity. A \textit{Frobenius algebra} over $R$ is a quintuple $(A, m, \iota, \Delta, \epsilon)$ such that: 
    \begin{enumerate}
        \item $(A, m, \iota)$ is an associative $R$-algebra with multiplication $m$ and unit $\iota$,
        \item $(A, \Delta, \epsilon)$ is a coassociative $R$-coalgebra with comultiplication $\Delta$ and counit $\epsilon$,
        \item the Frobenius relation holds: 
        \[
            \Delta \circ m = (\id \otimes m) \circ (\Delta \otimes \id) = (m \otimes \id) \circ (\id \otimes \Delta).
        \]
    \end{enumerate}
\end{definition}


Let $R$ be a commutative ring with unity, and $h, t$ be elements in $R$. Define $A_{h, t} = R[X]/(X^2 - hX - t)$. $A_{h, t}$ is endowed a Frobenius algebra structure as follows: the $R$-algebra structure is inherited from $R[X]$. Regarding $A_{h, t}$ as a free $R$-module with basis $\{1, X\}$, the counit $\epsilon: A_{h, t} \rightarrow R$ is defined by
\[
    \epsilon(1) = 0,\quad
    \epsilon(X) = 1.
\]
Then the comultiplication $\Delta$ is uniquely determined so that $(A_{h, t}, m, \iota, \Delta, \epsilon)$ becomes a Frobenius algebra. Explicitly, the operations $m$ and $\Delta$ are given by 
\begin{equation}
\label{eq:1X-operations}
    \begin{gathered}
        m(1 \otimes 1) = 1, \quad 
        m(X \otimes 1) = m(1 \otimes X) = X, \quad
    	m(X \otimes X) = hX + t, \\
        \Delta(1) = X \otimes 1 + 1 \otimes X - h (1 \otimes 1), \quad 
    	\Delta(X) = X \otimes X + t (1 \otimes 1).
    \end{gathered}
\end{equation}

Given a link diagram $D$, a complex $C_{h, t}(D; R)$ over $R$ is defined by the construction given in \cite{Khovanov:2000}, except that the defining Frobenius algebra $A = R[X]/(X^2)$ is replaced by $A_{h, t} = R[X]/(X^2 - hX - t)$. 

\begin{definition}
    The complex $C_{h, t}(D; R)$ is called the \textit{Khovanov complex}, and its homology denoted $H_{h, t}(D; R)$ is called the \textit{Khovanov homology} of $D$ (with respect to the triple $(R, h, t)$).
\end{definition}

Recall that Khovanov's original theory \cite{Khovanov:2000} is given by $(h, t) = (0, 0)$, Lee's theory \cite{Lee:2005} by $(h, t) = (0, 1)$, and (the filtered version of) Bar-Natan's theory \cite{BarNatan:2004} by $(h, t) = (1, 0)$. The universal one among all triples $(R, h, t)$ is given by $X^2 - hX - t$ over $R = \ZZ[h, t]$, which is called the \textit{$U(2)$-equivariant theory} (see \cite{Khovanov:2004,Khovanov:2022}).
The following proposition justifies referring to the isomorphism class of $H_{h, t}(D)$ as the \textit{Khovanov homology} of the corresponding link $L$.

\begin{proposition}[{\cite[Theorem 1]{BarNatan:2004}, \cite[Proposition 6]{Khovanov:2004}}]
\label{prop:Kh-invariance}
    Suppose $D, D'$ are link diagrams related by a Reidemeister move. Then there is a chain homotopy equivalence
    \[
        \rho: C_{h, t}(D) \rightarrow C_{h, t}(D').
    \]
\end{proposition}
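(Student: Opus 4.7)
The plan is to verify the chain homotopy equivalence for each of the three Reidemeister moves R1, R2, R3 separately, since any two diagrams of a given link are related by a finite sequence of such moves. For each move I would exhibit an explicit chain map $\rho$ between the two Khovanov complexes, built locally from the Frobenius operations $m, \iota, \Delta, \epsilon$ applied to the circles involved in the move, together with a homotopy inverse. The deformation parameters $(h, t)$ enter only through the structural formulas (\ref{eq:1X-operations}), so throughout one may rely on the fact that $A_{h, t}$ is a free rank-two $R$-module with the same underlying module structure as Khovanov's original algebra $R[X]/(X^2)$.

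For R1, the looped diagram has one extra crossing, so its complex sits in a mapping cone $C_{h, t}(D_0) \to C_{h, t}(D_1)$ indexed by the two resolutions at that crossing. After resolving, one side contains a small circle bounded by the bigon; pairing this circle with its neighbor through $m$ or $\Delta$ produces a direct summand on which the differential has an invertible ``identity'' component. A standard Gaussian-elimination (cancellation) lemma then strips this acyclic summand and leaves a subcomplex isomorphic to $C_{h, t}(D')$; writing out the section and retraction explicitly yields the equivalence $\rho$. For R2, the same strategy applies to the $2 \times 2$ cube of resolutions of the two-crossing diagram: two diagonally opposite corners contribute acyclic mapping cones, and after a single Gaussian elimination against an intermediate vertex the remaining vertex is isomorphic to the complex of the other diagram.

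For R3, I would exhibit a direct chain homotopy equivalence between the two $2^3$-cubes of resolutions on the two sides of the move, with the chain map and its homotopy prescribed face-by-face by cobordisms between the resolution diagrams. The main obstacle is the bookkeeping, since each cube has eight vertices and many faces. The crucial algebraic identity that makes the verification go through is the Frobenius relation $\Delta \circ m = (\id \otimes m) \circ (\Delta \otimes \id) = (m \otimes \id) \circ (\id \otimes \Delta)$. Since this identity and all operations in (\ref{eq:1X-operations}) match those used in Khovanov's original theory up to the $(h, t)$-deformation, the formal cobordism-category argument of Bar-Natan~\cite{BarNatan:2004} applies verbatim, and the chain homotopy equivalences he constructs yield the desired $\rho$ after applying the TQFT functor associated with $A_{h, t}$.
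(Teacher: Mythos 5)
The paper provides no proof of this proposition; it is a result cited directly from Bar-Natan and Khovanov, with the explicit chain maps $\rho$ taken from \cite[Section 4.3]{BarNatan:2004}. Your outline correctly captures the structure of the standard argument: check R1, R2, R3 separately, and for each move build a local chain homotopy equivalence from the Frobenius operations. Your R1 and R2 treatment via cancellation of acyclic summands (Gaussian elimination after delooping) is precisely how the argument runs, since the deformation $(h,t)$ only changes the structure constants, not the underlying free module structure.

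Your R3 treatment, however, is genuinely different from the route the paper (and Bar-Natan) actually takes. You propose a direct face-by-face comparison of the two $2^3$-cubes of resolutions, which is in the spirit of Khovanov's original computation and would work in principle, but is quite heavy. Bar-Natan instead factors the R3 equivalence through an intermediate complex $C$ that is the mapping cone of two chain-homotopic maps, so that R3 invariance follows from two applications of the R2 simplification plus a short observation; this is exactly the mechanism reproduced in the paper's Appendix in the proof of \Cref{prop:cab-reidemeister}. Both routes succeed, but yours trades Bar-Natan's conceptual economy for raw bookkeeping. Finally, your closing appeal to the cobordism-category theorem applying ``verbatim'' is sound but somewhat supplants the explicit sketch preceding it; to promote it from a citation to a proof you would still need to verify that $A_{h,t}$ descends to a well-defined functor on Bar-Natan's quotient category (i.e.\ that the local S, T and 4Tu relations hold), which is the content of \cite[Proposition 6]{Khovanov:2004} and holds for any rank-two Frobenius extension.
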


The explicit descriptions of $\rho$ for the three Reidemeister moves are given in \cite[Section 4.3]{BarNatan:2004}. We call each of these maps an \textit{R-move map}. We introduce a few more terms and notations that will be used throughout this paper. 

\begin{definition}
    A \textit{state} $u$ of a diagram $D$ is an assignment of $0$ or $1$ to each crossing of $D$. 
\end{definition}

When a total ordering of the crossings of $D$ is given, a state $u$ is identified with an element $u \in \{0, 1\}^n$. We denote by $D(u)$ the crossingless diagram obtained from $D$ by resolving all crossings accordingly. 

\begin{definition}
    For an arbitrary set $S$, an \textit{$S$-enhanced state} of a diagram $D$ is a pair $x = (u, a)$ such that $u$ is a state of $D$ and $a$ is an assignment of an element in $S$ to each circle of $D(u)$. 
\end{definition}

When $S$ is a subset of $A_{h, t}$, an $S$-enhanced state is identified with an element of $C_{h, t}(D)$ by the corresponding tensor product of the elements of $S$. In particular for $S = \{1, X\} \subset A_{h, t}$, the set of all $1X$-enhanced states of $D$ forms a basis of $C_{h, t}(D)$, and is called the \textit{standard generators} of $C_{h, t}(D)$.
    \subsection{Lee classes}
\label{subsec:lee-classes}

Lee \cite{Lee:2005} proved that for any link diagram $D$, its $\QQ$-Lee homology has dimension $2^{|D|}$ and is generated by the explicitly constructed \textit{Lee classes} (or the \textit{canonical classes}, as called in \cite{Rasmussen:2010}). 
Their argument is generalized in \cite{MTV:2007} and in \cite{Turner:2020}, provided that the Frobenius algebra $A_{h, t}$ is diagonalizable. Here we give an equivalent condition in terms of the defining quadratic polynomial of $A_{h, t}$.

\begin{definition} \label{def:factorable}
    We say $(R, h, t)$ is a \textit{factorable triple} (or simply \textit{is factorable}) if the quadratic polynomial $X^2 - hX - t \in R[X]$ factors as linear polynomials. 
\end{definition}

Khovanov's original theory $(h, t) = (0, 0)$, Lee's theory $(h, t) = (0, 1)$ and Bar-Natan's theory $(h, t) = (1, 0)$ are examples of such triples. The universal theory among all factorable triples is given by $X^2 - hX - t = (X - a)(X - b)$ over $R = \ZZ[a, b]$, which is called the \textit{$U(1) \times U(1)$-equivariant theory} (see \cite{Khovanov:2022}). 

For the remainder of this section we assume $(R, h, t)$ is factorable. Fix two roots $a, b \in R$ of $X^2 - hX - t$ and put $c = b - a$. Here $(h, t)$ and $(a, b, c)$ are related as
\[
    h = a + b, \quad t = -ab,\quad c^2 = h^2 + 4t.
\]
Define two elements
in $A_{h, t}$ by
\[
    X_a = X - a,\quad 
    X_b = X - b.
\]
Then we have 
\begin{equation}
\label{eq:ab-operations}
    \begin{gathered}
    	m(X_a \otimes X_a) = cX_a,\ m(X_b \otimes X_b) = -cX_b, \\
    	m(X_a \otimes X_b) = m(X_b \otimes X_a) = 0,\\
    	\Delta(X_a) = X_a \otimes X_a,\ \Delta(X_b) = X_b \otimes X_b.
    \end{gathered}
\end{equation}
Thus the operations $m$ and $\Delta$ are diagonalized in the submodule spanned by $\{X_a, X_b\}$. Moreover, from
\[
    \begin{pmatrix}
        X_a & X_b
    \end{pmatrix}
    =
    \begin{pmatrix}
        1 & X
    \end{pmatrix}
    \begin{pmatrix}
        -a & -b \\
        1 & 1
    \end{pmatrix},
\]
we see that $\{X_a, X_b\}$ form a basis of $A_{h, t}$ if and only if $c = b - a$ is invertible in $R$. Thus the Frobenius algebra $A_{h, t}$ is diagonalizable\footnote{As defined in \cite{Turner:2020}, a rank $2$ Frobenius algebra is $A$ over $R$ is \textit{diagonalizable} if there exists a basis $\{e_1, e_2\}$ of $A$ such that $e_1^2 = e_1,\ e_2^2 = e_2$ and $e_1 e_2 = 0$.  The equivalence (for one direction) can be seen by putting $e_1 = c^{-1}X_a,\ e_2 = -c^{-1}X_b$.} if and only if $(R, h, t)$ is factorable and $c$ is invertible in $R$. 

\begin{definition}
    For a link $L$, let $O(L)$ be the set of all orientations on the underlying unoriented link of $L$. The set $O(D)$ for a link diagram $D$ is defined likewise.
\end{definition}

For each $o \in O(L)$, let $L_o$ denote the link $L$ with its orientation replaced by $o$. A diagram $D_o$ is defined likewise for each $o \in O(D)$.

\begin{figure}[t]
    \centering
    \includegraphics[scale=0.35]{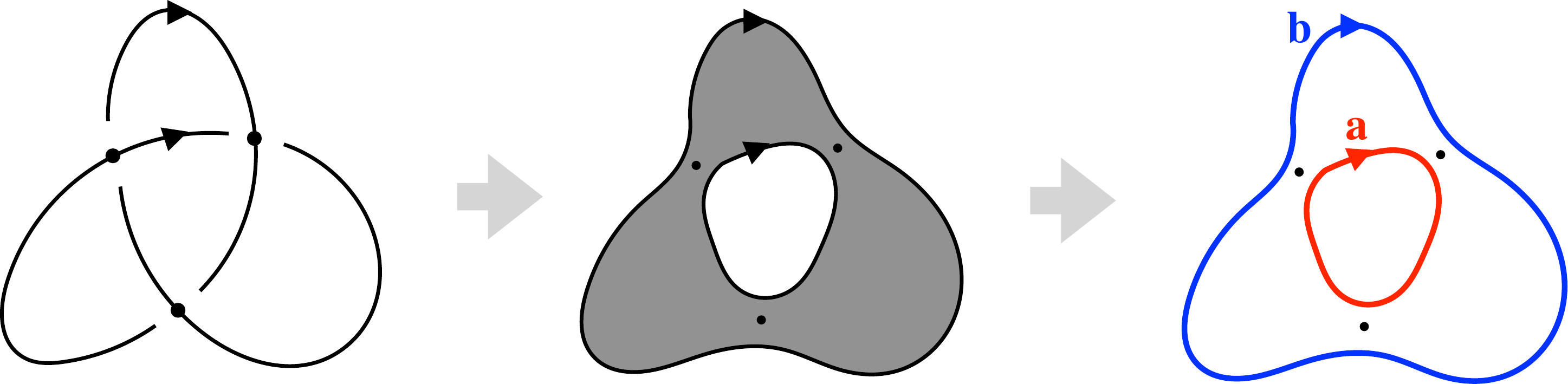}
    \caption{Construction of the Lee cycle.}
    \label{fig:ab}
\end{figure}

\begin{algorithm} \label{algo:ab-coloring}
    Given a link diagram $D$, the \textit{$ab$-coloring} on its Seifert circles are obtained as follows: separate $\mathbb{R}^2$ into regions by the Seifert circles of $D$, and color the regions in the checkerboard fashion, with the unbounded region colored white. For each Seifert circle, let it inherit the orientation from $D$. Assign to it $a$ if it sees a black region to the left, otherwise $b$.
\end{algorithm}

For any oriented link diagram $D$, there is a unique state $u$ that gives the orientation preserving resolution of $D$, so that $D(u)$ consists of the Seifert circles of $D$. The $ab$-coloring on $D$ determines a unique $X_aX_b$-enhanced state $\ca(D) \in C_{h, t}(D)$ given by the corresponding tensor products of $X_a$ and $X_b$. Similarly for each $o \in O(D)$, we obtain an $X_aX_b$-enhanced state $\ca(D, o) \in C_{h, t}(D)$ from the $ab$-coloring on $D_o$\footnote{In \cite{Rasmussen:2010} the element $\ca(D, o)$ is denoted $\mathfrak{s}_o$.}. 

\begin{definition}
    The above constructed elements $\ca(D, o)$ are called the \textit{Lee cycles} of $D$. 
\end{definition}

For any diagram $D$ there are $2^{|D|}$ distinct Lee cycles. For convenience we also write $\cb(D, o)$ for $\ca(D, -o)$. Then $\cb(D, o)$ is obtained by flipping all $X_a$'s and $X_b$'s for the tensor factors of $\ca(D, o)$.

\begin{example}
    For the diagram $D$ given in \Cref{fig:ab}, the Lee cycles are given by $\ca(D) = X_b \otimes X_a$ and $\cb(D) = X_a \otimes X_b$, where the first factor corresponds to the outer circle. 
\end{example}

\begin{proposition}
    $\ca(D, o)$ is indeed a cycle, i.e.\ $d\ca(D, o) = 0$.
\end{proposition}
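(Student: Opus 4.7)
The plan is to prove $d\alpha(D, o) = 0$ by a local analysis at each crossing. Since the Khovanov differential decomposes as $d = \sum_v d_v$ with each $d_v$ landing in a distinct summand of the Khovanov complex (indexed by the resolution obtained by flipping the $v$-th bit), it suffices to show that $d_v \alpha(D, o) = 0$ for every crossing $v$ at which $d_v$ contributes, i.e.\ for every $v$ with $u_{o, v} = 0$ in the Seifert-resolution state $u_o$ of $D_o$.

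The crux of the argument is the following geometric fact: at every crossing $v$, the two arcs produced by the Seifert (orientation-preserving) smoothing belong to two \emph{different} Seifert circles of $D_o$. These two arcs are parallel and carry the same orientation (e.g.\ both pointing upward), by construction of the Seifert smoothing. If both arcs lay on a single simple closed Seifert circle $C$, then the small middle region between the two parallel arcs would sit on the right of $C$ with respect to one arc and on the left of $C$ with respect to the other, contradicting the Jordan curve theorem applied to $C$ (since the inside of $C$ is a well-defined connected region).

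Combining this fact with the checkerboard coloring of \Cref{algo:ab-coloring}, the two distinct Seifert circles $C_1, C_2$ meeting at $v$ must receive opposite $ab$-labels. Indeed, the middle region at $v$ has a single color, but it lies on the left of one circle (say $C_2$) and on the right of the other ($C_1$), so the two circles see opposite colors on their left near $v$; consequently one is labeled $X_a$ and the other $X_b$. Because $C_1 \neq C_2$, the local operation $d_v$ is a merge, applied to the tensor factors $X_a$ and $X_b$, while the remaining tensor factors are untouched. By the identity $m(X_a \otimes X_b) = 0$ recorded in \Cref{eq:ab-operations}, this contribution vanishes, and summing over all valid $v$ gives $d\alpha(D, o) = 0$.

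The only nontrivial step is the parallel-arc claim; it is geometrically transparent but requires the Jordan-curve argument above to rule out a Seifert circle passing through both arcs at a single crossing. Everything else reduces to the local computation from \Cref{eq:ab-operations}.
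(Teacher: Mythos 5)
Your proof is correct and follows the same approach as the paper's one-line argument; you simply fill in the two facts the paper takes as given, namely that the oriented smoothing at any crossing joins two distinct Seifert circles (via the Jordan-curve argument) and that the checkerboard coloring forces them to carry opposite labels, so that $d$ reduces to $m(X_a\otimes X_b)=0$ at each outgoing edge.
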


\begin{proof}
    From the procedure of the $ab$-coloring, we see that each crossing connects differently colored Seifert circles of $D_o$, so all outgoing edge maps annihilates $\ca(D, o)$. 
\end{proof}

By abuse of notation, we write $\ca(D, o)$ and $\cb(D, o)$ for the corresponding homology classes. The following \Cref{prop:ab-gen,prop:alpha-homol-gr} generalize \cite[Theorem 4.2]{Lee:2005}, which are proved in \cite[Theorem 4.2]{Turner:2020} and in \cite[Proposition 2.9]{Sano:2020}. 

\begin{proposition} \label{prop:ab-gen}
	If $c$ is invertible, then $H_{h, t}(D; R)$ is freely generated over $R$ by the Lee classes. In particular $H_{h, t}(D; R)$ has rank $2^{|D|}$.
\end{proposition}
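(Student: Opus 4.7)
The plan is to follow Lee's original argument in the generalized form due to Turner, exploiting that when $c$ is invertible $\{X_a, X_b\}$ is an $R$-basis of $A_{h,t}$ that simultaneously diagonalizes the Frobenius structure per \eqref{eq:ab-operations}. I would first change basis in $C_{h,t}(D; R)$ from the standard $1X$-enhanced states to $X_aX_b$-enhanced states; because the local $2 \times 2$ change-of-basis matrix has determinant $\pm c$, invertibility of $c$ makes this an isomorphism of free $R$-modules.

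The key step is to exhibit a direct sum decomposition
\[
    C_{h,t}(D; R) = \bigoplus_{o \in O(D)} C^o(D; R)
\]
of chain complexes, indexed by orientations of the underlying link. Given an $X_aX_b$-enhanced state $(u, \epsilon)$, the labels $\epsilon$ together with the convention from \Cref{algo:ab-coloring} that the unbounded region is white determine a checkerboard coloring of the complementary regions of $D(u)$, which specifies an orientation of each circle of $D(u)$, and hence an orientation $o \in O(D)$ after unresolving. The diagonal nature of $m$ and $\Delta$ in \eqref{eq:ab-operations} ensures that the Khovanov differential preserves the assignment $(u, \epsilon) \mapsto o$, yielding the decomposition; by construction, the Lee cycle $\ca(D, o)$ lies in the summand $C^o(D; R)$, supported at the state $u_o$ for which $D(u_o)$ is the oriented resolution of $D_o$.

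Next I would show that $H_*(C^o(D; R)) \cong R$ with generator $[\ca(D, o)]$ for each $o$. After rescaling basis vectors by suitable powers of the unit $c$, each crossing of $D$ that is not orientation-preserving in $D_o$ contributes an acyclic cone factor on which a Gaussian elimination cancels all generators pairwise, while the canonical generator at $u_o$ survives uncancelled. Summing over $o \in O(D)$, which has cardinality $2^{|D|}$, yields a free module of rank $2^{|D|}$ with the Lee classes as generators.

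The main technical obstacle is verifying the compatibility of the decomposition with the differential, which reduces to a local case analysis at each crossing: whenever an edge map merges or splits circles of a resolution, the diagonal form of \eqref{eq:ab-operations} annihilates mixed-color inputs, and the surviving color-consistent transitions are exactly those respecting the induced orientation on $D$. This bookkeeping is mechanical but must be handled carefully, as it is what upgrades the purely linear-algebraic diagonalization of $A_{h,t}$ to a global decomposition of the cube of resolutions.
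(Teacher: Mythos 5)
Your proposed decomposition $C_{h,t}(D;R) = \bigoplus_{o \in O(D)} C^o(D;R)$ does not exist, because the assignment $(u,\epsilon) \mapsto o$ you describe is not well-defined. Orienting the circles of $D(u)$ by the checkerboard rule and a label $\epsilon$ always gives a coherent orientation of each \emph{circle}, but these orientations need not patch to an orientation of $D$ after unresolving. A local computation at a single crossing shows the obstruction: if in the resolution $D(u)$ the two local arcs at a crossing lie on circles carrying the \emph{same} label, then the induced half-edge orientations point either both into or both out of the crossing along a single strand of $D$, which is impossible. Already for the state $u_o$ of a trefoil diagram, the labeling that assigns $X_a$ to both Seifert circles admits no compatible orientation of the knot. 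In fact one can check that $(u,\epsilon)$ determines an orientation of $D$ precisely when, at every crossing, the two resolved arcs carry different labels, and this forces $u = u_o$ and $\epsilon = \epsilon_o$ for some $o$; i.e.\ the assignment is defined only on the $2^{|D|}$ Lee cycles themselves, not on the full set of enhanced states. Your ``mechanical bookkeeping'' step is therefore not a technicality to be filled in; it is the place where the argument as written breaks.

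The correct statement, which is essentially what \cite{Lee:2005,MTV:2007,Turner:2020} prove, uses a \emph{finer} decomposition: the diagonal form of \eqref{eq:ab-operations} shows that the differential preserves the induced $\{X_a,X_b\}$-coloring of the \emph{edges} of $D$ (the segments of the underlying curve between crossings), so $C_{h,t}(D;R) = \bigoplus_{\gamma} C^\gamma$ indexed by edge-colorings $\gamma$. One then shows: (i) $C^\gamma = 0$ unless at each crossing the four half-edge colors admit a monochromatic pairing; (ii) if some crossing has all four half-edges the same color, both resolutions contribute and the edge map is $m$ or $\Delta$ restricted to a single idempotent summand, which is multiplication by a unit (a power of $c$), so a Gaussian elimination at that crossing contracts $C^\gamma$; (iii) the surviving summands are exactly those where every crossing is ``alternately paired,'' and these are in bijection with $O(D)$, each consisting of the single generator $\ca(D,o)$. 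Both the orientation/edge-coloring bijection and the acyclicity in case (ii) require proof, and neither appears in your outline. Your Gaussian-elimination idea is the right mechanism for (ii), but it is currently aimed at the wrong target, since in the correct decomposition the summand containing $\ca(D,o)$ is already rank one and needs no cancellation.
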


\begin{proposition} \label{prop:alpha-homol-gr}
    Let $D$ be an $l$-component link diagram and $D_1, \cdots, D_l$ be the component diagrams. For any orientation $o$ on the underlying unoriented diagram of $D$, let $I \subset \{1, \ldots, l\}$ be the set of indices $i$ such that $o$ is opposite to the given orientation on $D_i$. The homological grading of $\ca(D, o)$ is given by
	\[
    	\gr_h(\ca(D, o)) = 2\sum_{i \in I, j \notin I} \mathit{lk}(D_i, D_j).
    \]
    where $\mathit{lk}$ denotes the linking number. In particular, $\gr_h(\ca(D)) = \gr_h(\cb(D)) = 0$.
\end{proposition}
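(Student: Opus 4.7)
The plan is to compute the homological grading directly from the definitions, by identifying the state underlying $\ca(D,o)$ and tracking how signs of crossings change under reorientation.

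First I would recall that the Lee cycle $\ca(D,o)$, regarded as a standard generator of $C_{h,t}(D)$, is supported on a particular state $u_o$ of $D$: namely, the state whose resolution $D(u_o)$ is precisely the Seifert resolution of the reoriented diagram $D_o$. Thus at each crossing of $D$, the state $u_o$ takes the $0$-smoothing if the crossing is positive with respect to $o$ and the $1$-smoothing if it is negative with respect to $o$. Consequently $|u_o|=n_-^o(D)$, where $n_-^o(D)$ denotes the number of crossings of $D$ that are negative with respect to the orientation $o$. Since the homological grading on $C_{h,t}(D)$ is $\gr_h = |u| - n_-(D)$ with $n_-(D)$ computed using the original orientation $o_0$, we obtain
\[
    \gr_h(\ca(D,o)) \;=\; n_-^o(D) - n_-(D).
\]

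Next I would analyze, crossing by crossing, the difference $n_-^o(D) - n_-(D)$. At a crossing between strands belonging to components $D_i$ and $D_j$, reversing the orientation of exactly one of the two strands flips the sign of the crossing, while reversing both or neither leaves the sign unchanged. In particular a self-crossing of $D_i$ never changes sign (both local strands are in $D_i$), and a mixed crossing between $D_i$ and $D_j$ with $i\neq j$ changes sign precisely when exactly one of $i,j$ lies in $I$. Writing $c_{ij}^{\pm}$ for the number of positive/negative crossings between $D_i$ and $D_j$ (unordered, $i\neq j$) with respect to $o_0$, the crossings that change sign contribute
\[
    n_-^o(D) - n_-(D) \;=\; \sum_{\{i,j\}:\ |\{i,j\}\cap I|=1} \bigl(c_{ij}^+ - c_{ij}^-\bigr).
\]

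Finally I would invoke the standard identity $c_{ij}^+ - c_{ij}^- = 2\,\mathit{lk}(D_i,D_j)$ and rewrite the sum over unordered mixed pairs as an ordered sum with $i\in I,\ j\notin I$ (which counts each such unordered pair exactly once since $I$ and its complement are disjoint). This yields
\[
    \gr_h(\ca(D,o)) \;=\; 2 \sum_{i\in I,\ j\notin I} \mathit{lk}(D_i, D_j),
\]
as claimed. The "in particular" statement then follows immediately: for $\ca(D)$ we have $I=\emptyset$, and for $\cb(D)=\ca(D,-o_0)$ we have $I=\{1,\dots,l\}$, so in both cases the indexing set of the sum is empty. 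There is no real obstacle here; the only thing to watch is to treat self-crossings correctly (they never contribute, regardless of whether the component is flipped), and to make sure the unordered-vs-ordered bookkeeping in the final sum matches the stated formula.
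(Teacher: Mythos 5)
Your proof is correct. Note that this paper does not itself prove the proposition but cites it to [Sano, Proposition 2.9]; nevertheless your argument is the natural one for this statement and it goes through. The three ingredients are all handled properly: (i) the Lee cycle $\ca(D,o)$ lives purely in the state summand $u_o$ given by the Seifert resolution of $D_o$, so its homological grading is $|u_o|-n_-(D)$; (ii) with the standard convention that the oriented smoothing is the $0$-smoothing at a positive crossing and the $1$-smoothing at a negative crossing, $|u_o|=n_-^o(D)$, giving $\gr_h(\ca(D,o)) = n_-^o(D)-n_-(D)$; and (iii) the per-crossing sign-change analysis correctly isolates mixed crossings with exactly one endpoint in $I$ and converts the signed count into linking numbers via $c_{ij}^+-c_{ij}^- = 2\,\mathit{lk}(D_i,D_j)$. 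The passage from unordered pairs $\{i,j\}$ with $|\{i,j\}\cap I|=1$ to the ordered sum over $i\in I,\ j\notin I$ is a clean bijection, and the "in particular" cases $I=\emptyset$ and $I=\{1,\dots,l\}$ both yield the empty sum, so $\gr_h(\ca(D))=\gr_h(\cb(D))=0$. No gap.
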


Thus when $c$ is invertible the graded module structure of $H_{h, t}(D; R)$ is completely known. This is the case for $\QQ$-Lee homology $(c = 2)$ and $\ZZ$-Bar-Natan homology $(c = 1)$. We will see in \Cref{subsec:reduction-of-params} that, even if $c$ is not invertible, the graded module structure of $H_{h, t}(D; R)$ is determined by $c$. 

Finally we state the variances of the Lee classes under the Reidemeister moves and cobordisms. The following \Cref{prop:cab-reidemeister} was the key to defining the link invariant $\bar{s}_c$ in \cite{Sano:2020}, which in particular implies that the Lee classes are \textit{not} invariant under the Reidemeister moves. 

\begin{definition}
    For any unary function $f$, its \textit{difference function} $\delta f$ is defined by
    \[
        \delta f(x, y) = f(y) - f(x).
    \]
\end{definition}

\begin{definition}
    For an oriented link diagram $D$, let $w(D)$ denote the writhe of $D$ and $r(D)$ denote the number of Seifert circles of $D$.
\end{definition}

\begin{proposition}[{\cite[Proposition 2.13]{Sano:2020}}] \label{prop:cab-reidemeister}
    Suppose $D, D'$ are related by a single Reidemeister move. The corresponding isomorphism $\rho$ maps the Lee classes as
    \begin{equation} \label{eq:cab-reidemeister}
    \begin{aligned}
        \ca(D) &\xmapsto{\ \rho\ } \epsilon c^j \ca(D'), \\
        \cb(D) &\xmapsto{\ \rho\ } \epsilon' c^j \cb(D').
    \end{aligned}
    \end{equation}
    Here $j \in \{-1, 0, 1\}$ given by
    \[ 
        j = \frac{\delta w(D, D') - \delta r(D, D')}{2} 
    \]
    and $\epsilon, \epsilon' \in \{ \pm 1 \}$ are signs satisfying
    \[
        \epsilon \epsilon' = (-1)^j.
    \]
\end{proposition}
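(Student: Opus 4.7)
The plan is to verify \eqref{eq:cab-reidemeister} by case analysis over the three Reidemeister moves, applying Bar-Natan's explicit chain homotopy equivalences $\rho$ from \cite[Section 4.3]{BarNatan:2004} directly to the Lee cycles. Since $\ca(D)$ is supported on the single orientation-preserving state, only the restriction of $\rho$ to that state contributes, which keeps all computations local.

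First I would handle R1. The local picture is a single arc on one side of the move and an arc with a curl on the other; after taking oriented resolutions, the curl contributes one extra small Seifert circle. Bar-Natan's R1 map restricts locally to a composition of a (co)multiplication with a (co)unit, and applying it to the relevant tensor factor---labelled $X_a$ or $X_b$ by the $ab$-coloring---yields, via $X_a^2 = cX_a$ and $X_b^2 = -cX_b$ from \eqref{eq:ab-operations}, a factor of the form $\pm c^j$ with $j = (\delta w - \delta r)/2 \in \{-1, 0, 1\}$, together with a sign $\epsilon$ determined by the local orientation of the curl.

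For R2 I would separate the parallel and anti-parallel subcases of the two strands, which force $\delta w$ and $\delta r$ into $\{0, \pm 2\}$ and hence $j \in \{-1, 0, 1\}$; the R2 chain map factors through a saddle followed by a cap (or a cup followed by a saddle), whose action on the $X_a, X_b$ factors is prescribed by $\Delta(X_a) = X_a \otimes X_a$ together with the multiplications in \eqref{eq:ab-operations}, producing the required $\pm c^j$. For R3 one has $\delta w = \delta r = 0$ so $j = 0$; Bar-Natan's R3 map is assembled from R2 maps and an isotopy, and applying it to $\ca$ reduces to the R2 computation to give $\rho(\ca(D)) = \pm \ca(D')$.

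Finally, the sign constraint $\epsilon \epsilon' = (-1)^j$ I would deduce from the symmetry of the construction under the relabeling of roots $a \leftrightarrow b$. Bar-Natan's R-move maps $\rho$ are defined on $C_{h,t}(D)$ without reference to the choice of roots, while the relabeling sends $X_a \leftrightarrow X_b$ and therefore $\ca(D) \leftrightarrow \cb(D)$ and $c = b - a \mapsto -c$. Applying this symmetry to the identity $\rho(\ca(D)) = \epsilon\, c^j\, \ca(D')$ yields $\rho(\cb(D)) = \epsilon\, (-c)^j\, \cb(D') = (-1)^j \epsilon\, c^j\, \cb(D')$, so $\epsilon' = (-1)^j \epsilon$. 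The main obstacle will be the R2 case, where one must enumerate the orientation and $ab$-coloring subcases and track signs consistently; R3 then follows by a routine bookkeeping argument layered on top of R2.
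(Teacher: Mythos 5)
Your overall strategy — using Bar-Natan's explicit chain homotopy equivalences from \cite[Section 4.3]{BarNatan:2004} and a case analysis over the three Reidemeister moves — is the same as the paper's, and your R1 and R2 cases match the paper's treatment in their essentials (cap for R1, cap-plus-saddle for anti-parallel R2, with the $c$-power coming from a merge of like-labeled circles). Your closing observation — deducing $\epsilon\epsilon' = (-1)^j$ from the $a \leftrightarrow b$ symmetry, which fixes $\rho$, swaps $\ca \leftrightarrow \cb$, and sends $c = b - a \mapsto -c$ — is a clean shortcut; the paper instead reads the signs off case by case during the $c$-power computation.

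The R3 case, however, has a genuine gap. You assert that since the R3 chain homotopy equivalence is built from R2 maps and an isotopy, "applying it to $\ca$ reduces to the R2 computation." This is not automatic. In Bar-Natan's construction the two R3 diagrams are each equivalent to a common mapping cone $C$ via maps $\tilde{G}$, $\tilde{G}'$ whose matrix entries include a chain \emph{homotopy} $h_0$, not merely R2 maps. When the center crossing is positive, the Lee cycle sits in the domain of $\Psi$ and is sent to a pair $(G_0 z, \Psi h_0 z)$; only when $h_0(z) = 0$ does the computation collapse to the R2 case. The homotopy term can be nonzero — precisely when the orientation-preserving resolution produces a small circle inside the changing disk, which occurs when the three strands are cyclically oriented. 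The paper disposes of this residual configuration by deforming the move sequence (following Polyak's observation) so that this particular R3 move is never performed, and this deformation is justified by the up-to-sign isotopy invariance of Bar-Natan's cobordism maps. Without an argument of that kind, your R3 step does not go through for all orientation configurations.
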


\begin{remark}
    Here it is not assumed that $c$ is invertible. When $j < 0$, equations \eqref{eq:cab-reidemeister} should be understood as
    \begin{align*}
        \ca(D') &\xmapsto{\rho^{-1}} \epsilon c^{-j} \ca(D), \\
        \cb(D') &\xmapsto{\rho^{-1}} \epsilon' c^{-j} \cb(D).
    \end{align*}
    %
\end{remark}

\begin{remark}
    For each orientation $o \in O(D)$ and the corresponding orientation $o' \in O(D')$, the relations given in \Cref{prop:cab-reidemeister} also hold between $\ca(D, o) \in H(D)$ and $\ca(D', o') \in H(D')$ (with $j, \epsilon, \epsilon'$ depending on $(o, o')$). This is because $C(D)$ and $C(D_o)$ only differ by some bigrading shift, and the cycles and the map $\rho$ correspond relevantly. 
\end{remark}

The proof of {\cite[Proposition 2.13]{Sano:2020}} is based on the element-wise description of $\rho$. We give a more simple proof in \Cref{sec:proof-of-key-prop}, based on the diagrammatic description of $\rho$.

\begin{proposition}[{\cite[Proposition 3.17]{Sano:2020}}] \label{prop:cab-cobordism}
    Suppose $(R, h, t)$ is factorable and $c$ is invertible. Let $S$ be an oriented cobordism between links $L, L'$ that has no closed components. Let $D, D'$ be the diagrams of $L, L'$ respectively, and  $\phi$ be the cobordism map corresponding to $S$
    \[
        \phi: H_{h, t}(D; R) \rightarrow H_{h, t}(D'; R).
    \]
    Then $\phi$ maps the Lee classes as 
    \begin{align*}
        \ca(D) &\xmapsto{\ \phi\ } \epsilon c^j \ca(D') + \cdots, \\
        \cb(D) &\xmapsto{\ \phi\ } \epsilon' c^j \cb(D') + \cdots
    \end{align*}
    where $j \in \ZZ$ is given by 
    \[
        j = \frac{\delta w(D, D') - \delta r(D, D') - \chi(S)}{2}
    \]
    and $\epsilon, \epsilon' \in \{ \pm 1 \}$ are signs satisfying
    \[
        \epsilon \epsilon' = (-1)^j.
    \]
    Moreover if every component of $S$ has a boundary in $L$, then the $(\cdots)$ terms vanish.
\end{proposition}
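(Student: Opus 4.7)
The plan is to decompose the cobordism $S$ via a movie presentation and track the behavior of $\ca(D)$ and $\cb(D)$ through each elementary step. Every oriented cobordism can be realized as a sequence of Reidemeister moves interleaved with the three Morse moves (birth, death, saddle), and the cobordism map $\phi$ factors correspondingly as a composition of R-move maps and elementary Morse move maps. For the R-move maps, \Cref{prop:cab-reidemeister} already gives exactly the required formula, with zero contribution to $\chi$. For the elementary Morse moves, I would compute directly using \eqref{eq:ab-operations} and check compatibility with the formula $j = (\delta w - \delta r - \chi)/2$ step by step.

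Concretely, a birth creates a new Seifert circle labelled $1$, and since $X_a - X_b = b - a = c$ as an element of $A_{h,t}$, we may rewrite $1 = c^{-1}(X_a - X_b)$. This splits the output as $c^{-1}$ times a signed sum of two candidate Lee states; the one matching the orientation on $S$ is the leading term $\ca(D')$, and the other is absorbed into $(\cdots)$. This contributes $j = -1$, matching $(\delta w, \delta r, \chi) = (0, 1, 1)$. A death uses $\epsilon(X_a) = \epsilon(X_b) = 1$, sending the Lee class directly to its counterpart with $j = 0$, matching $(0, -1, 1)$. For a saddle, the orientation on $S$ forces the two arcs entering it to carry opposite local orientations, which in turn forces matching $ab$-colors on the affected Seifert circles; then $m(X_a \otimes X_a) = c\, X_a$ (resp.\ $m(X_b \otimes X_b) = -c\, X_b$) yields $j = 1$ on a merge saddle, while $\Delta(X_a) = X_a \otimes X_a$ yields $j = 0$ on a split saddle. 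Summing $j$'s across the movie telescopes to the global formula, and the sign relation $\epsilon \epsilon' = (-1)^j$ is preserved at each elementary step (the $-$ sign in $m(X_b \otimes X_b) = -cX_b$ is exactly what flips $\epsilon'$ relative to $\epsilon$ on a $b$-$b$ merge), hence holds globally.

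The vanishing of the $(\cdots)$ terms under the hypothesis that every component of $S$ has boundary in $L$ is treated last. The only source of off-Lee-class contributions is the expansion $1 = c^{-1}(X_a - X_b)$ at births. However, a birth creates a disk component of $S$ whose boundary does not meet $L$, so under the stated hypothesis the movie may be chosen with no births, and every Lee-class-to-Lee-class step becomes a clean scalar multiple.

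The main obstacle is the color-matching assertion at saddles: the $ab$-coloring is defined by a global checkerboard construction on the ambient plane, whereas the saddle move is local, so one must show that orientability of $S$ propagates checkerboard consistency through the entire movie. Together with careful propagation of signs across every elementary piece (particularly the interplay between $\ca$ and $\cb$ at $a$-$a$ versus $b$-$b$ saddles), this accounts for the bulk of the bookkeeping and is where most of the technical work lies.
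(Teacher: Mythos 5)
Your overall strategy --- factor $S$ into elementary moves, track the Lee cycle through each, and verify the formula for $j$ and $\epsilon\epsilon'$ step by step --- is the right one; it is essentially the argument in the cited reference, which the paper does not reprove. The elementary computations at births, deaths, merges, and splits all check out numerically.

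There is, however, a genuine error in your justification of the ``Moreover'' clause. You assert that ``a birth creates a disk component of $S$ whose boundary does not meet $L$,'' and conclude that under the hypothesis the movie may be chosen with no births. But a birth creates a new \emph{circle} in the intermediate diagram, not a new connected \emph{component} of $S$: the newly born circle is typically attached by later saddles to a component of $S$ that does meet $L$ (this is exactly what happens whenever a component of $S$ meeting $L$ has positive genus). Your conclusion that births are avoidable is in fact still correct, but for a different reason: the Morse-theoretic fact that a compact cobordism, each of whose components meets $L$, admits a handle decomposition relative to $L$ with no $0$-handles (each $0$-handle can be canceled against a $1$-handle connecting it to the rest). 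An equivalent --- and more robust --- fix, which is what the original argument does, is to allow births and observe that the spurious $\ca(D',o')$-terms they generate are killed at a subsequent merge, because the reversed orientation on the new circle clashes with the orientation forced by the $L$-boundary of its component and yields $m(X_a \otimes X_b) = 0$. Finally, the color-matching assertion at saddles that you flag as the main obstacle is indeed provable and is the crux of the whole argument: the band lies entirely within a single checkerboard region, so coherence of the band orientation with the two bounding arcs forces both Seifert circles to bound that region with the same sign and hence to carry the same $ab$-label; but you are right that this must be spelled out rather than assumed.
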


    \subsection{Reduction of parameters}
\label{subsec:reduction-of-params}

Here we continue to assume that $(R, h, t)$ is factorable. It will be convenient to consider another basis $\{1, X_a\}$ for $A_{h, t}$, so that the operations on $A_{h, t}$ are described as 
\begin{equation}
    \label{eq:1Xa-ops}
    \begin{gathered}
        m(1 \otimes 1) = 1, \quad 
        m(X_a \otimes 1) = m(1 \otimes X_a) = X_a, \quad
    	m(X_a \otimes X_a) = c X_a, \\
        \Delta(1) = X_a \otimes 1 + 1 \otimes X_a - c (1 \otimes 1), \quad 
    	\Delta(X_a) = X_a \otimes X_a, \\
    	\iota(1) = 1, \quad \epsilon(1) = 0,\quad \epsilon(X_a) = 1.
    \end{gathered}
\end{equation}

\begin{definition}
    Let $A$ be a Frobenius algebra and $\theta$ an invertible element in $A$. The \textit{twisting} $A_\theta$ of $A$ by $\theta$ is another Frobenius algebra $(A, m, \iota, \Delta_\theta, \epsilon_\theta)$ with the same algebra structure as $A$ but with a different coalgebra structure given by
    \[
        \Delta_\theta(x) = \Delta(\theta^{-1}x),
        \quad 
        \epsilon_\theta(x) = \epsilon(\theta x).
    \]
\end{definition}

\begin{lemma}
\label{lem:frob-alg-isom}
    Suppose $(R, h, t)$, $(R, h', t')$ are both factorable, and that $c = \sqrt{h^2 + 4t}$ and $c' = \sqrt{h'^2 + 4t'}$ are related as $c' = \theta c$ for some invertible $\theta \in R$. Then there is a Frobenius algebra isomorphism
    \[
        \psi: A_{h, t} \longrightarrow A_{h', t'; \theta}
    \]
    mapping
    \[
        X_a \mapsto \theta^{-1}X_{a'},\quad
        X_b \mapsto \theta^{-1}X_{b'}.
    \]
    Here $A_{h', t'; \theta}$ denotes the $\theta$-twisting of $A_{h',t'}$. Moreover these maps satisfy the cocycle condition, i.e.\ the following diagram consisting of the above described maps commute.
	\begin{equation}
        \begin{tikzcd}
            {A_{h, t}} \arrow[rr, "\psi''"] \arrow[rd, "\psi"] &  & {A_{h'', t''; \theta\theta'}} \\
             & {A_{h', t'; \theta}} \arrow[ru, "\psi'"] & 
        \end{tikzcd}
    \end{equation}
\end{lemma}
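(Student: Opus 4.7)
The plan is to write down $\psi$ explicitly on the basis $\{1, X_a\}$ of $A_{h,t}$, verify each piece of the Frobenius structure separately, and then derive the cocycle identity by comparing values on generators. I would set $\psi(1) = 1$ and $\psi(X_a) = \theta^{-1}X_{a'}$ and extend $R$-linearly; invertibility of $\theta$ makes this an $R$-module isomorphism. The formula for $\psi(X_b)$ is then forced by the identities $X_b = X_a - c$ in $A_{h,t}$ and $X_{b'} = X_{a'} - c'$ in $A_{h',t'}$ together with $c' = \theta c$, giving $\psi(X_b) = \theta^{-1}X_{a'} - c = \theta^{-1}(X_{a'} - c') = \theta^{-1}X_{b'}$, as claimed.

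For the algebra axioms, the only non-trivial relation in \eqref{eq:1Xa-ops} is $X_a^2 = cX_a$, and
\[
    \psi(X_a)^2 = \theta^{-2}(X_{a'})^2 = \theta^{-2}c'X_{a'} = \theta^{-1}cX_{a'} = c\,\psi(X_a) = \psi(X_a^2).
\]
For the coalgebra axioms, using $\Delta_\theta(x) = \Delta(\theta^{-1}x)$ and $\epsilon_\theta(x) = \epsilon(\theta x)$, I would compute on generators: $\Delta_\theta(\psi(X_a)) = \theta^{-2}\Delta(X_{a'}) = \theta^{-2}X_{a'}\otimes X_{a'} = (\psi\otimes\psi)(\Delta(X_a))$, and $\Delta_\theta(\psi(1)) = \theta^{-1}\Delta(1) = \theta^{-1}(X_{a'}\otimes 1 + 1\otimes X_{a'}) - c\,(1\otimes 1) = (\psi\otimes\psi)(\Delta(1))$, where the last simplification uses $\theta^{-1}c' = c$. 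The counit checks $\epsilon_\theta(\psi(1)) = \epsilon(\theta) = 0$ and $\epsilon_\theta(\psi(X_a)) = \epsilon(X_{a'}) = 1$ are immediate.

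For the cocycle diagram, applying the first part of the lemma to $(h',t')$ and $(h'',t'')$ produces an analogous $\psi'$ with $\psi'(X_{a'}) = {\theta'}^{-1}X_{a''}$, and exactly the same calculations show that $\psi'$ remains a Frobenius isomorphism when viewed between the further twisted algebras $A_{h',t';\theta} \to A_{h'',t'';\theta\theta'}$, since $\Delta_{\theta\theta'}(x) = \Delta_{\theta'}(\theta^{-1}x)$ and the central scalar $\theta^{-1}$ commutes with the $R$-linear map $\psi'$. Evaluating the composition on generators gives $(\psi'\circ\psi)(1) = 1 = \psi''(1)$ and $(\psi'\circ\psi)(X_a) = \theta^{-1}\psi'(X_{a'}) = (\theta\theta')^{-1}X_{a''} = \psi''(X_a)$, so by $R$-linearity $\psi'\circ\psi = \psi''$. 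The main obstacle is the bookkeeping in this cocycle step — ensuring that $\psi'$ intertwines $\Delta_\theta$ with $\Delta_{\theta\theta'}$ rather than $\Delta$ with $\Delta_{\theta'}$ — but it reduces to the observation that the twist factor is a central unit and factors through $\Delta$.
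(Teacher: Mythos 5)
Your proposal is correct and follows essentially the same route as the paper: both verify the Frobenius axioms by direct computation on the basis $\{1, X_a\}$ using the formulas of \eqref{eq:1Xa-ops}, with the paper's proof merely packaging the module map as a ring homomorphism $R[X]\to R[X]$, $X\mapsto \theta^{-1}(X-a')+a$, so that the algebra axioms come for free (note $\psi(X_a)=\theta^{-1}X_{a'}$ and $\psi(X_b)=\theta^{-1}X_{b'}$ both drop out of this single formula, matching your derivation from $X_b = X_a - c$). Your explicit treatment of the cocycle step --- checking that $\psi'$ still intertwines $\Delta_\theta$ with $\Delta_{\theta\theta'}$ because the twist scalar is central and factors through the $R$-linear $\psi'$ --- is actually more detailed than the paper's proof, which leaves that part implicit.
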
 

\begin{proof}
    Define a ring isomorphism
    \[
        \psi: R[X] \rightarrow R[X],\quad X\ \mapsto\ \theta^{-1} (X - a') + a.
    \]
    Using \eqref{eq:1Xa-ops} it is easy to check that $\psi$ induces the desired Frobenius algebra isomorphism.
\end{proof}

\begin{proposition} \label{prop:ht-relation} 
    Suppose $(R, h, t)$, $(R, h', t')$ satisfy the condition of \Cref{lem:frob-alg-isom}. Then for any link diagram $D$, there is a chain isomorphism 
    \[
        \psi: C_{h, t}(D; R) \longrightarrow C_{h', t'}(D; R).
    \]
    Moreover when $\theta = 1$, the above map is natural with respect to $D$, i.e.\ if two diagrams $D, D'$ are related by a single Reidemeister move, the following diagram commutes.
    \begin{equation*}
        \centering
        \begin{tikzcd}
            C_{h, t}(D) \arrow{r}{\psi} \arrow{d}{\rho} & 
            C_{h', t'}(D) \arrow{d}{\rho} \\
            C_{h, t}(D') \arrow{r}{\psi} & 
            C_{h', t'}(D')
        \end{tikzcd}
    \end{equation*}
    Here $\rho$ is the corresponding R-move map of \Cref{prop:Kh-invariance}.
\end{proposition}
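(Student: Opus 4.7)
The plan is to build $\psi$ as a two-step composition. First, I would apply the Frobenius algebra isomorphism $\psi_0: A_{h, t} \to A_{h', t'; \theta}$ of \Cref{lem:frob-alg-isom} tensor-factor-wise at every vertex of the Khovanov cube. The Khovanov differential at each cube edge is built from either $m$ or $\Delta$ of the Frobenius algebra (applied to the affected factors and tensored with the identity on the others), and $\psi_0$ respects $m, \iota, \Delta, \epsilon$ of $A_{h', t'; \theta}$ by construction. Hence the tensor powers $\psi_0^{\otimes k}$ intertwine the Khovanov differentials, yielding a chain isomorphism
\[
    \psi_0^{\otimes}: C_{h, t}(D; R) \longrightarrow C_{h', t'; \theta}(D; R),
\]
where the right-hand side denotes the Khovanov complex built from the twisted Frobenius algebra $A_{h', t'; \theta}$.

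Second, I would identify $C_{h', t'; \theta}(D; R)$ with $C_{h', t'}(D; R)$ via a vertex-wise rescaling. Since $A_{h',t';\theta}$ shares its underlying module and multiplication with $A_{h',t'}$, the two complexes have identical modules at each vertex and identical differentials at merge edges; at split edges the twisted complex uses $\Delta_\theta = \theta^{-1}\Delta'$, by $R$-linearity of $\Delta'$. For a state $u \in \{0,1\}^n$ let $s(u)$ denote the number of split edges along any oriented path in the cube from $0^n$ to $u$. The identity
\[
    s(u) = \tfrac{1}{2}\bigl(|u| + |D(u)| - |D(0^n)|\bigr)
\]
shows that $s(u)$ is path-independent. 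Define $\Phi: C_{h', t'; \theta}(D;R) \to C_{h', t'}(D;R)$ by scaling the module at vertex $u$ by $\theta^{s(u)}$. Split edges increase $s$ by one, exactly absorbing the factor $\theta^{-1}$ in $\Delta_\theta$, while merge edges preserve $s$ and require no correction; thus $\Phi$ is a chain isomorphism. Setting $\psi = \Phi \circ \psi_0^{\otimes}$ yields the desired chain isomorphism.

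For the naturality statement, suppose $\theta = 1$. Then the twisting is trivial, $\Phi$ is the identity, and $\psi$ reduces to the tensor-factor-wise application of $\psi_0$ at every vertex. The R-move maps $\rho$ of \Cref{prop:Kh-invariance} admit an explicit diagrammatic description (see \cite[Section 4.3]{BarNatan:2004}) as compositions of tensor products of the elementary Frobenius operations $m, \iota, \Delta, \epsilon$ with identities. Since $\psi_0$ is a Frobenius algebra homomorphism, each such elementary operation commutes with $\psi_0$ applied tensor-factor-wise, and hence $\rho$ commutes with $\psi$ at the chain level.

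The main technical point is the second stage: verifying that the exponent $s(u)$ is well-defined on the cube and that the corresponding scaling simultaneously corrects the twist on every split edge without breaking commutativity at merge edges. Once this is in place, both the construction for general $\theta$ and the naturality statement for $\theta=1$ follow formally from the Frobenius-homomorphism property of $\psi_0$; no further edge-by-edge computation is needed.
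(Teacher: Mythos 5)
Your two-step construction of $\psi$ is exactly the paper's: first apply the Frobenius algebra isomorphism $\psi_0: A_{h,t}\to A_{h',t';\theta}$ vertex-wise, then untwist $C_{h',t';\theta}\to C_{h',t'}$ by a vertex-dependent power of $\theta$. Your explicit exponent $s(u) = \tfrac{1}{2}(|u| + |D(u)| - |D(0^n)|)$ makes precise what the paper only cites from Khovanov's work, which is a nice touch.

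However, the naturality argument for $\theta=1$ has a real gap. You claim the R-move maps of Bar-Natan are compositions of tensor products of $m,\iota,\Delta,\epsilon$ with identities, so that naturality is automatic from $\psi_0$ being a Frobenius homomorphism. This is false for R1. The R1-move chain map in Bar-Natan's description contains a \emph{dotted} birth: concretely, one of the two nonzero components is of the form
\[
    z \ \longmapsto\ X \otimes z - 1 \otimes Xz,
\]
which involves multiplication by $X$, and multiplication by $X$ is \emph{not} one of the four Frobenius structure maps, nor is it preserved by $\psi_0$: indeed $\psi_0(X) = \theta^{-1}X' + (a - \theta^{-1}a')$, so even at $\theta = 1$ one has $\psi_0(X) = X' + (a - a') \neq X'$ in general. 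Your argument therefore does not show that this component intertwines $\psi$.

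What saves the statement is that the particular combination appearing above is insensitive to the constant shift: $X \otimes 1 - 1 \otimes X = X_a \otimes 1 - 1 \otimes X_a = X_{a'} \otimes 1 - 1 \otimes X_{a'}$, so $\psi_0$ (which at $\theta=1$ sends $X_a \mapsto X_{a'}$) carries $X\otimes 1 - 1\otimes X$ to $X'\otimes 1 - 1\otimes X'$ after all. This is precisely the identity the paper invokes for the R1 case; you should include a corresponding check rather than appealing to a blanket ``$\rho$ is built from Frobenius operations'' claim. The R2 and R3 maps are genuinely built from $m,\iota,\Delta,\epsilon$ (and cones/sums thereof), so your argument is fine there.
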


\begin{proof}
    Let $C_{h', t'; \theta}(-; R)$ denote the Khovanov complex corresponding to the Frobenius algebra $A_{h', t'; \theta}$. The Frobenius algebra isomorphism $\psi$ of \Cref{lem:frob-alg-isom} induces a chain isomorphism $C_{h, t}(D; R) \rightarrow C_{h', t'; \theta}(D; R)$. Postcomposing the chain isomorphism $C_{h', t'; \theta}(D; R) \rightarrow C_{h', t'}(D; R)$ corresponding to the $\theta$-twisting (see \cite[Proposition 3]{Khovanov:2004}) gives the desired chain isomorphism $\psi$. Naturality follows from the explicit definition of $\psi$ and $\rho$, together with the identity
    \[
        X \otimes 1 - 1 \otimes X = X_a \otimes 1 - 1 \otimes X_a
    \]
    for the case of R1-move. 
\end{proof}


\begin{corollary}
    If $(R, h, t)$ is factorable, then $C_{h, t}(D; R)$ is isomorphic to $C_{c, 0}(D; R)$. If in addition $c/2 \in R$, it is isomorphic to $C_{0, (c/2)^2}(D; R)$.
\end{corollary}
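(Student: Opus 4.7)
The plan is to derive both isomorphisms as direct applications of \Cref{prop:ht-relation} with $\theta = 1$. For each of the candidate triples $(h', t') = (c, 0)$ and $(h', t') = (0, (c/2)^2)$, one verifies that (i) the triple is factorable over $R$, and (ii) the associated discriminant $c' = \sqrt{h'^2 + 4t'}$ equals the original $c$, so that the ratio $\theta = c'/c = 1$ lies in $R^\times$ and the hypothesis of \Cref{prop:ht-relation} is met.

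For $(h', t') = (c, 0)$, the defining polynomial is $X^2 - cX = X(X - c)$, which factors with roots $0$ and $c$, so the triple is factorable and $c' = c - 0 = c$. For $(h', t') = (0, (c/2)^2)$, which requires $c/2 \in R$, the polynomial is $X^2 - (c/2)^2 = (X - c/2)(X + c/2)$, giving roots $\pm c/2$ and $c' = c/2 - (-c/2) = c$. In both cases $\theta = 1$ is invertible, so \Cref{prop:ht-relation} immediately produces the desired chain isomorphisms $C_{h, t}(D; R) \to C_{c, 0}(D; R)$ and $C_{h, t}(D; R) \to C_{0, (c/2)^2}(D; R)$.

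No serious obstacle is anticipated, as the corollary is essentially a matching-constants check and all of the substantive work has already been carried out in \Cref{lem:frob-alg-isom} and \Cref{prop:ht-relation}. The present statement merely singles out two convenient normal forms among factorable triples sharing a prescribed discriminant $c$. The one point requiring explicit mention is that the hypothesis $c/2 \in R$ is exactly what is needed for the coefficient $(c/2)^2$ to lie in $R$, and hence for the second normal form $A_{0, (c/2)^2}$ to be defined over $R$ in the first place.
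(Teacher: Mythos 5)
Your proof is correct and matches the paper's intent: the corollary is left unproved in the paper precisely because it is an immediate application of \Cref{prop:ht-relation} with $\theta = 1$, which is exactly what you carry out. Your verification that both $(c,0)$ and $(0,(c/2)^2)$ are factorable with discriminant $c$ (with the natural choice of roots $\{0,c\}$ and $\{\pm c/2\}$ so that $c' = b' - a' = c$), and your remark that $c/2 \in R$ is needed just for the second triple to be defined over $R$, are the right points to make explicit.
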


\begin{corollary} 
    $C_\Kh(-; \FF_2) \isom C_\Lee(-; \FF_2)$.
\end{corollary}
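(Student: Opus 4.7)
The plan is to deduce the statement as a direct corollary of the preceding one, which asserts $C_{h,t}(-; R) \isom C_{c,0}(-; R)$ whenever $(R, h, t)$ is factorable. Since Lee's theory is $C_\Lee = C_{0, 1}$ by definition, the task reduces to verifying factorability of $(\FF_2, 0, 1)$ and computing the corresponding $c$. The defining quadratic $X^2 - 0 \cdot X - 1 = X^2 + 1$ splits over $\FF_2$ as $(X + 1)^2$, exhibiting $(\FF_2, 0, 1)$ as factorable in the sense of \Cref{def:factorable}, with double root $a = b = 1$. In particular $c = b - a = 0$, coinciding with the value $c' = 0$ attached to Khovanov's theory $(h', t') = (0, 0)$.

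Applying the preceding corollary to Lee's theory over $\FF_2$ then yields a chain isomorphism
\[
    C_\Lee(D; \FF_2) = C_{0, 1}(D; \FF_2) \isom C_{c, 0}(D; \FF_2) = C_{0, 0}(D; \FF_2) = C_\Kh(D; \FF_2),
\]
which is the asserted statement. Unwinding \Cref{lem:frob-alg-isom} with $\theta = 1$, $a = 1$, $a' = 0$, the underlying Frobenius algebra isomorphism is simply $X \mapsto X + 1$, reflecting the familiar fact that the two theories coincide over $\FF_2$ via the change of variable $1 \leftrightarrow 1$, $X \leftrightarrow X + 1$.

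The main (and only) subtlety to double-check is that \Cref{lem:frob-alg-isom} and the corollary derived from \Cref{prop:ht-relation} do not implicitly require $c$ to be nonzero, so that they may be applied in this degenerate case where both discriminants vanish. The hypothesis $c' = \theta c$ of \Cref{lem:frob-alg-isom} is then satisfied trivially with $\theta = 1$, and the formula $X \mapsto \theta^{-1}(X - a') + a$ defining $\psi$ is well-defined independently of the value of $c$, so the chain of reductions goes through without modification.
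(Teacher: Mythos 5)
Your argument is correct and follows the route the paper intends: $(\FF_2, 0, 1)$ is factorable with double root $a = b = 1$ and hence $c = 0$, so the preceding corollary directly gives $C_{0,1}(-; \FF_2) \isom C_{0,0}(-; \FF_2)$. Your attention to the degenerate case $c = 0$ is well placed, and you correctly observe that neither \Cref{lem:frob-alg-isom} nor \Cref{prop:ht-relation} imposes $c \neq 0$, so the reduction goes through unchanged.
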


\begin{figure}[t]
    \centering
    \includegraphics[width=0.5\textwidth]{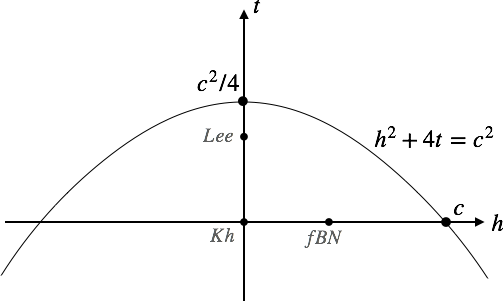}
    \caption{Visualization of the isomorphism class of $C_{h, t}$.}
    \label{fig:ht-correspondence}
\end{figure}

\Cref{prop:ht-relation} implies that $c$ determines the isomorphism class of $C_{h, t}(D; R)$. The isomorphism class can be visualized by the hyperbola $h^2 + 4t = c^2$ on the $ht$-coordinate space as in \Cref{fig:ht-correspondence}. 

\begin{proposition} \label{prop:ht-relation-ca}
    Under the assumption of \Cref{prop:ht-relation}, the chain isomorphism $\psi$ maps the Lee cycles of $C_{h, t}(D; R)$ to that of $C_{h', t'}(D; R)$ multiplied by a power of $\theta$. In particular when $\theta = 1$, the Lee cycles of $D$ and $D'$ correspond exactly.
\end{proposition}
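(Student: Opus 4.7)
The plan is to trace the Lee cycle $\ca(D, o)$ through the two-step construction of $\psi$ given in the proof of Proposition~\ref{prop:ht-relation}: namely, $\psi$ is the composition
\[
    C_{h,t}(D; R) \xrightarrow{\Psi_0} C_{h',t';\theta}(D; R) \xrightarrow{\eta} C_{h',t'}(D; R),
\]
where $\Psi_0$ is induced tensor-wise from the Frobenius algebra isomorphism of Lemma~\ref{lem:frob-alg-isom}, and $\eta$ is the chain isomorphism coming from the $\theta$-twisting, as in \cite[Proposition~3]{Khovanov:2004}.

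The first step is to compute $\Psi_0(\ca(D, o))$. By Lemma~\ref{lem:frob-alg-isom}, $\Psi_0$ sends $X_a \mapsto \theta^{-1} X_{a'}$ and $X_b \mapsto \theta^{-1} X_{b'}$. Since $\ca(D, o)$ is a tensor product of $r(D)$ factors of the form $X_a$ or $X_b$, one per Seifert circle as prescribed by the $ab$-coloring, applying $\Psi_0$ tensorwise yields $\theta^{-r(D)}\,\ca'_\theta(D, o)$, where $\ca'_\theta(D, o) \in C_{h',t';\theta}(D; R)$ is the analogously constructed element in $X_{a'}$'s and $X_{b'}$'s. This element is a cycle in the twisted complex by the same verification used for ordinary Lee cycles, since $m_\theta = m$ and the relation $X_{a'} X_{b'} = 0$ still annihilates all outgoing differentials. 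Next I would verify that $\eta(\ca'_\theta(D, o)) = \theta^{\ell}\, \ca'(D, o)$ for some integer $\ell$: the twisting isomorphism of \cite{Khovanov:2004} is defined statewise, and on the single oriented-resolution state reduces to a monomial rescaling by a power of $\theta$ determined by $r(D)$. Composing the two steps gives $\psi(\ca(D, o)) = \theta^{\ell - r(D)}\, \ca'(D, o)$, proving the main assertion. The argument for $\cb(D, o)$ is identical.

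The case $\theta = 1$ falls out immediately: then $A_{h',t';1} = A_{h',t'}$ as Frobenius algebras, so $\eta$ is the identity and $\Psi_0$ reduces to $X_a \mapsto X_{a'}$, $X_b \mapsto X_{b'}$ without any scaling; hence $\psi(\ca(D, o)) = \ca'(D, o)$ exactly. The main obstacle I anticipate is pinning $\eta$ down precisely enough on the oriented resolution to read off the scalar $\theta^{\ell}$; however, only the existence of such a scalar is needed for the conclusion, and this follows from the statewise monomial description of the twisting isomorphism in \cite[Proposition~3]{Khovanov:2004}.
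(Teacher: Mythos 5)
Your proof is correct and follows essentially the same two-step decomposition that the paper's (very terse) proof relies on: trace the Lee cycle through the Frobenius-algebra isomorphism (tensorwise $X_a \mapsto \theta^{-1}X_{a'}$, $X_b \mapsto \theta^{-1}X_{b'}$, contributing $\theta^{-r}$) and then through the $\theta$-twisting isomorphism (which acts by a state-wise power of $\theta$). The only difference is that the paper compresses both observations into a single sentence, while you spell out the scalars and the cycle check explicitly.
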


\begin{proof}
    $\psi$ maps $X_a$ to $\theta^{-1} X_{a'}$ and $X_b$ to $\theta^{-1} X_{b'}$, and the $\theta$-twisting are given by vertex-wise multiplications of powers of $\theta$. 
\end{proof}

Thus when considering Lee classes and its behavior under Reidemeister moves and cobordisms, it suffices to consider the case $h = c$ and $t = 0$. We occasionally denote $H_c(D)$ for $H_{c, 0}(D)$. The universal theory among all such triples is given by $X^2 - HX = X(X - H)$ over $R = \ZZ[H]$, which is the bigraded Bar-Natan theory, or what is called the \textit{$U(1)$-equivariant theory} (see \cite{Khovanov:2004,Khovanov:2022}). 
    \section{Module structures and reduced homology} \label{sec:module-str}

A module structure on Khovanov homology was first defined in \cite{Khovanov:2002}, and on other variants in \cite{Hedden:2012, Alishahi:2017, Alishahi:2018}. The reduced version of Khovanov homology was also defined by Khovanov in \cite{Khovanov:2002}, and for other variants, the reduced Bar-Natan homology is given by Kotelskiy--Watson--Zibrowius in \cite{KWZ:2019} and the reduced $U(1) \times U(1)$-equivariant Khovanov homology by Akhmechet--Zhang in \cite{AZ:2022}. Here we generalize these structures for a general triple $(R, h, t)$.

\subsection{Module structures}
\label{subsec:module-str}

\begin{definition}
    A \textit{pointed link} $(L, p)$ is a link $L$ with a marked point $p \in L$. A \textit{pointed link diagram} $(D, p)$ is defined likewise, where the point $p$ lies on an arc of $D$.
\end{definition}

Let $(D, p)$ be a pointed link diagram. First we define an endomorphism $x_p$ on $C_{h, t}(D)$ as follows: Take a small circle $\bigcirc$ near $p$. Merging $\bigcirc$ into a neighborhood of $p$ corresponds to the multiplication
\[
    m_p: A_{h,t} \otimes C_{h,t}(D) \rightarrow C_{h,t}(D).
\]
Define
\[
    x_p = m_p(X \otimes -) : C_{h, t}(D) \rightarrow C_{h, t}(D).
\]
The following proposition is a generalization of \cite[Lemma 2.3]{Hedden:2012}, \cite[Lemma 2.1]{Alishahi:2017} and \cite[Lemma 3.3]{Alishahi:2018}.

\begin{proposition} \label{lem:x_p endo}
    Suppose $p, q$ are two marked points on $D$ separated by a crossing $c$. Then $x_p$ and $h - x_q$ are chain homotopic.
\end{proposition}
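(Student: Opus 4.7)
The plan is to construct a chain homotopy $H$ realizing $dH + Hd = x_p + x_q - h \cdot \id$, which is equivalent to $x_p \simeq h - x_q$. Writing the complex as $C_{h,t}(D) = C_0 \oplus C_1$ according to the two resolutions at $c$, the total differential takes the block form
\[
    d = \begin{pmatrix} d_0 & 0 \\ \pm d_c & d_1 \end{pmatrix},
\]
where $d_c : C_0 \to C_1$ is the saddle at $c$ and $d_0, d_1$ collect the contributions of the remaining crossings. The assumption that $p, q$ are separated by $c$ means that the arcs containing $p$ and $q$ belong to a single local circle in one resolution and to two distinct local circles in the other; on the relevant tensor factors, $d_c$ therefore acts as either the multiplication $m$ or the comultiplication $\Delta$ of $A_{h,t}$.

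Define $H : C_1 \to C_0$ to be the reverse saddle: the operation opposite to $d_c$ on the relevant factors, and the identity on all other factors. A direct computation from \eqref{eq:1X-operations} yields the Frobenius-style identities
\[
    \Delta \circ m \;=\; L_X + R_X - h \cdot \id, \qquad m \circ \Delta \;=\; 2 L_X - h \cdot \id,
\]
on $A_{h,t} \otimes A_{h,t}$ and $A_{h,t}$ respectively, where $L_X$ and $R_X$ denote multiplication by $X$ in the left and right tensor slots. Since $x_p$ multiplies the factor of the $p$-circle by $X$ and $x_q$ multiplies the factor of the $q$-circle by $X$, these identities translate precisely into $Hd_c = (x_p + x_q - h)|_{C_0}$ and $d_c H = (x_p + x_q - h)|_{C_1}$, producing the diagonal blocks of $dH + Hd$.

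It remains to verify that $d_0 H + H d_1 = 0$, i.e.\ that $H$ (anti)commutes with all contributions coming from other crossings, in a manner consistent with the cube sign convention. This reduces to a face-by-face check on the Khovanov cube: for each $c' \neq c$, the $2$-face determined by $c$ and $c'$ commutes up to the standard sign as a consequence of the Frobenius relation, and replacing the $c$-edge by its reverse $H$ preserves this compatibility, since locally either the operations act on disjoint tensor factors (and commute trivially) or they share a factor on which the same Frobenius identities apply. The main obstacle is the combinatorial sign bookkeeping to see that the three identities assemble into the single equality $dH + Hd = x_p + x_q - h \cdot \id$; once this is in place, the chain homotopy $x_p \simeq h - x_q$ follows.
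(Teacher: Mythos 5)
Your approach is essentially the paper's: you resolve at $c$, take the reverse saddle $H$ as the homotopy, and compute the diagonal blocks via the two Frobenius identities (merge-after-split giving $2L_X - h$, split-after-merge giving $L_X + R_X - h$). Those identities and their translation into $x_p + x_q - h$ on each state are correct and match the paper's case analysis.

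The gap is in the off-diagonal block, which you acknowledge as ``the main obstacle'' but do not resolve. You should not need a face-by-face check. The reverse saddle $g\colon C(D_1)\to C(D_0)$ is itself a cobordism map, hence a chain map, so $d_0\,g = g\,d_1$ on the nose. The reason the sign bookkeeping looks hard in your version is that you wrote the total differential as $\bigl(\begin{smallmatrix} d_0 & 0 \\ \pm d_c & d_1 \end{smallmatrix}\bigr)$ with $+d_0$; but $C(D)$ is the mapping cone of $f\colon C(D_0)\to C(D_1)$, and the cone differential is $\bigl(\begin{smallmatrix} -d_0 & 0 \\ f & d_1 \end{smallmatrix}\bigr)$, with the shift negating $d_0$. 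With $H = \bigl(\begin{smallmatrix} 0 & g \\ 0 & 0 \end{smallmatrix}\bigr)$ the off-diagonal block of $dH + Hd$ is then $-d_0 g + g d_1$, which vanishes precisely because $g$ is a chain map. With your $+d_0$, you would instead be stuck trying to show $d_0 g + g d_1 = 0$, which is false and explains why the sign analysis did not close. So the idea and the computation of the diagonal blocks are right; the missing piece is to use the correct cone sign and the one-line observation that $g$ is a chain map, rather than re-deriving chain-map-ness square by square.
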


\begin{proof}
    Let $D_0, D_1$ be the diagrams obtained from $D$ by 0-, 1-resolving the crossing $c$ respectively. There are chain maps between $C(D_0)$ and $C(D_1)$ corresponding to the saddle moves in both ways 
    \[
    \begin{tikzcd}
        C(D_0) \arrow[r, "f", shift left] & C(D_1) \arrow[l, "g", shift left].
    \end{tikzcd}
    \]
    We may view $C(D)$ as the mapping cone of $f$ with differential
    \[
        d  = \begin{pmatrix} -d_0 & 0 \\ f & d_1 \end{pmatrix}.
    \]
    We claim that $x_p + x_q - h$ are null homotopic by the chain homotopy 
    \[
        H = \begin{pmatrix} 0 & g \\ 0 & 0 \end{pmatrix}.
    \]
    First we have 
    \[
        dH + Hd 
        = \begin{pmatrix} g f & -d_0g + g d_1 \\ 0 & f g \end{pmatrix}
        = \begin{pmatrix} g f & 0 \\ 0 & f g \end{pmatrix}.
    \]
    Take any state $u$ and focus on the $u$-summand of $C(D)$. Here we assume $u(c) = 0$ since the proof for the other case is identical. In this case it suffices to prove that 
    \[
        gf = x_p + x_q - h.
    \]
    If the points $p, q$ belong to the same circle of $D(u)$, then $g f$ is a merge-after-split, while $x_p, x_q$ are both multiplication by $X$ on the corresponding tensor multiplicand, so \[
        g f = m \Delta = 2 X - h.
    \]
    Otherwise if $p, q$ belong to different circles of $D(u)$, then $g f$ is a split-after-merge and
    \[
        g f = \Delta m = x_p + x_q - h
    \]
    can be checked directly (or by the \textit{neck-cutting relation} of \cite[Equation 13]{Khovanov:2022}). 
\end{proof}

\begin{remark}
    In particular if $h = 2 = 0$ in $R$, which is the case for $\FF_2$-Khovanov homology, it follows that $x_p$ and $x_q$ are chain homotopic, which is the case proved in \cite{Hedden:2012}. 
\end{remark}

In view of \Cref{lem:x_p endo} we define an endomorphism $U_p$ on $C_{h, t}(D)$ as follows: color the arcs of $D$ according to \Cref{algo:ab-coloring}, and define
\[
    U_p = 
    \begin{cases}
        x_p
            & \text{if $p$ is colored $a$,} \\
        h - x_p
            & \text{if $p$ is colored $b$.}
    \end{cases}
\]
Obviously $U_p$ commutes with the differential $d$, and $U_p^2 = hU_p + t$ holds. Regarding $A_{h, t} = R[U]/(U^2 - hU - t)$, we obtain an $A_{h, t}$-module structure on $C_{h, t}(D)$ and on $H_{h, t}(D)$. \Cref{lem:x_p endo} implies that the module structure on $H_{h, t}(D)$ only depends on the component on which $p$ lies. Moreover,

\begin{proposition} \label{lem:R-move-U-action}
    Suppose $D, D'$ are pointed link diagrams related by a Reidemeister move that does not contain the marked points in the changing disk. Then the corresponding R-move map $\rho$ commutes with $U_p$.
\end{proposition}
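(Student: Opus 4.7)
The strategy is first to verify that the $ab$-coloring of the arc at $p$ is unchanged by the R-move, which reduces the claim to the commutation of $\rho$ with the bare multiplication $x_p$, and then to dispatch the latter by a locality argument.

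Let $B \subset \mathbb{R}^2$ denote the disk in which the Reidemeister move takes place, and write $q$ for the region immediately to the left of the arc at $p$ with respect to its orientation. Outside $B$ the diagrams $D$ and $D'$ coincide, so their oriented Seifert resolutions also coincide outside $B$; in particular the arc through $p$ carries the same orientation in both. To compare the checkerboard colorings of $q$, I would choose any path from infinity to $q$ that avoids $B$. Such a path crosses the same collection of Seifert arcs in $D$ and in $D'$, since any Seifert arcs created, destroyed, or rearranged by the move lie inside $B$. Hence $q$ receives the same checkerboard color in both diagrams, and the $ab$-label at the arc through $p$ agrees. Consequently $U_p$ is described by the same formula, either $x_p$ or $h - x_p$, on $C_{h, t}(D)$ and on $C_{h, t}(D')$, so by $R$-linearity it suffices to prove $\rho \circ x_p = x_p \circ \rho$.

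For this I would invoke the locality of the R-move maps of \cite[Section 4.3]{BarNatan:2004}. Each elementary summand of $\rho$ is a cobordism supported in the cylinder $B \times I$, and acts on $C_{h, t}(D(u))$ only through the tensor factors corresponding to circles of $D(u)$ meeting $B$. The endomorphism $x_p$ places a dot on the circle containing $p$ at a location outside $B$. If this circle is disjoint from $B$, then $\rho$ and $x_p$ act on disjoint tensor factors and commute trivially. If the circle containing $p$ meets $B$ and is split, merged, or otherwise reconfigured by the cobordism, the standard Frobenius identities --- that the comultiplication is an $A$-bimodule map and the multiplication is $A$-linear in each factor --- permit the dot to be slid along the cobordism, yielding $\rho \circ x_p = x_p \circ \rho$ regardless. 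This is the diagrammatic principle that a dot may be moved freely across elementary cobordisms in Bar-Natan's category.

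The main subtlety lies in the coloring invariance at $p$: one must ensure that Seifert arcs newly created inside $B$ (as in an R1 move or a same-orientation R2 move) do not alter the checkerboard color of the exterior region $q$. The path-counting argument handles all R-move types uniformly, but relies crucially on the fact that such new arcs are contained in $B$ and therefore cannot be crossed by any path avoiding $B$.
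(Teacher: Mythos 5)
Your proof is correct, and it is essentially the paper's parenthetical \emph{alternative} argument rather than its primary one. You first check, via a checkerboard path in $\mathbb{R}^2 \setminus B$, that the $ab$-color of the arc at $p$ is unchanged by the R-move (a point the paper leaves implicit), which reduces the claim to $\rho \circ x_p = x_p \circ \rho$. You then argue by locality of the Bar-Natan cobordisms and dot-sliding via the Frobenius bimodule identities. This is the same idea the paper offers in parentheses: $x_p$ is the identity cobordism merging a dotted cup near $p$, $\rho$ is a (linear combination of) cobordism(s) supported in $B \times I$ with $p \notin B$, and since their nontrivial supports are disjoint the two compositions are isotopic, hence equal in the cobordism category.

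The paper's \emph{primary} proof is a different, more algebraic argument. It adjoins a positive kink to $D$ near $p$ to form $\tilde{D}$, identifies $C(\tilde{D})$ with the mapping cone of $m_p: A \otimes C(D) \rightarrow C(D)$, and observes that the R-move map on the cones is the block-diagonal map $\begin{pmatrix} 1 \otimes \rho & 0 \\ 0 & \rho \end{pmatrix}$. That this is a chain map (which is automatic, since R-move maps are chain maps and the R-move is disjoint from the kink) forces the off-diagonal relation $\rho \circ m_p = m_p \circ (1 \otimes \rho)$, and specializing the $A$-factor to $X$ gives $\rho \circ x_p = x_p \circ \rho$. The mapping-cone trick buys you a one-line derivation that avoids the case analysis over elementary cobordism pieces and the appeal to dot-sliding; your route, conversely, is more self-contained diagrammatically and makes the color-preservation step, which justifies passing from $x_p$ to $U_p$, explicit.
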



\begin{proof}
    Consider the diagram $\tilde{D}$ obtained from $D$ by adding a positive twist near $p$. We may regard $C(\tilde{D})$ as the mapping cone of 
    \[
        m_p: A \otimes C(D) \rightarrow C(D).
    \]
    Consider the similar diagram for $D'$, then the corresponding R-move map is given by 
    \[
        \begin{pmatrix}
            1 \otimes \rho & 0 \\
            0 & \rho
        \end{pmatrix}.
    \]
    From the fact that is a chain map, we get 
    \[
        m_p \circ (1 \otimes \rho) = \rho \circ m_p
    \]
    and hence the desired result. (Alternatively, in view of  \cite{BarNatan:2004,Khovanov:2022}, we may regard $x_p$ as a cobordism that merges a dotted cup near $p$. The R-move map $\rho$ is also represented by a cobordism, and the composition of the two cobordisms is obviously commutative.)
\end{proof}

\begin{proposition} 
    Let $D$ be a pointed link diagram. Suppose $(R, h, t), (R, h', t')$ are both factorable with $c = c'$. Then with the isomorphism $\psi$ of  \Cref{prop:ht-relation} the following diagram commutes
    \begin{equation*}
        \centering
        \begin{tikzcd}
            A_{h, t} \otimes C_{h, t}(D) \arrow{r}{m_p} \arrow{d}{\psi \otimes \psi} & 
            C_{h, t}(D) \arrow{d}{\psi} \\
            A_{h', t'} \otimes C_{h', t'}(D) \arrow{r}{m_p} & 
            C_{h', t'}(D)
        \end{tikzcd}
    \end{equation*}
\end{proposition}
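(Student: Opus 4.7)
The plan is to reduce to the case $\theta = 1$ and exploit that the resulting map $\psi : A_{h, t} \to A_{h', t'}$ is then a genuine Frobenius algebra isomorphism. The hypothesis $c = c'$ forces $\theta = c'/c = 1$ in the setup of \Cref{lem:frob-alg-isom}, so the twisted algebra $A_{h', t'; \theta}$ collapses to $A_{h', t'}$ and \Cref{lem:frob-alg-isom} supplies an honest Frobenius algebra isomorphism $\psi$. In particular $\psi$ is an $R$-algebra homomorphism, satisfying $\psi(m(x \otimes y)) = m(\psi(x) \otimes \psi(y))$ for all $x, y \in A_{h, t}$, where we use the same symbol $m$ for the multiplications on $A_{h, t}$ and $A_{h', t'}$.

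Next I would unpack the chain isomorphism $\psi: C_{h, t}(D) \to C_{h', t'}(D)$ from the proof of \Cref{prop:ht-relation}. Because $\theta = 1$, the post-composition with the twisting map becomes the identity, so at each state $u$ the chain map $\psi$ acts simply as the tensor power of the Frobenius algebra isomorphism $\psi : A_{h, t} \to A_{h', t'}$ over the circles of $D(u)$. The operation $m_p$ also decomposes tensor-wise at each state $u$: by its definition it applies the Frobenius multiplication $m$ to the external factor and to the tensor factor corresponding to the circle of $D(u)$ containing $p$, and acts as the identity on the remaining tensor factors.

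The commutativity of the diagram then follows by a direct calculation on each standard generator $x_u$ at state $u$: applying $\psi \circ m = m \circ (\psi \otimes \psi)$ to the pair $(a, \text{tensor factor at } O_p)$ and the tensor-wise naturality of $\psi$ on the remaining circles yields
\[
    \psi\bigl(m_p(a \otimes x_u)\bigr) = m_p\bigl(\psi(a) \otimes \psi(x_u)\bigr)
\]
for every $a \in A_{h, t}$, and extending $R$-linearly concludes the verification. The only real subtlety is to confirm that the twisting step in the proof of \Cref{prop:ht-relation} is harmless here; for $\theta \neq 1$ the twisting would rescale tensor factors by powers of $\theta$ and break the algebra-homomorphism identity above by an extra scalar, but this pathology is exactly excluded by the hypothesis $c = c'$. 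Since everything else is routine bookkeeping of tensor positions, this is the step where I would be most careful when writing the formal proof.
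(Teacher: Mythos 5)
Your proposal is correct and takes essentially the same route as the paper's one-line proof, which simply invokes that $\psi$ is a Frobenius algebra isomorphism. You fill in the details the paper leaves implicit: that $c = c'$ forces $\theta = 1$ so the twisting is trivial, that $\psi$ then acts tensor-wise by the algebra isomorphism, and that $m_p$ commutes with it because $\psi$ respects the multiplication $m$.
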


\begin{proof}
    $\psi$ is a Frobenius algebra isomorphism.
\end{proof}

Thus we conclude that, for a pointed link $L$, there is an $A_{h, t}$-module structure on $H_{h, t}(L)$ whose isomorphism class only depends on $c$ and on the component on which the marked point $p$ lies.





    \subsection{Cobordism maps} \label{subsec:module-cobordism}

Here we consider \textit{cobordisms} between pointed links, and define \textit{cobordism maps} between the corresponding chain complexes as $A_{h, t}$-module homomorphisms. First we define an explicit chain homotopy equivalence that represents the change of marked points. We follow the standard argument given in \cite{Khovanov:2002}, which is also used for other variants in \cite{KWZ:2019, AZ:2022}.

\begin{proposition} \label{prop:isotopy-through-infty}
    Suppose $D, D'$ are diagrams (in $\RR^2$) related by an isotopy in $S^2$ that passes an arc `through infinity'. Then there is a chain isomorphism 
    \[
        I: C_{h, t}(D) \rightarrow C_{h, t}(D')
    \]
    such that the Lee cycles of the two diagrams correspond exactly (up to sign). Moreover if $p, p'$ are marked points of $D, D'$ related by the isotopy, then 
    \[
        I \circ U_p = U_{p'} \circ I.
    \]
\end{proposition}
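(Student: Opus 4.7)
The plan is to build $I$ as a composition $I := I_0 \circ J$, where $I_0 \colon C_{h,t}(D) \to C_{h,t}(D')$ is a tautological combinatorial identification and $J \colon C_{h,t}(D) \to C_{h,t}(D)$ is a sign-twisted involution that corrects for a global flip of the $ab$-coloring. First I would observe that an isotopy in $S^2$ does not alter the set of crossings, their signs, the cube of resolutions, or the merge/split edge maps; all of this is determined by $D$ as a combinatorial structure on its underlying $4$-valent planar graph. Hence sending each enhanced state of $D$ to the combinatorially identical enhanced state of $D'$ yields a well-defined chain isomorphism $I_0$. The subtle point is that $I_0$ does not automatically preserve Lee cycles: the $ab$-coloring algorithm is not intrinsic to $S^2$ but depends on which region of $\RR^2 \setminus D$ is declared unbounded. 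Tracking the point $\infty \in S^2$ through the isotopy, just before the arc reaches $\infty$ the point lies in some region $R$ of $S^2 \setminus D$, at the critical moment on the arc itself, and just after in the region $R'$ on the opposite side of the arc. Since $R$ and $R'$ share the traversed arc they are adjacent in the region graph and carry opposite checkerboard colors, so the ``unbounded-is-white'' convention produces globally flipped colorings on $\RR^2 \setminus D$ versus $\RR^2 \setminus D'$. Every Seifert circle therefore has its $ab$-label swapped, and $I_0(\ca(D)) = \pm \cb(D')$ rather than $\ca(D')$.

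To correct this I would use the algebra involution $\sigma \colon A_{h,t} \to A_{h,t}$, $\sigma(X) := h - X$, which satisfies $\sigma(X_a) = -X_b$ and $\sigma(X_b) = -X_a$. A direct check using \eqref{eq:1X-operations} shows $\sigma \circ m = m \circ (\sigma \otimes \sigma)$, but $\Delta \circ \sigma = -(\sigma \otimes \sigma) \circ \Delta$ and $\epsilon \circ \sigma = -\epsilon$, so $\sigma^{\otimes}$ applied tensor-wise is not itself a chain map: each $\Delta$-type edge of the cube picks up an extra sign. To absorb these signs I would define a parity $\nu(u) \in \ZZ/2$ on vertices $u$ of the cube by counting $\Delta$-type edges along any path from the initial vertex to $u$, modulo $2$; the Frobenius relation forces the two paths bounding every square to contain the same number of $\Delta$-edges (on each path $\#\Delta - \#m$ equals the change in circle count while $\#\Delta + \#m = 2$), so $\nu$ is well-defined. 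Setting $J(y) := (-1)^{\nu(u)} \sigma^{\otimes}(y)$ for $y$ supported at vertex $u$ then gives a chain involution on $C_{h,t}(D)$ with $J(\ca(D)) = \pm \cb(D)$ and $J(\cb(D)) = \pm \ca(D)$.

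Composing, $I := I_0 \circ J$ yields $I(\ca(D)) = \pm \ca(D')$ and similarly for $\cb$. For the marked-point statement, the identity $\sigma(X y) = (h - X) \sigma(y)$ lifts tensor-wise to $J \circ x_p = (h - x_p) \circ J$; combined with the evident $I_0 \circ x_p = x_{p'} \circ I_0$ and the fact that the global color flip exchanges the forms of $U_p$ and $U_{p'}$ between $x_\bullet$ and $h - x_\bullet$, this gives $I \circ U_p = U_{p'} \circ I$ in both color cases. I expect the main obstacle to be the topological verification that the $ab$-coloring really does flip globally: one has to argue carefully that a single generic passage through infinity moves $\infty$ from one side of the traversed arc to the other, so that the before- and after-regions of $\infty$ are adjacent across that arc. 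The algebraic side (well-definedness of $\nu$, chain-map property of $J$, and the Lee-cycle and $U_p$ compatibilities) is then a mechanical application of the identities above and the Frobenius relation.
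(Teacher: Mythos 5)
Your proposal is correct and follows essentially the same route as the paper's proof: the paper writes $I = I_2 \circ I_1$ where $I_1$ is your tautological identification $I_0$ and $I_2$ is the chain isomorphism produced by Lemma~\ref{lem:frob-alg-isom} and Proposition~\ref{prop:ht-relation} applied to the Frobenius isomorphism $X \mapsto h - X$ with $\theta = -1$; your hand-built $J = (-1)^\nu \sigma^{\otimes}$ is exactly the unwound version of that machinery, with the $(-1)^\nu$ factor being the $\theta$-twisting correction and $\sigma^{\otimes}$ the induced map into $C_{h,t;-1}(D)$. The paper disposes of the $ab$-coloring flip and the $U_p$-commutation in one sentence each, whereas you spell them out; the extra care on well-definedness of $\nu$ and the sign bookkeeping is sound but is precisely what Proposition~\ref{prop:ht-relation} already packages.
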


\begin{proof}
    There is an obvious isomorphism
    \[
        I_1: C(D) \rightarrow C(D')
    \]
    that maps the enhanced states identically to the corresponding ones. Consider another chain isomorphism $I_2$ induced from the Frobenius algebra isomorphism, induced from the ring isomorphism
    \[
        R[X] \rightarrow R[X],\quad
        X \mapsto h - X.
    \]
    Note that $I_2$ exchanges $X_a$ and $X_b$ up to sign. Define $I = I_2 \circ I_1$. One sees from \Cref{algo:ab-coloring} that the $ab$-colorings of the Seifert circles of $D$ gets flipped as an arc passes through infinity. This effect is taken care by $I_2$ and hence $I$ commutes with $U$. The latter statement is also obvious since $p$ and $p'$ are colored differently. 
\end{proof}

\begin{proposition} \label{prop:tau_pq}
    Suppose $p, q$ are two marked points on $D$ separated by a crossing. Then there is a self-chain homotopy equivalence $\tau_{p, q}$ on $C_{h, t}(D)$ such that
    \[
        \tau_{p, q} \circ U_p = U_q \circ \tau_{p, q}.
    \]
\end{proposition}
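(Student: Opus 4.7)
The plan is to construct $\tau_{p,q}$ by an explicit matrix formula on the mapping-cone decomposition of $C_{h,t}(D)$ at the crossing $c$ separating $p$ and $q$, following the basepoint-change strategy of \cite{Khovanov:2002,KWZ:2019,AZ:2022}. First I would verify a geometric fact about the $ab$-coloring at $c$: the two arcs of $D$ through $p$ and $q$ lie on the two distinct Seifert circles produced by the oriented resolution of $c$, and a local checkerboard analysis (the ``middle strip'' between the two local Seifert arcs is forced to carry the opposite color to the two outer regions) shows that these Seifert circles carry opposite $ab$-colors. Up to swapping $p$ and $q$, I may therefore assume $U_p = h - x_p$ and $U_q = x_q$.

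Next, realizing $C_{h,t}(D) = \Cone\bigl(f \colon C(D_0) \to C(D_1)\bigr)$ at $c$, the chain homotopy $H = \left(\begin{smallmatrix} 0 & g \\ 0 & 0 \end{smallmatrix}\right)$ from the proof of \Cref{lem:x_p endo} satisfies $dH + Hd = x_p + x_q - h = -(U_p - U_q)$, so $U_p - U_q$ is null-homotopic. A direct check using the Frobenius relations shows moreover that the saddle $g \colon C(D_1) \to C(D_0)$ intertwines the local multiplications, $g \circ x_p^{(1)} = x_p^{(0)} \circ g$ and analogously for $q$; hence $g$ already intertwines $U_p$ and $U_q$ on the two sides of the cone.

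I would then define $\tau_{p,q}$ as an upper-triangular chain map on the mapping cone whose off-diagonal entry is built from $g$ and whose diagonal entries are chosen so that the intertwining $\tau_{p,q} \circ U_p = U_q \circ \tau_{p,q}$ holds on each summand. The chain-map condition $d\tau_{p,q} = \tau_{p,q} d$ and the intertwining identity both reduce to a short list of finite algebraic identities on the Frobenius algebra $A_{h,t}$, which I would check by direct computation using the explicit multiplication and comultiplication from \eqref{eq:1X-operations}. Invertibility of $\tau_{p,q}$ up to chain homotopy is then established by writing down an explicit homotopy inverse, exploiting the nilpotency $H^2 = 0$.

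The hard part will be identifying the correct diagonal correction: the naive ansatz $\tau = \id \pm H$ only yields $U_p \simeq U_q$ up to homotopy and fails to produce a genuine intertwining. Achieving $\tau_{p,q} \circ U_p = U_q \circ \tau_{p,q}$ on the nose forces the diagonal blocks to conjugate $U_p$ to $U_q$ within each $C(D_\epsilon)$, which in turn is constrained by the fact that on $C(D_0)$ the operators $U_p, U_q$ act on different tensor factors while on $C(D_1)$ they sum to $h$; this local identity essentially pins down the form of $\tau_{p,q}$. A secondary point is that the color analysis must be performed uniformly for positive and negative crossings, which reduces to a finite case-check on local diagrams.
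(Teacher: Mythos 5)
Your approach is genuinely different from the paper's, but it is incomplete in a way you yourself flag, so it does not yet constitute a proof. The paper does not attempt a local construction at the crossing at all. Instead, it realizes the basepoint-passing move geometrically: pull the separating strand to the outermost region of the plane, pass it ``through infinity'' using the explicit isomorphism $I$ of \Cref{prop:isotopy-through-infty}, and slide it back, so that the resulting diagram is literally the same as $D$ with $p$ replaced by $q$. The map $\tau_{p,q}$ is then defined as the composite of the corresponding R-move maps together with one application of $I$, and its commutation with $U$ is immediate from \Cref{lem:R-move-U-action} and \Cref{prop:isotopy-through-infty}, both of which were already established. No local algebra on the mapping cone is needed at all; the global move sidesteps it.

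The core difficulty with your proposal is exactly the one you identify at the end: the mapping-cone homotopy $H$ only gives $U_p - U_q = \pm(dH + Hd)$, i.e.\ the two operators are chain-homotopic, and this does \emph{not} produce a chain automorphism intertwining them on the nose. Promoting a homotopy between two chain operators to a conjugating chain equivalence is a nontrivial step, and your sketch of an ``upper-triangular $\tau$ with diagonal corrections'' is not backed by a formula or even a candidate. You would need to produce the diagonal blocks explicitly, verify the chain-map condition $d\tau = \tau d$ \emph{and} the exact intertwining $\tau U_p = U_q \tau$ on each of $C(D_0)$ and $C(D_1)$, and then exhibit a homotopy inverse; none of these is done, and it is not clear the local data suffice (the two operators act on different tensor factors on one side of the cone and on a common factor on the other, and $f, g$ do not in general conjugate one to the other). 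By contrast, your preliminary geometric observation --- that the two local Seifert arcs at a crossing lie on distinct, oppositely-colored Seifert circles, because the Seifert graph is bipartite under the $ab$-coloring --- is correct and is essentially the content used implicitly throughout the paper, so that part of your setup is fine; the gap is entirely in the construction of $\tau_{p,q}$ itself. If you want to salvage a local proof you would have to actually write down $\tau_{p,q}$ and check the identities; as it stands, the paper's global isotopy argument is both shorter and avoids the problem you ran into.
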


\begin{proof}
    Instead of moving the marked point over or under the strand, the desired move can be realized by a sequence of planer isotopies and Reidemeister moves as follows: pull the strand that separates the two points to the outermost region in $\RR^2$. Pass this arc through infinity, and then slide it back close to the original position so that the resulting diagram is identical to the original one, except that the marked point $p$ is moved to $q$. The desired map $\tau_{p, q}$ is given by the composition of the corresponding R-move maps, with the isomorphism $I$ of \Cref{prop:isotopy-through-infty} placed in between. Commutativity with $U$ follows from \Cref{lem:R-move-U-action,prop:isotopy-through-infty}.
\end{proof}


\begin{definition}
    A \textit{cobordism} between pointed links $L, L'$ in $\RR^3$ is a compact oriented surface in $\RR^3 \times I$ with boundary $-L \times \{0\} \cup L' \times \{1\}$, together with an embedded curve $\gamma \subset S$ that connects the marked points of $L, L'$.
\end{definition}

Given a cobordism $S$ as above, we may isotope $S$ (rel boundary, together with $\gamma$) so that $\gamma$ intersects each slice $\RR^3 \times \{t\}$ at a single point and that $S$ can be represented by a finite sequence of local moves between pointed link diagrams, each of which is either (i) a Reidemeister move, (ii) a Morse move or (iii) a marked point crossing move. This induces a homomorphism
\[
    \phi: C_{h, t}(D) \rightarrow C_{h, t}(D')
\]
by the composition of corresponding homomorphisms. 

\begin{proposition} \label{prop:module-cob}
    The above constructed $\phi$ is an $A_{h, t}$-module homomorphism. 
\end{proposition}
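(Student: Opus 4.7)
The plan is to decompose $\phi$ into the enumerated local chain maps---Reidemeister maps, Morse maps, and marked-point crossing maps $\tau_{p,q}$---and verify commutativity with the $U$-action at each step. By general position applied to the curve $\gamma \subset S$, we may assume that $\gamma$ avoids every critical point of the height function and the interior of every Reidemeister changing disk; equivalently, at each local move the marked point lies on an arc disjoint from the disk in which the move occurs. It then suffices to check the module-homomorphism property for each local move separately and chain them together.

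With this arrangement, each type of local move is handled by an existing result or a Frobenius-algebra identity. A Reidemeister move commutes with $U_p$ by \Cref{lem:R-move-U-action}. A marked-point crossing move satisfies $\tau_{p,q} \circ U_p = U_q \circ \tau_{p,q}$ by \Cref{prop:tau_pq}. A Morse move (birth, death, or saddle) acts on the tensor factors of the circles inside the changing disk by one of $\iota$, $\epsilon$, $m$, or $\Delta$ and as the identity on every other factor; so whenever $p$ sits on an uninvolved circle, commutativity with $U_p$ is immediate. In the remaining case, when $p$'s circle is involved in a saddle, commutativity follows from the $A_{h,t}$-linearity of $m$ and from the $A_{h,t}$-bimodule property of $\Delta$, the latter being an immediate consequence of the Frobenius relation in \eqref{eq:1X-operations}.

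The one subtlety is that the $ab$-coloring of the arc through $p$ may change across a Morse move, so $U_p$ could a priori switch between $x_p$ and $h-x_p$ on either side of a local move. To handle this, we pass to homology, where \Cref{lem:x_p endo} shows that the endomorphism induced by $U_p$ depends only on the pointed component, not on the specific arc or its coloring. This is precisely the level at which the $A_{h,t}$-module structure was declared at the end of \Cref{subsec:module-str}. I expect the main obstacle to be clean bookkeeping of the coloring through Morse moves, not any genuinely difficult identity; once coloring-independence of $U_p$ on homology is invoked, the remaining verification reduces to routine Frobenius-algebra computations.
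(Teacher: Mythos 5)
Your decomposition and the key lemmas you invoke match the paper's proof exactly: the paper's argument is precisely that $U$ commutes with the R-move maps (\Cref{lem:R-move-U-action}), with $\tau_{p,q}$ (\Cref{prop:tau_pq}), and analogously with the Morse maps, and your Frobenius-algebra computation for the merge and split just supplies the detail that the paper leaves implicit for the Morse case. The one place you deviate is the ``subtlety'' about the $ab$-coloring possibly changing across a Morse move --- this does not in fact happen, so the detour through homology is unnecessary. The coloring at $p$ is determined by the local orientation of the Seifert circle through $p$ together with the checkerboard colour of an adjacent region, and the latter equals the parity of the number of Seifert circles crossed by a path from that region to infinity; for a Morse (or Reidemeister) move supported in a disk disjoint from $p$, such a path can be chosen to avoid the disk, so the parity, and hence the coloring at $p$, is unchanged. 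This is exactly the observation underlying \Cref{prop:red-r-moves,prop:red-m-moves}. Note also that your workaround, taken by itself, would only show that $\phi_*$ is a module map on homology, which is weaker than the chain-level assertion being made (and used, e.g., in \Cref{prop:red-cobordism}); the correct resolution is to observe that the coloring is stable, after which the chain-level commutativity you already verified goes through verbatim.
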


\begin{proof}
    That $U$ commutes with the R-move maps and $\tau_{p, q}$ is already proved. Analogous result for the Morse moves also holds. 
\end{proof}

\begin{remark}
    Although not necessary in this paper, it is interesting to ask whether $\phi$ is independent (up to sign) of the choice of the isotopy of $S$.
\end{remark}
    \subsection{Reduced homology}

For a pointed link diagram $(D, p)$, let $(X_a)_p C_{h, t}(D)$ denote the subcomplex of $C_{h, t}(D)$ which is generated by the $1X_a$-enhanced states whose tensor factor corresponding to the marked circle is restricted to $X_a$. This is indeed a subcomplex since the operations involving the marked circles are given by 
\[
    \underline{X_a} \cdot 1 = \underline{X_a},\quad 
    \underline{X_a}  \cdot X = b\underline{X_a},\quad
    \Delta \underline{X_a} = \underline{X_a} \otimes X_a.
\]
Here the underline indicates the factor corresponding to the marked circle. We also declare $\deg(\underline{X_a}) = 0$ so that the subcomplex inherits the quantum grading (which does not agree with the one on $C_{h, t}(D)$). The subcomplex $(X_b)_pC_{h, t}(D)$ is defined similarly.

\begin{definition} \label{def:red-C}
    The \textit{reduced Khovanov complex} $\tilde{C}^\pm_{h, t}(D)$ of a pointed link $D$ is defined as follows: color the arcs of $D$ according to \Cref{algo:ab-coloring}, and define 
    \[
        \tilde{C}^+_{h, t}(D) = 
        \begin{cases}
            (X_a)_p C_{h, t}(D)
                & \text{if $p$ is colored $a$,} \\
            (X_b)_p C_{h, t}(D)
                & \text{if $p$ is colored $b$}
        \end{cases}
    \]
    and 
    \[
        \tilde{C}^-_{h, t}(D) = 
        \begin{cases}
            (X_b)_p C_{h, t}(D)
                & \text{if $p$ is colored $a$,} \\
            (X_a)_p C_{h, t}(D)
                & \text{if $p$ is colored $b$.}
        \end{cases}
    \]
    The corresponding homology groups are denoted $\tilde{H}^\pm_{h, t}(D)$. 
\end{definition}

With the endomorphism $U_p$ given in \Cref{subsec:module-str}, we may alternatively define
\[
    \tilde{C}^+_{h, t}(D) = \Ima(U_p - a), \quad \tilde{C}^-_{h, t}(D) = \Ima(U_p - b).
\]

\begin{proposition} \label{prop:red-C-isom1}
    There is a bigrading preserving involution $I$ on $C_{h, t}(D)$ that maps $C^+_{h, t}(D)$ isomorphically onto $C^-_{h, t}(D)$.
\end{proposition}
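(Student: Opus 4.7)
The plan is to define $I$ as the chain automorphism of $C_{h,t}(D)$ induced tensor-factor-wise by the $R$-algebra involution $\iota : A_{h,t} \to A_{h,t}$, $X \mapsto h - X$. This is exactly the map $I_2$ already appearing in the proof of \Cref{prop:isotopy-through-infty}. First I would verify that $\iota$ is a well-defined $R$-algebra automorphism satisfying $\iota^2 = \id$, which follows at once from $(h-X)^2 - h(h-X) - t = X^2 - hX - t$. A direct computation then gives $\iota(X_a) = b - X = -X_b$ and $\iota(X_b) = -X_a$.

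Next I would upgrade $\iota$ to a chain isomorphism. The map $\iota$ does not preserve the counit, since $\epsilon(\iota(X)) = \epsilon(h - X) = -1$, so it is not a Frobenius algebra isomorphism on the nose. However this defect is precisely the $(-1)$-twisting of \Cref{lem:frob-alg-isom}: viewed as $\iota : A_{h,t} \to A_{h,t;-1}$, it is a genuine Frobenius algebra isomorphism, and postcomposing with the chain isomorphism $C_{h,t;-1}(D) \to C_{h,t}(D)$ induced by the twisting (as in the proof of \Cref{prop:ht-relation}) defines the $R$-linear chain automorphism $I$. Bigrading preservation on $C_{h,t}(D)$ follows because $\iota$ preserves the quantum grading on $A_{h,t}$ ($h$ and $X$ have the same internal degree) and because the twisting acts by scalar multiplication vertex by vertex. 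The identity $I^2 = \id$ follows from $\iota^2 = \id$ together with the fact that the $(-1)$-twisting composed with itself is the identity twisting.

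Since $I$ acts tensor-factor-wise on enhanced states, the formulas $\iota(X_a) = -X_b$ and $\iota(X_b) = -X_a$ imply that $I$ sends the subcomplex $(X_a)_p C_{h,t}(D)$ isomorphically onto $(X_b)_p C_{h,t}(D)$, and vice versa. In either coloring of $p$ this interchanges $\tilde{C}^+_{h,t}(D)$ and $\tilde{C}^-_{h,t}(D)$ as defined in \Cref{def:red-C}. Moreover the quantum grading shift used to pass from $C_{h,t}(D)$ to either $\tilde{C}^+_{h,t}(D)$ or $\tilde{C}^-_{h,t}(D)$ is the same (the declared degree of the marked tensor factor is $0$ in both cases), so $I$ restricts to a bigrading preserving chain isomorphism $\tilde{C}^+_{h,t}(D) \to \tilde{C}^-_{h,t}(D)$ in the reduced gradings as well.

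I do not expect a substantial obstacle here; the construction is essentially algebraic. The only mildly delicate point is the counit sign issue, which is handled cleanly by the twisting formalism already developed in \Cref{subsec:reduction-of-params}, so that $I$ is a bona fide chain automorphism rather than merely a graded module map.
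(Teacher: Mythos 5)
Your proposal is correct and follows essentially the same route as the paper: the paper's proof simply invokes the chain isomorphism $\psi$ of \Cref{prop:ht-relation} with $\theta = -1$, which is exactly the map induced by $X \mapsto h-X$ composed with the $(-1)$-twisting that you construct, and then observes that $I$ swaps $X_a$ and $X_b$ (up to sign) and hence interchanges the two subcomplexes. Your additional verifications of $\iota^2 = \id$ and the grading bookkeeping are consistent with, but more explicit than, what the paper writes.
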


\begin{proof}
    Let $I$ be the chain isomorphism $\psi$ of \Cref{prop:ht-relation} for the case $\theta = -1$. The statement can be seen from the fact that $I$ is induced from the ring isomorphism $X \mapsto h - X$.
\end{proof}

\begin{proposition} \label{prop:C(D)-SES}
    There is a short exact sequence of complexes
    \[
        0 
            \rightarrow
        \tilde{C}^+_{h, t}(D) 
            \xrightarrow{i} 
        C_{h, t}(D) 
            \xrightarrow{\pi}
        \tilde{C}^-_{h, t}(D)
            \rightarrow 
        0.
    \]
    Moreover this sequence splits when $c$ is invertible in $R$
\end{proposition}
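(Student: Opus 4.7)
The plan is to realize the short exact sequence as the kernel--image sequence of an explicit chain endomorphism of $C_{h,t}(D)$ built from the $U_p$-action, and then to upgrade it to a chain-level direct sum when $c$ is invertible by invoking the diagonalization of $A_{h,t}$ in the basis $\{X_a, X_b\}$.

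Without loss of generality I first assume $p$ is colored $a$; the case where $p$ is colored $b$ follows symmetrically via the involution $I$ of \Cref{prop:red-C-isom1}. Consider the endomorphism $\pi := U_p - b$ on $C_{h,t}(D)$; since $U_p$ commutes with the differential, so does $\pi$, making it a chain map. Acting on the marked tensor factor in the basis $\{1, X_a\}$ and using \eqref{eq:1Xa-ops} together with the identity $X \cdot X_a = (X_a + a)X_a = X_a^2 + aX_a = (c + a)X_a = bX_a$, one computes $\pi(1) = X - b = X_b$ and $\pi(X_a) = bX_a - bX_a = 0$. Hence $\ker\pi$ is precisely the submodule with marked factor in $RX_a$, namely $\tilde{C}^+_{h,t}(D)$, while $\Ima\pi$ is the submodule with marked factor in $RX_b$, namely $\tilde{C}^-_{h,t}(D)$. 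Combining $\pi$ with the evident inclusion $i:\tilde{C}^+_{h,t}(D)\hookrightarrow C_{h,t}(D)$ gives the desired short exact sequence.

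For the splitting, the key identity is $c = X_a - X_b$ in $A_{h,t}$; when $c$ is invertible this implies that $\{X_a, X_b\}$ is an $R$-basis of $A_{h,t}$ (for example, $1 = c^{-1}(X_a - X_b)$). Applying this basis at the marked tensor factor of every state summand yields an $R$-module decomposition $C_{h,t}(D) = \tilde{C}^+_{h,t}(D) \oplus \tilde{C}^-_{h,t}(D)$. It remains to check that the differential respects this decomposition, which is immediate from the diagonalization formulas \eqref{eq:ab-operations}: each merge and split in the cube acts on $\{X_a, X_b\}$-labelled tensor factors so as to stay within $RX_a$ or within $RX_b$, and in particular the marked factor retains its label. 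Equivalently, a direct computation gives $\pi(X_b) = -cX_b$, so the map $s : \tilde{C}^-_{h,t}(D) \to C_{h,t}(D)$ defined by $s(y) = -c^{-1}y$ (via the natural inclusion) satisfies $\pi \circ s = \id$, exhibiting the splitting at the chain level.

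No step here presents a substantive obstacle: the argument is entirely local at the marked tensor factor and reduces to direct verifications with the Frobenius operations on $A_{h,t}$. The only minor bookkeeping concern is that the quantum grading on $\tilde{C}^\pm_{h,t}(D)$ is normalized so that $\deg(\underline{X_a}) = \deg(\underline{X_b}) = 0$, so the sequence holds as one of chain complexes respecting the homological grading, and interpreting it as a sequence of bigraded complexes would require inserting appropriate quantum shifts.
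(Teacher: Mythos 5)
Your proof is correct and takes essentially the same route as the paper's: where you realize the sequence as the kernel--image sequence of the explicit chain endomorphism $\pi = U_p - b$, the paper takes the evident quotient $C_{h,t}(D) \twoheadrightarrow C_{h,t}(D)/(X_a)_p$ and then re-identifies the quotient with $(X_b)_p C_{h,t}(D)$ via the isomorphism of \Cref{prop:red-C-isom2}, which is the same map. Your splitting computation $\pi(\underline{X_b}) = -c\,\underline{X_b}$ is exactly the paper's observation that $\pi_a \circ i_b$ is multiplication by $a - b = -c$, so $-c^{-1}i_b$ splits; the alternative $\{X_a,X_b\}$-decomposition argument you sketch is a correct but redundant second route to the same conclusion.
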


To prove this, we first give alternative descriptions of $C^\pm_{h, t}(D)$. Define a quotient complex
\[
    C_{h, t}(D) / (X_a)_p = C_{h, t}(D) / (X_a)_p C_{h, t}(D).
\]
We may regard $C_{h, t}(D) / (X_a)_p$ as a chain complex generated by the enhanced states whose tensor factor corresponding to the marked circle is $1$. The operations involving the marked circle are given by 
\[
    \underline{1} \cdot 1 = \underline{1},\quad 
    \underline{1} \cdot X = a\underline{1},\quad
    \Delta \underline{1} = \underline{1} \otimes X_b.
\]
We declare $\deg(\underline{1}) = 0$ so that the quotient complex inherits the quantum grading. Similarly define $C_{h, t}(D) / (X_b)_p$.

\begin{proposition} \label{prop:red-C-isom2}
    There are bigraded preserving chain isomorphisms
    \begin{align*}
        (X_a)_p C_{h, t}(D) &\isom C_{h, t}(D) / (X_b)_p,\\
        (X_b)_p C_{h, t}(D) &\isom C_{h, t}(D) / (X_a)_p.
    \end{align*}
\end{proposition}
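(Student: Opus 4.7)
The plan is to realize both isomorphisms via the first isomorphism theorem applied to a single chain endomorphism of $C_{h,t}(D)$. Consider the $R$-linear map $\mu := m_p(X_a \otimes -) = x_p - a \cdot \id$ on $C_{h,t}(D)$, which multiplies the marked tensor factor by $X_a = X - a$. Since $x_p$ is a cobordism-type operation commuting with the Khovanov differential (via the argument used in the proof of \Cref{lem:x_p endo}), so is $\mu$. The key algebraic fact used throughout is the identity $X_a X_b = (X-a)(X-b) = X^2 - hX - t = 0$ in $A_{h,t}$.

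Next I would compute the kernel and image of multiplication by $X_a$ on $A_{h,t}$. A direct calculation in the $\{1, X\}$ basis yields
\[
    \ker(X_a \cdot : A_{h,t} \to A_{h,t}) = R \cdot X_b, \qquad \Ima(X_a \cdot) = R \cdot X_a.
\]
Since $A_{h,t}$ is a free $R$-module of rank $2$, these pointwise identifications globalize: at each state $u$ the complex splits as a tensor product over the circles of $D(u)$, and $\mu$ acts only on the marked factor, yielding $\ker \mu = (X_b)_p C_{h,t}(D)$ and $\Ima \mu = (X_a)_p C_{h,t}(D)$. The first isomorphism theorem then delivers a chain isomorphism
\[
    C_{h,t}(D)/(X_b)_p \xrightarrow{\ \isom\ } (X_a)_p C_{h,t}(D), \qquad [\xi] \mapsto \mu(\xi).
\]

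For bigrading preservation, I would represent each class of $C_{h,t}(D)/(X_b)_p$ by the enhanced state whose marked factor is $\underline{1}$ (in the $\{1, X_b\}$ basis), so that $\mu$ sends $\underline{1}$ to $\underline{X_a}$. Under the shifts declared in \Cref{def:red-C} ($\deg \underline{1} = 0$ on the quotient and $\deg \underline{X_a} = 0$ on the subcomplex), these lie in matching quantum degrees while the unmarked factors are untouched; hence the map is bigrading preserving. The second isomorphism $(X_b)_p C_{h,t}(D) \isom C_{h,t}(D)/(X_a)_p$ follows by exchanging the roles of $a$ and $b$ throughout.

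The main subtlety is the globalization step: one must verify that the pointwise identifications of kernel and image at the marked $A_{h,t}$ factor lift to the corresponding submodule and quotient of the full chain complex. This follows from the freeness of $A_{h,t}$ as an $R$-module, which makes tensoring with the remaining factors exact. Apart from this and the routine bookkeeping of grading shifts, the argument is formal.
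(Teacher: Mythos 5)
Your proposal is correct and arrives at the same isomorphism as the paper, but by a slightly different route. The paper simply exhibits the map $\underline{X_a} \otimes x \mapsto \underline{1} \otimes x$ and observes that it is a chain isomorphism; this is immediate because the displayed operations involving the marked circle in $(X_a)_p C$ (given above \Cref{def:red-C}) and in $C/(X_b)_p$ (given below \Cref{prop:C(D)-SES}, with $a$ and $b$ exchanged) correspond term by term under this substitution. You instead realize the isomorphism via the first isomorphism theorem applied to the chain endomorphism $\mu = m_p(X_a \otimes -) = x_p - a$, after computing $\ker\mu = (X_b)_p C$ and $\Ima\mu = (X_a)_p C$. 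On a representative $\underline{1}\otimes x$ your induced map $\bar\mu$ produces $\underline{X_a}\otimes x$, so $\bar\mu$ is exactly the inverse of the paper's map. Your route is longer, but it makes explicit where the isomorphism comes from (multiplication by $X_a$, with $X_a X_b = 0$ governing the kernel) and correctly flags the need for flatness of the remaining tensor factors in passing from the rank-one computation on $A_{h,t}$ to the full state-wise claim; your treatment of the grading shifts declared in \Cref{def:red-C} is also handled properly.
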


\begin{proof}
    The first isomorphism is given by
    \[
        \underline{X_a} \otimes x \mapsto \underline{1} \otimes x
    \]
    and similarly for the second. 
\end{proof}

\begin{proof}[Proof of \Cref{prop:C(D)-SES}]
    We assume that $p$ is colored $a$. There is a short exact sequence
    \[
        0 
            \rightarrow
        (X_a)_p C(D)
            \xhookrightarrow{i_a}
        C(D) 
            \xrightarrow{\pi_a}
        C(D)/(X_a)_p \isom (X_b)_p C(D)
            \rightarrow 
        0.
    \]
    The other inclusion 
    \[
        i_b: (X_b)_p C(D) \hookrightarrow C(D)
    \]
    postcomposed with $\pi_a$ is the multiplication by $a - b = -c$. Thus when $c$ is invertible, $-c^{-1} i_b$ gives a splitting.
\end{proof}

\begin{corollary} \label{cor:redH-rank}
    If $c$ is invertible, $\tilde{H}^\pm_{h, t}(D)$ is a free $R$-module of rank $2^{|D| - 1}$. \qed
\end{corollary}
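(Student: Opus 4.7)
The plan is to combine the chain-level splitting from \Cref{prop:C(D)-SES} with the explicit Lee-class basis of $H_{h, t}(D)$: I want to show that the Lee classes split cleanly between the two reduced subcomplexes, which will directly give an honest free basis of each summand (as opposed to arguing via projective summands, which would not automatically yield freeness over a general commutative ring).

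Since $c$ is invertible, \Cref{prop:C(D)-SES} yields a chain-level splitting
\[
    C_{h, t}(D) \;\isom\; \tilde{C}^+_{h, t}(D) \oplus \tilde{C}^-_{h, t}(D),
\]
so $H_{h, t}(D) \isom \tilde{H}^+_{h, t}(D) \oplus \tilde{H}^-_{h, t}(D)$. By \Cref{prop:ab-gen}, the left-hand side is a free $R$-module of rank $2^{|D|}$ with basis the Lee classes $\{\ca(D, o)\}_{o \in O(D)}$. First I would observe that each Lee cycle is a pure tensor in $X_a$'s and $X_b$'s, so the tensor factor at the marked circle places $\ca(D, o)$ entirely in one of $\tilde{C}^\pm_{h, t}(D)$; this partitions $O(D) = O^+ \sqcup O^-$ so that the $O^\pm$-indexed Lee classes sit in $\tilde{H}^\pm_{h, t}(D)$ under the decomposition.

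Next I would verify $|O^+| = |O^-| = 2^{|D| - 1}$ via the involution $I$ of \Cref{prop:red-C-isom1}. Since $I$ is induced from $X \mapsto h - X$, it sends $X_a \mapsto -X_b$ and $X_b \mapsto -X_a$, swapping the $a/b$ label on every tensor factor up to an overall sign. The left-right rule of \Cref{algo:ab-coloring} shows that reversing all orientations of $D_o$ globally flips the $ab$-coloring of its Seifert circles, while the underlying Seifert resolution itself is unchanged (the oriented smoothing at a crossing depends only on the two local strand orientations up to simultaneous reversal). Combining these two facts gives $I(\ca(D, o)) = \pm \ca(D, -o)$, so the fixed-point-free involution $o \mapsto -o$ on $O(D)$ restricts to a bijection $O^+ \leftrightarrow O^-$, yielding the desired count.

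Finally, a free basis of a direct sum $A \oplus B$ whose elements each lie entirely in $A$ or in $B$ restricts to a free basis of each summand (a one-line check using uniqueness of components). Hence the Lee classes indexed by $O^\pm$ form a free $R$-basis of $\tilde{H}^\pm_{h, t}(D)$, proving freeness of rank $2^{|D| - 1}$. The only step requiring real care is the identity $I(\ca(D, o)) = \pm \ca(D, -o)$, but this reduces to the elementary observation about how the $ab$-coloring transforms under total orientation reversal, so I do not foresee a serious obstacle.
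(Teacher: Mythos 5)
Your proof is correct and matches the paper's intended argument, which derives the corollary from the split short exact sequence of \Cref{prop:C(D)-SES} and fleshes out exactly your Lee-class-basis reasoning in the immediately following (unlabeled) proposition on reduced Lee classes, whose proof is likewise cited as immediate from \Cref{prop:ab-gen} and \Cref{prop:C(D)-SES}. Your instinct to insist on an explicit basis rather than arguing via projective summands is a worthwhile refinement: over a general commutative ring --- which is all the paper assumes in this section --- a direct summand of a free module is only projective, so the Lee-class basis (together with the bijection $O^+ \leftrightarrow O^-$ from $o \mapsto -o$, which the paper records as $\cb(D,o) = \ca(D,-o)$ being the $a/b$-flip of $\ca(D,o)$) is genuinely needed to conclude freeness, a point the paper leaves implicit.
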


Next we prove that the isomorphism class of the reduced homology is an invariant of pointed links.

\begin{proposition} \label{prop:red-r-moves}
    Suppose $D, D'$ are pointed link diagrams related by a Reidemeister move that does not contain the marked point in the changing disk. Then the corresponding R-move map $\rho$ induces chain homotopy equivalences
    \[
        \rho: \tilde{C}^\pm_{h, t}(D) \rightarrow \tilde{C}^\pm_{h, t}(D').
    \]
\end{proposition}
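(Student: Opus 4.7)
The plan is to exploit the algebraic description
\[
    \tilde{C}^+_{h,t}(D) = \Ima(U_p - a), \qquad \tilde{C}^-_{h,t}(D) = \Ima(U_p - b)
\]
observed just after \Cref{def:red-C}, together with the strict commutation $\rho \circ U_p = U_p \circ \rho$ supplied by \Cref{lem:R-move-U-action}. Note first that because the Reidemeister disk is disjoint from $p$, the $ab$-coloring of the arc containing $p$ is the same in $D$ and $D'$, so $U_p$ is defined identically on both sides.

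First I would check that $\rho$ restricts. For any $x \in C_{h,t}(D)$ and $\lambda \in \{a, b\}$,
\[
    \rho\bigl((U_p - \lambda)\,x\bigr) = (U_p - \lambda)\,\rho(x) \in \Ima(U_p - \lambda) \subset C_{h,t}(D'),
\]
so $\rho$ carries each subcomplex $\Ima(U_p - \lambda) \subset C_{h,t}(D)$ into the corresponding one in $C_{h,t}(D')$, yielding the restricted chain maps $\tilde{\rho} : \tilde{C}^\pm_{h,t}(D) \to \tilde{C}^\pm_{h,t}(D')$.

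Next I would promote $\tilde{\rho}$ to a chain homotopy equivalence by producing a homotopy inverse and a pair of witnessing homotopies that all commute with $U_p$, so that they automatically descend to the reduced subcomplexes by the same argument as above. The inverse Reidemeister move gives an R-move map $\rho' : C_{h,t}(D') \to C_{h,t}(D)$ which is a chain homotopy inverse of $\rho$ by \Cref{prop:Kh-invariance} and which commutes with $U_p$ by another application of \Cref{lem:R-move-U-action}; hence $\rho'$ restricts to $\tilde{\rho}' : \tilde{C}^\pm_{h,t}(D') \to \tilde{C}^\pm_{h,t}(D)$. What remains is to verify that the chain homotopies $H, H'$ satisfying $\rho'\rho - \id = dH + Hd$ and $\rho\rho' - \id = dH' + H'd$, constructed from the local Reidemeister data in \cite[Section 4.3]{BarNatan:2004}, can also be chosen to commute with $U_p$.

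This last step is the main obstacle. The expected justification is the cobordism-theoretic principle already underlying \Cref{lem:R-move-U-action}: since the Reidemeister disk is disjoint from $p$ by hypothesis, the dotted cap cobordism representing $U_p$ can be placed outside the support of $H$, so $U_p$ commutes past $H$ at the diagrammatic level. Once this is verified in turn for R1, R2, and R3, the homotopies $H$ and $H'$ preserve $\Ima(U_p - \lambda)$ for $\lambda \in \{a,b\}$, and therefore restrict to chain homotopies on $\tilde{C}^\pm_{h,t}$, completing the chain homotopy equivalence.
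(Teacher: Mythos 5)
Your argument is correct, but it takes a genuinely different route from the paper's proof. The paper's one-line proof is a direct inspection: it appeals to the explicit element-level formulas for the R-move maps and homotopies given in Bar-Natan's Section 4.3 and observes that the tensor factor at the marked circle is never altered, so the subcomplexes $(X_a)_pC$, $(X_b)_pC$ are preserved. You instead exploit the purely algebraic description $\tilde{C}^\pm_{h,t}(D) = \Ima(U_p - \lambda)$, $\lambda\in\{a,b\}$, and derive the restriction of $\rho$, $\rho'$, $H$, $H'$ formally from strict commutation with $U_p$. This is cleaner and more modular: it avoids tracking how the marked circle may merge or split through the changing disk in various resolutions, reducing everything to the one claim that the Bar-Natan maps commute with the dotted cobordism at $p$.

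The point you flag as the ``main obstacle'' --- that the homotopies $H$, $H'$ (not just $\rho$, $\rho'$) commute with $U_p$ --- is real, since \Cref{lem:R-move-U-action} is stated only for the R-move maps. But your cobordism-theoretic justification closes it: in Bar-Natan's geometric complex every map involved, including the homotopies, is a $\ZZ$-linear combination of (dotted) cobordisms supported in the changing disk crossed with the interval, while $U_p$ is a dotted cylinder supported near $p$; disjointly supported cobordisms commute under composition. This is precisely the parenthetical argument already given in the proof of \Cref{lem:R-move-U-action}, applied verbatim to $H$ and $H'$, so the gap is filled by the same mechanism. One small point worth making explicit: the claim that the $ab$-coloring at $p$ is unchanged between $D$ and $D'$ (so that $U_p$ means the same operator on both sides) follows because the coloring at a point can be read off along a ray to infinity chosen to avoid the changing disk.
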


\begin{proof}
    From the explicit descriptions of the R-move maps the corresponding chain homotopies given in \cite[Section 4.3]{BarNatan:2004} we see that the $ab$-coloring on the marked circles remains unchanged by these maps. 
\end{proof}

\begin{proposition} \label{prop:red-m-moves}
    Suppose $D, D'$ are pointed link diagrams related by a Morse move that does not contain the marked point in the changing disk. Then the corresponding map $\rho$ induces homomorphisms
    \[
        \rho: \tilde{C}^\pm_{h, t}(D) \rightarrow \tilde{C}^\pm_{h, t}(D').
    \]
\end{proposition}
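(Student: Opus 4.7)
The plan is to unwind the definition of $\tilde{C}^\pm_{h,t}$ and show that $\rho$ preserves the subcomplexes $(X_a)_p C_{h,t}(D)$ and $(X_b)_p C_{h,t}(D)$ separately. Since $\tilde{C}^\pm_{h,t}(D)$ is by definition one of these two for each sign (and likewise for $D'$), this will suffice. I would proceed by case analysis on the three elementary Morse moves, working in the basis $\{1, X_a\}$ of $A_{h,t}$ with the operations recorded in \eqref{eq:1Xa-ops}.

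First I would dispose of the easy cases. For a birth, $\rho$ acts by $y \mapsto y \otimes 1$ for a newly introduced disjoint unknot, leaving the marked factor untouched. For a death, the dying circle lies in the changing disk and so is not the marked circle, and $\rho$ applies $\epsilon$ only to its factor. For a saddle disjoint from the marked circle, the marked factor is again unaffected. The only genuine computation is for a saddle that involves the marked circle: if this saddle merges the marked circle with another, the action on the marked factor is $m(X_a \otimes -)$, and \eqref{eq:1Xa-ops} gives $m(X_a \otimes 1) = X_a$ and $m(X_a \otimes X_a) = c X_a$; if the saddle splits the marked circle, the action is $\Delta(X_a) = X_a \otimes X_a$, and whichever of the two resulting circles inherits $p$ still carries $X_a$. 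The analogous identities with $X_b$ handle the $(X_b)_p$ subcomplex.

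The point I expect to require the most care is the $ab$-color of the arc at $p$, which depends on the global checkerboard shading of the Seifert resolution and can in principle flip across a saddle (for instance, when the move changes the parity of the number of Seifert circles enclosing the arc at $p$). However, the verification above shows that $\rho$ preserves both $(X_a)_p C$ and $(X_b)_p C$ regardless of any coloring, and by \Cref{prop:red-C-isom1} these two subcomplexes are interchanged by an involution of $C_{h,t}(-)$. Hence one obtains $\rho(\tilde{C}^\pm_{h,t}(D)) \subset \tilde{C}^\pm_{h,t}(D')$ up to a possible swap of $\pm$-labels that is absorbed by this involution and has no effect on the resulting isomorphism class.
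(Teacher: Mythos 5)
Your middle paragraph — the explicit check in the basis $\{1,X_a\}$ that a birth, a death, and a saddle (whether disjoint from the marked circle or merging/splitting it) each send the subcomplex $(X_a)_p C_{h,t}(D)$ into $(X_a)_{p'} C_{h,t}(D')$, and likewise for $(X_b)_p$ — is correct and is essentially the verification the paper has in mind (the paper's proof just points to the analogous argument for \Cref{prop:red-r-moves}). The issue is the final paragraph, which misidentifies what remains to be shown and ends up establishing something weaker than the proposition.

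What actually needs to be observed, and what the paper's proof of \Cref{prop:red-r-moves} records, is that the $ab$-color at $p$ is \emph{unchanged} by a Morse move not meeting the marked point. It cannot flip: the checkerboard color of a point $x$ near $p$ equals the parity of the number of intersections of a generic arc from $x$ to infinity with the union of Seifert circle arcs, and one can choose this arc to avoid the changing disk entirely. Outside the disk the Seifert arcs of $D$ and $D'$ coincide, so the parity, hence the shading on both sides of the arc through $p$, hence the $ab$-color of that arc, is the same for $D$ and $D'$. (Your proposed mechanism — a change in the parity of circles enclosing $p$ — does not occur for the same reason.) Once the color is fixed, $(X_a)_p$ versus $(X_b)_p$ matches up with $\tilde C^+$ versus $\tilde C^-$ identically on both sides, and your computation yields $\rho(\tilde C^\pm(D))\subset\tilde C^\pm(D')$.

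Your fallback via the involution of \Cref{prop:red-C-isom1} does not close this gap. If the coloring did flip, $\rho$ would send $\tilde C^+(D)$ into $\tilde C^-(D')$; composing with the involution produces $I\circ\rho$, not $\rho$, respecting the $\pm$-labels. The proposition asserts that the specific map $\rho$ restricts to $\tilde C^\pm(D)\to\tilde C^\pm(D')$, and this precision matters downstream: \Cref{prop:red-cobordism} takes the restriction of $\rho$ itself, and \Cref{prop:cab-cobordism-red} then tracks $\tilde\ca\in\tilde C^+$ and $\tilde\cb\in\tilde C^-$ through the cobordism map, where an undetected $\pm$-swap would scramble the conclusion. ``Same isomorphism class'' is therefore not a sufficient substitute for the stated claim.
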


\begin{proof}
    Similar to the proof of \Cref{prop:red-r-moves}.
\end{proof}

\begin{proposition} \label{prop:red-basept-cross}
    Suppose $p, q$ are two marked points on $D$ separated by a crossing. Then the chain homotopy equivalence $\tau_{p, q}$ of \Cref{prop:tau_pq} induces chain homotopy equivalences
    \[
        \tau_{p, q}: \tilde{C}^\pm_{h, t}(D, p) \rightarrow \tilde{C}^\pm_{h, t}(D, q).
    \]
\end{proposition}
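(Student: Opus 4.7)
The plan is to examine the explicit construction of $\tau_{p,q}$ given in the proof of \Cref{prop:tau_pq} and check that each building block restricts to a chain homotopy equivalence between the corresponding reduced subcomplexes; since the composition of chain homotopy equivalences is again a chain homotopy equivalence, the claim then follows immediately.

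First I would exploit the alternative characterization $\tilde{C}^+_{h,t}(D,p) = \Ima(U_p - a)$ and $\tilde{C}^-_{h,t}(D,p) = \Ima(U_p - b)$ recorded right after \Cref{def:red-C}, which is valid regardless of the $ab$-color of $p$ by a direct computation (if $p$ is colored $a$, then $U_p - a = x_p - a$ has image $(X_a)_p C$; if $p$ is colored $b$, then $U_p - a = b - x_p$ has image $(X_b)_p C$, and likewise for $U_p - b$). Under this uniform description, any chain map $f$ between Khovanov complexes satisfying $f \circ U_p = U_q \circ f$ automatically sends $\tilde{C}^\pm_{h,t}(D,p)$ into $\tilde{C}^\pm_{h,t}(D,q)$, and if $f$ is itself a chain isomorphism then it restricts to a chain isomorphism of the reduced subcomplexes.

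Recall that $\tau_{p,q}$ is constructed as the composite of (i) R-move maps pulling the separating strand to the outermost region of $\RR^2$ while the marked point stays outside each changing disk, (ii) the through-infinity isomorphism $I$ of \Cref{prop:isotopy-through-infty}, and (iii) another sequence of R-move maps sliding the strand back, carrying the marked point from $p$ to $q$. The R-move steps induce chain homotopy equivalences between the reduced complexes by \Cref{prop:red-r-moves}. For the middle step, \Cref{prop:isotopy-through-infty} gives that $I$ is a chain isomorphism satisfying $I \circ U_p = U_{p'} \circ I$, so by the observation above it restricts to a chain isomorphism of reduced subcomplexes.

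The only subtle point to track is that the $ab$-coloring near the marked circle flips when the strand is passed through infinity, so that the description of $\tilde{C}^+$ switches between $(X_a)_p C$ and $(X_b)_{p'} C$; this is the minor obstacle that the uniform $\Ima(U_p - a)$ description neatly handles without case analysis. Composing the chain homotopy equivalences obtained at every stage yields the desired equivalences $\tau_{p,q}: \tilde{C}^\pm_{h,t}(D,p) \to \tilde{C}^\pm_{h,t}(D,q)$.
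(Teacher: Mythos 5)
Your proposal is correct and follows essentially the same path as the paper's proof: the paper also shows that $\tau_{p,q}$ descends to the reduced complexes by the commutative square built from the surjections $U_p - a$ and $U_q - a$ (i.e.\ the same intertwining argument you phrase via $\Ima(U_p - a)$), and then invokes the construction of $\tau_{p,q}$ together with \Cref{prop:red-r-moves} to conclude it is a chain homotopy equivalence. You simply make the terse second half explicit by decomposing $\tau_{p,q}$ into R-move maps and the through-infinity isomorphism $I$, which is a fair unpacking rather than a different route.
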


\begin{proof}
    That $\tau_{p, q}$ induces homomorphisms between the reduced complexes can be seen by the commutative diagram
    \[
    \begin{tikzcd}
    C(D) \arrow[r, two heads, "U_p - a"] \arrow[d, "{\tau_{p, q}}"] & {\tilde{C}^+(D, p)} \arrow[d, "{\tau_{p, q}}"] \\
    C(D) \arrow[r, two heads, "U_q - a"]           & {\tilde{C}^+(D, q)}                           
    \end{tikzcd}    
    \]
    and similarly for $\tilde{C}^-$. That $\tau_{p, q}$ is a chain homotopy equivalence can be seen from the construction together with \Cref{prop:red-r-moves}.
\end{proof}

\begin{proposition}
    If $(R, h, t), (R, h', t')$ are both factorable with $c = c'$, then the isomorphism $\psi$ of \Cref{prop:ht-relation} induces chain isomorphisms
    \[
        \psi: \tilde{C}^\pm_{h, t}(D) \rightarrow \tilde{C}^\pm_{h', t'}(D').
    \]
\end{proposition}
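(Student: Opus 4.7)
The plan is to invoke \Cref{prop:ht-relation,lem:frob-alg-isom} directly and observe that the restriction of $\psi$ to the defining subcomplex of $\tilde C^\pm$ is immediate from how $\psi$ acts on the idempotent-like basis $\{X_a, X_b\}$. Since $c = c'$, the scalar $\theta$ in \Cref{lem:frob-alg-isom} satisfies $\theta = 1$, so the Frobenius algebra isomorphism reduces to $\psi(X_a) = X_{a'}$ and $\psi(X_b) = X_{b'}$. Consequently, at each vertex of the cube, the induced tensor-product chain isomorphism sends the submodule of $C_{h,t}(D)$ in which the tensor factor at the marked circle is $X_a$ (resp.\ $X_b$) onto the corresponding submodule of $C_{h',t'}(D)$ in which that factor is $X_{a'}$ (resp.\ $X_{b'}$).

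Next I would note that the $ab$-coloring produced by \Cref{algo:ab-coloring} depends only on the diagram $D$, not on the triple $(R,h,t)$. Hence the marked circle receives the same colour ($a$ or $b$) in both theories, and comparing with \Cref{def:red-C} shows that $\psi$ restricts to isomorphisms
\[
    \psi: (X_a)_p C_{h,t}(D) \xrightarrow{\ \isom\ } (X_{a'})_p C_{h',t'}(D),\quad
    \psi: (X_b)_p C_{h,t}(D) \xrightarrow{\ \isom\ } (X_{b'})_p C_{h',t'}(D),
\]
which together give the two claimed isomorphisms $\psi: \tilde C^\pm_{h,t}(D) \to \tilde C^\pm_{h',t'}(D)$. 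The chain-map property is inherited from \Cref{prop:ht-relation}, and bijectivity on each summand follows from the fact that $\psi$ is a bijection on $A_{h,t}$ that identifies the two distinguished elements.

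There is essentially no obstacle here: the entire content is the observation that the reduced subcomplex is defined by the marked tensor factor lying in $R\cdot X_a$ or $R\cdot X_b$, and that $\psi$ was built precisely to send these rank-one summands onto their primed counterparts. The only point worth checking carefully is that $\theta = 1$ forces no extra scaling to appear on the restricted map; this is already recorded in \Cref{prop:ht-relation-ca}, which I would cite in place of reproving it.
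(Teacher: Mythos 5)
Your proposal is correct and follows essentially the same route as the paper: the paper's proof is the one-line remark ``Immediate from the observation given in the proof of \Cref{prop:ht-relation-ca} with $\theta = 1$,'' and you have simply unpacked that observation — namely that with $\theta = 1$ the map $\psi$ sends $X_a \mapsto X_{a'}$ and $X_b \mapsto X_{b'}$ vertex-wise, that the $ab$-coloring of \Cref{algo:ab-coloring} is independent of the triple $(R,h,t)$, and hence that $\psi$ carries the defining submodule $(X_a)_p C_{h,t}(D)$ (resp.\ $(X_b)_p$) onto its primed counterpart. The only small imprecision is the phrasing ``the scalar $\theta$ satisfies $\theta = 1$'': more accurately, $c = c'$ lets one \emph{choose} $\theta = 1$ in \Cref{lem:frob-alg-isom}, which is all that is needed.
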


\begin{proof}
    Immediate from the observation given in the proof of \Cref{prop:ht-relation-ca} with $\theta = 1$.
\end{proof}

Assembling the above obtained results, we conclude that the \textit{reduced Khovanov homology} $\tilde{H}^\pm_{h, t}(L)$ of a pointed link $L$ is well-defined, whose isomorphism class depends only on $c$ and on the component on which the marked point lies. In particular the reduced Khovanov homology of an (unpointed) knot $K$ is well-defined. We also have cobordism maps between the reduced complexes. 

\begin{proposition} \label{prop:red-cobordism}
    Suppose $S$ is a cobordism between pointed links $L, L'$. The cobordism map 
    \[
        \phi: C_{h, t}(D) \rightarrow C_{h, t}(D')
    \]
    given in \Cref{subsec:module-cobordism} restricts to the reduced complexes
    \[
        \phi: \tilde{C}^\pm_{h, t}(D) \rightarrow \tilde{C}^\pm_{h, t}(D').
    \]
\end{proposition}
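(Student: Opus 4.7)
The plan is to deduce \Cref{prop:red-cobordism} directly from \Cref{prop:red-r-moves}, \Cref{prop:red-m-moves}, and \Cref{prop:red-basept-cross}. Recall from \Cref{subsec:module-cobordism} that $\phi$ is built by first isotoping $S$ (rel boundary, together with the embedded arc $\gamma$ connecting the marked points of $L$ and $L'$) so that $\gamma$ meets each slice $\RR^3 \times \{t\}$ transversely in exactly one point, and then decomposing $S$ into a finite sequence of local moves on pointed link diagrams of three types: Reidemeister moves, Morse moves, and marked point crossing moves $\tau_{p,q}$. The role of $\gamma$ is precisely to pin down where the marked point sits at each level, which lets us arrange that in every Reidemeister/Morse factor the marked point lies outside the changing disk, and that any motion of the marked point past a crossing is realized by a $\tau_{p,q}$ factor rather than by a Reidemeister move.

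Granting this decomposition, the proof is essentially a triviality: the composition of maps that restrict to a subcomplex itself restricts to that subcomplex. By \Cref{prop:red-r-moves} and \Cref{prop:red-m-moves}, each Reidemeister and Morse factor carries $\tilde{C}^\pm_{h,t}$ into $\tilde{C}^\pm_{h,t}$, and by \Cref{prop:red-basept-cross} so does each $\tau_{p,q}$ factor. Composing the restricted maps in the original order produces the desired restriction of $\phi$, which makes sense because the sign $\pm$ assigned to the marked circle is a consistent choice that propagates unambiguously through the decomposition (once one fixes the $ab$-coloring at each intermediate stage). There is no serious obstacle here; the only mildly delicate point — that the isotopy can be arranged so that no Reidemeister or Morse move touches the marked point inside its changing disk — is exactly the generic position argument used to construct $\tau_{p,q}$ in \Cref{prop:tau_pq} and was implicitly invoked already when $\phi$ was defined at the unreduced level.
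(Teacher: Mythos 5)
Your argument is correct and matches the paper's proof, which simply cites \Cref{prop:module-cob} together with \Cref{prop:red-r-moves,prop:red-m-moves,prop:red-basept-cross} and declares the result immediate; you have unpacked exactly why it is immediate, namely that $\phi$ is by construction a composition of the three kinds of local move maps, each of which restricts to $\tilde{C}^\pm_{h,t}$. The remark about arranging the isotopy so that the marked point never meets a changing disk is already built into the definition of $\phi$ in \Cref{subsec:module-cobordism}, so no new argument is needed there.
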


\begin{proof}
    Immediate from \Cref{prop:module-cob} and \Cref{prop:red-r-moves,prop:red-m-moves,prop:red-basept-cross}.
\end{proof}

In the following we only consider $\tilde{C}^+$ and omit the $+$ symbol when there is no need to consider both complexes at the same time. Finally we remark that the unreduced theory can be recovered from the reduced theory.

\begin{definition}
    Given an unpointed link $L$, a pointed link $L_+$ is defined by adding a disjoint pointed unknot to $L$. For an unpointed link diagram $D$, a pointed link diagram $D_+$ is defined likewise, with the added circle oriented counterclockwise.
\end{definition}

\begin{proposition} \label{prop:red-recover-unred}
    There is an isomorphism
    \[
        C_{h, t}(D) \isom \tilde{C}_{h, t}(D_+)
    \]
    given by the correspondence
    \[
        x \mapsto \underline{X_a} \otimes x.
    \]
    \qed
\end{proposition}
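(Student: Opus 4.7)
The plan is to exhibit the claimed isomorphism by exploiting the disjoint-union structure of $D_+$ together with the fact that multiplication by $X_a$ in $A_{h, t}$ picks out a free rank-one $R$-submodule.

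First, since the added unknot $U$ in $D_+$ is disjoint from $D$ and crossingless, the states of $D_+$ correspond canonically to the states of $D$, and every resolution decomposes as $D_+(u) = D(u) \sqcup U$. This gives an identification of $R$-modules
\[
    C_{h, t}(D_+) \isom C_{h, t}(D) \otimes A_{h, t},
\]
with the $A_{h, t}$ factor recording the $\{1, X\}$-label on $U$. Under this identification the Khovanov differential has the form $d \otimes \id$, since every edge map acts only on tensor factors coming from circles of $D(u)$ and leaves the disjoint marked component alone.

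Second, I identify the $ab$-coloring of $U$. Placing $U$ in the unbounded region of $D$, its interior becomes a new bounded region colored black across the single circle $U$, while the checkerboard coloring of the regions meeting the Seifert circles of $D$ is unaffected. Since $U$ is oriented counterclockwise, it sees this black interior to its left, so by \Cref{algo:ab-coloring} it is colored $a$; hence by \Cref{def:red-C} we have $\tilde{C}_{h, t}(D_+) = (X_a)_p C_{h, t}(D_+)$.

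Third, I identify the image of multiplication by $X_a$ inside $A_{h, t}$. A direct calculation using $h = a + b$, $t = -ab$ gives $X \cdot X_a = bX_a$, so $X_a \cdot A_{h, t} = R \cdot X_a$ is a free rank-one $R$-module generated by $X_a$. Under the tensor decomposition above, the subcomplex $(X_a)_p C_{h, t}(D_+)$ is therefore $C_{h, t}(D) \otimes R\cdot X_a$, and the map $x \mapsto \underline{X_a} \otimes x$ is an $R$-module bijection onto it. It is a chain map because the differential is $d \otimes \id$, and it preserves the bigrading: the homological degree depends only on the state label and is unaffected, while the convention $\deg(\underline{X_a}) = 0$ built into \Cref{def:red-C} is precisely what is needed for the quantum grading on the reduced side to match that of $C_{h, t}(D)$.

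The argument is essentially bookkeeping and I do not anticipate any serious obstacle; the only point requiring care is the identification of the $ab$-coloring of the added circle, which is what pins down why $\tilde{C}^+$ (rather than $\tilde{C}^-$) is the complex recovered by this formula.
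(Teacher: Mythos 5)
Your proof is correct and fills in exactly the bookkeeping that the paper leaves implicit (the proposition is stated with \qed and no written proof). The three observations you isolate — the disjoint-union tensor decomposition $C_{h,t}(D_+) \cong C_{h,t}(D) \otimes A_{h,t}$ with differential $d \otimes \id$, the identification that the counterclockwise added circle is $ab$-colored $a$ so that $\tilde{C}^+$ is the relevant reduced complex, and the computation $X \cdot X_a = bX_a$ showing $X_a A_{h,t} = R X_a$ is free of rank one — are precisely the ingredients needed, and your grading remark correctly invokes the convention $\deg(\underline{X_a}) = 0$ from the paper's Definition of the reduced complex. This matches the intended argument.
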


    \subsection{Reduced Lee classes}

\begin{definition}
    Define $\tilde{O}(D)$ as the subset of $O(D)$ consisting of orientations $o$ whose color at $p$ by the $ab$-coloring of \Cref{algo:ab-coloring} coincides with the one obtained from the given orientation of $D$.
\end{definition}

\begin{definition}
    For each $o \in \tilde{O}(D)$ define the \textit{reduced Lee cycle}
    \[
        \tilde{\ca}(D, o) \in \tilde{C}^+_{h, t}(D)
    \]
    by the preimage of the Lee cycle $\ca(D, o) \in C_{h, t}(D)$ under the inclusion $\tilde{C}^+_{h, t}(D) \hookrightarrow C_{h, t}(D)$. Similarly define
    \[
        \tilde{\cb}(D, o) \in \tilde{C}^-_{h, t}(D)
    \]
    by the preimage of $\cb(D, o) = \ca(D, -o) \in C_{h, t}(D)$ under the inclusion $\tilde{C}^-_{h, t}(D) \hookrightarrow C_{h, t}(D)$. 
\end{definition}

\begin{proposition}
    If $c$ is invertible, then $\tilde{H}^+_{h, t}(D)$ is freely generated by the classes $\tilde{\ca}(D, o)$ and $\tilde{H}^-_{h, t}(D)$ is freely generated by the classes $\tilde{\cb}(D, o)$ over $R$.
\end{proposition}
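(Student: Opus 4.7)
The plan is to combine the splitting of the short exact sequence in \Cref{prop:C(D)-SES}, valid because $c$ is invertible, with the free generation of $H_{h, t}(D)$ by Lee classes from \Cref{prop:ab-gen}. The key observation is that when $c$ is invertible, $\{X_a, X_b\}$ is an $R$-basis of $A_{h, t}$, so the marked-circle tensor factor of each Lee cycle is unambiguously either $X_a$ or $X_b$. This sorts the Lee cycles into the two subcomplexes $\tilde{C}^\pm_{h, t}(D)$, and passing to homology sorts the Lee classes across the splitting $H_{h, t}(D) \isom \tilde{H}^+_{h, t}(D) \oplus \tilde{H}^-_{h, t}(D)$.

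More concretely, I would assume without loss of generality that $p$ is colored $a$ in $D$, so that $\tilde{C}^+ = (X_a)_p C$ and $\tilde{C}^- = (X_b)_p C$. For any $o \in O(D)$, the Lee cycle $\ca(D, o)$ carries $X_a$ at the marked circle exactly when the $ab$-coloring of $D_o$ paints $p$ with $a$, which by the definition of $\tilde{O}(D)$ is exactly when $o \in \tilde{O}(D)$. Hence $\ca(D, o) \in \tilde{C}^+$ for $o \in \tilde{O}(D)$ and $\ca(D, o) \in \tilde{C}^-$ otherwise, and the partition of $O(D)$ into $\tilde{O}(D)$ and its complement splits the free Lee-basis of $H_{h, t}(D)$ into free bases of the two summands. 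For $o \in \tilde{O}(D)$ the element $\tilde{\ca}(D, o)$ is by definition the preimage of $\ca(D, o)$ under the inclusion $\tilde{H}^+_{h, t}(D) \hookrightarrow H_{h, t}(D)$, so the first half of the statement follows.

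For the $\tilde{\cb}$ part, the remaining step is to observe that $o \mapsto -o$ defines a bijection $\tilde{O}(D) \to O(D) \setminus \tilde{O}(D)$: by \Cref{algo:ab-coloring} reversing all orientations swaps ``black-on-the-left'' with ``black-on-the-right'' and therefore swaps the $a$ and $b$ labels on every Seifert circle, while the checkerboard coloring is itself orientation-independent. Consequently $\{\cb(D, o)\}_{o \in \tilde{O}(D)} = \{\ca(D, -o)\}_{o \in \tilde{O}(D)}$ runs exactly through the complementary subfamily $\{\ca(D, o')\}_{o' \notin \tilde{O}(D)}$, which is the free basis of $\tilde{H}^-_{h, t}(D)$ identified in the previous paragraph. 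Pulling back along $\tilde{C}^-_{h, t}(D) \hookrightarrow C_{h, t}(D)$ yields the claimed generation by $\{\tilde{\cb}(D, o)\}_{o \in \tilde{O}(D)}$. I do not anticipate a serious obstacle: once the $\{X_a, X_b\}$-basis is available the Lee cycles sort themselves into the two halves, and the only geometric input needed is the orientation-reversal behaviour of the $ab$-coloring, which is immediate from \Cref{algo:ab-coloring}.
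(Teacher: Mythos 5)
Your proposal is correct and follows exactly the route the paper intends by its terse ``Immediate from \Cref{prop:ab-gen,prop:C(D)-SES}'': when $c$ is invertible the short exact sequence splits into $C_{h,t}(D) \isom (X_a)_p C \oplus (X_b)_p C$, and the free Lee basis of $H_{h,t}(D)$ sorts along the marked-circle tensor factor into the two summands, with the $o \mapsto -o$ bijection between $\tilde{O}(D)$ and its complement translating the $\tilde{\cb}$-generation into the $\ca(D, o')$-generation of $\tilde{H}^-$. You fill in the details the paper leaves implicit; no gaps.
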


\begin{proof}
    Immediate from \Cref{prop:ab-gen,prop:C(D)-SES}.
\end{proof}

\begin{corollary}
    If $c$ is invertible and $D$ is a knot diagram, $\tilde{H}_{h, t}(D)$ is freely generated by the single class $\tilde{\ca}(D)$. \qed
\end{corollary}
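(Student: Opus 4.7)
The plan is to derive this corollary directly from the preceding proposition, which asserts that when $c$ is invertible, $\tilde{H}^+_{h,t}(D)$ is freely generated by $\{\tilde{\ca}(D, o) \mid o \in \tilde{O}(D)\}$. So I only need to identify $\tilde{O}(D)$ when $D$ is a knot diagram, and since the convention established just before the statement is to suppress the $+$ symbol, the claim reduces to showing that $\tilde{O}(D)$ is a single orientation.

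First I would observe that a knot diagram has exactly one underlying component, so $O(D)$ has only two elements: the given orientation $o$ and its reverse $-o$. The given orientation trivially lies in $\tilde{O}(D)$ by definition, so it suffices to show that $-o \notin \tilde{O}(D)$, i.e.\ that reversing the orientation of a knot changes the $ab$-color assigned to the marked point $p$.

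The key step is the following observation about \Cref{algo:ab-coloring}: the set of Seifert circles and the checkerboard coloring of $\RR^2$ both depend only on the underlying unoriented diagram, so they are unchanged by reversing the orientation. The assignment of $a$ or $b$, however, is determined by whether a Seifert circle (traversed in its induced orientation) sees a black region to its left. Reversing the orientation of $D$ reverses the traversal direction of every Seifert circle and thereby swaps ``left'' and ``right'' on each of them, so every circle's label toggles between $a$ and $b$. In particular the Seifert circle containing $p$ receives the opposite color, which shows $-o \notin \tilde{O}(D)$.

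Combining these observations, $\tilde{O}(D) = \{o\}$, and the previous proposition gives that $\tilde{H}_{h,t}(D)$ is freely generated by the single class $\tilde{\ca}(D) = \tilde{\ca}(D, o)$. No serious obstacle is expected; the only point requiring care is the orientation-reversal argument for the $ab$-coloring, which is essentially a direct inspection of \Cref{algo:ab-coloring}.
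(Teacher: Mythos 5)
Your proposal is correct and is exactly the intended argument: the corollary is stated without proof because $\tilde{O}(D)$ is a singleton for a knot diagram, and your careful check — that reversing the orientation of a knot preserves its Seifert circles (hence the checkerboard coloring) while flipping every inherited circle orientation, and therefore every $a$/$b$ label — fills in precisely the detail the paper leaves implicit. (An equally immediate alternative would be to cite \Cref{cor:redH-rank} to see the rank is $2^{|D|-1}=1$, but your direct analysis of $\tilde{O}(D)$ is the more self-contained route and matches the structure of the preceding proposition.)
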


The following two propositions are reduced versions of \Cref{prop:cab-reidemeister,prop:cab-cobordism}. 

\begin{proposition} \label{prop:cab-reidemeister-red}
     Suppose $D, D'$ are pointed link diagrams related by a Reidemeister move that does not contain the marked point in the changing disk. Then the corresponding R-move map $\rho$ of \Cref{prop:red-r-moves} sends the reduced Lee classes as
    \begin{align*}
        \tilde{\ca}(D) &\xmapsto{\ \rho\ } \epsilon c^j \tilde{\ca}(D'), \\
        \tilde{\cb}(D) &\xmapsto{\ \rho\ } \epsilon' c^j \tilde{\cb}(D').
    \end{align*}
    Here $j \in \{-1, 0, 1\}$ given by
    \[ 
        j = \frac{\delta w(D, D') - \delta r(D, D')}{2} 
    \]
    and $\epsilon, \epsilon' \in \{ \pm 1 \}$ are signs satisfying
    \[
        \epsilon \epsilon' = (-1)^j.
    \]
\end{proposition}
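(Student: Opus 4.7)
The plan is to deduce \Cref{prop:cab-reidemeister-red} directly from its unreduced counterpart \Cref{prop:cab-reidemeister}, using the observation that the reduced complex $\tilde{C}^+_{h,t}(D)$ sits inside $C_{h,t}(D)$ as a subcomplex, and that the reduced Lee cycle $\tilde{\ca}(D)$ is by definition the unique element of $\tilde{C}^+_{h,t}(D)$ whose image under the inclusion $i\colon \tilde{C}^+_{h,t}(D) \hookrightarrow C_{h,t}(D)$ equals the unreduced Lee cycle $\ca(D)$.

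First I would invoke \Cref{prop:red-r-moves}, which says that since the Reidemeister move does not enter the marked disk, the unreduced R-move map $\rho$ restricts to the reduced complexes, so that the square
\begin{equation*}
    \begin{tikzcd}
        \tilde{C}^+_{h,t}(D) \arrow{r}{i} \arrow{d}{\rho} &
        C_{h,t}(D) \arrow{d}{\rho} \\
        \tilde{C}^+_{h,t}(D') \arrow{r}{i} &
        C_{h,t}(D')
    \end{tikzcd}
\end{equation*}
commutes. I would also note that the $ab$-colored tensor factor at the marked circle of $\ca(D)$ agrees with that of $\ca(D')$, because the $ab$-coloring outside the changing disk is preserved by the move; hence $\tilde{\ca}(D)$ and $\tilde{\ca}(D')$ really do lie in $\tilde{C}^+_{h,t}$ on their respective sides (and similarly for $\tilde{\cb}$ in $\tilde{C}^-_{h,t}$).

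Then, applying $i$ to the desired identity $\rho(\tilde{\ca}(D)) = \epsilon c^j \tilde{\ca}(D')$ and using $i(\tilde{\ca}(D)) = \ca(D)$ together with $i(\tilde{\ca}(D')) = \ca(D')$, the claim reduces to $\rho(\ca(D)) = \epsilon c^j \ca(D')$ in $C_{h,t}(D')$. This is precisely \Cref{prop:cab-reidemeister}, which supplies the exponent $j = (\delta w(D, D') - \delta r(D, D'))/2$ and signs $\epsilon, \epsilon'$ satisfying $\epsilon \epsilon' = (-1)^j$. Since $i$ is injective, the equality lifts uniquely to the reduced complex. Running the same argument for $\tilde{\cb}(D)$ through the inclusion $\tilde{C}^-_{h,t}(D) \hookrightarrow C_{h,t}(D)$ yields the $\cb$ statement with the same constants.

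I do not anticipate any real obstacle here: the proposition is a formal corollary of \Cref{prop:cab-reidemeister} and \Cref{prop:red-r-moves}, with injectivity of the inclusion doing the main work. The only minor verification—that the reduced Lee cycles on $D$ and $D'$ inhabit matching reduced subcomplexes—reduces to checking that the $ab$-coloring of the marked arc is unaltered by a Reidemeister move disjoint from the marked disk, which is immediate from \Cref{algo:ab-coloring}.
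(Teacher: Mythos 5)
Your argument is correct, but it takes a genuinely different route from the paper's. The paper simply says ``the proof of \Cref{prop:cab-reidemeister} works without change'' and points to \Cref{sec:proof-of-key-prop}: that is, re-run the explicit case-by-case analysis of the R-move maps (RM1, RM2, RM3) verbatim in the reduced setting, which works because the $ab$-coloring at the marked circle is untouched by a move away from the marked disk. You instead \emph{deduce} the reduced statement formally from the unreduced one, using the commuting square of \Cref{prop:red-r-moves} together with injectivity of the chain-level inclusion $i\colon \tilde{C}^+_{h,t}(D') \hookrightarrow C_{h,t}(D')$. Your route is cleaner, and it automatically inherits the handling of the delicate RM3 case (where the appendix must invoke a trick from \cite{Polyak:2010}) without redoing it.

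One point worth making explicit, though, because it is where the argument could have gone wrong. Your injectivity step has to be read at the cycle level, not the homology level: the inclusion $\tilde{C}^+_{h,t} \hookrightarrow C_{h,t}$ is injective on chains, but the induced map $\tilde{H}^+_{h,t} \to H_{h,t}$ need not be injective when $c$ is not invertible (the short exact sequence of \Cref{prop:C(D)-SES} splits only when $c$ is a unit, and \Cref{prop:cab-reidemeister-red} does not assume that). So you must invoke \Cref{prop:cab-reidemeister} as a cycle-level equality $\rho(\ca(D)) = \epsilon c^j \ca(D')$ in $C_{h,t}(D')$, not merely as an equality of homology classes. This is indeed what the appendix establishes, but the wording of \Cref{prop:cab-reidemeister} (``Lee classes,'' ``the corresponding isomorphism'') leaves it easy to read only at the level of homology. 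You should also note, for the $j<0$ case, that the unreduced equation is stated for the homotopy inverse $\rho^{-1}$, and that the inverse map likewise restricts to the reduced subcomplexes by \Cref{prop:red-r-moves}, so the same deduction applies.
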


\begin{proof}
    The proof of \Cref{prop:cab-reidemeister} works without change (see \Cref{sec:proof-of-key-prop}).
\end{proof}

\begin{proposition} \label{prop:cab-cobordism-red}
    Suppose $(R, h, t)$ is factorable and $c$ is invertible. Let $S$ be an oriented cobordism between pointed links $L, L'$ that has no closed components. The corresponding cobordism map $\phi$ of \Cref{prop:red-cobordism} sends the reduced Lee classes as 
    \begin{align*}
        \tilde{\ca}(D) &\xmapsto{\ \phi\ } \epsilon c^j \tilde{\ca}(D') + \cdots, \\
        \tilde{\cb}(D) &\xmapsto{\ \phi\ } \epsilon' c^j \tilde{\cb}(D') + \cdots
    \end{align*}
    where $j \in \ZZ$ is given by 
    \[
        j = \frac{\delta w(D, D') - \delta r(D, D') - \chi(S)}{2}
    \]
    and $\epsilon, \epsilon' \in \{ \pm 1 \}$ are signs satisfying
    \[
        \epsilon \epsilon' = (-1)^j.
    \]
    Moreover if every component of $S$ has a boundary in $L$, then the $(\cdots)$ terms vanish.
\end{proposition}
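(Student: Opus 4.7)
The plan is to reduce to the unreduced \Cref{prop:cab-cobordism} by exploiting the inclusion of chain complexes $i: \tilde{C}^+_{h,t}(D) \hookrightarrow C_{h,t}(D)$, under which $\tilde{\ca}(D) \mapsto \ca(D)$ by definition and, by \Cref{prop:red-cobordism}, the reduced cobordism map $\phi$ is the restriction of the unreduced cobordism map $\Phi: C_{h,t}(D) \to C_{h,t}(D')$. The key identity $i \circ \phi = \Phi \circ i$ holds on the nose at the chain level, so injectivity of $i$ is what allows the unreduced formula to descend to the reduced one.

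First I would represent $S$ as a finite composition of elementary pointed moves: Reidemeister moves, Morse moves, and basepoint crossing moves $\tau_{p,q}$. For the Reidemeister and Morse contributions the argument of \Cref{prop:cab-cobordism} directly yields the stated $j$, $\epsilon$, $\epsilon'$ for $\Phi(\ca(D))$. For a basepoint crossing $\tau_{p,q}$, by \Cref{prop:tau_pq} it is a composition of Reidemeister moves bracketing the isotopy-through-infinity isomorphism $I$; the Reidemeister contributions to $\delta w$ and $\delta r$ cancel because the initial and final diagrams agree apart from the basepoint, and by \Cref{prop:isotopy-through-infty} the map $I$ sends Lee cycles to Lee cycles up to sign. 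Hence each $\tau_{p,q}$ contributes trivially to $j$ and only a sign to $\epsilon, \epsilon'$, and the unreduced formula $\Phi(\ca(D)) = \epsilon c^j \ca(D') + \cdots$ holds with the $j$ and signs as stated.

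Next, since $i(\phi(\tilde{\ca}(D))) = \Phi(\ca(D))$, the element $\epsilon c^j \ca(D') + \cdots$ lies in $i(\tilde{C}^+_{h,t}(D'))$, and injectivity of $i$ lets us lift the equation uniquely at the chain level; when $c$ is invertible \Cref{prop:C(D)-SES} strengthens this to a split inclusion, so the lift descends to homology as $\phi(\tilde{\ca}(D)) = \epsilon c^j \tilde{\ca}(D') + \cdots$. The statement for $\tilde{\cb}$ follows by the symmetric argument using $\tilde{C}^-$, and the vanishing of the $\cdots$ terms when every component of $S$ meets $L$ is inherited directly from the unreduced statement.

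The main obstacle I anticipate is tracking the sign relation $\epsilon \epsilon' = (-1)^j$ through each elementary move, particularly through $I$, which exchanges $X_a \leftrightarrow X_b$ up to sign and therefore affects $\ca$ and $\cb$ asymmetrically; this is the same bookkeeping already appearing in \Cref{prop:cab-reidemeister-red}, and reduces to the explicit formulas for $I$ and for the R-move maps on the standard generators.
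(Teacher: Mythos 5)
Your proof is correct and centers on the same observation the paper uses: the only new ingredient in the pointed/reduced setting is the basepoint-crossing move $\tau_{p,q}$, and by \Cref{prop:tau_pq} this is a composite of R-move maps bracketing $I$ from \Cref{prop:isotopy-through-infty}, both of which send Lee cycles to Lee cycles up to the expected factors, so $\tau_{p,q}$ contributes nothing to $j$. The paper's proof is the terse one-liner that the proof of \Cref{prop:cab-cobordism} goes through verbatim in the reduced complexes once this is noted; your route is structurally slightly different in that you first establish the formula for the pointed cobordism map on the unreduced complex and then lift it through the split inclusion $\tilde{H}^+ \hookrightarrow H$ of \Cref{prop:C(D)-SES}, using that the Lee classes form a basis and that $i_*$ sends $\tilde{\ca}(D',o')$ to $\ca(D',o')$. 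Both arguments are valid and rest on the same key facts; the lifting route has the small bookkeeping advantage of treating the unreduced statement as a black box, at the cost of introducing an extra step. One small phrasing issue worth fixing: you speak of lifting the equation "uniquely at the chain level" and then "descending to homology," but the unreduced equation you are quoting is a homology-level statement, so the lift you actually need is in homology, and the injectivity of $i_*$ there is exactly what the split SES provides; once that is rephrased, the argument is clean.
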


\begin{proof} 
     Again the proof of \Cref{prop:cab-cobordism} works without change, except that we need to consider the effect of $\tau_{p, q}$, which in fact needs no care since it is defined by a composition of R-move maps and the map $I$ of \Cref{prop:isotopy-through-infty}.
\end{proof}

\begin{proposition}
    Under the identification given in  \Cref{prop:red-recover-unred}, the unreduced and the reduced Lee classes correspond as
    \[
        \ca(D, o) = \tilde{\ca}(D_+, o_+)
    \]
    for each $o \in O(D)$. 
    \qed
\end{proposition}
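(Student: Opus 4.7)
The plan is to unpack both sides of the asserted equality under the identification of \Cref{prop:red-recover-unred} and check that they produce the same element. The content of the statement lies almost entirely in correctly identifying the $ab$-color of the added basepoint circle in $D_+$; the rest is definition chasing.

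First I would analyze the $ab$-coloring of $(D_+)_{o_+}$. Since the added unknot is disjoint from $D$ and carries a (counterclockwise) Seifert-circle-by-itself resolution, the Seifert circles of $(D_+)_{o_+}$ are exactly those of $D_o$ together with the added circle. In the checkerboard coloring on $\RR^2$ minus these circles, the region outside the added circle inherits the coloring from $D_o$'s complement (since both have the same unbounded region, which is white), while the interior of the added circle is separated from the white unbounded region by a single curve and is therefore black. The counterclockwise orientation places this black interior on the circle's left, so by \Cref{algo:ab-coloring} the added circle is colored $a$. The same argument applies regardless of the orientation $o$ chosen on $D$, and in particular it agrees with the color induced by the given (default) orientation of $D_+$, so $o_+ \in \tilde{O}(D_+)$.

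With this coloring in hand, the construction of the Lee cycle as a tensor product of $X_a$'s and $X_b$'s over Seifert circles gives
\[
    \ca(D_+, o_+) = \underline{X_a} \otimes \ca(D, o),
\]
where $\underline{X_a}$ is the factor on the marked (added) circle and the remaining factors are unchanged from $\ca(D, o)$ because the coloring on the Seifert circles of $D_o$ is unaffected by a disjoint addition.

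Finally, since the marked point is colored $a$, we have $\tilde{C}^+_{h,t}(D_+) = (X_a)_p C_{h,t}(D_+)$, and the inclusion $\tilde{C}^+_{h,t}(D_+) \hookrightarrow C_{h,t}(D_+)$ is the identity on elements of the form $\underline{X_a} \otimes x$. Hence the preimage of $\ca(D_+, o_+)$ under this inclusion, which is by definition $\tilde{\ca}(D_+, o_+)$, equals $\underline{X_a} \otimes \ca(D, o)$. This is precisely the image of $\ca(D, o)$ under the isomorphism $x \mapsto \underline{X_a} \otimes x$ of \Cref{prop:red-recover-unred}, establishing the claim. I do not anticipate any substantive obstacle; the only subtle point is the color computation for the added circle, and that is forced by the counterclockwise convention.
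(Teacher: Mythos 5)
Your proof is correct. The paper marks this proposition with a \qed and supplies no argument, treating it as immediate from the definitions; your write-up supplies exactly the definition-chasing the paper leaves implicit. The one point worth flagging: your color computation for the added circle tacitly assumes the disjoint unknot is placed in the unbounded (white) region, which is the standard convention for a disjoint union diagram but is not spelled out in the paper's definition of $D_+$. Under that convention, the interior of the added counterclockwise circle is black, the circle is colored $a$, the checkerboard coloring of the remaining regions is unaffected, and the identification $x \mapsto \underline{X_a} \otimes x$ carries $\ca(D,o)$ to $\ca(D_+, o_+)$ landing in $(X_a)_p C_{h,t}(D_+) = \tilde{C}^+_{h,t}(D_+)$, whose preimage is $\tilde{\ca}(D_+, o_+)$ by definition. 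Nothing further is needed.
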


    \subsection{Connected sums and mirrors}

\subsubsection{Connected sums}

\begin{proposition} \label{prop:C-red-conn-sum}
    Suppose $D, D'$ are pointed link diagrams. Then there is a chain isomorphism
    \[
        \tilde{C}_{h, t}(D \# D') \isom \tilde{C}_{h, t}(D) \otimes_R \tilde{C}_{h, t}(D').
    \]
    Under this identification, the Lee cycles correspond as
    \[
        \tilde{\ca}(D \# D', o \# o') = \tilde{\ca}(D, o) \otimes \tilde{\ca}(D', o')
    \]
    for any $o \in \tilde{O}(D)$ and $o' \in \tilde{O}(D')$. Here it is assumed that, both $D, D'$ have the marked points on outermost arcs, and that $D \# D'$ can be realized by a surgery along a coherently oriented untwisted band that connects the two marked points. 
\end{proposition}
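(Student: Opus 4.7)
My plan is to construct the chain isomorphism explicitly on enhanced states, check it intertwines the differentials case by case, and then read off the Lee-cycle identity from the way the $ab$-colorings match up near the band.

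First, since $D \# D'$ introduces no new crossings, its states biject with pairs $(u, u')$, and each resolution $(D \# D')(u, u')$ is obtained from $D(u) \sqcup D'(u')$ by gluing the two marked circles into a single one via the band. Using the subcomplex description $\tilde{C}_{h,t}(D) = (X_a)_p C_{h,t}(D)$ (take $p$ colored $a$ without loss of generality), an enhanced state of $\tilde{C}_{h,t}(D \# D')$ assigns $X_a$ to the merged marked circle, while one of $\tilde{C}_{h,t}(D) \otimes \tilde{C}_{h,t}(D')$ assigns $X_a$ to both marked circles separately; sending one to the other and identifying the labels on the remaining circles yields a graded $R$-module isomorphism $\Phi$. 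To see that $\Phi$ is a chain map I would check the differentials crossing by crossing: when the saddle at a crossing of $D$ misses the marked circle of $D(u)$ the result is trivially compatible, while when it touches the marked circle the relevant operations on the basis $\{1, X_a\}$ from equation \eqref{eq:1Xa-ops}, namely $m(X_a \otimes 1) = X_a$, $m(X_a \otimes X_a) = c X_a$, and $\Delta(X_a) = X_a \otimes X_a$, preserve the marked $X_a$-factor and produce matching outputs under $\Phi$. Crossings of $D'$ are symmetric.

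For the Lee-cycle identity, I would verify that Algorithm \ref{algo:ab-coloring} applied to $D \# D'$ restricts to the $ab$-colorings of $D$ and $D'$. The hypothesis that both marked points lie on outermost arcs ensures the checkerboard coloring of $\RR^2$ is compatible near the band: the unbounded regions of $D$ and $D'$ glue to that of $D \# D'$ with no flip. The coherent untwisted band then guarantees the merged Seifert circle inherits the orientation of the marked Seifert circles of $D$ and $D'$, hence the common color $a$; every other Seifert circle is unaffected in position and local checkerboard data, so it retains its color. Consequently $\ca(D \# D', o \# o')$ factors through $\Phi$ as $\ca(D, o) \otimes \ca(D', o')$, and the same factorisation restricts to the reduced cycles $\tilde{\ca}$. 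The main obstacle is precisely this planar checkerboard compatibility at the band: verifying that no color flip occurs as one crosses it is the one step that genuinely uses the outermost-arc and coherent-untwisted hypotheses, without which extra sign or color corrections would appear.
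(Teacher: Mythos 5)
Your proof is correct and follows essentially the same route as the paper: identify the cube and resolution circles of $D \# D'$ with those of $D$ and $D'$, send the marked factor to the tensor of marked factors, and then match the $ab$-colorings. The only cosmetic difference is that you work with the subcomplex description $(X_a)_p C$ while the paper uses the isomorphic quotient description $C/(X_b)_p$ and sends $\underline{1}\otimes x\otimes y \mapsto (\underline{1}\otimes x)\otimes(\underline{1}\otimes y)$.
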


\begin{proof}
    From the assumption, the two marked points are colored the same by the $ab$-colorings of $D$ and $D'$. The isomorphism is given by mapping
    \[
        \underline{1} \otimes x \otimes y \mapsto 
        (\underline{1} \otimes x) \otimes (\underline{1} \otimes y).
    \]
    Here we used the quotient description of the reduced complexes.
\end{proof}

\begin{remark} \label{rem:unred-disj-union}
    For unpointed link diagrams $D, D'$, we have $(D \sqcup D')_+ =  D_+ \# D'_+$ and hence obtain the well known formula
    \[
        C_{h, t}(D \sqcup D') \isom C_{h, t}(D) \otimes_R C_{h, t}(D')
    \]
    and 
    \[
        \ca(D \sqcup D') = \ca(D) \otimes \ca(D').
    \]
\end{remark}

\subsubsection{Mirrors}

For any Frobenius algebra $A = (A, m, \iota, \Delta, \epsilon)$ over $R$, its \textit{dual Frobenius algebra} is defined by $A^* = (A^*, \Delta^*, \epsilon^*, m^*, \iota^*)$ where $A^* = \Hom_R(A, R)$ and other maps are the dual maps. For a link diagram $D$, let $D^*$ denote its mirror.

\begin{lemma} \label{lem:A-self-dual}
    There is a Frobenius algebra isomorphism
    \[
        \varphi: A_{h, t} \rightarrow A_{h, t}^*
    \]
    given by 
    \[
        \varphi(1) = X^*, \quad
        \varphi(X) = 1^* + h X^*
    \]
    where $\{1^*, X^*\}$ is the basis of $A_{h, t}^*$ dual to the basis $\{1, X\}$ for $A_{h, t}$. 
\end{lemma}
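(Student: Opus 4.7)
The strategy is to recognize $\varphi$ as the canonical isomorphism induced by the Frobenius pairing, namely $\varphi(x) = \epsilon(x \cdot -)$. First I would verify that this recipe yields the stated formulas: $\epsilon(1 \cdot 1) = 0$ and $\epsilon(1 \cdot X) = 1$ give $\varphi(1) = X^*$, while $\epsilon(X \cdot 1) = 1$ and $\epsilon(X \cdot X) = \epsilon(hX + t) = h$ give $\varphi(X) = 1^* + h X^*$. Bijectivity is then immediate from the change-of-basis matrix $\begin{pmatrix} 0 & 1 \\ 1 & h \end{pmatrix}$, whose determinant is $-1$ and hence a unit.

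Next I would unpack the Frobenius structure on $A_{h,t}^*$ explicitly. The unit of $A^*$ is $\epsilon \in A^*$, the counit is evaluation at $1$, the multiplication is $\Delta^*$, and the comultiplication is $m^*$. Using the dual basis, a direct computation against \eqref{eq:1X-operations} yields the multiplication table
\[
    1^* \cdot 1^* = -h\, 1^* + t\, X^*, \quad 1^* \cdot X^* = X^* \cdot 1^* = 1^*, \quad X^* \cdot X^* = X^*,
\]
and the comultiplication
\[
    m^*(1^*) = 1^* \otimes 1^* + t\, X^* \otimes X^*, \quad m^*(X^*) = 1^* \otimes X^* + X^* \otimes 1^* + h\, X^* \otimes X^*.
\]

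Finally I would verify the four compatibility conditions directly. Unit preservation is $\varphi(1) = X^* = \epsilon$, the unit of $A^*$; counit preservation is $\varphi(x)(1) = \epsilon(x)$ by construction. Multiplicativity reduces to the single nontrivial identity $\varphi(X^2) = \varphi(X)^2$, which expands as $h\, 1^* + (h^2 + t)\, X^* = (1^* + h X^*)^2$ using the above table. Comultiplicativity reduces to $m^*(\varphi(z)) = (\varphi \otimes \varphi)(\Delta(z))$ for $z \in \{1, X\}$, both of which collapse to equalities of tensors after expanding via \eqref{eq:1X-operations} and the formula for $m^*$.

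The computation is entirely routine; there is no substantive obstacle beyond carefully tracking the dualization conventions. The only conceptual point worth flagging is that the lemma is an instance of the classical fact that every finitely generated projective commutative Frobenius algebra is canonically isomorphic to its linear dual via the Frobenius pairing $\langle x, y \rangle = \epsilon(xy)$, so one could alternatively invoke this general principle and reduce the proof to the verification that the resulting $\varphi$ matches the stated formulas on the basis $\{1, X\}$.
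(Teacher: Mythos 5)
Your proof is correct, but it takes a genuinely different route from the paper's. The paper factors $\varphi$ as a composition of two simpler isomorphisms: first the ``swap'' identification $A_{h,t} \cong A_{-h,t}^*$ sending $1 \mapsto X^*$, $X \mapsto 1^*$, and then the dual of the ring isomorphism $X \mapsto X + h$ (which gives $A_{h,t} \cong A_{-h,t}$), composed together to produce the stated map. This decomposition minimizes the amount of direct verification: each factor is transparently a Frobenius algebra isomorphism for elementary reasons. Your approach is instead to recognize $\varphi$ as the canonical Frobenius-pairing map $x \mapsto \epsilon(x\cdot-)$, and then to verify all four compatibility conditions (unit, counit, multiplication, comultiplication) by explicit expansion against the structure constants of $A_{h,t}$ and $A_{h,t}^*$. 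The computations you carry out are all correct; in particular the multiplication table for $A_{h,t}^*$ and the identity $h\,1^* + (h^2+t)X^* = (1^* + hX^*)^2$ are accurate. Your route buys conceptual clarity about \emph{why} $\varphi$ has the stated formula (it is the nondegenerate Frobenius pairing), at the cost of a somewhat longer direct computation; the paper's route buys brevity by exploiting the $h \leftrightarrow -h$ symmetry and the translation automorphism, at the cost of a less self-explanatory statement of where the formula comes from.
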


\begin{proof}
    The desired $\varphi$ is given by the composition of two isomorphisms: first we have $A_{h, t} \isom A_{-h, t}^*$ by the correspondence
    \[
        1 \mapsto X^*,\quad
        X \mapsto 1^*.
    \]
    Next the ring isomorphism
    \[
        R[X] \rightarrow R[X];\quad X \mapsto X + h
    \]
    induces $A_{h, t} \isom A_{-h, t}$ and hence $A^*_{-h, t} \isom A^*_{h, t}$ by
    \[
        1^* \mapsto 1^* + hX^*,\quad
        X^* \mapsto X^*.
    \]
\end{proof}

\begin{proposition} \label{prop:C-mirror}
    The isomorphism $\varphi$ of \Cref{lem:A-self-dual} induces a chain isomorphism
    \[
        C_{h, t}(D^*) \isom C_{h, t}(D)^*.
    \]
    This gives a perfect pairing
    \[
        \langle \cdot, \cdot \rangle: C_{h, t}(D) \otimes C_{h, t}(D^*) \rightarrow R
    \]
    defined by 
    \[
        \langle z, w \rangle = \langle z, \varphi(w) \rangle_{\text{std}}
    \]
    where the right hand side $\langle \cdot, \cdot \rangle_{\text{std}}$ is the standard pairing between $C$ and $C^*$. 
\end{proposition}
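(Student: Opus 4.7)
The plan is to combine two manifest symmetries. Mirroring $D \mapsto D^*$ swaps 0-resolutions and 1-resolutions at every crossing, so there is a canonical bijection $u \leftrightarrow \mathbf{1}-u$ between the vertices of the cube of resolutions of $D^*$ and that of $D$, under which $D^*(u) = D(\mathbf{1}-u)$ as crossingless diagrams. On the other hand, dualizing a complex reverses every arrow. An edge of the cube of $D^*$ flipping a coordinate $0 \to 1$ corresponds under the bit-flip to an edge of the cube of $D$ flipping $1 \to 0$, i.e.\ an edge reversed in direction. Hence the vertex labellings and edge orientations of $C_{h, t}(D^*)$ and of $C_{h, t}(D)^*$ match up perfectly, so what remains is to identify the vertex modules and edge maps.

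First I would apply at each vertex $u$ the $R$-linear isomorphism $\varphi^{\otimes c(u)}: A_{h, t}^{\otimes c(u)} \to (A_{h, t}^*)^{\otimes c(u)} \isom (A_{h, t}^{\otimes c(u)})^*$ induced by the Frobenius algebra isomorphism $\varphi$ of \Cref{lem:A-self-dual}. The real content is that these vertex-wise maps intertwine the differentials. An edge of $D^*$ performing a merge $m$ on a pair of circles corresponds under the bit-flip to an edge of $D$ performing a split $\Delta$, whose dual in $C_{h, t}(D)^*$ is $\Delta^*$. The required commutation $\Delta^* \circ (\varphi \otimes \varphi) = \varphi \circ m$ is precisely the statement that $\varphi$ is a morphism of $R$-algebras from $A_{h, t}$ to $A_{h, t}^*$, where the product on the latter is $\Delta^*$; the analogous identity for splits follows symmetrically from $\varphi$ being a morphism of coalgebras. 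Compatibility with the units and counits is verified the same way and is automatic from \Cref{lem:A-self-dual}.

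The main obstacle will be bookkeeping the edge signs in the cube differential: mirroring and dualizing both alter sign assignments, and I expect them to do so in parallel, but this has to be checked against an explicitly fixed sign convention, possibly absorbing a factor $(-1)^{|u|}$ on the $u$-indexed summand to align the two. Once the chain isomorphism $\Phi : C_{h, t}(D^*) \to C_{h, t}(D)^*$ is established, the second assertion is formal: the pairing $\langle z, w \rangle := \Phi(w)(z)$ unwinds vertex by vertex to $\langle z, \varphi(w) \rangle_{\text{std}}$, and is perfect because $\Phi$ is an $R$-module isomorphism onto $\Hom_R(C_{h, t}(D), R)$ while each graded piece of $C_{h, t}(D)$ is a free $R$-module of finite rank, so the standard pairing is perfect in each bidegree.
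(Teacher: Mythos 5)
Your proposal is correct and takes essentially the same approach as the paper's proof: where you apply $\varphi$ vertex-wise and verify the edge-map commutations directly from the Frobenius-morphism property, the paper factors the same isomorphism as $C_{h,t}(D^*) \isom C_{-h,t}(D)^* \isom C_{h,t}(D)^*$, mirroring the two-step construction of $\varphi$ in \Cref{lem:A-self-dual}, and these are the same computation. The sign bookkeeping you flag is real but is also silently delegated by the paper to the standard mirror-duality isomorphism $C_{h,t}(D^*) \isom C_{-h,t}(D)^*$, so your treatment is on par.
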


\begin{proof}
    The composition of chain isomorphisms
    \[
        C_{h, t}(D^*) \isom C_{-h, t}(D)^* \isom C_{h, t}(D)^*
    \]
    is realized by applying $\varphi$ to the tensor factors.
\end{proof}

Now assume $(R, h, t)$ is factorable. 

\begin{lemma} \label{lem:A-pairing-iota}
    The isomorphism $\varphi$ of \Cref{lem:A-self-dual} maps
    \[
        \varphi(X_a) = 1^* + bX^*,\quad
        \varphi(X_b) = 1^* + aX^*
    \]
    and
    \begin{gather*}
        \langle X_a, \varphi(X_a) \rangle = c,\  
        \langle X_b, \varphi(X_b) \rangle = -c,\\
        \langle X_a, \varphi(X_b) \rangle =
        \langle X_b, \varphi(X_a) \rangle = 0.
    \end{gather*}
    \qed
\end{lemma}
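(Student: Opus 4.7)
The plan is to reduce the entire claim to direct evaluations, using only the $R$-linearity of $\varphi$, the formulas $\varphi(1) = X^*$ and $\varphi(X) = 1^* + hX^*$ from \Cref{lem:A-self-dual}, and the identity $h = a + b$ coming from factorability.

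First I would compute the images $\varphi(X_a)$ and $\varphi(X_b)$ by expanding $X_a = X - a$ and $X_b = X - b$ and applying $R$-linearity of $\varphi$. For $X_a$ this gives
\[
    \varphi(X_a) = \varphi(X) - a\varphi(1) = 1^* + hX^* - aX^* = 1^* + (h-a)X^* = 1^* + bX^*,
\]
and the computation of $\varphi(X_b)$ is entirely parallel, with $a$ and $b$ swapped.

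Second, I would evaluate the four standard pairings in $A_{h,t} \otimes A_{h,t}^* \to R$. Since $\{1^*, X^*\}$ is the basis dual to $\{1, X\}$, for any $\lambda \in R$ the identities $1^*(X - \lambda) = -\lambda$ and $X^*(X - \lambda) = 1$ hold. Each of the required pairings then collapses in a single line; for example,
\[
    \langle X_a, \varphi(X_a) \rangle = 1^*(X_a) + b\, X^*(X_a) = -a + b = c,
\]
and the three remaining cases $\langle X_b, \varphi(X_b) \rangle = -c$, $\langle X_a, \varphi(X_b) \rangle = 0$, and $\langle X_b, \varphi(X_a) \rangle = 0$ are established identically.

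There is no genuine obstacle: the lemma is a mechanical unwinding of the definitions, and its content is that the self-pairing $\varphi$ becomes diagonal in the basis $\{X_a, X_b\}$ with diagonal entries $\pm c$. This will be the computational input later when identifying Lee classes under the perfect pairing between $C_{h,t}(D)$ and $C_{h,t}(D^*)$ of \Cref{prop:C-mirror}.
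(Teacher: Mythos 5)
Your computation is correct and is precisely the mechanical unwinding the paper has in mind — it gives no proof at all, marking the lemma with \qed as immediate. The key identities you isolate, namely $1^*(X-\lambda) = -\lambda$ and $X^*(X-\lambda) = 1$ together with $h - a = b$ and $h - b = a$, are exactly what makes each line collapse, and everything checks out.
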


\begin{proposition} \label{prop:C-mirror-pairing}
    The Lee cycles of $D$ and $D^*$ pair as
    \begin{gather*}
        \langle \ca(D), \ca(D^*) \rangle = \epsilon c^r, \ 
        \langle \cb(D), \cb(D^*) \rangle = \epsilon' c^r, \\
        \langle \ca(D), \cb(D^*) \rangle = \langle \cb(D), \ca(D^*) \rangle = 0
    \end{gather*}
    where $r = r(D)$ and $\epsilon, \epsilon' \in \{ \pm 1 \}$ are signs such that $\epsilon\epsilon' = (-1)^r$. 
\end{proposition}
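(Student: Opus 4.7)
The approach is to exploit the product structure of the Lee cycles and reduce the pairing to a tensor product of local pairings computed by \Cref{lem:A-pairing-iota}. The first step is to observe that, in the mirror convention underlying \Cref{prop:C-mirror} (switching all crossings without reflecting the projection or reversing orientations), the Seifert circles of $D^*$ coincide with those of $D$ as oriented planar curves. This is because the Seifert smoothing at a crossing depends only on the strand orientations, not on the crossing sign, while neither the strand orientations nor the checkerboard coloring of the plane are altered by switching crossings. Consequently the $ab$-colorings produced by \Cref{algo:ab-coloring} for $D$ and $D^*$ are identical, so $\ca(D)$ and $\ca(D^*)$ are the \emph{same} tensor product $X_{c_1} \otimes \cdots \otimes X_{c_r}$ (living in different complexes, at different states), while $\cb(\cdot) = \ca(\cdot, -o)$ agrees with the $a \leftrightarrow b$ swap at every factor.

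Next I would trace through the composite isomorphism $C_{h,t}(D^*) \isom C_{-h,t}(D)^* \isom C_{h,t}(D)^*$ of \Cref{prop:C-mirror}. The first arrow is the standard cube-duality, which identifies the state $u$ of $D^*$ with the dual of the complementary state $\bar u$ of $D$; in particular the oriented state of $D^*$, namely $\bar{u_0}$, is sent to the dual of the oriented state $u_0$ of $D$. The second arrow applies $\varphi$ factor-wise. Hence $\ca(D^*)$ is identified with the functional on $C_{h,t}(D)$ supported at $u_0$ whose tensor factors are $\varphi(X_{c_1}), \dots, \varphi(X_{c_r})$. Since $\ca(D)$ itself sits at state $u_0$, the state-vanishing of the standard pairing reduces the computation to
\[
    \langle \ca(D), \ca(D^*) \rangle = \prod_{i=1}^{r} \langle X_{c_i}, \varphi(X_{c_i}) \rangle.
\]
By \Cref{lem:A-pairing-iota} each factor equals $c$ if $c_i = a$ and $-c$ if $c_i = b$. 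Writing $k$ for the number of $b$-colored Seifert circles, this gives $\langle \ca(D), \ca(D^*) \rangle = (-1)^k c^r$, so $\epsilon = (-1)^k$.

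The same calculation for $\cb$ swaps the roles of $a$ and $b$, yielding $\epsilon' = (-1)^{r-k}$ and thus $\epsilon\epsilon' = (-1)^r$ as claimed. For the mixed pairings, $\cb(D^*)$ and $\ca(D)$ disagree at \emph{every} Seifert circle, so every local factor is of the form $\langle X_a, \varphi(X_b) \rangle$ or $\langle X_b, \varphi(X_a) \rangle$, both zero by \Cref{lem:A-pairing-iota}, and the total vanishes; symmetrically for $\langle \cb(D), \ca(D^*) \rangle$. The main obstacle is the bookkeeping in the middle paragraph — pinning down the precise state-complementation under the cube duality and the factor-wise action of $\varphi$, so that one may legitimately reduce to the product of local pairings. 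Once that identification is verified, the remainder is a direct substitution into \Cref{lem:A-pairing-iota}.
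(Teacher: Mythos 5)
Your proposal is correct and takes essentially the same approach as the paper, which simply cites Lemma~\ref{lem:A-pairing-iota} together with the observation that $D$ and $D^*$ have identical Seifert circles and declares the result immediate. You spell out the implicit steps — the coincidence of $ab$-colorings, the state-complementation under cube duality, and the factorization of the standard pairing into local factors — which the paper leaves tacit, and your sign bookkeeping $\epsilon = (-1)^k$, $\epsilon' = (-1)^{r-k}$ correctly reproduces $\epsilon\epsilon' = (-1)^r$.
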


\begin{proof}
    Immediate from \Cref{lem:A-pairing-iota} together with the observation that the Seifert circles of $D$ and $D^*$ are identical.
\end{proof}

Next we relate the reduced complex of $D^*$ with the dual of the reduced complex of $D$.

\begin{proposition} \label{prop:tilde-iota}
    The isomorphism $\varphi$ of \Cref{prop:C-mirror} induces isomorphisms $\tilde{\varphi}$ such that the following diagram commutes
    \[
    \begin{tikzcd}[row sep=2.5em]
    (X_a)_pC(D^*) \arrow[r, "\sim"] \arrow[d, "\tilde{\varphi}"] & C(D^*)/(X_b)_p \arrow[d, "\tilde{\varphi}"] \\
    (C(D)/(X_b)_p)^* \arrow[r, "\sim"]                 & ((X_a)_pC(D))^*.                  
    \end{tikzcd}
    \]
    Here the horizontal arrows are the isomorphisms of \Cref{prop:red-C-isom2}. Similar isomorphisms with $a, b$ exchanged also exist. Thus we get isomorphisms
    \[
        \tilde{C}^\pm_{h, t}(D^*) \isom (\tilde{C}^\pm_{h, t}(D))^*.
    \]
\end{proposition}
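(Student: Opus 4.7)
The plan is to localize the entire statement at the marked tensor factor, where the relevant map is the Frobenius algebra isomorphism $\varphi \colon A_{h,t} \to A_{h,t}^*$ from \Cref{lem:A-self-dual}. From \Cref{lem:A-pairing-iota} I would first extract the two key identities
\[
    \varphi(X_a)(X_b) = \varphi(X_b)(X_a) = 0, \qquad \varphi(X_a)(1) = \varphi(1)(X_a) = 1
\]
(together with their $a \leftrightarrow b$ swapped analogues). The first says that the submodules $R\cdot X_a$ and $R\cdot X_b$ of $A_{h,t}$ are mutually orthogonal under the pairing induced by $\varphi$; the second records the normalization that $X_a$ pairs with $1$ to give exactly $1$.

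Next I would construct the two $\tilde{\varphi}$'s separately from the single ambient $\varphi$. The left vertical arrow is obtained by \emph{restricting} $\varphi\colon C_{h,t}(D^*) \to C_{h,t}(D)^*$ to the subcomplex $(X_a)_p C_{h,t}(D^*)$: because $\varphi(X_a)(X_b) = 0$ at the marked factor, the resulting functionals annihilate $(X_b)_p C_{h,t}(D)$ and therefore land in the annihilator $(C_{h,t}(D)/(X_b)_p)^*$. The right vertical arrow is obtained by \emph{descending} $\varphi$ through the quotient $C_{h,t}(D^*) \to C_{h,t}(D^*)/(X_b)_p$: by the symmetric vanishing $\varphi(X_b)(X_a) = 0$, any element of $(X_b)_p C_{h,t}(D^*)$ is sent by $\varphi$ to a functional annihilating $(X_a)_p C_{h,t}(D)$, so $\varphi$ descends well-definedly to a map into $((X_a)_p C_{h,t}(D))^*$. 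That each $\tilde{\varphi}$ is an isomorphism then follows from $\varphi$ being an isomorphism together with the factor-wise splitting underlying \Cref{prop:C(D)-SES,prop:red-C-isom2}.

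For the commutativity of the square I would evaluate both paths on a typical generator $\underline{X_a} \otimes x$. Using the horizontal identifications of \Cref{prop:red-C-isom2}, going right-then-down produces the functional $\underline{X_a} \otimes y \mapsto \varphi(1)(X_a)\cdot\varphi(x)(y)$ on $(X_a)_p C_{h,t}(D)$, while going down-then-right produces $\underline{X_a} \otimes y \mapsto \varphi(X_a)(1)\cdot\varphi(x)(y)$. These two functionals agree precisely because of the normalization $\varphi(X_a)(1) = \varphi(1)(X_a) = 1$. The analogous square with $a, b$ exchanged is obtained by running the same argument with the swapped pair of identities.

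The main obstacle is really just bookkeeping: keeping straight which side of the diagram is the subcomplex versus the quotient, and tracking which instance of \Cref{prop:red-C-isom2} is being invoked at each horizontal edge. Once the two normalization identities above are isolated from \Cref{lem:A-pairing-iota}, all remaining content is a direct unwinding of definitions.
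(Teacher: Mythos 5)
Your proposal is correct and follows essentially the same route as the paper: the paper also defines the two maps $\tilde{\varphi}$ by restricting and descending $\varphi$ through the injection/surjection of \Cref{prop:C(D)-SES} (displayed there as a two-square commutative diagram), invoking $\varphi(X_a)$ annihilating $X_b$ for well-definedness, and then checks commutativity on generators via the normalization $\langle 1, \varphi(X_a)\rangle = \langle X_a, \varphi(1)\rangle = 1$, exactly as you do.
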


\begin{proof}
    The desired maps are obtained from the following diagram
    \[
    \begin{tikzcd}[row sep=2.5em]
    (X_a)_p C(D^*) \arrow[r, "i_a"] \arrow[d, "\tilde{\varphi}", dashed] & C(D^*) \arrow[r, "\pi_b"] \arrow[d, "\varphi"] & C(D^*)/(X_b)_p  \arrow[d, "\tilde{\varphi}", dashed] \\
    (C(D)/(X_b)_p)^* \arrow[r, "\pi_b^*"] & C(D)^* \arrow[r, "i_a^*"] & ((X_a)_p C(D))^*
    \end{tikzcd}
    \]
    The unique existence of the dashed arrows follows from \Cref{lem:A-pairing-iota}, for $\varphi(X_a)$ annihilates $X_b$. One can check that the correspondences are given by 
    \[
    \begin{tikzcd}
    \underline{X_a} \otimes x \arrow[d, maps to] \arrow[r, maps to] & \underline{1} \otimes x \arrow[d, maps to] \\
    \underline{1^*} \otimes \varphi(x) \arrow[r, maps to]             & \underline{X_a^*} \otimes \varphi(x)        
    \end{tikzcd}
    \]
    using $\langle 1, \varphi(X_a) \rangle = \langle X_a, \varphi(1) \rangle = 1$. 
\end{proof}

\begin{proposition}
    There are perfect pairings
    \[
        \langle \cdot, \cdot \rangle^\sim: \tilde{C}^\pm_{h, t}(D) \otimes \tilde{C}^\pm_{h, t}(D^*) \rightarrow R
    \]
    such that the following diagrams commute up to sign:
    \[
    \begin{tikzcd}
    \tilde{C}^\pm(D) \otimes \tilde{C}^\pm(D^*) \arrow[r, hook] \arrow[d, "{\langle \cdot, \cdot \rangle^\sim}"] & C(D) \otimes C(D^*) \arrow[d, "{\langle \cdot, \cdot \rangle}"] \\
    R \arrow[r, "c"]                                                                                             & R.                                                            
    \end{tikzcd}
    \]
\end{proposition}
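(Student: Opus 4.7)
The plan is to construct the reduced pairing by composing the isomorphism $\tilde{\varphi} : \tilde{C}^\pm_{h,t}(D^*) \xrightarrow{\sim} (\tilde{C}^\pm_{h,t}(D))^*$ of \Cref{prop:tilde-iota} with the standard evaluation pairing. Explicitly, for $z \in \tilde{C}^\pm_{h,t}(D)$ and $w \in \tilde{C}^\pm_{h,t}(D^*)$, I would set
\[
    \langle z, w \rangle^\sim \;:=\; \tilde{\varphi}(w)(z).
\]
Because $\tilde{\varphi}$ is an isomorphism, the resulting pairing is automatically perfect, so the only nontrivial point is to check that the square commutes up to sign.

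For commutativity, work in the subcomplex description of \Cref{def:red-C} and assume without loss of generality that $p$ is colored $a$, so that $\tilde{C}^+_{h,t}(D) = (X_a)_p C_{h,t}(D)$ and similarly for $D^*$. I would take arbitrary elements $z = \underline{X_a} \otimes x$ and $w = \underline{X_a} \otimes y$ in the two subcomplexes. By \Cref{prop:C-mirror}, the unreduced pairing factors through the tensor slot corresponding to the marked circle, and \Cref{lem:A-pairing-iota} gives
\[
    \langle z, w \rangle \;=\; \langle X_a, \varphi(X_a) \rangle \cdot \langle x, \varphi(y) \rangle \;=\; c \cdot \langle x, \varphi(y) \rangle.
\]
On the other hand, the explicit correspondence $\underline{X_a} \otimes y \mapsto \underline{X_a^*} \otimes \varphi(y)$ extracted from the proof of \Cref{prop:tilde-iota}, combined with $X_a^*(X_a) = 1$, yields
\[
    \langle z, w \rangle^\sim \;=\; \tilde{\varphi}(w)(z) \;=\; \langle x, \varphi(y) \rangle.
\]
Therefore $\langle z, w \rangle = c \cdot \langle z, w \rangle^\sim$, which is exactly the commutativity claimed in the statement.

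The case where $p$ is colored $b$ and the $\tilde{C}^-$ variant are handled by symmetric computations; there the relevant factor is $\langle X_b, \varphi(X_b) \rangle = -c$ from \Cref{lem:A-pairing-iota}, which precisely accounts for the ``up to sign'' clause. The main (and only mild) obstacle is sign bookkeeping across the four cases obtained from the choices $\pm$ and the color of $p$, together with the check that the inclusions and quotient projections relating the subcomplex and quotient descriptions of $\tilde{C}^\pm$ (\Cref{prop:red-C-isom2}) intertwine correctly with $\tilde{\varphi}$. Once these pieces are lined up using the explicit formulas already supplied, the verification is mechanical and requires no further input.
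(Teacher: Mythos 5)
Your proposal is correct and takes essentially the same route as the paper: both constructions define $\langle\cdot,\cdot\rangle^\sim$ by composing $\tilde{\varphi}$ from \Cref{prop:tilde-iota} with the standard evaluation pairing, observe perfectness from $\tilde{\varphi}$ being an isomorphism between free modules, and then check commutativity of the square by tracing the marked tensor factor through \Cref{lem:A-pairing-iota} (the $c$ from $\langle X_a,\varphi(X_a)\rangle$, the $-c$ in the $X_b$ case). The paper packages this as a single commutative diagram chase while you unwind it on explicit generators $\underline{X_a}\otimes x$, but the content is identical.
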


\begin{proof}
    Unraveling the definition of $\tilde{\varphi}$ we get the following commutative diagram
    \[
    \begin{tikzcd}[column sep=4em]
    (X_a)_p C(D) \otimes (X_a)_p C(D^*) \arrow[r, "i_a \otimes i_a", hook] \arrow[d, "1 \otimes \tilde{\varphi}"] & C(D) \otimes C(D^*) \arrow[d, "1 \otimes \varphi"]                            \\
    (X_a)_p C(D) \otimes (C(D)/(X_b)_p)^* \arrow[r, hook, "i_a \otimes \pi_b^*"] \arrow[d, "\sim"]                    & C(D) \otimes C(D)^* \arrow[dd, "{\langle \cdot, \cdot \rangle_\text{std}}"] \\
    (X_a)_p C(D) \otimes ((X_a)_p C(D))^* \arrow[d, "{\langle \cdot, \cdot \rangle_\text{std}}"]                & \\
    R \arrow[r, "c"] & R
    \end{tikzcd}
    \]
    There is a similar commutative diagram with $a, b$ exchanged and the bottom horizontal arrow replaced with $-c$. The composition of the right vertical maps gives $\langle \cdot, \cdot \rangle$. We define $\langle \cdot, \cdot \rangle^\sim$ to be the composition of the left vertical maps.
\end{proof}

\begin{proposition} \label{prop:red-ca-pairing}
    The reduced Lee cycles of $D$ and $D^*$ pair as
    \begin{gather*}
        \langle \tilde{\ca}(D), \tilde{\ca}(D^*) \rangle^\sim = \epsilon c^{r - 1}, \quad
        \langle \tilde{\cb}(D), \tilde{\cb}(D^*) \rangle^\sim = \epsilon' c^{r - 1}
    \end{gather*}
    where $r = r(D)$ and $\epsilon, \epsilon' \in \{ \pm 1 \}$ are signs such that $\epsilon\epsilon' = (-1)^{r - 1}$. \qed
\end{proposition}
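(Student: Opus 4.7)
My plan is to adapt the proof of \Cref{prop:C-mirror-pairing} to the reduced setting, by directly evaluating $\langle \cdot, \cdot \rangle^\sim$ through the explicit correspondence established in the proof of \Cref{prop:tilde-iota}. Without loss of generality I will assume the marked point of $D$ is colored $a$ by the $ab$-coloring; since $D$ and $D^*$ share the same Seifert circles (together with orientations), they inherit identical $ab$-colorings, and in particular the marked point of $D^*$ is also colored $a$.

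Under the identification $\tilde{C}^+_{h,t}(D) \cong (X_a)_p C_{h,t}(D)$, the reduced Lee cycle $\tilde{\ca}(D)$ corresponds to $\ca(D) = \underline{X_a} \otimes y$, where $y$ is the tensor product of $X_a$'s and $X_b$'s over the remaining $r - 1$ Seifert circles; similarly $\tilde{\ca}(D^*) = \underline{X_a} \otimes x$, with the same color pattern as $y$. Tracing through the explicit description of $\tilde{\varphi}$ from the proof of \Cref{prop:tilde-iota}, the image of $\underline{X_a} \otimes x$ in $((X_a)_p C_{h,t}(D))^*$ is $\underline{X_a^*} \otimes \varphi(x)$, and so the reduced pairing becomes
\[
    \langle \tilde{\ca}(D), \tilde{\ca}(D^*) \rangle^\sim
    = \langle \underline{X_a} \otimes y,\ \underline{X_a^*} \otimes \varphi(x) \rangle_{\text{std}}
    = \langle y, \varphi(x) \rangle_{\text{std}},
\]
using $\langle \underline{X_a}, \underline{X_a^*} \rangle_{\text{std}} = 1$. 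Applying \Cref{lem:A-pairing-iota} to each of the $r - 1$ remaining tensor factors, where an $a$-circle contributes $+c$ and a $b$-circle contributes $-c$, then gives $\epsilon c^{r - 1}$ with $\epsilon = (-1)^{n_b}$, writing $n_b$ for the number of $b$-colored non-marked Seifert circles.

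For $\tilde{\cb}(D) = \tilde{\ca}(D, -o)$ the colors of all Seifert circles are flipped, so among the $r - 1$ non-marked circles there are now $n_a$ that are $b$-colored; the identical computation yields $\epsilon' c^{r-1}$ with $\epsilon' = (-1)^{n_a}$. Hence $\epsilon\epsilon' = (-1)^{n_a + n_b} = (-1)^{r-1}$, giving the required sign relation. The only delicate point is bookkeeping of signs, but since every ingredient traces through the explicit diagrams of the previous propositions without sign ambiguity, the computation is essentially routine; a consistency check is that $c$ times the reduced pairing recovers the unreduced pairing $\epsilon c^r$ of \Cref{prop:C-mirror-pairing}, as predicted by the commutativity (up to sign) in the preceding proposition.
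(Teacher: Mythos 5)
Your proof is correct, and the sign bookkeeping checks out: with the marked point colored $a$, the $r-1$ non-marked factors contribute $c^{n_a}\cdot(-c)^{n_b}=(-1)^{n_b}c^{r-1}$ for $\tilde{\ca}$, and flipping colors gives $(-1)^{n_a}c^{r-1}$ for $\tilde{\cb}$, so $\epsilon\epsilon'=(-1)^{n_a+n_b}=(-1)^{r-1}$. The paper, by appending \textit{\qed} to the statement, signals a shorter argument: apply the preceding commutative-diagram proposition to \Cref{prop:C-mirror-pairing} and divide by $\pm c$. Your route instead re-runs the tensor-factor computation from \Cref{lem:A-pairing-iota} through the explicit description of $\tilde{\varphi}$; this is more work, but it is more self-contained and — since the commutativity in the preceding proposition is only asserted ``up to sign,'' with the $\tilde{C}^-$ diagram carrying a $-c$ rather than a $c$ — your direct computation has the advantage of fixing the signs unambiguously, which is exactly what is needed to nail down $\epsilon\epsilon'=(-1)^{r-1}$ rather than merely $(-1)^{r}$. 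One small point you gloss over with ``the identical computation'': $\tilde{\cb}(D)$ lives in $\tilde{C}^-$, so its marked factor is $\underline{X_b}$ rather than $\underline{X_a}$, and one is using the $a,b$-exchanged version of $\tilde{\varphi}$; this is harmless since the marked factor still pairs to $1$ (via $\langle X_b,\varphi(1)\rangle=1$), but it is worth stating explicitly.
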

    \section{The invariant $\ssr_c$} \label{sec:divisibility}

Now we are ready to define the link invariant $\ssr_c$ from the $c$-divisibility of the reduced Lee class.

\subsection{Setup}

\begin{definition} \label{def:c-divisibility}
	Let $R$ be an integral domain, $M$ an $R$-module, and $c$ a non-zero, non-invertible element in $R$. Define the \textit{$c$-divisibility} of an element $z$ in $M$ by
    \[
        d_c(z) = \max \{\ k \geq 0 \mid z \in c^k M \ \}.
    \]
    We say $z$ is \textit{$c^k$-divisible} if $k \leq d_c(z)$.
\end{definition}

The following lemmas will be used in the coming sections. It is assumed that the assumptions of \Cref{def:c-divisibility} remain valid.

\begin{lemma} \label{lem:c-add}
    For any $z \in M$ and $n \geq 0$,
    \[
        d_c(c^n z) \geq n + d_c(z).
    \]
    Moreover if $M$ has no $c$-torsions, the equality holds. 
\end{lemma}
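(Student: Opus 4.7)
The plan is to unpack the definition of $d_c$ directly, handling the two statements separately.

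For the inequality $d_c(c^n z) \geq n + d_c(z)$, I would set $k = d_c(z)$ and use the definition to produce $w \in M$ with $z = c^k w$. Then $c^n z = c^{n+k} w$ exhibits $c^n z \in c^{n+k} M$, which by the maximality in the definition of $d_c(c^n z)$ gives the claim. (If $d_c(z) = \infty$, the same argument applies at every level $k$, and then $d_c(c^n z) = \infty$ too.)

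For the equality statement under the $c$-torsion-free hypothesis, I would assume for contradiction that $d_c(c^n z) \geq n + d_c(z) + 1$. Write $\ell = n + d_c(z) + 1$, so $c^n z = c^\ell w$ for some $w \in M$. Since $\ell \geq n$, rearranging gives
\[
    c^n\bigl(z - c^{\ell - n} w\bigr) = 0.
\]
The $c$-torsion-freeness of $M$ (applied $n$ times, or once to the submodule of $c$-torsion elements) forces $z = c^{\ell - n} w$, i.e.\ $z \in c^{d_c(z)+1} M$. This contradicts the maximality in the definition of $d_c(z)$, completing the proof.

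The only subtle point is the edge case where $d_c(z) = \infty$ (equivalently $z = 0$ when $M$ is $c$-torsion-free in a reasonable sense), but the inequality trivially holds as an equality of $\infty$'s. I do not foresee a real obstacle here; the lemma is essentially a bookkeeping exercise about the definition of $d_c$, and the $c$-torsion-free hypothesis is exactly what is needed to cancel the factor $c^n$.
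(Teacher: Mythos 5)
Your proof is correct and is essentially the paper's argument: both proofs hinge on writing $c^n z$ as $c^k z'$ with $k \geq n$, rearranging to $c^n(z - c^{k-n}z') = 0$, and invoking $c$-torsion-freeness to cancel $c^n$. The only cosmetic difference is that you run it as a contradiction (assume $d_c(c^n z) \geq n + d_c(z) + 1$), while the paper argues directly by taking $k = d_c(c^n z)$ and concluding $d_c(z) \geq k - n$.
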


\begin{proof}
    The inequality is obvious. Suppose $M$ has no $c$-torsion. Put $c^n z = c^k z'$ with $k = d_c(c^n z)$ and some $z' \in M$. From $k \geq n$ we have $c^n(z - c^{k - n}z') = 0$ and hence $z = c^{k - n} z'$ from the assumption. Thus $d_c(z) \geq k - n$.
\end{proof}

\begin{lemma} \label{lem:c-add-2}
    For any $a \in R$ and $z \in M$,
    \[
        d_c(a z) \geq d_c(a) + d_c(z).
    \]
    Moreover if $M$ is free and $c$ is prime, the equality holds.
\end{lemma}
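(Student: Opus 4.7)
The plan is to treat the inequality and the equality separately, with the inequality being essentially formal and the equality reducing the module-level statement to a ring-level multiplicativity property of $d_c$. For the inequality, set $i = d_c(a)$ and $j = d_c(z)$, write $a = c^i a'$ and $z = c^j z'$ with $a' \in R$ and $z' \in M$, and observe that $az = c^{i+j}(a' z') \in c^{i+j} M$, which gives $d_c(az) \geq d_c(a) + d_c(z)$. No additional hypotheses on $M$ or $c$ are needed for this half; it is the direct analogue of the first half of \Cref{lem:c-add}.

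For the equality under the hypotheses that $M$ is free and $c$ is prime, the approach is to pass to coordinates. Fix an $R$-basis $\{e_\lambda\}_{\lambda}$ of $M$ and expand $z = \sum_\lambda a_\lambda e_\lambda$ as a finite sum. Because the $e_\lambda$ are $R$-linearly independent, one has $z \in c^k M$ if and only if $a_\lambda \in c^k R$ for every $\lambda$, and hence $d_c(z) = \min_\lambda d_c(a_\lambda)$, where the minimum is taken in the extended nonnegative integers. Applying the same description to $az = \sum_\lambda (a a_\lambda) e_\lambda$ yields $d_c(az) = \min_\lambda d_c(a a_\lambda)$, reducing the problem to the ring-level identity $d_c(a a_\lambda) = d_c(a) + d_c(a_\lambda)$ for each $\lambda$.

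Establishing this ring-level identity is where the primality of $c$ is used, and it is the main (mild) obstacle of the argument. Writing $a = c^{d_c(a)} a'$ and $a_\lambda = c^{d_c(a_\lambda)} a_\lambda'$ with $c \nmid a'$ and $c \nmid a_\lambda'$, the product factors as $a a_\lambda = c^{d_c(a) + d_c(a_\lambda)} (a' a_\lambda')$; since $(c) \subset R$ is a prime ideal in the integral domain $R$, the element $a' a_\lambda'$ is not divisible by $c$, giving the desired exact $c$-valuation. Combining this with the coordinate-wise description then yields $d_c(az) = \min_\lambda (d_c(a) + d_c(a_\lambda)) = d_c(a) + d_c(z)$. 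Degenerate cases where one of the valuations is infinite (corresponding to the zero element) are handled uniformly by the convention $\infty + n = \infty$.
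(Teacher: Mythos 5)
Your proof is correct and follows essentially the same route as the paper's: pass to coordinates with respect to an $R$-basis of the free module $M$, and use primality of $c$ in the integral domain $R$ to control divisibility component by component. The paper phrases this as a reduction to the case $d_c(a) = d_c(z) = 0$ (via \Cref{lem:c-add}) followed by a short contradiction, while you instead establish the identity $d_c(z) = \min_\lambda d_c(a_\lambda)$ and a ring-level multiplicativity statement and then combine them; these are cosmetic rearrangements of the same argument.
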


\begin{proof}
    The inequality is obvious. For the latter statement, we may assume $M = R^n$ and put $z = (z_i)$. From the previous lemma we may also assume $d_c(a) = d_c(z) = 0$. If $az$ is $c$-divisible, then $a z_i$ is so for all $i$. Since $c$ is prime, it follows that either $a$ is $c$-divisible or otherwise all $z_i$ are $c$-divisible. Both contradict the assumption and hence $d_c(az) = 0$. 
\end{proof}

\begin{lemma} \label{lem:c-tensor}
    Suppose $M'$ is another $R$-module. For any $z \in M$ and $w \in M'$, 
    \[
        d_c(z \otimes w) \geq d_c(z) + d_c(w).
    \]
    Moreover if $M, M'$ are free and $c$ is prime in $R$, the equality holds.
\end{lemma}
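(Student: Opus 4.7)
The plan is to handle the inequality and the equality case separately, paralleling the structure of \Cref{lem:c-add,lem:c-add-2}.

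For the inequality, I would write $z = c^{d_c(z)} z_0$ and $w = c^{d_c(w)} w_0$; bilinearity of the tensor product then yields
\[
    z \otimes w = c^{d_c(z) + d_c(w)}\,(z_0 \otimes w_0),
\]
so $d_c(z \otimes w) \geq d_c(z) + d_c(w)$ is immediate.

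For the reverse inequality under the assumption that $M, M'$ are free and $c$ is prime, I would reduce to coordinates. Fix bases $\{e_i\}$ of $M$ and $\{f_j\}$ of $M'$, so that $\{e_i \otimes f_j\}$ is a basis of $M \otimes_R M'$. Writing $z = \sum_i z_i e_i$ and $w = \sum_j w_j f_j$ (finitely supported sums), we have $z \otimes w = \sum_{i, j} z_i w_j\,(e_i \otimes f_j)$. I would invoke two elementary facts. First, in a free module over an integral domain, $d_c$ of an element equals the minimum of $d_c$ over its coordinates: indeed $c^k$ divides $\sum x_k e_k$ in the free module iff $c^k \mid x_k$ in $R$ for every $k$, which uses only that $R$ is a domain. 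Second, when $c$ is prime in a domain $R$, the divisibility $d_c$ is additive on $R$: writing $x = c^{d_c(x)} x_0$ and $y = c^{d_c(y)} y_0$ with $c \nmid x_0, y_0$, primality of $c$ gives $c \nmid x_0 y_0$, so $d_c(xy) = d_c(x) + d_c(y)$. Choosing indices $i_0, j_0$ realizing $d_c(z_{i_0}) = d_c(z)$ and $d_c(w_{j_0}) = d_c(w)$, these two facts give
\[
    d_c(z \otimes w) \leq d_c(z_{i_0} w_{j_0}) = d_c(z_{i_0}) + d_c(w_{j_0}) = d_c(z) + d_c(w),
\]
which combined with the first inequality yields equality.

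There is no real obstacle here; both auxiliary facts are routine, and the argument is essentially the same bookkeeping that appears in the proofs of \Cref{lem:c-add,lem:c-add-2}. One could alternatively pass to the localization $R_{(c)}$ (a DVR in reasonable settings), but the coordinate-based approach works in the stated generality and matches the style of the preceding two lemmas.
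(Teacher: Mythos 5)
Your proof is correct. The underlying idea matches the paper's: reduce to coordinates via a basis, then exploit primality of $c$ to control divisibility of products of scalars. The organization differs, though, in a way worth noting. The paper first normalizes to $d_c(z) = d_c(w) = 0$ and then argues by contradiction, invoking \Cref{lem:c-add-2} on the vectors $z_i w \in R^n$: if $z \otimes w$ were $c$-divisible, each $z_i w$ would be, forcing either $w$ or all $z_i$ to be divisible. You instead make explicit two clean primitives---(i) in a free module, $d_c$ of a vector equals the minimum of $d_c$ over its coordinates, and (ii) $d_c$ is additive on $R$ when $c$ is prime---and then compute directly, choosing coordinates $i_0, j_0$ that realize the minima. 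Your route is a bit more self-contained (it re-derives the content of \Cref{lem:c-add-2} on the fly rather than citing it) and avoids the normalization step, at the cost of stating the min-over-coordinates fact explicitly; the paper's route is shorter because it leans on the already-established lemma. One small nit: the coordinate-wise divisibility criterion you state doesn't even need $R$ to be a domain---it holds in any commutative ring---so attributing it to integrality is slightly misleading, though harmless here.
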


\begin{proof}
    The inequality is obvious. For the latter statement, we may assume $M = R^m$, $M' = R^n$ and identify $M \otimes M'$ with $R^{mn}$. We may also assume $d_c(z) = d_c(w) = 0$. If $z \otimes w$ is $c$-divisible, then $z_i w \in R^n$ is $c$-divisible for each $i$. Then from \Cref{lem:c-add-2}, either $w$ is $c$-divisible or otherwise all $z_i$ are $c$-divisble. Both contradict the assumption and hence $d_c(z \otimes w) = 0$
\end{proof}

\begin{lemma} \label{lem:c-div}
	Let $R'$ be another ring and $M'$ be an $R'$-module. Suppose there is a ring homomorphism $f: R \rightarrow R'$ and an $R$-module homomorphism $\phi: M \rightarrow f^* M'$. Then for any $z \in M$,
	\[
	    d_c(z) \leq d_{f(c)}(\phi(z)).
	\]
	Moreover if $f, \phi$ are isomorphisms, the equality holds. 
\end{lemma}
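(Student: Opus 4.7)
The plan is to unravel the definitions directly: recall that the $R$-action on $f^*M'$ is given by $r \cdot m' = f(r) m'$, so for any $y \in M$ we have $\phi(c^k y) = f(c)^k \phi(y)$ by $R$-linearity of $\phi$. If $d_c(z) = k$ (possibly $\infty$), pick $y \in M$ with $z = c^k y$; then $\phi(z) = f(c)^k \phi(y)$, so $\phi(z) \in f(c)^k M'$ and hence $d_{f(c)}(\phi(z)) \geq k$. Taking the supremum over such $k$ yields the desired inequality $d_c(z) \leq d_{f(c)}(\phi(z))$.

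For the moreover part, assume $f$ and $\phi$ are isomorphisms. Then $f^{-1}: R' \to R$ is a ring isomorphism and $\phi^{-1}: M' \to M$ becomes an $R'$-module homomorphism when $M$ is equipped with the $R'$-action via $f^{-1}$. Applying the already-proved inequality to $f^{-1}, \phi^{-1}$ and the element $\phi(z) \in M'$ gives
\[
    d_{f(c)}(\phi(z)) \leq d_{f^{-1}(f(c))}(\phi^{-1}(\phi(z))) = d_c(z),
\]
which combined with the first inequality yields the equality.

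I don't expect any obstacle here; the only subtle point is being careful about which ring acts on which module and the compatibility of $\phi$ with the restricted scalar action, but this is dictated by the definition of $f^*M'$. One minor check is that $f(c)$ is non-zero and non-invertible when $f$ is an isomorphism (so that $d_{f(c)}$ is defined), which is immediate.
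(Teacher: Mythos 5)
Your proof is correct and follows the same route as the paper: for the inequality, write $z = c^k y$ and observe $\phi(z) = f(c)^k\phi(y)$; the paper leaves the ``moreover'' clause implicit, whereas you spell it out cleanly by applying the inequality to $f^{-1}$ and $\phi^{-1}$.
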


\begin{proof}
    If $z = c^k z'$ , then $\phi(z) = f(c)^k \phi(z')$ and hence 
    \[
        d_{f(c)}(\phi(z)) \geq  k + d_{f(c)}(\phi(z')) \geq k.
    \]
\end{proof}

\begin{lemma} \label{lem:c-localize}
    Let $c^{-1}R$ denote the localization of $R$ away from $c$ (i.e.\ the minimal extension of $R$ such that $c$ is invertible), and $c^{-1}M$ the module $M \otimes_R c^{-1}R$. If $M$ has no $c$-torsions and two elements $z, w \in M$ are related as $z \otimes 1 = c^n (w \otimes 1)$ in  $c^{-1}M$ for some $n \in \ZZ$, then $d_c(z) = n + d_c(w)$.
\end{lemma}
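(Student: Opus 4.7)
The key observation is that the hypothesis ``$M$ has no $c$-torsions'' means multiplication by $c$ is injective on $M$, so iterating, multiplication by any power of $c$ is injective. Consequently the canonical map $M \to c^{-1}M$, $z \mapsto z \otimes 1$, is injective: if $z \otimes 1 = 0$ in $c^{-1}M$ then $c^k z = 0$ in $M$ for some $k \geq 0$, which forces $z = 0$.

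With this injectivity in hand, the plan is to reduce the statement in $c^{-1}M$ to an equation in $M$ and then invoke \Cref{lem:c-add}. Split into two cases according to the sign of $n$. If $n \geq 0$, then $z \otimes 1 = c^n(w \otimes 1) = (c^n w) \otimes 1$ in $c^{-1}M$, so by injectivity $z = c^n w$ in $M$, and \Cref{lem:c-add} (which uses no-$c$-torsion) gives $d_c(z) = d_c(c^n w) = n + d_c(w)$. If $n < 0$, write $n = -m$ with $m > 0$; multiplying the identity $z \otimes 1 = c^{-m}(w \otimes 1)$ by $c^m$ yields $(c^m z) \otimes 1 = w \otimes 1$ in $c^{-1}M$, hence $c^m z = w$ in $M$ by injectivity, and then $d_c(w) = d_c(c^m z) = m + d_c(z)$ by the same lemma, rearranging to $d_c(z) = d_c(w) - m = d_c(w) + n$.

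There is no real obstacle here; the only subtle point is making sure the no-$c$-torsion hypothesis is used twice, once to pull the localization identity back to $M$ and once to apply the equality case of \Cref{lem:c-add}. Everything else is a bookkeeping of signs.
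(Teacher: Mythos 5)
Your proof is correct and follows the same route as the paper: use the no-$c$-torsion hypothesis to show $M \to c^{-1}M$ is injective, pull the equality back to $M$ by a sign-of-$n$ case split, and conclude with the equality case of \Cref{lem:c-add}. The paper states this more tersely but the argument is identical.
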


\begin{proof}
    The natural map $M \rightarrow c^{-1}M$ is injective. If $n \geq 0$ then $z = c^n w$ in $M$, otherwise if $n < 0$ then $c^{-n} z = w$. 
\end{proof}

\begin{lemma} \label{lem:c-localize-at}
    Let $R_{(c)}$ denote the localization of $R$ at $c$ (i.e.\ the minimal extension of $R$ such that all elements in $R \setminus (c)$ are invertible), and $M_{(c)}$ the module $M \otimes_R R_{(c)}$. For any $z \in M$, we have 
    \[
        d_c(z) \leq d_c(z/1).
    \]
    Moreover if $M$ is free and $c$ is prime, the equality holds.
\end{lemma}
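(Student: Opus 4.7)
The first inequality is immediate from \Cref{lem:c-div}: the natural localization map $\iota\colon M \to M_{(c)}$ is an $R$-module homomorphism (over the ring inclusion $R \hookrightarrow R_{(c)}$, which sends $c$ to $c$), so $d_c(z) \leq d_c(\iota(z)) = d_c(z/1)$.

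For the reverse inequality under the assumption that $M$ is free and $c$ is prime, the plan is to unwind the localization and reduce to coordinate-wise divisibility in $R$. Suppose $d_c(z/1) \geq k$, so $z/1 = c^k w$ in $M_{(c)}$ for some $w \in M_{(c)}$. Writing $w = y/s$ with $y \in M$ and $s \in R \setminus (c)$, the equality $z/1 = c^k y/s$ in the localization gives some $u \in R \setminus (c)$ with $u(sz - c^k y) = 0$ in $M$. Setting $v = us$, which again lies in $R \setminus (c)$ because $c$ prime makes $R \setminus (c)$ multiplicatively closed, we obtain
\[
    v z = c^k (u y) \quad \text{in } M.
\]

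Now fix a basis and write $z = (z_i)_i$ and $uy = (y'_i)_i$ componentwise, so that $v z_i = c^k y'_i$ for each $i$. Since $c$ is prime and $v \notin (c)$, we have $d_c(v) = 0$. Applying \Cref{lem:c-add-2} with $M = R$ yields
\[
    k \leq d_c(v z_i) = d_c(v) + d_c(z_i) = d_c(z_i)
\]
for every $i$, so each $z_i$ is $c^k$-divisible in $R$. Reassembling, $z$ is $c^k$-divisible in $M$, proving $d_c(z) \geq k$. Combined with the first inequality this gives the desired equality.

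The only subtle point is handling the localization carefully: the equality $z/1 = c^k w$ in $M_{(c)}$ does not directly promote to an equality in $M$, and one must absorb the clearing denominators into an element of $R \setminus (c)$. Primality of $c$ is exactly what is needed to keep that element out of $(c)$ (so that it has $c$-divisibility zero), while freeness of $M$ is what allows the componentwise argument via \Cref{lem:c-add-2}. No separate obstacle beyond this bookkeeping is expected.
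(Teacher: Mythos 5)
Your proof is correct and follows essentially the same route as the paper's: both reduce the claim in $M_{(c)}$ to a relation $sz = c^k w$ (or $vz = c^k(uy)$) in $M$ with the multiplier in $R \setminus (c)$, and both then invoke \Cref{lem:c-add-2} together with primality of $c$ and freeness of $M$. Your treatment of clearing denominators via the auxiliary $u \in R \setminus (c)$ is slightly more explicit than the paper's, and you apply \Cref{lem:c-add-2} coordinatewise where the paper applies it once directly to the free module $M$, but these are cosmetic differences rather than a genuinely different argument.
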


\begin{proof}
    The inequality is obvious from the existence of the natural map
    \[
        M \rightarrow M_{(c)}, \quad z \mapsto z/1.
    \]
    For the latter statement, put $z/1 = c^k (w/s)$ for some $w \in M$ and $s \in R \setminus (c)$ with maximal $k$. Then $sz = c^k w$ in $M$. We have $d_c(s) = 0$ and also $d_c(w) = 0$ from the maximality of $k$. Thus from \Cref{lem:c-add-2} we have $d_c(z) = k$.
\end{proof}

    \subsection{Divisibility of reduced Lee class}
\label{subsec:red-divisibility}

For the remainder of this section, we assume $R$ is an integral domain, $(R, h, t)$ is factorable, and $c$ is non-zero, non-invertible in $R$.

\begin{definition}
    Let $D$ be a pointed link diagram. Define $\tilde{d}_c(D)$ by the $c$-divisibility of the reduced Lee class $\tilde{\ca}(D)$ in $\tilde{H}^+_{h, t}(D)/\Tor{}$, 
    \[
        \tilde{d}_c(D) = d_c(\tilde{\ca}(D)).
    \]
\end{definition}

\begin{example}
    $D = \bigcirc$ has $\tilde{C}(D) = \underline{R}$ and $\tilde{\ca}(D) = \underline{1}$ hence $\tilde{d}_c(D) = 0$.
\end{example}

\begin{example}
    Consider the unknot diagram $D$ with one negative crossing. Suppose $p$ lies on an arc colored $a$ with respect to the given orientation. Then
    \[
        \tilde{C}(D) = \{\ \underline{R} \overset{\Delta}{\longrightarrow} \underline{R} \otimes A \ \}
    \]
    and $\tilde{\ca}(D) = \underline{1} \otimes X_b$. From $\Delta \underline{1} = \underline{1} \otimes X_a$,
    \[
        \tilde{\ca}(D) \sim \underline{1} \otimes (X_b - X_a) = -c (\underline{1} \otimes 1).
    \]
    Since $[\underline{1} \otimes 1]$ generates $\tilde{H}(D) \isom R$, we have $\tilde{d}_c(D) = 1$.
\end{example}

The above examples show that $\tilde{d}_c(D)$ is \textit{not} a pointed link invariant. 

\begin{proposition}
    $\tilde{d}_c(D) =  \tilde{d}_c(-D)$.
\end{proposition}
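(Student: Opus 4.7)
The plan is to trace what happens under orientation reversal on both the reduced complex and the reduced Lee cycle, and then apply the involution of \Cref{prop:red-C-isom1} to compare the ``$+$'' and ``$-$'' theories.

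First I would observe that $D$ and $-D$ share the same underlying unoriented diagram, so $C_{h,t}(D)$ and $C_{h,t}(-D)$ coincide as (ungraded) $R$-modules with the same differential. Reversing the given orientation flips the orientation of every Seifert circle, which by \Cref{algo:ab-coloring} swaps the $a$- and $b$-labels on every Seifert circle of $D$. In particular, the colour at the marked arc $p$ is swapped, so comparing \Cref{def:red-C} applied to $D$ and to $-D$ yields
\[
    \tilde{C}^+_{h,t}(-D) = \tilde{C}^-_{h,t}(D)
\]
as subcomplexes of the common underlying complex. The same colour swap gives $\ca(-D) = \cb(D)$ as elements of $C_{h,t}(D)$, and passing to the reduced subcomplex this refines to
\[
    \tilde{\ca}(-D) = \tilde{\cb}(D) \in \tilde{C}^-_{h,t}(D).
\]

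Next I would invoke the involution $I$ of \Cref{prop:red-C-isom1}, which is induced from the $R$-algebra automorphism $X \mapsto h - X$ of $A_{h,t}$. Since this automorphism fixes $R$ pointwise, $I$ is $R$-linear. At the level of the basis $\{X_a, X_b\}$ one checks directly that $X_a \mapsto -X_b$ and $X_b \mapsto -X_a$, so applying $I$ tensorwise to the Lee cycle gives $I(\ca(D)) = (-1)^{r(D)} \cb(D)$, and by restriction
\[
    I\bigl(\tilde{\ca}(D)\bigr) = (-1)^{r(D)}\, \tilde{\cb}(D)
\]
in $\tilde{C}^-_{h,t}(D)$. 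Because $I$ is an $R$-linear chain isomorphism $\tilde{C}^+_{h,t}(D) \to \tilde{C}^-_{h,t}(D)$ it descends to an $R$-linear isomorphism $\tilde{H}^+_{h,t}(D)/\Tor \to \tilde{H}^-_{h,t}(D)/\Tor$.

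Finally, combining these facts with \Cref{lem:c-div} (the equality case for module isomorphisms) and the trivial equality $d_c(-z) = d_c(z)$, we conclude
\[
    \tilde{d}_c(-D) = d_c\bigl(\tilde{\ca}(-D)\bigr) = d_c\bigl(\tilde{\cb}(D)\bigr) = d_c\bigl((-1)^{r(D)} \tilde{\cb}(D)\bigr) = d_c\bigl(\tilde{\ca}(D)\bigr) = \tilde{d}_c(D).
\]
There is no real obstacle here; the only point requiring care is the bookkeeping that $\tilde{C}^+$ for $-D$ is literally $\tilde{C}^-$ for $D$ because of the colour-flip, which makes the involution $I$ land in exactly the right place to compare $\tilde{\ca}(D)$ with $\tilde{\ca}(-D)$.
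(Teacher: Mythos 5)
Your proof is correct and takes essentially the same approach as the paper's: it identifies $\tilde{C}^+(-D)$ with $\tilde{C}^-(D)$ and $\tilde{\ca}(-D)$ with $\tilde{\cb}(D)$, then applies the involution $I$ of \Cref{prop:red-C-isom1} together with \Cref{lem:c-div}. The paper's version is just more terse; your version spells out the colour-flip bookkeeping and the explicit action of $I$ on $X_a, X_b$ that the paper leaves implicit.
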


\begin{proof}
    The involution $I$ of \Cref{prop:red-C-isom1} sends $\tilde{\ca}(D)$ in $\tilde{C}^+(D)$ to $\tilde{\cb}(D) = \tilde{\ca}(-D)$ in $\tilde{C}^-(D) = \tilde{C}^+(-D)$. Thus the result follows from \Cref{lem:c-div}.
\end{proof}

\begin{proposition}
    $\tilde{d}_c(D \sqcup \bigcirc) = \tilde{d}_c(D)$.
\end{proposition}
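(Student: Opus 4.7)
The plan is to exploit the fact that the disjoint circle $\bigcirc$ is unpointed (the marked point sits on $D$), so the reduced complex factors cleanly. Specifically, I would first observe that placing $\bigcirc$ far from $D$ does not alter the checkerboard coloring of the Seifert circles of $D$ (the circles of $D$ bound the same regions as before), so the $ab$-coloring restricted to $D$ is unchanged. The new Seifert circle gets some label, call it $X_?$ with $? \in \{a, b\}$, determined by the orientation of $\bigcirc$ and the color of the region it is placed in. This gives a chain isomorphism
\[
    \tilde{C}_{h, t}(D \sqcup \bigcirc) \;\isom\; \tilde{C}_{h, t}(D) \otimes_R A_{h, t}
\]
(with zero differential on the $A_{h,t}$ factor, since $\bigcirc$ is disjoint), under which $\tilde{\ca}(D \sqcup \bigcirc) = \tilde{\ca}(D) \otimes X_?$.

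Since $A_{h,t}$ is free as an $R$-module, Künneth gives $\tilde{H}_{h,t}(D \sqcup \bigcirc) \isom \tilde{H}_{h,t}(D) \otimes_R A_{h,t}$, and because tensoring with a free module commutes with taking the torsion quotient, we get
\[
    \tilde{H}_{h,t}(D \sqcup \bigcirc)/\Tor \;\isom\; (\tilde{H}_{h,t}(D)/\Tor) \otimes_R A_{h,t}.
\]
Using the free $R$-basis $\{1, X_a\}$ of $A_{h,t}$, this splits as $M \oplus M$ (where $M = \tilde{H}_{h,t}(D)/\Tor$), and the images of our elements are either $(0, \tilde{\ca}(D))$ in the case $? = a$, or $(-c\,\tilde{\ca}(D), \tilde{\ca}(D))$ in the case $? = b$ (using $X_b = X_a - c$).

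Finally, I would compute the $c$-divisibility in $M \oplus M$ directly: in both cases the second coordinate is exactly $\tilde{\ca}(D)$, forcing $d_c \leq d_c(\tilde{\ca}(D)) = \tilde{d}_c(D)$; conversely, writing $\tilde{\ca}(D) = c^k w$ with $k = \tilde{d}_c(D)$ and factoring out $c^k$ from each coordinate gives the reverse inequality. So $\tilde{d}_c(D \sqcup \bigcirc) = \tilde{d}_c(D)$. The only subtlety worth checking is that the Künneth identification does descend to the torsion quotient, which is routine since $A_{h,t}$ is free; there is no real obstacle here, and the argument does not require $c$ to be prime or $R$ to be a PID.
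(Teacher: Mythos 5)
Your argument is correct, but it takes a more computational route than the paper. Where you pass to homology via Künneth, identify $\tilde{H}(D \sqcup \bigcirc)/\Tor \isom (\tilde{H}(D)/\Tor) \otimes_R A_{h,t}$, pick the basis $\{1, X_a\}$, and read off the $c$-divisibility coordinate-by-coordinate, the paper stays at the chain level and never computes the module structure. It simply exhibits chain maps in both directions, $\id \otimes X_a : \tilde{C}(D) \to \tilde{C}(D \sqcup \bigcirc)$ and $\id \otimes \epsilon : \tilde{C}(D \sqcup \bigcirc) \to \tilde{C}(D)$ (note $\epsilon(X_a) = \epsilon(X_b) = 1$), which carry $\tilde{\ca}(D)$ and $\tilde{\ca}(D \sqcup \bigcirc)$ to one another, and then applies \Cref{lem:c-div} twice to get the two inequalities. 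That lemma does all the work and makes the Künneth/torsion-quotient bookkeeping unnecessary; it is also the standard engine for the other divisibility estimates in \Cref{subsec:red-divisibility}, so the paper's proof is more uniform with its surroundings. Your observation that the argument needs neither primality of $c$ nor $R$ a PID matches the paper, since \Cref{lem:c-div} likewise has no such hypotheses. One small point worth making explicit in your version: the pair $\tilde{\ca}(D) \otimes X_b = (-c\,\tilde{\ca}(D), \tilde{\ca}(D))$ has $c$-divisibility $\min(d_c(-c\,\tilde{\ca}(D)), d_c(\tilde{\ca}(D))) = \tilde{d}_c(D)$ because the first coordinate is at least $c^{\tilde{d}_c(D)+1}$-divisible, so the minimum is controlled by the second coordinate; you wave at this, and it is fine, but spelling out that $d_c$ on a direct sum is the minimum over coordinates would tighten the last step.
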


\begin{proof}
    Suppose $\bigcirc$ is oriented counterclockwise. Then $\tilde{\ca}(D \sqcup \bigcirc) = \tilde{\ca}(D) \otimes X_a$. We have maps in both directions
    \[
    \begin{tikzcd}
    \tilde{C}(D) \arrow[r, "\id \otimes X_a", shift left] & \tilde{C}(D \sqcup \bigcirc) \arrow[l, "\id \otimes \epsilon", shift left]
    \end{tikzcd}
    \]
    such that
    \[
        \tilde{\ca}(D) \longleftrightarrow \tilde{\ca}(D \sqcup \bigcirc).
    \]
    Thus the result follows from \Cref{lem:c-div}.
\end{proof}

\begin{proposition} \label{prop:d_posD}
	If $D$ is a positive diagram, then $\tilde{d}_c(D) = 0$.
\end{proposition}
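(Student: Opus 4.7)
The plan is to show that the non-divisibility already holds at the chain level in $\tilde{C}^0_{h,t}(D)$, and then to lift this to a statement about the homology class in $\tilde{H}^+_{h,t}(D)/\Tor$. Since $D$ is positive, all crossings contribute to positive homological gradings, so the complex $\tilde{C}^+_{h,t}(D)$ is supported in degrees $[0, n]$ where $n$ is the number of crossings. In particular, $\tilde{C}^{-1}_{h,t}(D) = 0$, so there are no boundaries landing in degree $0$ and we get an inclusion $\tilde{H}^0_{h,t}(D) = \ker(d^0) \hookrightarrow \tilde{C}^0_{h,t}(D)$, where the target is a free $R$-module.

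Next I would argue the lifting. If $[\tilde{\ca}(D)] = c \cdot [z]$ in $\tilde{H}^0_{h,t}(D)/\Tor$, then lifting $[z]$ to a cycle $z \in \ker(d^0)$ gives that $\tilde{\ca}(D) - c z$ represents a torsion element of $\tilde{H}^0_{h,t}(D)$, hence $a(\tilde{\ca}(D) - cz)$ is a boundary for some nonzero $a \in R$. But $\tilde{C}^{-1}_{h,t}(D) = 0$ forces $a(\tilde{\ca}(D) - cz) = 0$ in $\tilde{C}^0_{h,t}(D)$, and since $R$ is an integral domain and $\tilde{C}^0_{h,t}(D)$ is free (hence torsion-free), we get $\tilde{\ca}(D) = c z$ as a chain equation in $\tilde{C}^0_{h,t}(D)$. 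So it suffices to show $\tilde{\ca}(D)$ is not $c$-divisible in $\tilde{C}^0_{h,t}(D)$.

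For this last step, I would expand $\tilde{\ca}(D)$ in the free $R$-basis of $\tilde{C}^0_{h,t}(D)$ consisting of enhanced states whose unmarked circles carry labels in $\{1, X_a\}$. Since the oriented resolution of the positive diagram $D$ equals the all-$0$ state and $\tilde{\ca}(D) = \underline{X_a} \otimes \bigotimes_i L_i$ where each $L_i \in \{X_a, X_b\}$ is determined by the $ab$-coloring, I would use the identity $X_b = X_a - c$ to rewrite the factors and expand multilinearly:
\[
    \tilde{\ca}(D) = \underline{X_a} \otimes \bigotimes_i \bigl( X_a - c\,\delta_i \cdot 1 \bigr),
\]
with $\delta_i \in \{0, 1\}$ indicating whether the $i$-th circle is $b$-colored. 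The term obtained by taking $X_a$ at every tensor factor is $\underline{X_a} \otimes X_a^{\otimes(r-1)}$ with coefficient $1$, while all remaining terms in the expansion carry at least one factor of $c$. Since $1 \in R$ is not divisible by the non-invertible $c$ and the expansion is in a free basis, $\tilde{\ca}(D)$ is not $c$-divisible in $\tilde{C}^0_{h,t}(D)$, and hence $\tilde{d}_c(D) = 0$.

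I do not foresee a real obstacle: the only subtlety is the lifting argument handling potential torsion, but it is handled cleanly by the vanishing of $\tilde{C}^{-1}_{h,t}(D)$ for positive diagrams, which is the hallmark property making positive diagrams so well-behaved.
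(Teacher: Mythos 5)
Your proof is correct, but it takes a genuinely different route from the paper's. The paper exploits the fact that for a positive diagram the oriented resolution is the all-$0$ state and $0$-resolving the crossings one by one gives a sequence of quotient chain maps $\tilde{C}(D) \rightarrow \tilde{C}(D_0) \rightarrow \cdots \rightarrow \tilde{C}(D_{0\cdots 0})$ ending at a crossingless diagram; each sends the reduced Lee cycle to the reduced Lee cycle, so $\tilde{d}_c(D) \leq \tilde{d}_c(D_{0\cdots 0})$ by \Cref{lem:c-div}, and the latter vanishes by the already-proved facts $\tilde{d}_c(\bigcirc) = 0$ and $\tilde{d}_c(D \sqcup \bigcirc) = \tilde{d}_c(D)$. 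You instead stay entirely inside $\tilde{C}^0_{h,t}(D)$: you observe that positivity forces $\tilde{C}^{-1}_{h,t}(D) = 0$, hence $\tilde{H}^0_{h,t}(D) = \ker d^0$ is torsion-free (this already kills the $\Tor$ in the definition of $\tilde{d}_c$, so the torsion-lifting paragraph is slightly heavier than needed), and then you expand $\tilde{\ca}(D)$ in the free $\{1,X_a\}$-basis via $X_b = X_a - c$ to exhibit a coefficient equal to $1$ on $\underline{X_a} \otimes X_a^{\otimes(r-1)}$. Both proofs are short and both hinge on the same structural fact (the oriented resolution sits in minimal homological degree); the paper's version outsources the final non-divisibility to the crossingless case via functoriality of $d_c$ under module maps, while yours is self-contained and makes the unit coefficient visible at the chain level, at the cost of an explicit basis computation.
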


\begin{proof}
    The orientation preserving state of $D$ is $s = (0, \ldots, 0)$. By 0-resolving the crossings one by one, we get a sequence of quotient maps
    \begin{align*}
    	\tilde{C}(D) \rightarrow \tilde{C}(D_0) \rightarrow \cdots \rightarrow \tilde{C}(D_{0 \cdots 0}).
    \end{align*}
    Since the rightmost diagram is a disjoint union of circles, we have 
    \[
        0 \leq \tilde{d}_c(D) \leq \tilde{d}_c(D_{0 \cdots 0}) = 0.
    \]
\end{proof}

\begin{proposition} \label{prop:d_c-fusion}
    Suppose $D, D'$ are pointed diagrams that are related by a saddle move that splits a Seifert circle of $D$ into two Seifert circles of $D'$ and that does not contain the marked points of $D$, $D'$. Then 
    \[
        \tilde{d}_c(D) \leq \tilde{d}_c(D') \leq \tilde{d}_c(D) + 1.
    \]
\end{proposition}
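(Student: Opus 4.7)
The plan is to bound $\tilde{d}_c(D')$ both above and below using the saddle cobordism maps in each direction. Let $\eta: \tilde{C}_{h,t}(D) \to \tilde{C}_{h,t}(D')$ denote the chain map induced by the split saddle cobordism $S_\eta$ from $D$ to $D'$, and let $\mu: \tilde{C}_{h,t}(D') \to \tilde{C}_{h,t}(D)$ denote the chain map induced by the reverse merge cobordism $S_\mu$. Both are provided by \Cref{prop:red-cobordism} without requiring $c$ to be invertible. Locally, $\eta$ acts as the comultiplication $\Delta$ on the tensor factor of the Seifert circle being split (and as the identity on all other factors), while $\mu$ acts as the multiplication $m$ on the two factors being merged.

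Assuming without loss of generality that the affected Seifert circle of $D$ is colored $a$, the operations \eqref{eq:ab-operations} give $\Delta(X_a) = X_a \otimes X_a$ and $m(X_a \otimes X_a) = c\, X_a$, from which I expect the chain-level identities
\[
\eta(\tilde{\ca}(D)) = \pm\, \tilde{\ca}(D'), \qquad \mu(\tilde{\ca}(D')) = \pm\, c\, \tilde{\ca}(D).
\]
Passing to homology modulo torsion (which is $c$-torsion-free since $c \neq 0$), I will apply \Cref{lem:c-div} to $\eta_*$ to obtain
\[
\tilde{d}_c(D) = d_c(\tilde{\ca}(D)) \leq d_c(\eta_*(\tilde{\ca}(D))) = \tilde{d}_c(D'),
\]
and apply \Cref{lem:c-div} to $\mu_*$ together with \Cref{lem:c-add} to obtain
\[
\tilde{d}_c(D') \leq d_c(\mu_*(\tilde{\ca}(D'))) = d_c(c\, \tilde{\ca}(D)) = 1 + \tilde{d}_c(D),
\]
which are the two desired inequalities.

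The main obstacle is verifying that both new Seifert circles of $D'$ inherit the same $ab$-color as the original Seifert circle of $D$; otherwise the claimed chain-level identity for $\eta$ would pick up additional terms requiring further care. This amounts to a local combinatorial check inside the saddle disk, tracking how the checkerboard coloring of $\RR^2$ and the coherent Seifert orientations behave across the arc replacement. As a backup route, when $c$ is invertible one may appeal directly to \Cref{prop:cab-cobordism-red}: the values $\chi(S_\eta) = \chi(S_\mu) = -1$ together with $\delta r = \pm 1$ and $\delta w = 0$ give $j_\eta = 0$ and $j_\mu = 1$, and every component of each saddle has boundary in the source link, so the unwritten terms vanish and the same identities hold in homology.
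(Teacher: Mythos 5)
Your main route is exactly the paper's proof: compose the two saddle maps $D \to D' \to D$ so that the reduced Lee cycles transform as $\tilde{\ca}(D) \xmapsto{\Delta} \tilde{\ca}(D') \xmapsto{m} \pm c\,\tilde{\ca}(D)$, and then apply \Cref{lem:c-add} and \Cref{lem:c-div}. The $ab$-coloring check you flag is indeed used implicitly by the paper as well (so it is not a gap relative to the paper's own level of detail); do note, however, that your fallback via \Cref{prop:cab-cobordism-red} cannot serve as a backup, since that proposition assumes $c$ invertible, in which case $\tilde{d}_c$ is identically zero and the statement is vacuous.
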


\begin{proof}
    The sequence of two saddle moves
    \[
        D \rightarrow D' \rightarrow D
    \]
    induces a sequence of chain maps that send the Lee cycles as
    \[
        \tilde{\ca}(D) 
        \ \xmapsto{\Delta} \  
        \tilde{\ca}(D')
        \ \xmapsto{m} \  
        \pm c \cdot \tilde{\ca} (D).
    \]
    Thus the result follows from \Cref{lem:c-add,lem:c-div}.
\end{proof}

\begin{proposition} \label{prop:d_c-conn-sum}
    For pointed link diagrams $D, D'$, 
    \[
        \tilde{d}_c(D \# D') \geq \tilde{d}_c(D) + \tilde{d}_c(D').
    \]
    Moreover when $R$ is a PID and $c$ is prime, this becomes an equality. 
\end{proposition}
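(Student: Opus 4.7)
The plan is to leverage \Cref{prop:C-red-conn-sum}, which provides a chain isomorphism
\[
    \Phi: \tilde{C}(D \# D') \isom \tilde{C}(D) \otimes_R \tilde{C}(D')
\]
carrying the reduced Lee cycle $\tilde{\ca}(D \# D')$ to $\tilde{\ca}(D) \otimes \tilde{\ca}(D')$. Passing to homology and applying \Cref{lem:c-div} to $\Phi_*$, the task reduces to computing the $c$-divisibility of the class of $\tilde{\ca}(D) \otimes \tilde{\ca}(D')$ in $H(\tilde{C}(D) \otimes \tilde{C}(D'))/\Tor$.

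For the inequality, I would use the natural homology cross product
\[
    \mu: \tilde{H}(D) \otimes_R \tilde{H}(D') \to H(\tilde{C}(D) \otimes \tilde{C}(D')),\quad [z] \otimes [z'] \mapsto [z \otimes z'],
\]
which is well defined since boundaries tensor cycles to boundaries. Any class of the form (torsion) $\otimes$ (cycle) is again torsion---if $r[\tau] = 0$ via $r\tau = dv$, then $r(\tau \otimes z') = d(v \otimes z')$---so $\mu$ descends to torsion-free quotients. Since $\mu$ sends $\tilde{\ca}(D) \otimes \tilde{\ca}(D')$ to $\Phi_* \tilde{\ca}(D \# D')$, \Cref{lem:c-tensor} bounds the source divisibility from below by $\tilde{d}_c(D) + \tilde{d}_c(D')$, and \Cref{lem:c-div} transports this along $\mu$ to give $\tilde{d}_c(D \# D') \geq \tilde{d}_c(D) + \tilde{d}_c(D')$.

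For the equality, assuming $R$ is a PID and $c$ is prime, I would invoke the Künneth theorem to obtain a (non-canonically split) short exact sequence
\[
    0 \to \bigoplus_{i+j=n} \tilde{H}_i(D) \otimes_R \tilde{H}_j(D') \to H_n(\tilde{C}(D) \otimes \tilde{C}(D')) \to \bigoplus_{i+j=n-1} \Tor_1^R(\tilde{H}_i(D), \tilde{H}_j(D')) \to 0.
\]
Over a PID, $\Tor_1^R(-, -)$ is torsion, and the mixed-torsion summands of $\tilde{H}(D) \otimes \tilde{H}(D')$ are also torsion, so passing to torsion-free quotients yields a natural isomorphism
\[
    (\tilde{H}(D)/\Tor) \otimes_R (\tilde{H}(D')/\Tor) \isom H(\tilde{C}(D) \otimes \tilde{C}(D'))/\Tor,
\]
under which the classes $\tilde{\ca}(D) \otimes \tilde{\ca}(D')$ on both sides correspond. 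Both torsion-free quotients are free over the PID $R$, so the equality case of \Cref{lem:c-tensor} applies and, combined with \Cref{lem:c-div}, yields $\tilde{d}_c(D \# D') = \tilde{d}_c(D) + \tilde{d}_c(D')$. The principal technical hurdle will be the Künneth torsion bookkeeping, but this is routine once each homology is decomposed into free and torsion parts over the PID.
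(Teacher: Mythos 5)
Your proposal is correct and follows essentially the same route as the paper: use \Cref{prop:C-red-conn-sum} to identify the reduced complex of the connected sum with the tensor product, pass the Lee cycle through the induced map on torsion-free homology, and combine \Cref{lem:c-tensor} with \Cref{lem:c-div}. The paper states tersely that the map on torsion-free quotients is an isomorphism over a PID, whereas you supply the K\"unneth justification explicitly, but this is exactly the content the paper's assertion implicitly relies on.
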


\begin{proof}
    From \Cref{prop:C-red-conn-sum} there is a natural map 
    \[
        \tilde{H}(D)/\Tor{} \otimes \tilde{H}(D')/\Tor{} \rightarrow \tilde{H}(D \# D')/\Tor{}
    \]
    that maps
    \[
        [\tilde{\ca}(D)] \otimes [\tilde{\ca}(D')] \mapsto [\tilde{\ca}(D \# D')].
    \]
    Thus the inequality holds from \Cref{lem:c-tensor,lem:c-div}. When $R$ is a PID, this map is an isomorphism and hence the reverse inequality follows from the latter statement of \Cref{lem:c-tensor}. 
\end{proof}

\begin{proposition} \label{prop:d-conn-sum-disj-union}
    For pointed link diagrams $D, D'$,
    \[
        \tilde{d}_c(D \# D') \leq \tilde{d}_c(D \sqcup D') \leq \tilde{d}_c(D \# D') + 1.
    \]
    Here $D \sqcup D'$ is regarded as a pointed link by taking any point on its arc. 
\end{proposition}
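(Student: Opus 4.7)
The plan is to reduce this to a direct application of \Cref{prop:d_c-fusion}, after making a careful choice of marked points on both sides.

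First, I would set up the band surgery relating the two diagrams. By hypothesis the connected sum $D \# D'$ is realized by an untwisted, coherently oriented band $b$ joining the marked points of $D$ and $D'$. The inverse saddle, cutting this band, takes $D \# D'$ back to $D \sqcup D'$. I would place the marked point of $D \sqcup D'$ on an arc of, say, $D$ that is disjoint from $b$ (and keep the marked point of $D \# D'$ on that same arc). Then the saddle move $D \# D' \to D \sqcup D'$ avoids the marked points, as required by \Cref{prop:d_c-fusion}.

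Next I would verify the Seifert-circle count change. Away from the band $b$, the Seifert circles of $D \# D'$ agree with those of $D$ and $D'$ respectively; the two Seifert circles of $D, D'$ carrying the marked points are fused into a single Seifert circle by $b$. Hence $r(D \# D') = r(D) + r(D') - 1$, while $r(D \sqcup D') = r(D) + r(D')$. So the saddle $D \# D' \to D \sqcup D'$ indeed splits a single Seifert circle into two, putting us in the hypothesis of \Cref{prop:d_c-fusion}.

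Applying \Cref{prop:d_c-fusion} with $D \# D'$ playing the role of ``$D$'' and $D \sqcup D'$ playing the role of ``$D'$'' then yields
\[
    \tilde{d}_c(D \# D') \leq \tilde{d}_c(D \sqcup D') \leq \tilde{d}_c(D \# D') + 1,
\]
which is the claim. There is no real obstacle here; the only subtlety is the bookkeeping of marked points, which is handled by placing the basepoint of $D \sqcup D'$ away from the surgery region.
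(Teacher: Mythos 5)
Your proposal is correct and takes essentially the same approach as the paper, which simply observes that this is a special case of \Cref{prop:d_c-fusion}. Your additional bookkeeping of the marked points and the Seifert-circle count is just an unpacking of why the hypotheses of that proposition hold.
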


\begin{proof}
    This is a special case of \Cref{prop:d_c-fusion}.
\end{proof}

\begin{proposition} \label{prop:RM-k-relation}
    If $D, D'$ are related by a Reidemeister move, then 
    \[
        \tilde{d}_c(D) = \tilde{d}_c(D') + j
    \]
    where 
    \[
        j = \frac{\delta w(D, D') - \delta r(D, D')}{2}.
    \]
\end{proposition}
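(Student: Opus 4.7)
The plan is to deduce this directly from the Reidemeister-move formula for reduced Lee classes established in \Cref{prop:cab-reidemeister-red}, combined with the divisibility lemmas of \Cref{subsec:red-divisibility}. Specifically, the R-move map $\rho$ from \Cref{prop:red-r-moves} is a chain homotopy equivalence between $\tilde{C}^+_{h,t}(D)$ and $\tilde{C}^+_{h,t}(D')$, and hence induces an $R$-module isomorphism
\[
    \rho_*: \tilde{H}^+_{h, t}(D)/\Tor{} \xrightarrow{\ \sim\ } \tilde{H}^+_{h, t}(D')/\Tor{}
\]
which, by \Cref{prop:cab-reidemeister-red}, satisfies $\rho_*[\tilde{\ca}(D)] = \epsilon c^j [\tilde{\ca}(D')]$ with $j = (\delta w - \delta r)/2 \in \{-1, 0, 1\}$.

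For the case $j \geq 0$, I would first apply the equality clause of \Cref{lem:c-div} (the ring map is the identity on $R$ and $\rho_*$ is an isomorphism) to obtain
\[
    \tilde{d}_c(D) = d_c(\rho_*[\tilde{\ca}(D)]) = d_c(\epsilon c^j [\tilde{\ca}(D')]).
\]
Since $\epsilon = \pm 1$ is a unit it may be absorbed, so the right hand side equals $d_c(c^j [\tilde{\ca}(D')])$. To invoke the equality clause of \Cref{lem:c-add}, I would note that $\tilde{H}(D')/\Tor{}$ has no $c$-torsion: if $c(z + \Tor{}) = 0$ then $cz$ is torsion, say $r c z = 0$ with $r \neq 0$, and since $R$ is an integral domain $rc \neq 0$, so $z \in \Tor{}$. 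Therefore $d_c(c^j [\tilde{\ca}(D')]) = j + \tilde{d}_c(D')$, giving the claimed identity.

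For the case $j = -1$, I would use the inverse chain homotopy equivalence $\rho^{-1}$, applying the same reasoning to $\rho^{-1}_*[\tilde{\ca}(D')] = \epsilon^{-1} c \, [\tilde{\ca}(D)]$ (the rewritten form noted in the remark after \Cref{prop:cab-reidemeister}), which yields $\tilde{d}_c(D') = 1 + \tilde{d}_c(D)$, equivalently $\tilde{d}_c(D) = \tilde{d}_c(D') + j$. Thus a single argument, applied in the appropriate direction depending on the sign of $j$, handles all three possibilities.

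The only subtlety is the verification that $\tilde{H}/\Tor{}$ is $c$-torsion-free, which is needed to promote the easy inequality in \Cref{lem:c-add} to an equality; but as sketched above this follows immediately from $R$ being an integral domain. Everything else is a direct transcription of \Cref{prop:cab-reidemeister-red} through the divisibility formalism, so no genuine obstacle is expected.
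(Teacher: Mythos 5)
Your proof is correct and follows exactly the same route as the paper, which simply states "Immediate from Proposition~\ref{prop:cab-reidemeister-red}"; you have filled in the intermediate steps (invoking Lemma~\ref{lem:c-div} for the isomorphism $\rho_*$, Lemma~\ref{lem:c-add} for the $c^j$ factor, and the observation that $\tilde{H}/\Tor$ is $c$-torsion-free) that the authors treated as obvious.
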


\begin{proof}
    Immediate from \Cref{prop:cab-reidemeister-red}.
\end{proof}

\begin{proposition} \label{prop:k-ptd-cobordism}
    Suppose $S$ is a cobordism between pointed links $L, L'$, such that each component of $S$ has a boundary in $L$. Then
    \[
        \tilde{d}_c(D) \leq \tilde{d}_c(D') + j
    \]
    where 
    \[
        j = \frac{\delta w(D, D') - \delta r(D, D') - \chi(S)}{2}.
    \]
\end{proposition}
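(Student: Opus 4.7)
The plan is to decompose $S$ into a sequence of elementary cobordisms (Reidemeister moves, saddles, and deaths) via a carefully chosen Morse-theoretic presentation, verify the claimed inequality on each piece using results already proved, and then sum. Since $c$ is not assumed invertible here, I cannot invoke \Cref{prop:cab-cobordism-red} directly and must instead track the $c$-divisibility move by move. The hypothesis that every component of $S$ has a boundary in $L$ is used right at the start: each connected component $S_i$ of $S$ is a compact oriented surface with $\partial S_i \cap L \neq \emptyset$, and by a standard cancellation argument in Morse theory with boundary one may choose a Morse function $f : S \to [0, 1]$ with $f^{-1}(0) = L$, $f^{-1}(1) = L'$, and no interior local minima (any such minimum could otherwise be cancelled against a companion index-$1$ critical point via a flow-line reaching $\partial S_i \cap L$). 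This yields a movie $D = D_0 \to D_1 \to \cdots \to D_n = D'$ consisting only of Reidemeister moves, saddles, and deaths, with the marked point relocated when necessary via the $\tau_{p,q}$ of \Cref{prop:red-basept-cross}, which is a chain homotopy equivalence and hence does not affect $\tilde{d}_c$.

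For each elementary cobordism $S_i : D_i \to D_{i+1}$, set
\[
    j_i = \frac{\delta w(D_i, D_{i+1}) - \delta r(D_i, D_{i+1}) - \chi(S_i)}{2}.
\]
I would verify $\tilde{d}_c(D_i) \leq \tilde{d}_c(D_{i+1}) + j_i$ case by case:
\begin{itemize}
    \item Reidemeister moves have $\chi(S_i) = 0$ and $j_i = (\delta w - \delta r)/2$; the inequality (in fact equality) is \Cref{prop:RM-k-relation}.
    \item A splitting saddle has $(\delta w, \delta r, \chi) = (0, +1, -1)$, giving $j_i = 0$; this is the left half of \Cref{prop:d_c-fusion}.
    \item A merging saddle has $(\delta w, \delta r, \chi) = (0, -1, -1)$, giving $j_i = 1$; this is \Cref{prop:d_c-fusion} applied in the reverse direction (a merge from $D_i$ to $D_{i+1}$ is a split from $D_{i+1}$ to $D_i$).
    \item A death has $(\delta w, \delta r, \chi) = (0, -1, +1)$, giving $j_i = 0$; the inequality follows from $\tilde{d}_c(D \sqcup \bigcirc) = \tilde{d}_c(D)$, proved earlier in this section.
\end{itemize}

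Since $w$, $r$, and $\chi$ are all additive under composition of elementary cobordisms, $\sum_i j_i = j$, so chaining the per-move inequalities yields $\tilde{d}_c(D) \leq \tilde{d}_c(D') + j$. I expect the main obstacle to be the Morse-theoretic step eliminating births: the boundary hypothesis on $S$ is used precisely there, because a birth considered on its own would demand $\tilde{d}_c(D) \leq \tilde{d}_c(D \sqcup \bigcirc) - 1$, which is false in general.
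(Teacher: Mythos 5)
Your proof is correct, and it takes a genuinely different route from the paper's. The paper's proof is a one-liner: apply \Cref{prop:cab-cobordism-red} over the localization $c^{-1}R$, where $c$ is invertible and the reduced Lee classes freely generate the homology, to get $\phi(\tilde{\ca}(D)) = \pm c^{j}\,\tilde{\ca}(D')$ with vanishing error terms (this is where the hypothesis that every component of $S$ meets $L$ enters for them), and then transfer the divisibility statement back to $R$ via \Cref{lem:c-localize} combined with $d_c(z) \le d_c(\phi(z))$ from \Cref{lem:c-div}. You instead sidestep the localization entirely: you eliminate births by a Morse-theoretic argument (using the same boundary hypothesis, but in a topological rather than algebraic role), reduce to Reidemeister moves, saddles, and deaths, and chain the already-proved per-move inequalities from \Cref{prop:RM-k-relation} and \Cref{prop:d_c-fusion} together with $\tilde{d}_c(D\sqcup\bigcirc)=\tilde{d}_c(D)$, summing via additivity of $w$, $r$, $\chi$. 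Your approach is more elementary and makes explicit the source of the inequality move-by-move; the paper's approach is shorter but leans on the full cobordism-map machinery over $c^{-1}R$. Both are valid.

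One small slip worth flagging: you write that $\tau_{p,q}$ ``is a chain homotopy equivalence and hence does not affect $\tilde{d}_c$.'' That inference is not valid as stated, since a chain homotopy equivalence need not preserve a distinguished homology class. The conclusion is nevertheless correct for a different reason: $\tau_{p,q}$ is defined (\Cref{prop:tau_pq}) as a composite of R-move maps and the through-infinity isomorphism of \Cref{prop:isotopy-through-infty}, starting and ending at the same underlying diagram, so the accumulated exponent $j = \tfrac{\delta w - \delta r}{2}$ vanishes and the Lee class is preserved up to sign, hence $\tilde{d}_c$ is preserved. With that justification in place of the one you gave, the argument is complete.
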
 

\begin{proof}
    Immediate from \Cref{prop:cab-cobordism-red} with \Cref{lem:c-localize}.
\end{proof}

    \subsection{Definition and properties of $\ssr_c$}

The following definition is justified from \Cref{prop:RM-k-relation}.

\begin{definition} \label{def:red_sc}
    For any pointed link $L$, define
    \[
        \ssr_c(L) = 2\tilde{d}_c(D) + w(D) - r(D) + 1
    \]
    where $D$ is any pointed diagram representing $L$. 
\end{definition}

\begin{proposition} \label{prop:s-mod-2}
    $\ssr_c(L) \equiv |L| - 1 \bmod{2}$. 
\end{proposition}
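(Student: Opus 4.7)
The plan is to reduce the claim to the classical parity relation
\[
    w(D) + r(D) \equiv |L| \pmod{2}.
\]
Given this, we have $w(D) - r(D) + 1 \equiv w(D) + r(D) + 1 \equiv |L| + 1 \equiv |L| - 1 \pmod{2}$, and adding the even number $2\tilde{d}_c(D)$ preserves the parity, so $\ssr_c(L) \equiv |L| - 1 \pmod{2}$. Note that the statement does not depend on the marked point since the right-hand side of \Cref{def:red_sc} is even modulo $2$ after the shift.

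To prove the parity relation, I would apply Seifert's algorithm to the oriented diagram $D$. The resulting surface $\Sigma$ has $\partial \Sigma = L$ and is built from $r(D)$ disks glued together by $c(D)$ bands (one per crossing), so
\[
    \chi(\Sigma) = r(D) - c(D),
\]
where $c(D)$ denotes the total number of crossings. On the other hand, any compact oriented surface with $|L|$ boundary components satisfies $\chi(\Sigma) = 2k - 2g - |L|$, where $k$ is the number of connected components of $\Sigma$ and $g$ is its total genus, so $\chi(\Sigma) \equiv |L| \pmod{2}$. Combining these, $r(D) - c(D) \equiv |L| \pmod{2}$. Since $w(D)$ is a sum of $\pm 1$'s over the crossings we have $w(D) \equiv c(D) \pmod 2$, and hence $w(D) + r(D) \equiv |L| \pmod{2}$, as desired.

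There is no real obstacle here; the statement is a parity check, reducing to the elementary topological fact that the Euler characteristic of a compact oriented surface agrees in parity with the number of its boundary components.
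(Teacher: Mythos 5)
Your proof is correct and follows essentially the same route as the paper's: apply Seifert's algorithm, compute $\chi(\Sigma) = r(D) - n(D)$, and use that $\chi(\Sigma) \equiv |L| \pmod 2$ together with $w(D) \equiv n(D) \pmod 2$. (You are in fact slightly more careful, allowing the Seifert surface to be disconnected, but modulo $2$ this makes no difference.)
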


\begin{proof}
    Take any diagram $D$ of $L$, and let $S$ be the Seifert surface of $L$ obtained by applying Seifert's algorithm to $D$. Then from 
    \[
        \chi(S) = 2 - 2g(S) - |L| = r(D) - n(D),
    \]
    we have 
    \[
        \ssr_c(L) \equiv n(D) + r(D) + 1 \equiv |L| + 1 \bmod{2}.
    \]  
\end{proof}

The following properties of $\ssr_c$ immediately follows from those of $\tilde{d}_c$. 

\begin{proposition} \label{prop:s-properties}
    \ 
    \begin{enumerate}
        \item $\ssr_c(\bigcirc) = 0$.
        \item $\ssr_c(L \sqcup \bigcirc) = \ssr_c(L) - 1$.
        \item $\ssr_c(L) = \ssr_c(-L)$.
        \item $\ssr_c(L \# L') \geq \ssr_c(L) + \ssr_c(L')$.
        \item $\ssr_c(L \# L') - 1 \leq \ssr_c(L \sqcup L') \leq \ssr_c(L \# L') + 1$.
    \end{enumerate}
    When $R$ is a PID and $c$ is prime in $R$, \textit{4.} becomes an equality. 
    \qed
\end{proposition}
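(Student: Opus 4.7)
The plan is to reduce each of the five assertions to the corresponding property of $\tilde{d}_c$ already proved in \Cref{subsec:red-divisibility}, by picking a convenient diagram and tracking the three quantities $\tilde{d}_c(D)$, $w(D)$, $r(D)$ appearing in \Cref{def:red_sc}. Since $\ssr_c$ is diagram-independent by \Cref{prop:RM-k-relation}, we are free to choose any representative.

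For (1), take $D = \bigcirc$, which has $\tilde{d}_c(D) = 0$ by the first example of \Cref{subsec:red-divisibility}, $w(D) = 0$, and $r(D) = 1$, yielding $\ssr_c(\bigcirc) = 0$. For (2), represent $L \sqcup \bigcirc$ by $D \sqcup \bigcirc$; the writhe is unchanged, $r$ increases by $1$, and $\tilde{d}_c$ is unchanged by the earlier proposition $\tilde{d}_c(D \sqcup \bigcirc) = \tilde{d}_c(D)$, so $\ssr_c$ drops by $1$. For (3), reverse every component simultaneously: crossing signs are preserved (being bilinear in the strand orientations) and Seifert's algorithm produces the same circles, so $w$ and $r$ are preserved; combined with the earlier identity $\tilde{d}_c(-D) = \tilde{d}_c(D)$, the claim follows.

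For (4), fuse $D$ and $D'$ by a coherently oriented untwisted band at outermost arcs, as in \Cref{prop:C-red-conn-sum}. The writhe is additive, while the two Seifert circles carrying the marked points merge, so $r(D \# D') = r(D) + r(D') - 1$. Combined with the inequality $\tilde{d}_c(D \# D') \geq \tilde{d}_c(D) + \tilde{d}_c(D')$ of \Cref{prop:d_c-conn-sum}, the definition yields
\[
    \ssr_c(L \# L') = 2\tilde{d}_c(D \# D') + w(D) + w(D') - r(D) - r(D') + 2 \geq \ssr_c(L) + \ssr_c(L'),
\]
with equality when $R$ is a PID and $c$ is prime by the second half of that proposition. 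For (5), the diagrams $D \# D'$ and $D \sqcup D'$ share the same writhe and differ by exactly $1$ in the Seifert circle count, and \Cref{prop:d-conn-sum-disj-union} gives $0 \leq \tilde{d}_c(D \sqcup D') - \tilde{d}_c(D \# D') \leq 1$. A direct subtraction gives $\ssr_c(L \sqcup L') - \ssr_c(L \# L') \in \{-1, +1\}$, which is the desired two-sided inequality.

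There is no serious obstacle since every nontrivial input is already established. The only minor care is needed in (3), where for a multi-component link one should invoke the involution $I$ of \Cref{prop:red-C-isom1}, which identifies $\tilde{C}^+_{h, t}(D)$ with $\tilde{C}^-_{h, t}(D) = \tilde{C}^+_{h, t}(-D)$ and sends $\tilde{\ca}(D)$ to $\tilde{\ca}(-D)$, so that the $c$-divisibilities agree via \Cref{lem:c-div}.
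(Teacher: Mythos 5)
Your proof is correct and is exactly the argument the paper leaves implicit with its ``immediately follows from those of $\tilde{d}_c$'' remark: unwind \Cref{def:red_sc}, track $w$, $r$, and $\tilde{d}_c$ diagram by diagram, and invoke the $\tilde{d}_c$-level propositions from \Cref{subsec:red-divisibility}. Your bookkeeping of $w(D\#D') = w(D) + w(D')$ and $r(D\#D') = r(D) + r(D') - 1$ under a band sum at outermost marked arcs is right, and the observation in (5) that the difference lands in $\{-1, +1\}$ (rather than merely $[-1,1]$) is consistent with the parity constraint of \Cref{prop:s-mod-2}.
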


The following proposition states the behavior of $\ssr_c$ under cobordisms, which is the key to proving the main theorem.

\begin{proposition} \label{prop:s-cobordism}
    Suppose $S$ is a cobordism between pointed links $L, L'$, such that each component of $S$ has a boundary in $L$. Then
    \[
        \ssr_c(L) \leq \ssr_c(L') - \chi(S).
    \]
    Moreover, if every component $S$ has boundary in both $L$ and $L'$, then
    \[
        |\ssr_c(L) - \ssr_c(L')| \leq -\chi(S).
    \]
\end{proposition}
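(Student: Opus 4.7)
The plan is to reduce the proposition to the divisibility estimate already established in \Cref{prop:k-ptd-cobordism} by a simple bookkeeping computation, and then handle the two-sided bound by reversing the cobordism.

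For the first inequality, I would fix diagrams $D$ and $D'$ representing $L$ and $L'$, and apply \Cref{prop:k-ptd-cobordism} directly to $S$. This gives
\[
    \tilde{d}_c(D) - \tilde{d}_c(D') \;\leq\; \frac{\delta w(D,D') - \delta r(D,D') - \chi(S)}{2}.
\]
Multiplying by $2$, substituting $\delta w = w(D') - w(D)$ and $\delta r = r(D') - r(D)$, and grouping terms against the defining formula $\ssr_c(L) = 2\tilde{d}_c(D) + w(D) - r(D) + 1$ of \Cref{def:red_sc}, the writhe and Seifert-circle terms telescope exactly and one is left with $\ssr_c(L) - \ssr_c(L') \leq -\chi(S)$. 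This is the first inequality.

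For the second inequality, I would reverse the cobordism. The reversed cobordism $\bar S \colon L' \to L$ satisfies $\chi(\bar S) = \chi(S)$. Under the stronger hypothesis that every component of $S$ has boundary in both $L$ and $L'$, each component of $\bar S$ automatically has boundary in its source $L'$, so the hypothesis of the first inequality (proved above) applies to $\bar S$. This yields $\ssr_c(L') - \ssr_c(L) \leq -\chi(\bar S) = -\chi(S)$. Combining with the first inequality (which still applies to $S$ under the stronger hypothesis) gives $|\ssr_c(L) - \ssr_c(L')| \leq -\chi(S)$, as desired.

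There is no substantial obstacle here: all the analytic content was absorbed into \Cref{prop:cab-cobordism-red} (the behavior of the reduced Lee class under cobordism maps) and \Cref{prop:k-ptd-cobordism} (its translation into a divisibility bound). The only point I would verify explicitly is that the symmetric boundary condition really is preserved under reversal of $S$, so that the one-sided inequality can legitimately be applied in both directions.
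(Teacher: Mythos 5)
Your argument is correct and is precisely what the paper intends; the paper's own proof is just the one-liner ``Immediate from \Cref{prop:k-ptd-cobordism}'', and your bookkeeping unwinding of the definition of $\ssr_c$ together with the reversal of $S$ for the two-sided bound is the standard way to flesh that out.
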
 

\begin{proof}
    Immediate from \Cref{prop:k-ptd-cobordism}.
\end{proof}

\begin{corollary} \label{cor:s-link-conc-inv}
    $\ssr_c$ is invariant under link concordance. \qed
\end{corollary}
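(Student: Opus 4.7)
The plan is to observe that this corollary is essentially immediate from the second (``moreover'') clause of \Cref{prop:s-cobordism}, so the only work is to unpack the definition of link concordance and confirm that both hypotheses of that clause are satisfied.

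First I would recall that a (smooth) concordance between links $L$ and $L'$ in $S^3$ is a properly embedded, oriented surface $S \subset S^3 \times I$ with $\partial S = -L \times \{0\} \cup L' \times \{1\}$, such that $S$ is a disjoint union of annuli, one for each component, each annulus having one boundary circle on $L$ and one on $L'$. In particular $|L| = |L'|$. To apply \Cref{prop:s-cobordism} we view $L, L'$ as pointed links by placing marked points on corresponding components, and by an isotopy rel boundary we may arrange that the arc $\gamma$ on $S$ connecting the marked points lies inside a single annulus component of $S$.

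Next I would verify the two conditions of the ``moreover'' clause. Every component of $S$ is an annulus with nonempty boundary on both $L$ and $L'$, so in particular every component has boundary on $L$ (the first hypothesis of \Cref{prop:s-cobordism}) and also on $L'$ (the hypothesis of the moreover clause). Moreover, since $S$ is a disjoint union of annuli, we have $\chi(S) = 0$. Applying the inequality
\[
    |\ssr_c(L) - \ssr_c(L')| \leq -\chi(S) = 0,
\]
we conclude $\ssr_c(L) = \ssr_c(L')$, so $\ssr_c$ is a link concordance invariant.

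There is no real obstacle here; the content of the result lies entirely in \Cref{prop:s-cobordism}, and the corollary only requires the elementary topological observations that a concordance is a disjoint union of annuli with Euler characteristic zero and that every component has boundary in both $L$ and $L'$.
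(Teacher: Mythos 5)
Your proof is correct and takes exactly the route the paper intends: the corollary carries a \qed\ indicating it is immediate from the ``moreover'' clause of \Cref{prop:s-cobordism}, and you supply the two elementary observations needed to invoke it, namely that a concordance is a disjoint union of annuli (so $\chi(S)=0$) each meeting both $L$ and $L'$.
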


\begin{corollary} \label{cor:s_c-slice}
    If $K$ is a slice knot, then $\ssr_c(K) = 0$. \qed
\end{corollary}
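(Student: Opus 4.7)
The plan is to combine concordance invariance (Corollary \ref{cor:s-link-conc-inv}) with the fact that a slice knot is concordant to the unknot, and then use $\ssr_c(\bigcirc) = 0$ from Proposition \ref{prop:s-properties}(1). Concretely, I would first recall that $K$ being slice means it bounds a smoothly embedded disk $\Delta \subset B^4$. Removing an open $4$-ball from $B^4$ that intersects $\Delta$ in a small sub-disk produces a properly embedded annulus in $S^3 \times I$ whose boundary is $K$ on one end and the unknot $U$ on the other; this exhibits a concordance between $K$ and $U$.

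Next I would invoke Corollary \ref{cor:s-link-conc-inv} to conclude $\ssr_c(K) = \ssr_c(U)$, and then note $\ssr_c(U) = 0$ by Proposition \ref{prop:s-properties}(1). Alternatively, one can bypass Corollary \ref{cor:s-link-conc-inv} and apply Proposition \ref{prop:s-cobordism} directly to the concordance $S$ described above: since $S$ is an annulus, it has $\chi(S) = 0$ and each component has boundary in both $K$ and $U$, so the second inequality in Proposition \ref{prop:s-cobordism} yields $|\ssr_c(K) - \ssr_c(U)| \leq 0$, giving $\ssr_c(K) = 0$.

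There is essentially no obstacle here; the main input (the cobordism inequality for $\ssr_c$) has already been established in Proposition \ref{prop:s-cobordism}, and the translation from ``slice'' to ``concordant to the unknot'' is standard. The only minor point to be careful about is ensuring the cobordism produced from the slice disk satisfies the hypothesis of having boundary in both ends, which is automatic since the annulus has two nonempty boundary circles.
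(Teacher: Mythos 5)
Your proof is correct and matches the paper's (implicit) argument: the corollary follows immediately from the concordance invariance of $\ssr_c$ (Corollary \ref{cor:s-link-conc-inv}, itself a consequence of Proposition \ref{prop:s-cobordism}) together with $\ssr_c(\bigcirc) = 0$. The alternative route you mention, applying Proposition \ref{prop:s-cobordism} directly to the annulus cobordism, is just unwinding that same chain of reasoning one step.
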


\begin{corollary} \label{thm:s-lower-bound-slice-genus} 
    For a knot $K$, we have 
    \[
        |\ssr_c(K)| \leq 2g_4(K).
    \]
    Here $g_4(K)$ denotes the slice genus of $K$. 
    \qed
\end{corollary}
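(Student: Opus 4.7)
The plan is to deduce the slice-genus bound directly from the cobordism inequality of \Cref{prop:s-cobordism} applied to the slice disk (with genus) of $K$, together with the normalization $\ssr_c(U) = 0$ from \Cref{prop:s-properties}.

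First I would fix a smoothly and properly embedded orientable surface $F \subset B^4$ with $\partial F = K \subset S^3 = \partial B^4$ and with $g(F) = g_4(K)$. Removing a small open disk $D^2$ from the interior of $F$ yields a surface $S \subset S^3 \times [0,1]$ (after a standard identification of a collar of $\partial B^4$) realizing a cobordism from the unknot $U$ to $K$. Here $S$ is connected, and
\[
    \chi(S) = \chi(F) - 1 = (1 - 2g_4(K)) - 1 = -2g_4(K).
\]
Choose any marked point on $U$ and any marked point on $K$, and join them by an embedded arc $\gamma \subset S$ (which exists since $S$ is connected); this turns $S$ into a cobordism between pointed links in the sense of \Cref{subsec:module-cobordism}.

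Next I would invoke the second half of \Cref{prop:s-cobordism}: since $S$ is connected, every component of $S$ has boundary in both $U$ and $K$, and therefore
\[
    \bigl|\ssr_c(U) - \ssr_c(K)\bigr| \leq -\chi(S) = 2g_4(K).
\]
Combined with $\ssr_c(U) = 0$ (item 1 of \Cref{prop:s-properties}), this gives $|\ssr_c(K)| \leq 2g_4(K)$, as required.

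There is essentially no obstacle here: the real work has already been done in proving the cobordism inequality of \Cref{prop:s-cobordism}, and the corollary is just the specialization to a slice surface of $K$ viewed as a cobordism from $U$. The only mild subtlety worth mentioning is the choice of the arc $\gamma$ on $S$ connecting the two marked points, which is needed to regard $S$ as a cobordism of pointed links; this is automatic from connectedness of $S$.
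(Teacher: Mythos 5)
Your proof is correct and matches the paper's intended derivation: the \verb|\qed| in the statement signals that it follows immediately from \Cref{prop:s-cobordism} (applied to the cobordism obtained by puncturing a minimal-genus slice surface) together with $\ssr_c(U) = 0$, exactly as you argue. The Euler-characteristic bookkeeping and the observation that connectedness supplies the required arc $\gamma$ are both right.
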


\begin{corollary}
    For a positive knot $K$,
    \[
        \ssr_c(K) = 2g_4(K) = 2g(K).
    \]
\end{corollary}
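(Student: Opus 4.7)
The plan is to bound $\ssr_c(K)$ between two copies of $2g(K)$ and then use the classical inequality $g_4(K) \leq g(K)$ to squeeze everything together.

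First I would fix a positive diagram $D$ of $K$. By \Cref{prop:d_posD}, $\tilde{d}_c(D) = 0$, so the definition of $\ssr_c$ collapses to
\[
    \ssr_c(K) = w(D) - r(D) + 1.
\]
For a positive diagram the writhe equals the number of crossings $n(D)$, and Seifert's algorithm produces a surface $S$ of Euler characteristic $\chi(S) = r(D) - n(D)$, so $g(S) = (n(D) - r(D) + 1)/2 = (w(D) - r(D) + 1)/2$. Hence
\[
    \ssr_c(K) = 2g(S).
\]

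Next I would invoke Cromwell's theorem (the Bennequin-type inequality for positive links), which says that Seifert's algorithm applied to a positive diagram realizes the Seifert genus, so $g(S) = g(K)$. This gives $\ssr_c(K) = 2g(K)$. Combined with \Cref{thm:s-lower-bound-slice-genus}, which yields $\ssr_c(K) \leq 2g_4(K)$, and the obvious $g_4(K) \leq g(K)$, I get the chain
\[
    2g(K) = \ssr_c(K) \leq 2g_4(K) \leq 2g(K),
\]
forcing equalities throughout.

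The only step that uses anything beyond the machinery developed in the paper is the equality $g(S) = g(K)$ for the Seifert surface of a positive diagram, which is the classical result of Cromwell. Everything else is a direct combination of \Cref{prop:d_posD}, \Cref{def:red_sc}, and \Cref{thm:s-lower-bound-slice-genus}, so no further obstacle is expected.
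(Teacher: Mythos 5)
Your proof is correct, but it invokes more external machinery than necessary, and the paper's argument is slightly cleverer in a way worth pointing out. You use Cromwell's theorem to establish $g(S) = g(K)$ before running the squeeze. The paper instead observes that the squeeze can be run \emph{without} knowing $g(S) = g(K)$ in advance: one only needs the trivial inequality $g(K) \leq g(S)$ (the Seifert genus is a minimum over all Seifert surfaces, and $S$ is one of them). The paper's chain is
\[
    2g_4(K) \leq 2g(K) \leq 2g(S) = \ssr_c(K) \leq 2g_4(K),
\]
where the first inequality is classical, the second is the definition of $g(K)$, the equality is your computation using \Cref{prop:d_posD} together with $w(D)=n(D)$ and $\chi(S)=r(D)-n(D)$, and the last inequality is \Cref{thm:s-lower-bound-slice-genus}. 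All the inequalities collapse, so $g(S)=g(K)$ (Cromwell's theorem for positive knots) and $g(K)=g_4(K)$ come out as \emph{conclusions} rather than inputs. In other words, the slice-torus property of $\ssr_c$ plus the vanishing of $\tilde{d}_c$ on positive diagrams gives an independent proof of the Bennequin-type equality, so citing Cromwell is circularly redundant. Your proof is not wrong, but it leans on a theorem that the machinery you are using already subsumes, so consider replacing the appeal to Cromwell with the one-line remark that $g(K)\leq g(S)$ by definition.
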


\begin{proof}
    Take any positive diagram $D$ of $K$, and let $S$ be a Seifert surface of $K$ obtained by applying Seifert's algorithm to $D$. Then 
    \[
        \ssr_c(K) = n(D) - r(D) + 1 = 2g(S)
    \]
    and
    \[
        2g_4(K) \leq 2g(K) \leq 2g(S) = \ssr_c(K) \leq 2g_4(K).
    \]
\end{proof}

Thus we conclude,

\begin{theorem} \label{thm:1}
    For any PID $R$ and a prime $c$ in $R$, the invariant $\ssr_c$ is a slice-torus invariant. \qed
\end{theorem}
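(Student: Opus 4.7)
The plan is to verify the three defining conditions of \Cref{def:slice-torus-inv} — being a group homomorphism $\mathit{Conc}(S^3) \to \RR$, the slice bound, and the value on positive torus knots — by assembling the properties of $\ssr_c$ already established in this section.

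First, to show that $\ssr_c$ descends to a group homomorphism on $\mathit{Conc}(S^3)$, I would combine concordance invariance with additivity under connect sum. The former is exactly \Cref{cor:s-link-conc-inv}, so $\ssr_c$ is well defined on concordance classes. For additivity, item 4 of \Cref{prop:s-properties} gives $\ssr_c(K \# K') \geq \ssr_c(K) + \ssr_c(K')$, and the hypothesis that $R$ is a PID and $c$ is prime upgrades this to an equality, matching the group operation on $\mathit{Conc}(S^3)$. The inverse of $[K]$ is represented by the reversed mirror $-K^*$, and the knot $K \# (-K^*)$ is slice; combining \Cref{cor:s_c-slice} with additivity then forces $\ssr_c(-K^*) = -\ssr_c(K)$, so inverses map to inverses and $\ssr_c$ is a homomorphism.

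Second, the slice bound $|\ssr_c(K)| \leq 2 g_4(K)$ is recorded verbatim as \Cref{thm:s-lower-bound-slice-genus}. For the torus condition, the positive torus knot $T_{p,q}$ admits a positive braid diagram, so the corollary immediately preceding \Cref{thm:1} gives $\ssr_c(T_{p,q}) = 2 g(T_{p,q})$, and Milnor's classical genus formula for torus knots evaluates this as $(p-1)(q-1)$, completing the verification.

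None of these steps presents a real obstacle, since all of the structural work has already been carried out in \Cref{prop:s-properties}, \Cref{prop:s-cobordism}, and their corollaries; the only point that benefits from a moment of care is checking that the equality case of the connect-sum additivity in \Cref{prop:s-properties} is in force, which it is precisely because $R$ is a PID and $c$ is prime — exactly the hypothesis of the theorem.
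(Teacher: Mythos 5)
Your proof is correct and takes essentially the same route the paper intends: the theorem in the paper carries only a \qed, meaning the authors regard it as an immediate assembly of \Cref{cor:s-link-conc-inv}, \Cref{prop:s-properties} (with the PID-and-prime hypothesis making connect-sum additivity an equality), \Cref{thm:s-lower-bound-slice-genus}, and the corollary on positive knots, which is exactly what you do. The remark on inverses is strictly redundant once additivity and well-definedness are in hand, but it is a harmless and correct verification.
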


The following property is what all slice-torus invariants have in common.

\begin{corollary}[{\cite[Corollary 5.9]{Lewark:2014}}]
    Suppose $R$ is a PID and $c$ is prime in $R$. For any alternating knot $K$, the invariant $\ssr_c(K)$ coincides with the knot signature $\sigma(K)$ of $K$. 
    \qed
\end{corollary}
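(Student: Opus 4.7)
The approach is to invoke \Cref{thm:1} together with Lewark's classification of slice-torus invariants on alternating knots. Since \Cref{thm:1} establishes that $\ssr_c$ satisfies the axioms of \Cref{def:slice-torus-inv}, the corollary is a direct quotation of \cite[Corollary 5.9]{Lewark:2014}, which asserts that any slice-torus invariant $\nu$ takes the value $\sigma(K)$ on an alternating knot $K$.

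For the reader's benefit, the underlying reason for Lewark's statement is a rigidity phenomenon: for an alternating knot, the Slice axiom $|\nu(K)| \leq 2g_4(K)$, the Torus normalization $\nu(T_{p,q}) = (p-1)(q-1)$, and the concordance-homomorphism property together pin $\nu(K)$ down uniquely. This rests on Murasugi's theorem relating the signature of an alternating knot to its slice genus, which provides matching upper and lower bounds that coincide precisely when $\nu(K) = \sigma(K)$.

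The plan, therefore, is a one-line proof: \Cref{thm:1} places $\ssr_c$ in the class of slice-torus invariants, and \cite[Corollary 5.9]{Lewark:2014} finishes the argument. The only bookkeeping step is to match the sign convention for $\sigma$ used in the citation against the convention of the present paper, which can be verified on a single example such as the right-handed trefoil. There is no genuine obstacle, and in particular no further input from the Lee-class machinery is needed at this stage.
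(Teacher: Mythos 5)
Your proof is correct and matches the paper's (implicit) argument exactly: the corollary is an immediate consequence of \Cref{thm:1} together with the cited result of Lewark, which is why the paper gives only \qed. The extra remarks on rigidity and sign conventions are reasonable context but not needed.
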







    \subsection{Refined Lee classes} 
\label{subsec:refined-class-red}

Here we prove that the Lee classes (modulo torsion) can be refined so that it becomes invariant (up to sign) under the Reidemeister moves.

\begin{definition}
    Let $D$ be a pointed link diagram. Define the \textit{refined Lee class} $\tilde{\cz}(D)$ in $\tilde{H}_{h, t}(D)/\Tor{}$ by
    \[
        \tilde{\cz}(D) = c^{-k} \tilde{\ca}(D)
    \]
    where $k = \tilde{d}_c(D)$. 
\end{definition}

\begin{proposition} \label{prop:cz-reidemeister}
    The class $\tilde{\cz}(D)$ is invariant (up to sign) under the Reidemeister moves.
\end{proposition}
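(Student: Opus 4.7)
The plan is to combine Proposition \ref{prop:cab-reidemeister-red} with Proposition \ref{prop:RM-k-relation}: the first tells us that an R-move map $\rho$ multiplies $\tilde{\ca}(D)$ by a factor of the form $\epsilon c^{j}$, while the second says that the $c$-divisibility changes by exactly $j$. Dividing out by $c^{k}$ should therefore precisely cancel the $c^{j}$ picked up by $\rho$, leaving only the sign $\epsilon$.

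Concretely, set $k = \tilde{d}_c(D)$, $k' = \tilde{d}_c(D')$ and $j = (\delta w(D,D') - \delta r(D,D'))/2$, so that $k = k' + j$ by Proposition \ref{prop:RM-k-relation}. Since $R$ is an integral domain and $\tilde{H}(D)/\Tor{}$ is torsion-free, multiplication by $c$ is injective on it, so the refined class $\tilde{\cz}(D) = c^{-k}\tilde{\ca}(D)$ is unambiguously defined and the natural map into the localization $c^{-1}(\tilde{H}(D)/\Tor{})$ is injective. I would work inside this localization, where Proposition \ref{prop:cab-reidemeister-red} can be read uniformly as $\rho(\tilde{\ca}(D)) = \epsilon c^{j}\tilde{\ca}(D')$ for any $j \in \ZZ$, and compute
\[
    \rho(\tilde{\cz}(D)) = c^{-k}\rho(\tilde{\ca}(D)) = \epsilon c^{j-k}\tilde{\ca}(D') = \epsilon c^{j - k + k'}\tilde{\cz}(D') = \epsilon \tilde{\cz}(D'),
\]
the exponent vanishing because $k = k' + j$. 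Since both $\rho(\tilde{\cz}(D))$ and $\tilde{\cz}(D')$ lie in $\tilde{H}(D')/\Tor{}$, injectivity of the embedding into the localization promotes this to an equality in $\tilde{H}(D')/\Tor{}$ itself.

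The only mild obstacle is interpreting the $j < 0$ case of Proposition \ref{prop:cab-reidemeister-red}, which is literally stated in terms of $\rho^{-1}$ rather than $\rho$. Passing to the localization at the outset sidesteps this, since in $c^{-1}(\tilde{H}(D')/\Tor{})$ the relation $\rho(\tilde{\ca}(D)) = \epsilon c^{j}\tilde{\ca}(D')$ makes sense for every $j \in \ZZ$ and is equivalent to the corresponding statement for $\rho^{-1}$. Alternatively, one can dispense with the localization and argue directly by multiplying both sides of the desired identity by $c^{\max(k, k')}$, reducing to a relation already established inside $\tilde{H}(D')/\Tor{}$, and then cancelling that power of $c$ using torsion-freeness.
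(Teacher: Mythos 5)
Your proposal is correct and matches the paper's argument, which is simply ``Immediate from Proposition~\ref{prop:cab-reidemeister-red}'' — you have just filled in the details: $\rho$ multiplies $\tilde{\ca}$ by $\epsilon c^j$, the divisibility shifts by $j$ (Proposition~\ref{prop:RM-k-relation}, itself a corollary of the same proposition), and the two effects cancel after normalizing by $c^{-\tilde{d}_c}$. Your care with the $j<0$ case and with torsion-freeness of $\tilde{H}/\Tor$ is sound, though the paper leaves these points implicit.
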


\begin{proof}
    Immediate from \Cref{prop:cab-reidemeister-red}. 
\end{proof}

\begin{proposition} \label{prop:k-cobordism}
    Let $S$ be a cobordism between pointed links $L, L'$, such that each component of $S$ has a boundary in $L$. Then the corresponding cobordism map
    \[
        \phi: \tilde{H}_{h, t}(D)/\Tor \rightarrow \tilde{H}_{h, t}(D')/\Tor
    \]
    sends
    \[
        \tilde{\cz}(D) \xmapsto{\phi} \pm c^l \tilde{\cz}(D')
    \]
    where
    \[
        l = \frac{\delta \tilde s_c(L, L') - \chi(S)}{2}.
    \]
\end{proposition}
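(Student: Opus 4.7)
The plan is to derive this from \Cref{prop:cab-cobordism-red}, which gives the analogous cobordism relation at the level of $\tilde{\ca}$, and then translate it to $\tilde{\cz}$ via its defining relation $\tilde{\ca}(D) = c^{\tilde{d}_c(D)} \tilde{\cz}(D)$ in $\tilde{H}_{h,t}(D)/\Tor{}$. Because \Cref{prop:cab-cobordism-red} is stated under the hypothesis that $c$ is invertible whereas here $c$ is allowed to be non-invertible in $R$, I would first pass to the localization $c^{-1}R$ to apply that proposition, and then pull the resulting identity back to $\tilde{H}_{h,t}/\Tor{}$ using \Cref{lem:c-localize}, exploiting that this torsion-free module embeds into its localization.

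Concretely, set $k = \tilde{d}_c(D)$ and $k' = \tilde{d}_c(D')$. Since every component of $S$ has a boundary in $L$, \Cref{prop:cab-cobordism-red} applied after localization yields
\[
    \phi(\tilde{\ca}(D)) = \epsilon c^j \tilde{\ca}(D'), \qquad j = \frac{\delta w(D, D') - \delta r(D, D') - \chi(S)}{2},
\]
with no higher-order correction. Substituting $\tilde{\ca}(D) = c^k \tilde{\cz}(D)$ and $\tilde{\ca}(D') = c^{k'} \tilde{\cz}(D')$ and cancelling $c^k$ --- legitimate in the localization --- gives $\phi(\tilde{\cz}(D)) = \epsilon c^{j + k' - k} \tilde{\cz}(D')$.

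It then remains to verify that $j + k' - k$ equals $l$. Using $\ssr_c(L) = 2k + w(D) - r(D) + 1$ one finds $\delta \ssr_c(L, L') = 2(k' - k) + \delta w(D, D') - \delta r(D, D')$, so that $j + k' - k = (\delta \ssr_c(L, L') - \chi(S))/2 = l$, which is nonnegative by \Cref{prop:s-cobordism}. The main potential obstacle is simply the bookkeeping involved in the localization and pull-back step, but the same device was already used in the proof of \Cref{prop:k-ptd-cobordism}, so I do not expect any genuine difficulty beyond this, and the sign ambiguity $\pm$ in the statement is absorbed by the sign $\epsilon$ inherited from \Cref{prop:cab-cobordism-red}.
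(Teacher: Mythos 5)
Your proposal is correct and follows the same route as the paper, which simply cites \Cref{prop:cab-cobordism-red}; you have merely made explicit the localization bookkeeping (the same device the paper invokes via \Cref{lem:c-localize} in \Cref{prop:k-ptd-cobordism}) and the exponent arithmetic $j + k' - k = l$, both of which check out.
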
 

\begin{proof}
    Immediate from \Cref{prop:cab-cobordism-red}.
\end{proof}

\begin{corollary}
    The class $\tilde{\cz}(D)$ is invariant (up to sign) under link concordance. \qed
\end{corollary}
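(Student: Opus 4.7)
The plan is to derive this corollary as a direct consequence of \Cref{prop:k-cobordism}. Let $L, L'$ be concordant pointed links, and let $S \subset \RR^3 \times I$ be a concordance realizing this, equipped with a path $\gamma$ between the marked points. By definition a concordance is a disjoint union of annuli, one for each pair of corresponding components of $L$ and $L'$, so in particular each component of $S$ has nonempty boundary in $L$ (and also in $L'$), satisfying the hypothesis of \Cref{prop:k-cobordism}.

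Next I would compute the exponent $l$ that appears in the conclusion of \Cref{prop:k-cobordism}. Since $S$ is a disjoint union of annuli, $\chi(S) = 0$. Since $S$ is also a concordance in the usual (unpointed) sense, \Cref{cor:s-link-conc-inv} applies to give $\ssr_c(L) = \ssr_c(L')$, hence $\delta \ssr_c(L, L') = 0$. Therefore
\[
    l = \frac{\delta \ssr_c(L, L') - \chi(S)}{2} = 0,
\]
and \Cref{prop:k-cobordism} yields that the induced cobordism map $\phi$ sends $\tilde{\cz}(D) \mapsto \pm \tilde{\cz}(D')$.

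The statement of the corollary is exactly this conclusion, so no further work is required. The only mild subtlety worth checking is that a smooth concordance between pointed links, equipped with an arc $\gamma$ joining the marked points, falls within the cobordism framework of \Cref{subsec:module-cobordism} that underlies \Cref{prop:k-cobordism}; but this is immediate since one can always choose $\gamma$ inside one of the annular components connecting the two marked points. No genuine obstacle arises, as the heavy lifting has already been packaged into \Cref{prop:cab-cobordism-red} and its corollary \Cref{prop:k-cobordism}.
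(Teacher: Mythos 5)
Your proof is correct and takes exactly the route the paper intends: the paper leaves the corollary as an immediate consequence of \Cref{prop:k-cobordism}, and you spell out the two facts needed to conclude $l=0$ (that $\chi(S)=0$ for a union of annuli, and that $\ssr_c(L)=\ssr_c(L')$ by \Cref{cor:s-link-conc-inv}, so $\delta\ssr_c(L,L')=0$). Your observation that the arc $\gamma$ joining the marked points can be taken inside an annular component, making the concordance a pointed cobordism in the sense of \Cref{subsec:module-cobordism}, is a reasonable small detail to note.
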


\begin{remark}
    As in \cite{Sano:2020-b} we may also adjust the signs of the cobordism maps so that the sign indeterminacy is removed. Then $\tilde{\cz}(D)$ becomes strictly invariant. 
\end{remark}


As a byproduct of the above arguments, we get the following characterization of $\ssr_c(L)$ via cobordism, which is analogous to the reformulation of $s$ given in \cite[Section 2.2]{KM:2011}.

\begin{proposition} \label{prop:s_c-by-cobordism}
    Let $L$ be a pointed link, and $S$ a connected cobordism from the pointed unknot $U$ to $L$. Let $\phi_S$ be the induced cobordism map
    \[
        \phi_S: R =\tilde{H}_{h, t}(U) \rightarrow \tilde{H}_{h, t}(L)/\Tor.
    \]
    Put
    \[
        z = \phi_S(1) \in \tilde{H}_{h, t}(L)/\Tor.
    \]
    Then
    \[
        \ssr_c(L) = 2d_c(z) + \chi(S).
    \]
\end{proposition}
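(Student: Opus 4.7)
The plan is to apply Proposition \ref{prop:k-cobordism} directly to the cobordism $S$ from the unknot $U$ to $L$, and then unwind the definitions.

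First I would compute the inputs. For the unknot $U$, we have $\tilde{H}_{h,t}(U) = R$ with $\tilde{\ca}(U) = 1$, so $\tilde{d}_c(U) = 0$ and hence $\tilde{\cz}(U) = 1$. Also $\ssr_c(U) = 0$, so $\delta \ssr_c(U, L) = \ssr_c(L)$. Applying Proposition \ref{prop:k-cobordism} to $S$ (which is connected, so every component has boundary in $U$) gives
\[
    z = \phi_S(1) = \phi_S(\tilde{\cz}(U)) = \pm c^l \tilde{\cz}(L), \qquad l = \frac{\ssr_c(L) - \chi(S)}{2}.
\]

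Next I would compute $d_c(z)$. By construction $\tilde{\cz}(L) = c^{-k}\tilde{\ca}(L)$ where $k = \tilde{d}_c(L) = d_c(\tilde{\ca}(L))$, so $c^k \tilde{\cz}(L) = \tilde{\ca}(L)$. Since $\tilde{H}_{h,t}(L)/\Tor$ is torsion-free (in particular $c$-torsion-free), Lemma \ref{lem:c-add} gives $d_c(\tilde{\cz}(L)) = 0$. Applying Lemma \ref{lem:c-add} once more to $z = \pm c^l \tilde{\cz}(L)$ yields $d_c(z) = l$.

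Substituting this value of $l$ into the formula above produces
\[
    2 d_c(z) + \chi(S) = 2l + \chi(S) = \ssr_c(L),
\]
which is the claim.

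There is no real obstacle here: the only subtle point is ensuring that the $c$-divisibility of $\tilde{\cz}(L)$ is zero, which requires working in the torsion-free quotient $\tilde{H}_{h,t}(L)/\Tor$ so that Lemma \ref{lem:c-add} applies as an equality. The geometric input (how the cobordism shifts the refined class by a power of $c$ determined by $\ssr_c$ and $\chi(S)$) is fully packaged in Proposition \ref{prop:k-cobordism}, so the proof reduces to a one-line substitution.
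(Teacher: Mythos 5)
Your proof is correct and follows exactly the same route as the paper: apply Proposition \ref{prop:k-cobordism} to the connected cobordism $S$ with $\tilde{\cz}(U) = 1$, and then read off $d_c(z) = l$ from $d_c(\tilde{\cz}(L)) = 0$. You merely spell out the $c$-torsion-free justification of $d_c(\tilde{\cz}(L)) = 0$ via Lemma \ref{lem:c-add}, which the paper leaves implicit.
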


\begin{proof}
    Obviously $\tilde{\ca}(U) = \tilde{\cz}(U) = 1$. From \Cref{prop:k-cobordism},
    \[
        z = \pm c^l \tilde{\cz}(L)
    \]
    where 
    \[
        l = \frac{\ssr_c(L) - \chi(S)}{2}.
    \]
    Now the result is follows from $d_c(\tilde{\cz}(L)) = 0$. 
\end{proof}

Now we restrict to the case when $D$ is a knot diagram.

\begin{proposition} \label{prop:red-H-canon}
    Suppose $R$ is a PID and $c$ is prime. For a knot diagram $D$, the refined Lee class $\tilde{\cz}(D)$ is a generator of $\tilde{H}_{h, t}(D)/\Tor{}$. 
\end{proposition}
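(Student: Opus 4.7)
The plan is to combine the mirror pairing of \Cref{prop:red-ca-pairing} with the slice-torus property of $\ssr_c$ established in \Cref{thm:1}, to show that $\tilde{\cz}(D)$ pairs to a unit with its mirror counterpart and must therefore be a generator.

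First, I would show that $M := \tilde{H}_{h,t}(D)/\Tor{}$ is a free $R$-module of rank $1$. Since $R$ is a PID and $M$ is finitely generated and torsion-free, it is automatically free. To compute the rank, localize away from $c$: the localization $c^{-1}R$ is again a PID in which $(h, t)$ is factorable and $c$ is now invertible, and flatness of localization gives $c^{-1}\tilde{H}_{h,t}(D; R) \cong \tilde{H}_{h,t}(D; c^{-1}R)$, which by \Cref{cor:redH-rank} is free of rank $2^{|D|-1} = 1$. Hence $M \cong R$, and the same argument gives $M^* := \tilde{H}_{h,t}(D^*)/\Tor{} \cong R$. Next, I would argue that the chain-level pairing of \Cref{prop:red-ca-pairing} descends to a perfect pairing $M \otimes_R M^* \to R$. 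By \Cref{prop:tilde-iota}, $\tilde{C}(D^*) \cong \Hom_R(\tilde{C}(D), R)$ as complexes of free $R$-modules, identifying $\tilde{H}(D^*)$ with the cohomology of $\tilde{C}(D)$; the induced Kronecker pairing is well-defined on homology classes since $\tilde{\varphi}$ is a chain map, and the universal coefficient theorem over the PID $R$ gives perfection modulo torsion (the $\Ext^1$ contribution is torsion and dies upon quotienting).

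Next I would evaluate this pairing on the refined Lee classes. By \Cref{prop:red-ca-pairing}, $\langle \tilde{\ca}(D), \tilde{\ca}(D^*) \rangle^\sim = \pm c^{r(D) - 1}$. Since $\ssr_c$ is a slice-torus invariant by \Cref{thm:1}, applying it to the slice knot $K \# m(K)$ together with the equality in \Cref{prop:s-properties}(4) (valid under our PID hypothesis) and \Cref{cor:s_c-slice} yields $\ssr_c(K) + \ssr_c(m(K)) = 0$. Substituting $\ssr_c = 2\tilde{d}_c + w - r + 1$ and using $w(D^*) = -w(D)$, $r(D^*) = r(D)$ gives $\tilde{d}_c(D) + \tilde{d}_c(D^*) = r(D) - 1$. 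Consequently,
\[
    \langle \tilde{\cz}(D), \tilde{\cz}(D^*) \rangle^\sim = c^{-\tilde{d}_c(D) - \tilde{d}_c(D^*)} \langle \tilde{\ca}(D), \tilde{\ca}(D^*) \rangle^\sim = \pm 1.
\]

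To conclude, under identifications $M \cong R$ and $M^* \cong R$, the perfect pairing $M \otimes M^* \to R$ is of the form $(\lambda, \mu) \mapsto u \lambda \mu$ for some fixed unit $u \in R^\times$. Writing $\tilde{\cz}(D) = \lambda g$ for a generator $g$ of $M$ and $\tilde{\cz}(D^*) = \mu g'$ similarly, the equation $\pm 1 = u \lambda \mu$ forces $\lambda \in R^\times$, so $\tilde{\cz}(D)$ is indeed a generator of $M$. The main obstacle is the second step --- descending the chain-level perfect pairing to a perfect pairing on the free parts of homology, which relies on the universal coefficient theorem for cochain complexes over a PID and requires care to check that the $\Ext^1$ term is indeed torsion.
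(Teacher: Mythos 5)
Your proof is correct and takes essentially the same approach as the paper: both derive $\tilde{d}_c(D) + \tilde{d}_c(D^*) = r(D) - 1$ from the slice-torus property of \Cref{thm:1} and combine it with the mirror pairing of \Cref{prop:red-ca-pairing} to conclude that $\tilde{\cz}(D)$ pairs with $\tilde{\cz}(D^*)$ to a unit, hence is a generator. You spell out two points the paper leaves implicit --- the rank-one freeness of $\tilde{H}_{h,t}(D)/\Tor$ for a knot diagram, and the descent of the chain-level perfect pairing to the free part of homology via the universal coefficient theorem --- but these are elaborations, not a different route.
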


\begin{proof}
    From \Cref{prop:red-ca-pairing} we have
    \[
        \langle \tilde{\ca}(D), \tilde{\ca}(D^*) \rangle^\sim 
        = \pm c^{r(D) - 1}.
    \]
    From \Cref{thm:1} it follows that 
    \[
        \tilde{d}_c(D) + \tilde{d}_c(D^*) = r(D) - 1
    \]
    and hence
    \[
        \langle \tilde{\cz}(D), \tilde{\cz}(D^*) \rangle^\sim = \pm 1. 
    \]
    Since $\langle \cdot, \cdot \rangle^\sim$ is a perfect pairing, the classes $\tilde{\cz}(D), \tilde{\cz}(D^*)$ must be generators of $\tilde{H}_{h, t}(D)/\Tor{}$ and $\tilde{H}_{h, t}(D^*)/\Tor{}$ respectively.
\end{proof}

The following proposition proves the half of \Cref{mainthm:2}.

\begin{proposition}
\label{prop:ssr-rasmussen}
    For $(R, c) = (F[H], H)$ where $F$ is a field of any characteristic, we have $\ssr_H = s^F$ as knot invariants. 
\end{proposition}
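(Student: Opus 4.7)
The plan is to chain two equalities: the prior result $\ss_H = s^F$ already proved in \cite{Sano:2020}, together with a new equality $\ssr_H = \ss_H$ that follows from a stabilization comparison between the unreduced and reduced theories. Since $F[H]$ is a PID and $H$ is prime, all structural hypotheses of the preceding sections are in force, so every tool we need (divisibility lemmas, reduced-Lee-class formalism, \Cref{prop:s-properties}) is available without adjustment.

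To establish $\ssr_H(K) = \ss_H(K)$ for a knot $K$, I would compare $K$ with the split union $K \sqcup U$. Let $D$ be a diagram of $K$ and write $D_+ = D \sqcup \bigcirc$ for the associated pointed diagram. By \Cref{prop:red-recover-unred} there is a chain isomorphism $C_H(D) \isom \tilde{C}_H(D_+)$, and by the proposition immediately following it, this isomorphism identifies $\ca(D)$ with $\tilde{\ca}(D_+)$. Passing to homology modulo torsion and applying the equality case of \Cref{lem:c-div} (with $f = \id$) therefore gives
\[
    d_H(D) = \tilde{d}_H(D_+).
\]
Since $D_+$ adds only an isolated circle to $D$, we have $w(D_+) = w(D)$ and $r(D_+) = r(D) + 1$. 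Substituting into the definitions of the two invariants yields
\[
    \ssr_H(K \sqcup U) = 2\tilde{d}_H(D_+) + w(D_+) - r(D_+) + 1 = 2 d_H(D) + w(D) - r(D) = \ss_H(K) - 1.
\]
On the other hand, \Cref{prop:s-properties}(2) gives $\ssr_H(K \sqcup U) = \ssr_H(K) - 1$. Comparing the two expressions yields $\ssr_H(K) = \ss_H(K)$, and combining with the known equality $\ss_H = s^F$ gives the proposition.

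The argument is essentially routine once the Lee-class identification is in place. The only subtle checkpoint is convention matching — namely that the added counterclockwise circle in $D_+$ is the marked one and that its $ab$-color is consistent with the choice of reduced complex $\tilde{C}^+$ — but this is precisely the content of the proposition following \Cref{prop:red-recover-unred}, so no new input is required. Accordingly I do not expect a serious obstacle; the work is to assemble the identifications cleanly rather than to overcome any genuine difficulty.
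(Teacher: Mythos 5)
Your argument breaks at the step where you equate two different computations of $\ssr_H(K \sqcup U)$. The quantity $\ssr_H$ is defined on \emph{pointed} links, and the two computations place the marked point on different components of $K \sqcup U$. When you compute via $D_+$ and obtain $\ss_H(K) - 1$, the marked point lies on the added circle $U$: that is the very definition of $d_H(D)$ (\Cref{def:unred-d-and-s}). When you instead invoke \Cref{prop:s-properties}(2) to get $\ssr_H(K) - 1$, the marked point must lie on $K$, since the underlying identity $\tilde{d}_c(D \sqcup \bigcirc) = \tilde{d}_c(D)$ is proved with the \emph{unmarked} circle contributing the tensor factor $X_a$ to the reduced Lee cycle. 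Those are two distinct pointed links, and the assertion that they share the same value of $\ssr_H$ is precisely the statement $\ssr_H(K) = \ss_H(K)$ you set out to prove, so the argument is circular. Nor can the gap be closed by the formal properties you invoke: every step you use is available for any PID $R$ and prime $c$, yet the paper shows by computation that $\ss_3 \neq \ssr_3$ (\Cref{mainthm:3}; $\ss_3$ fails the mirror formula while $\ssr_3$ satisfies it). Any proof of $\ssr_H = \ss_H$ must therefore exploit something specific to $(F[H], H)$, which the paper does separately in \Cref{prop:unred-eq-red} using the rank-one decomposition of the unreduced complex from \cite{Sano:2020}. The paper's own proof of \Cref{prop:ssr-rasmussen} bypasses $\ss_H$ entirely: it cites the characterization of $s^F(K)$ as the $q$-grading of the generator of $\tilde{H}_\BN(K)/\Tor$ from \cite{KWZ:2019}, identifies that generator with the refined Lee class $\tilde{\cz}(D)$ via \Cref{prop:red-H-canon}, and computes $\gr_q(\tilde{\cz}(D)) = 2\tilde{d}_H(D) + w(D) - r(D) + 1$.
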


\begin{proof}
    Let $\tilde{H}_\BN$ denote the reduced bigraded Bar-Natan homology over $F$, given the triple $(R, h, t) = (F[H], H, 0)$ with $\deg{H} = -2$. From \cite[Proposition 3.8]{KWZ:2019}, $s^F(K)$ is given by the $q$-grading of the generator of $\tilde{H}_\BN(K)/\Tor{}$. Thus with \Cref{prop:red-H-canon} we get
    \[
        s^F(K) = \gr_q(\tilde{\cz}(K)).
    \]
    On the other hand, for any diagram $D$ of $K$, we have
    \[
        \gr_q(\tilde{\ca}(D)) = w(D) - r(D) + 1
    \]
    and hence 
    \[
        \gr_q(\tilde{\cz}(D)) = 2\tilde{d}_H(D) + w(D) - r(D) + 1
    \]
    which is precisely the definition of $\ssr_H(K)$.
\end{proof}

    \subsection{Classification of $\ssr_c$}
\label{sec:classification}

Here we classify the invariants $\ssr_c$ under the assumption that $R$ is a PID and $c \in R$ is prime. First we divide $(R, c)$ into four types by the following mutually exclusive conditions:

\begin{itemize}
    \item [(A)] $\fchar{R} \neq 0$.
    \item [(B)] $\fchar{R} = 0$ and $c \nmid n$ for every $n \in \ZZ \setminus 0$.
    \item [(C)] $\fchar{R} = 0$ and $c \sim p \cdot 1_R$ for some prime $p \in \ZZ$. 
    \item [(D)] $\fchar{R} = 0$, $c \mid p \cdot 1_R$ and $c \not\sim p \cdot 1_R$ for some prime $p \in \ZZ$. 
\end{itemize}

Here $\sim$ denotes associatedness. Typical examples of $(R, c)$ belonging to the four types are:

\begin{itemize}
    \item [(A)] $(R, c) = (\FF_p[h], h)$, $p$: prime.
    \item [(B)] $(R, c) = (\QQ[h], h)$.
    \item [(C)] $(R, c) = (\ZZ, p)$, $p$: prime. 
    \item [(D)] $(R, c) = (\ZZ[i], 1 + i)$ or
    $(\ZZ[\omega], 1 + \omega)$ where $\omega = e^\frac{2\pi i}{6}$.
\end{itemize}

For $(R, c)$ of type (A) - (C), the following proposition states that we may restrict our consideration only to the above typical cases.

\begin{proposition} \label{prop:red-s-classify}
    Suppose $R$ is a PID and $c \in R$ is prime. 
    \begin{enumerate}
        \item If $(R, c)$ is of type (A), then $\ssr_c = s^{\FF_p}$, where $p = \fchar{R}$. 
        \item If $(R, c)$ is of type (B), then $\ssr_c = s^{\QQ}$. 
        \item If $(R, c)$ is of type (C), then $\ssr_c = \ssr_p(-; \ZZ)$.
    \end{enumerate}
\end{proposition}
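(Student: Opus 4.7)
The plan is to treat the three cases uniformly by finding, for each type, a ``reference pair'' $(R_0, c_0)$ and a ring homomorphism $f : R_0 \to R'$ (with $R' = R$ or $R' = R_{(c)}$) under which the reduced Khovanov complex base-changes and the reduced Lee class is sent to the reduced Lee class, up to a unit. In type (A), take $(R_0, c_0) = (\FF_p[H], H)$ with $p = \fchar R$; the inclusion $\FF_p \hookrightarrow R$ gives $f : \FF_p[H] \to R$ with $H \mapsto c$, and $f$ is injective because $c$ being prime (hence not a unit) forbids it from being algebraic over $\FF_p$ (else $\FF_p[c]$ would be a finite field making $c$ invertible). In type (B), take $(R_0, c_0) = (\QQ[H], H)$; the hypothesis $c \nmid n$ for every nonzero integer makes each such integer a unit in $R_{(c)}$, so $\QQ \hookrightarrow R_{(c)}$ and $f : \QQ[H] \hookrightarrow R_{(c)}$ is defined by $H \mapsto c$. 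In type (C), take $(R_0, c_0) = (\ZZ, p)$ with $f : \ZZ \hookrightarrow R$ the canonical map; here $f(p) = p\cdot 1_R$ is associated to $c$ by hypothesis.

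Next, I will use \Cref{prop:ht-relation} to replace the triple $(h, t)$ over $R$ with $(c, 0)$, obtaining a chain isomorphism $\tilde{C}_{h,t}(D; R) \isom \tilde{C}_{c,0}(D; R)$ that carries reduced Lee classes to reduced Lee classes up to a unit (by \Cref{prop:ht-relation-ca}). The reference complex $\tilde{C}_{c_0, 0}(D; R_0)$ then base-changes to $\tilde{C}_{c, 0}(D; R')$ along $f$, using that $R'$ is flat over the PID $R_0$ (being a torsion-free $R_0$-module since both are integral domains), and the reduced Lee classes correspond up to a unit. Since flat base change commutes with homology, this identification passes to the free parts: $\tilde{H}(D; R')/\Tor{} \isom \bigl(\tilde{H}(D; R_0)/\Tor{}\bigr) \otimes_{R_0} R'$.

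The remaining step is to compare $c$-divisibilities across this base change. Localizing at the prime $c$ via \Cref{lem:c-localize-at} reduces the comparison to divisibility on free modules over the DVRs $R_{0, (c_0)}$ and $R_{(c)}$. The map induced by $f$ between these DVRs is a local homomorphism carrying the uniformizer $c_0$ to a unit multiple of the uniformizer $c$ (equal to $c$ in types (A), (B); equal to $u^{-1}c$ for some $u \in R^\times$ in type (C)), and is therefore valuation-preserving. This yields $\tilde{d}_{c_0}(D) = \tilde{d}_c(D)$, whence $\ssr_{c_0}(K) = \ssr_c(K)$. For types (A) and (B), \Cref{prop:ssr-rasmussen} then identifies $\ssr_H$ over $F[H]$ with $s^F$ for $F = \FF_p, \QQ$ respectively; for type (C), the reference invariant is $\ssr_p(-;\ZZ)$ by definition. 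The main obstacle I anticipate is the careful verification that $f$ descends to a local map of DVRs sending uniformizer to a unit multiple of uniformizer: type (C) is the subtlest, since $f(p)$ is only associated to $c$ rather than equal to it and one must check that integers coprime to $p$ remain outside $(c)$ in $R$ (this uses a B\'ezout-type argument together with $c \mid p \cdot 1_R$), while in type (B) one must first pass through $R_{(c)}$ before $\QQ$ becomes available as a subring.
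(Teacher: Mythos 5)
Your proposal is correct, but it takes a genuinely different route from the paper. The paper isolates the key step as \Cref{lem:lem-classify-1}: from a ring homomorphism $\psi : R_0 \to R$ with $\psi(c_0) \sim c$, \Cref{lem:c-div} immediately gives the \emph{one-sided} inequality $\tilde{d}_{c_0}(D; R_0) \leq \tilde{d}_c(D; R)$, hence $\ssr_{c_0}(K) \leq \ssr_c(K)$ for all knots $K$, and then the fact that both invariants are already known to be slice-torus (\Cref{thm:1}) -- in particular $\nu(m(K)) = -\nu(K)$ -- converts this to an equality by applying the inequality to $m(K)$. Your argument instead proves the equality $\tilde{d}_{c_0}(D; R_0) = \tilde{d}_c(D; R)$ head-on, for each fixed diagram, via flat base change of the reduced complex and a valuation comparison over the DVRs $R_{0,(c_0)}$ and $R_{(c)}$. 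This works, but it pushes all the difficulty into base-change bookkeeping that the paper's bootstrap avoids entirely: you need (i) injectivity of $f$ so that the torsion summand of the $R_0$-homology stays torsion after tensoring (your ``flat base change commutes with homology'' justifies the homology isomorphism, but the step ``passes to the free parts'' also needs that a finitely generated module over the PID $R_0$ splits as free $\oplus$ torsion and that injectivity keeps $T_0 \otimes R'$ torsion); (ii) the check that $f$ descends to a \emph{local} map of DVRs, i.e.\ $f^{-1}\bigl((c)\bigr) = (c_0)$, which you correctly flag as the subtlest point; and (iii) the module-level fact that a primitive element of a free module over the source DVR remains primitive after base change, which is a small Nakayama-style argument beyond the element-level ``valuation-preserving'' you invoke. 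Each of these can be supplied, so there is no error, but the paper's proof is about three lines once the slice-torus property is in hand, whereas yours would need a page of lemmas. The trade-off is that your argument proves a slightly stronger, diagram-level statement and does not consume the mirror formula.
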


We first prove the following two lemmas. 

\begin{lemma} 
\label{lem:lem-classify-1}
    Suppose $(R_0, c_0)$ is another pair such that $R_0$ is a PID and $c_0$ is prime. If there is a ring homomorphism
    \[
        \psi: R_0 \rightarrow R
    \]
    such that $\psi(c_0) \sim c$, then the two corresponding knot invariants coincide,
    \[
        \ssr_{c_0}(-; R_0) = \ssr_c(-; R).
    \]
\end{lemma}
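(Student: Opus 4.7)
The plan is to build a comparison chain map from the $R_0$-theory to the $R$-theory via base change along $\psi$, derive a one-sided inequality from the $c$-divisibility lemma \Cref{lem:c-div}, and then promote it to equality using the mirror property of slice-torus invariants (available since $\ssr_{c_0}$ and $\ssr_c$ are already known to be slice-torus by \Cref{thm:1}).

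Concretely, by \Cref{prop:ht-relation} I may assume the defining triples are $(R_0, c_0, 0)$ and $(R, c, 0)$. Base-changing coefficients along $\psi : R_0 \to R$ produces an $R_0$-linear chain map
\[
    \Phi_0 : \tilde{C}_{c_0, 0}(D; R_0) \longrightarrow \tilde{C}_{\psi(c_0), 0}(D; R)
\]
which, since it preserves the $\{X_a, X_b\}$ basis tensor-wise, sends $\tilde{\ca}(D; R_0)$ to the reduced Lee cycle of the $\psi(c_0)$-theory. Writing $c = \theta \psi(c_0)$ for some unit $\theta \in R$, the reduced analog of \Cref{prop:ht-relation,prop:ht-relation-ca} supplies a chain isomorphism $\Phi_1 : \tilde{C}_{\psi(c_0), 0}(D; R) \xrightarrow{\sim} \tilde{C}_{c, 0}(D; R)$ that multiplies the reduced Lee cycle by a power of $\theta$. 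Thus $\Phi := \Phi_1 \circ \Phi_0$ is an $R_0$-linear chain map with $\Phi(\tilde{\ca}(D; R_0)) = u\,\tilde{\ca}(D; R)$ for some unit $u \in R$. Passing to the induced map on $\tilde{H}/\Tor{}$ and invoking \Cref{lem:c-div}, I get
\[
    \tilde{d}_{c_0}(D) \;\leq\; d_{\psi(c_0)}\!\bigl(u\,\tilde{\ca}(D; R)\bigr) \;=\; \tilde{d}_c(D),
\]
where the final equality uses $\psi(c_0) \sim c$ and $u \in R^\times$. Therefore $\ssr_{c_0}(K) \leq \ssr_c(K)$ for every knot $K$.

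For the reverse inequality I apply the same construction to the mirror $m(K)$. By \Cref{thm:1} both $\ssr_{c_0}$ and $\ssr_c$ are slice-torus, hence concordance-group homomorphisms; combined with item 3 of \Cref{prop:s-properties} (orientation-reversal invariance) this forces $\nu(m(K)) = -\nu(K)$ for each invariant. Thus
\[
    -\ssr_{c_0}(K) \;=\; \ssr_{c_0}(m(K)) \;\leq\; \ssr_c(m(K)) \;=\; -\ssr_c(K),
\]
giving $\ssr_c(K) \leq \ssr_{c_0}(K)$ and hence equality. The main technical point I expect to spell out is that $\Phi_*$ descends to the quotient by torsion, i.e.\ that $R_0$-torsion classes are sent to $R$-torsion classes; this uses that $\psi(c_0) \ne 0$ (which holds because $\psi(c_0) \sim c$ with $c \ne 0$), and any residual corner case in the knot setting can be sidestepped by working directly with the refined Lee class $\tilde{\cz}(D)$ of \Cref{prop:red-H-canon}, which is a generator of $\tilde{H}/\Tor{} \cong R$ so that the divisibility calculation can be carried out on a single free cyclic module.
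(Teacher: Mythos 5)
Your proposal is correct and follows essentially the same route as the paper: apply \Cref{lem:c-div} to the base-change map (combined with the twisting isomorphism of \Cref{prop:ht-relation}) to get the one-sided inequality $\tilde{d}_{c_0}(D;R_0) \leq \tilde{d}_c(D;R)$, then invoke the mirror formula, which both invariants satisfy by \Cref{thm:1}, to upgrade it to an equality. The extra care you take about descending to the quotient by torsion is sound but turns out to be vacuous here, since the hypothesis $\psi(c_0)\sim c$ with $c$ a nonzero non-unit forces $\psi$ to be injective (if $\ker\psi=(q)\neq 0$ then $R_0/(q)$ is a field embedding in $R$, making $\psi(c_0)$ either zero or a unit), so torsion maps to torsion automatically.
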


\begin{proof}
    From \Cref{lem:c-div}, we have 
    \[
        \tilde{d}_{c_0}(D; R_0) \leq \tilde{d}_c(D; R)
    \]
    and hence 
    \[
        \ssr_{c_0}(K; R_0) \leq \ssr_c(K; R).
    \]
    From \Cref{thm:1}, both invariants satisfy the mirror formula, so the reverse inequality also holds. 
\end{proof}

\begin{lemma} \label{lem:lem-classify-2}
    Let $R_{(c)}$ denote the localization of $R$ at $c$. The two knot invariants corresponding to $(R, c)$ and $(R_{(c)}, c)$ coincide,
    \[
        \ssr_c(-; R) = \ssr_c(-; R_{(c)}).
    \]
\end{lemma}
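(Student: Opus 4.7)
The plan is to reduce the claim directly to \Cref{lem:c-localize-at}. Since the writhe and Seifert circle count of a diagram do not depend on the coefficient ring, it suffices to show that
\[
    \tilde{d}_c(D; R) = \tilde{d}_c(D; R_{(c)})
\]
for any pointed knot diagram $D$. Because localization is flat, tensoring the reduced Khovanov complex with $R_{(c)}$ is exact and yields a natural isomorphism
\[
    \tilde{H}_{h, t}(D; R) \otimes_R R_{(c)} \isom \tilde{H}_{h, t}(D; R_{(c)})
\]
under which the reduced Lee class of the localized theory corresponds to $\tilde{\ca}(D; R) \otimes 1$, since the Lee cycle is defined by the same enhanced-state formula $X_a \otimes X_b \otimes \cdots$ regardless of the coefficient ring.

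Next I would verify that passing to the torsion-free quotient commutes with this base change. Setting $M := \tilde{H}_{h, t}(D; R)/\Tor{}$, which is free over the PID $R$, and tensoring the short exact sequence $0 \to T \to \tilde{H}_{h, t}(D; R) \to M \to 0$ with $R_{(c)}$, the left term $T \otimes_R R_{(c)}$ remains a torsion $R_{(c)}$-module (the $c$-primary summands of $T$ survive as torsion, while the others vanish because their annihilators become units in $R_{(c)}$), whereas $M \otimes_R R_{(c)} = M_{(c)}$ is free. Hence $\tilde{H}_{h, t}(D; R_{(c)})/\Tor{} \isom M_{(c)}$ canonically, and under this identification the reduced Lee class of the localized theory corresponds to $[\tilde{\ca}(D)]/1 \in M_{(c)}$. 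Then \Cref{lem:c-localize-at}, applied to the free $R$-module $M$ and the prime $c$, gives
\[
    d_c\bigl([\tilde{\ca}(D)];\, M\bigr) = d_c\bigl([\tilde{\ca}(D)]/1;\, M_{(c)}\bigr),
\]
which is precisely the equality $\tilde{d}_c(D; R) = \tilde{d}_c(D; R_{(c)})$.

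The only delicate point is the bookkeeping that identifies $\tilde{H}_{h, t}(D; R_{(c)})/\Tor{}$ with the localized module $M_{(c)}$, but this is a standard consequence of the structure theorem for finitely generated modules over a PID; once handled, the statement follows immediately from the already-established \Cref{lem:c-localize-at}.
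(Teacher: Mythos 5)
Your proof is correct and follows essentially the same route as the paper, whose own proof simply says ``Immediate from Lemma~\ref{lem:c-localize-at}.'' You have filled in precisely the bookkeeping that the paper leaves implicit: that localization is exact and hence commutes with taking homology, that the Lee cycle corresponds under the chain-level base change, and that $\tilde{H}_{h,t}(D;R_{(c)})/\Tor \isom M_{(c)}$ (with the Lee class going to $[\tilde{\ca}(D)]/1$), after which \Cref{lem:c-localize-at} applied to the free $R$-module $M$ gives the equality of divisibilities.
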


\begin{proof}
    Immediate from \Cref{lem:c-localize-at}. 
\end{proof}

\begin{proof}[Proof of \Cref{prop:red-s-classify}]
    If $(R, c)$ is of type (A), there is a ring homomorphism
    \[
        \FF_p[H] \rightarrow R, \quad H \mapsto c
    \]
    and hence $\ssr_c = s^{\FF_p}$. If $(R, c)$ is of type (B), every $n \in \ZZ \setminus 0$ is invertible in $R_{(c)}$. Thus from the universal property of localizations, the ring homomorphism 
    \[
        \ZZ[H] \rightarrow R, \quad H \mapsto c
    \]
    induces
    \[
        \QQ[H] \rightarrow R_{(c)}, \quad H \mapsto c
    \]
    and hence $\ssr_c = s^\QQ$. Finally if $(R, c)$ is of type (C), the unique ring homomorphism
    \[
        \ZZ \rightarrow R
    \]
    sends $p \mapsto p \sim c$ and hence $\ssr_c = \ssr_p(-; \ZZ)$. 
\end{proof}

Thus the invariants $\ssr_c$ that are potentially distinct from the $s$-invariants can be found only in type (C) and (D). Observations on direct computations are given in \Cref{subsec:compu-results}.

\begin{remark}
\label{rem:ZH-theory}
    \Cref{prop:ssr-rasmussen} and \Cref{lem:lem-classify-1} imply that $\ssr_c$ is not slice-torus for a general $(R, c)$, in particular for $(R, c) = (\ZZ[H], H)$. This can be seen as follows: For any $(R, c)$ and consider the ring homomorphism
    \[
        \ZZ[H] \rightarrow R, \quad H \mapsto c.
    \]
    This gives an equality
    \[
        \tilde{d}_H(D; \ZZ[H]) \leq \tilde{d}_c(D; R)
    \]
    and hence 
    \[
        \ssr_H(K; \ZZ[H]) \leq \ssr_c(K; R).
    \]
    If both $\ssr_H$ and $\ssr_c$ are homomorphisms, we get the reverse inequality and hence obtain $\ssr_H = \ssr_c$. This implies that all $s^F$ are equal, but we know from \cite{LS:2014_rasmussen,Seed:KnotKit} that $s^\QQ \neq s^{\FF_2}$, hence a contradiction. More discussion on $(R, c) = (\ZZ[H], H)$ is given in \Cref{subsec:ZH-theory}.
\end{remark}

    \section{The unreduced counterparts} \label{sec:unreduced}

Here we compare $\ssr_c$ with the unreduced counterpart $\ss_c$ defined in \cite{Sano:2020}. We redefine the quantities as follows:

\begin{definition} \label{def:unred-d-and-s}
    For a link diagram $D$, define
    \[
        d_c(D) = \tilde{d}_c(D_+).
    \]
\end{definition}

\begin{definition}
    For a link $L$, define
    \[
        \ss_c(L) = \ssr_c(L_+) + 1.
    \]
\end{definition}

It follows from \Cref{prop:red-recover-unred} that the definition of $d_c(D)$ coincides with the one given in \cite[Definition 3.5]{Sano:2020}. Also for $\ss_c(L)$, we have
\begin{align*}
    \ss_c(L) 
    &= \ssr_c(L_+) + 1 \\
    &= 2\tilde{d}_c(D_+) + w(D_+) - r(D_+) + 2 \\
    &= 2d_c(D) + w(D) - r(D) + 1
\end{align*}
hence $\ss_c$ coincides with the invariant defined in \cite[Theorem 1]{Sano:2020}%
\footnote{
    As a link invariant, it seems more natural to define $\ss_c(L) = \ssr_c(L_+)$ so that we get $\ss_c(\varnothing) = 0$. The $+1$ is added so as to make $\ss_c(\bigcirc) = 0$.
}. 
With the above redefinition, all of the basic properties of $d_c$ and $\ss_c$ given in \cite{Sano:2020} immediately follows from the results obtained in this paper. 

\begin{proposition}
    For a pointed link diagram $D$, 
    \[
        \tilde{d}_c(D) \leq d_c(D) \leq \tilde{d}_c(D) + 1.
    \]
    Here, $d_c(D)$ is defined by regarding $D$ as a unpointed link by forgetting the marked point.
\end{proposition}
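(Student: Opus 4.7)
The plan is to use two natural chain maps between $C_{h,t}(D)$ and $\tilde{C}^+_{h,t}(D)$: the inclusion $i\colon \tilde{C}^+(D) \hookrightarrow C(D)$, and the chain map $U_p - a\colon C(D) \to \tilde{C}^+(D)$ (well-defined since $\tilde{C}^+(D) = \Ima(U_p - a)$ as noted in \Cref{subsec:module-str}, and $U_p$ commutes with the differential). I will deduce each inequality by tracking the Lee class through these maps, combined with the identification $d_c(\ca(D)) = \tilde{d}_c(D_+) = d_c(D)$ coming from the chain isomorphism $C(D) \isom \tilde{C}(D_+)$ of \Cref{prop:red-recover-unred} (under which $\ca(D) \leftrightarrow \tilde{\ca}(D_+)$).

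For the first inequality $\tilde{d}_c(D) \leq d_c(D)$, the inclusion $i$ sends $\tilde{\ca}(D)$ to $\ca(D)$ by the very definition of the reduced Lee class. Applying \Cref{lem:c-div} to the induced map on homology modulo torsion yields $\tilde{d}_c(D) = d_c(\tilde{\ca}(D)) \leq d_c(\ca(D)) = d_c(D)$.

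For the second inequality $d_c(D) \leq \tilde{d}_c(D) + 1$, the crucial chain-level computation is $(U_p - a)(\ca(D)) = c\, \tilde{\ca}(D)$ in $\tilde{C}^+(D)$. I would verify this by a case split on the $ab$-coloring of $p$. The marked tensor factor of $\ca(D)$ is $X_a$ (resp.\ $X_b$) when $p$ is colored $a$ (resp.\ $b$), and a direct calculation using $X^2 = hX + t$ and $t = -ab$ gives $X \cdot X_a = b X_a$ and $X \cdot X_b = a X_b$. In the first case $U_p = x_p$ acts on the marked factor $X_a$ as multiplication by $b$; in the second case $U_p = h - x_p$ acts on $X_b$ as $h - a = b$. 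Hence $U_p - a$ acts as $c = b - a$ on the marked factor in both cases. Given this, \Cref{lem:c-div} gives $d_c(\ca(D)) \leq d_c(c\, \tilde{\ca}(D))$, and \Cref{lem:c-add} (since $\tilde{H}^+_{h,t}(D)/\Tor{}$ is torsion-free over the integral domain $R$) gives $d_c(c\, \tilde{\ca}(D)) = 1 + \tilde{d}_c(D)$, completing the proof.

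The main obstacle is the key chain-level computation $(U_p - a)(\ca(D)) = c\, \tilde{\ca}(D)$: the slight asymmetry between $U_p = x_p$ (for $p$ colored $a$) and $U_p = h - x_p$ (for $p$ colored $b$) is exactly compensated by the coloring-dependent marked tensor factor of $\ca(D)$, so that both cases produce the same multiplier $c$. This single multiplier of $c$ is precisely the source of the ``$+1$'' in the upper bound.
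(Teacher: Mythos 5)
Your proof is correct, and it diverges from the paper's argument for the second inequality. The first inequality is handled exactly as in the paper: the inclusion $\tilde{C}(D) \hookrightarrow C(D)$ sends $\tilde{\ca}(D)$ to $\ca(D)$, so \Cref{lem:c-div} gives $\tilde{d}_c(D) \leq d_c(D)$. For the upper bound, however, the paper does not work with $U_p - a$ at all; it instead uses the chain of identifications $d_c(D) = \tilde{d}_c(D_+) = \tilde{d}_c(D \sqcup \bigcirc)$ and then invokes \Cref{prop:d-conn-sum-disj-union} (a special case of the fusion estimate \Cref{prop:d_c-fusion}, which tracks the Lee cycle through a split-then-merge saddle pair and sees the multiplier $c$ appear via $m\Delta$). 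Your route replaces that with the single surjective chain map $U_p - a\colon C(D) \twoheadrightarrow \tilde{C}^+(D)$, verifies by the case split on the $ab$-coloring of $p$ that this map sends $\ca(D) \mapsto c\,\tilde{\ca}(D)$ (the asymmetry between $x_p$ and $h - x_p$ being absorbed by the coloring-dependent marked factor), and then applies \Cref{lem:c-div} and \Cref{lem:c-add}. Both are sound. Your version is more self-contained and local -- it uses nothing about disjoint unions or the $D_+$ construction beyond the definition $d_c(D) = \tilde{d}_c(D_+)$ itself -- whereas the paper's version is shorter on the page because it simply cites two already-established propositions. The underlying mechanism (merging an $X_a$-labelled handle at $p$ multiplies the Lee class by $c$) is the same in both, just packaged differently.
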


\begin{proof}
    The injection $\tilde{C}(D) \hookrightarrow C(D)$ maps $\tilde{\ca}(D)$ to $\ca(D)$, and hence
    \[
        \tilde{d}_c(D) \leq d_c(D).
    \]
    On the other hand, from \Cref{prop:d-conn-sum-disj-union},
    \[
        d_c(D) = \tilde{d}_c(D_+) = \tilde{d}_c(D \sqcup \bigcirc) \leq \tilde{d}_c(D) + 1.
    \]
\end{proof}

\begin{definition}
    For a pointed link $L$, define 
    \[
        \epsilon_c(L) = \ss_c(L) - \ssr_c(L).
    \]
\end{definition}


\begin{proposition}
    $\epsilon_c$ takes values in $\{0, 2\}$.
\end{proposition}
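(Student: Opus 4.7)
The plan is to unravel both sides of $\epsilon_c(L) = \ss_c(L) - \ssr_c(L)$ into a single difference of divisibilities, and then quote the proposition immediately above which already constrains that difference to lie in $\{0,1\}$.

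First, I would expand the definitions directly at the diagram level. Fix a pointed diagram $D$ for $L$, so that $D_+$ is its disjoint union with a pointed counterclockwise circle, and note the elementary relations $w(D_+) = w(D)$ and $r(D_+) = r(D) + 1$. By \Cref{def:unred-d-and-s} we have $\tilde{d}_c(D_+) = d_c(D)$. Plugging these into \Cref{def:red_sc} gives
\[
    \ssr_c(L_+) = 2\tilde{d}_c(D_+) + w(D_+) - r(D_+) + 1 = 2 d_c(D) + w(D) - r(D),
\]
and on the other hand
\[
    \ssr_c(L) = 2\tilde{d}_c(D) + w(D) - r(D) + 1.
\]

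Second, I would subtract and use $\ss_c(L) = \ssr_c(L_+) + 1$ to obtain
\[
    \epsilon_c(L) = \ssr_c(L_+) + 1 - \ssr_c(L) = 2\bigl(d_c(D) - \tilde{d}_c(D)\bigr).
\]

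Finally, the preceding proposition states $\tilde{d}_c(D) \leq d_c(D) \leq \tilde{d}_c(D) + 1$, so $d_c(D) - \tilde{d}_c(D) \in \{0,1\}$, which immediately yields $\epsilon_c(L) \in \{0, 2\}$. No step here is delicate — the whole statement is really a bookkeeping corollary of the reduced-versus-unreduced divisibility inequality already established — so there is no genuine obstacle; the only care needed is in tracking which component carries the marked point when comparing $L$, $L_+$, and $L \sqcup \bigcirc$.
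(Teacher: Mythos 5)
Your proof is correct and follows exactly the same route as the paper: unwind the definitions to show $\epsilon_c(L) = 2(d_c(D) - \tilde{d}_c(D))$, then invoke the immediately preceding proposition $\tilde{d}_c(D) \leq d_c(D) \leq \tilde{d}_c(D) + 1$. The paper's proof simply states this identity and calls the conclusion immediate; you have filled in the same bookkeeping that the paper already displayed just above in the computation of $\ss_c(L) = 2d_c(D) + w(D) - r(D) + 1$.
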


\begin{proof}
    Immediate from 
    \[
        \epsilon_c(L) = 2(d_c(D) - \tilde{d}_c(D)).
    \]
\end{proof}

\begin{corollary}
    If $(R, c)$ makes both $\ssr_c$ and $\ss_c$ satisfy
    \[
        \ssr_c(K) = -\ssr_c(m(K)),\quad \ss_c(K) = -\ss_c(m(K)) 
    \]
    then $\ssr_c = \ss_c$ as knot invariants. \qed
\end{corollary}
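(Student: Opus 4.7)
The plan is to exploit the previously established fact that $\epsilon_c := \ss_c - \ssr_c$ is non-negative (in fact takes values in $\{0,2\}$) together with the two mirror formulas, which force $\epsilon_c$ to also be anti-symmetric under mirroring. Non-negativity plus anti-symmetry then pins $\epsilon_c$ to zero.

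More precisely, I would first recall from the immediately preceding proposition that for every knot $K$,
\[
    \epsilon_c(K) = \ss_c(K) - \ssr_c(K) \in \{0, 2\},
\]
so in particular $\epsilon_c(K) \geq 0$. Then I would apply the two hypothesized mirror formulas termwise:
\[
    \epsilon_c(m(K)) = \ss_c(m(K)) - \ssr_c(m(K)) = -\ss_c(K) + \ssr_c(K) = -\epsilon_c(K).
\]
Combining this with $\epsilon_c(m(K)) \geq 0$ and $\epsilon_c(K) \geq 0$ forces $\epsilon_c(K) = 0$, hence $\ssr_c(K) = \ss_c(K)$ for every knot $K$.

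There is essentially no obstacle here; the argument is one line once the two ingredients (non-negativity of $\epsilon_c$ and the assumed mirror symmetries) are in place. The only mild subtlety is making sure the mirror hypothesis is being applied to a \emph{knot} $K$ (so that there is no ambiguity about adding a disjoint unknot to turn it into a pointed link), but this is exactly how the corollary is stated. Therefore the write-up is just the above short chain of equalities followed by the non-negativity comparison.
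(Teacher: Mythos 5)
Your proposal is correct and is precisely the argument the paper intends: the $\qed$ after the corollary signals that it follows immediately from the preceding proposition that $\epsilon_c \in \{0,2\}$, combined with the hypothesized antisymmetry of both invariants under mirroring. The chain $\epsilon_c(m(K)) = -\epsilon_c(K)$ plus non-negativity on both sides forcing $\epsilon_c \equiv 0$ is exactly the one-line deduction the authors had in mind.
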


\begin{proposition}
\label{prop:unred-eq-red}
    Suppose $R$ is a PID and $c$ is prime. If $(R, c)$ satisfies any one of the following three conditions, then $\ssr_c = \ss_c$ as knot invariants.
    \begin{enumerate}
        \setlength{\itemsep}{0.25em}
        \item $(R, c) = (F[H], H)$ for some field $F$,
        \item $c = 2 \neq 0$,
        \item $2 = 0$.
    \end{enumerate}
\end{proposition}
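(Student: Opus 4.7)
The plan is to apply the preceding corollary, which says that if $\ss_c$ satisfies the mirror formula $\ss_c(K) = -\ss_c(m(K))$, then $\ss_c = \ssr_c$ (since $\ssr_c$ automatically does so by Theorem~\ref{thm:1}). Equivalently, writing $\ss_c = \ssr_c + \epsilon_c$ with $\epsilon_c \in \{0,2\}$, we aim to show $\epsilon_c \equiv 0$ on knots, i.e.\ $d_c(D) = \tilde d_c(D)$ for every knot diagram~$D$.

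For case~(1), $(R,c) = (F[H], H)$: I would combine Proposition~\ref{prop:ssr-rasmussen} (which gives $\ssr_H = s^F$) with the identification $\ss_H = s^F$ proved in \cite{Sano:2020}. Since $s^F$ is a slice-torus invariant, this yields the mirror formula for $\ss_H$ immediately. A self-contained argument would adapt the proof of Proposition~\ref{prop:ssr-rasmussen} to the unreduced setting: the free quotient $H_H(K; F[H])/\Tor{}$ is a rank-$2$ free $F[H]$-module with canonical generators of $q$-gradings $s^F \pm 1$ (cf.\ \cite{KWZ:2019}), from which the $H$-divisibility of $\ca(D)$ recovers the Rasmussen invariant.

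For case~(3), $2 = 0$: I would invoke Proposition~\ref{prop:red-s-classify}(1), giving $\ssr_c = s^{\FF_2}$, together with case~(1) applied to $F = \FF_2$ ($\ss_H = s^{\FF_2}$ on $(\FF_2[H], H)$). The ring homomorphism $\FF_2[H] \to R,\ H \mapsto c$ and Lemma~\ref{lem:c-div} give $s^{\FF_2}(K) \leq \ss_c(K; R)$; to upgrade this to equality, I would exploit the involution $I$ of Proposition~\ref{prop:red-C-isom1}, which in characteristic~$2$ acts as the identity modulo $c$ since $I(X) = X + c$, yielding an upper bound $\ss_c(K) \leq s^{\FF_2}(K)$. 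For case~(2), $c = 2 \neq 0$: reduce to $(R, c) = (\ZZ, 2)$ by Lemma~\ref{lem:lem-classify-1}-style considerations. Use the integral Lee pairing of Proposition~\ref{prop:C-mirror-pairing}, $\langle \ca(D), \ca(D^*) \rangle = \pm 2^{r(D)}$, together with the mod-$2$ reduction to $\FF_2$-Khovanov $\cong \FF_2$-Lee (the corollary after Proposition~\ref{prop:ht-relation}), to conclude that the primitive classes $\ca(D)/2^{d_2(D)}$ and $\ca(D^*)/2^{d_2(D^*)}$ pair to a unit in $\ZZ$, giving $d_2(D) + d_2(D^*) = r(D)$.

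The main obstacle will be establishing the unit pairing in case~(2) and the involution-based upper bound in case~(3): both require the specific interplay between the prime~$c$ and characteristic~$2$, and both should fail for $c = 3$, consistently with Theorem~\ref{mainthm:3}.
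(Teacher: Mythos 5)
Your overall reduction is the same as the paper's: it suffices to prove the mirror formula $\ss_c(K) = -\ss_c(m(K))$ (equivalently, that $\ss_c$ is a group homomorphism), and then $\ss_c = \ssr_c$ follows from the corollary you cite. And your case~(1) argument, invoking $\ss_H = s^F$ from \cite{Sano:2020} together with \Cref{prop:ssr-rasmussen}, does match the paper for $\fchar F \ne 2$; note, though, that \cite{Sano:2020} only treats $\fchar F \ne 2$, so case~(1) in characteristic~$2$ is really subsumed under case~(3), and your attempt to prove case~(3) \emph{from} case~(1) applied to $F = \FF_2$ risks circularity.

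For cases (2) and (3), however, the paper takes a genuinely different and more structural route. Its single mechanism across all three cases is to \emph{split the unreduced complex} $C_{h,t}(D)$ into two subcomplexes each having rank-$1$ homology, with generators described by the two Lee classes $\ca(D)$ and $\cb(D)$ --- the splitting of \cite[Section~3.3]{Sano:2020} for $\fchar \neq 2$, and a Wigderson-style splitting \cite{Wigderson:2016} for $\fchar = 2$. Once the unreduced theory decomposes into two rank-$1$ pieces, each piece behaves exactly like the reduced theory: \Cref{lem:c-tensor} gives the connected-sum \emph{equality}, hence the homomorphism property, hence the mirror formula, hence $\ss_c = \ssr_c$. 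The splitting is doing all the work, and nothing about the prime~$2$ per se is invoked beyond the choice of which splitting to use.

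Your proposed alternatives for (2) and (3) do not close this gap. In case~(2), the reduction to $(\ZZ,2)$ ``by \Cref{lem:lem-classify-1}-style considerations'' is circular: the proof of \Cref{lem:lem-classify-1} uses the mirror formula for $\ssr_c$, which holds because $\ssr_c$ is already known to be slice-torus; the analogous statement for $\ss_c$ is precisely what we are trying to establish. More seriously, the conclusion that $\ca(D)/2^{d_2(D)}$ and $\ca(D^*)/2^{d_2(D^*)}$ ``pair to a unit'' is not an independent fact you can extract from \Cref{prop:C-mirror-pairing} plus mod-$2$ reduction: the unreduced homology over $\ZZ$ may have $2$-torsion, the free quotient has rank~$2$, and there is no a priori reason the sublattice generated by the two primitive Lee classes is the full free part. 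In the paper, the analogous unit-pairing statement \Cref{prop:red-H-canon} for the reduced theory is \emph{derived from} the mirror formula via \Cref{thm:1}, not the other way around; in the unreduced setting you would again need the connected-sum equality first, and that comes from the splitting. Your case~(3) involution sketch has a similar status: the observation that $I(X) = X + c$ in characteristic~$2$ so that $\cb(D) \equiv \ca(D) \pmod{c}$ is a genuine ingredient (it is essentially why the Wigderson splitting exists), but it does not by itself produce the upper bound $\ss_c \le s^{\FF_2}$ without unpacking how it yields a decomposition of the complex. You have correctly flagged both of these as the main obstacle; the paper's complex-splitting approach is exactly what resolves it.
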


\begin{proof}
    It suffices to prove that the corresponding invariant $\ss_c$ is an abelian group homomorphism. 
    The first case, with the assumption that $\fchar{F} \neq 2$, is proved in \cite[Section 3.3]{Sano:2020}. This is achieved by splitting the unreduced complex $C_{h, t}(D)$ into two subcomplexes, whose homology have rank $1$ respectively, and describing the generators using the two Lee classes $\ca(D)$ and $\cb(D)$. The other two cases can be proved similarly, where in the third case, one uses a splitting similar to the one given by Wigderson in \cite{Wigderson:2016}.
\end{proof}

\begin{proof}[Proof of \Cref{mainthm:2}]
    Immediate from \Cref{prop:ssr-rasmussen,prop:unred-eq-red}.
\end{proof}
    \section{Computations}
\label{sec:computations}

\subsection{The program}

We developed a computer program that computes from a input knot diagram $D$ the invariants $\ssr_c$ and $\ss_c$ over any supported ring $R = \ZZ,\ \QQ[H],\ \FF_p[H],\ \ZZ[i]$ and $\ZZ[\omega]$ where $\omega = e^{2\pi i/6}$. Our program is capable of computing the invariants $\ss_c$ and $\ssr_c$ for knots with 18-crossing within a few seconds on an ordinary laptop computer. The program is available at 
\begin{center}
    \url{https://github.com/taketo1024/yui}
\end{center}

Our implementation is based on Bar-Natan's efficient algorithm for computing Khovanov homology \cite{Bar-Natan:2007}. The `deloop \& eliminate' method also works for general Khovanov homology $H_{h, t}$ and the reduced $\tilde{H}_{h, t}$. The necessary modification is in the delooping process, where the neck cutting relation and the double-dot removing relation must be replaced by equations (13) and (15) of \cite{Khovanov:2022}. For the reduced case, we must keep track of the pointed component and leave it unlooped until the final step. We also identify the Lee cycle in the simplified chain complex, which is done by applying the same transformations to the original cycle until all circles are deloop. Having obtained the simplified complex and the Lee cycle, we compute its homology in by the standard algorithm (computing the Smith normal form) and vectorize the Lee class. 

For example, the following command computes $\ssr_2$ over $(R, c) = (\ZZ, 2)$ for the trefoil knot $3_1$. 

\begin{verbatim}
    > yui ss 3_1 -t z -c 2 -r
    -2 
\end{verbatim}

\noindent
Internally, the program produces a simplified complex
\[
    \tilde{C} = \{\ 0 \rightarrow \ZZ[-3] \xrightarrow{2} \ZZ[-2] \rightarrow 0[-1] \rightarrow \ZZ[0] \rightarrow 0\ \}
\]
with the Lee cycle vectorized in $\tilde{C}^0 = \ZZ$ as 
\[
    \tilde{\ca} = (-2).
\]
Then the program computes
\[
    d_2 = 1,\ w = -3,\ r = 2
\]
and outputs 
\[
    \ssr_2 = -2.
\]

\subsection{Computational results}
\label{subsec:compu-results}

\begin{table}[t]
    \centering
    \begin{tabular}{c|ccc|cccc}
         Knot 
            & $s^\QQ$ & $s^{\FF_2}$ & $s^{\FF_3}$ 
            & $\ssr_2$ & $\ssr_3$ & $\ssr_{1 + i}$ & $\ssr_{1 + \omega}$ \\
         \hline
         $K14n{19265}$ & 
            0 & {\color{red}-2} & 0 & 
            {\color{red}-2} & 0 & {\color{red}-2} & 0 \\
         $K14n{22180}$ & 
            0 & {\color{red}2} & 0 & 
            {\color{red}2} & 0 & {\color{red}2} & 0 \\
         $K18nh{5566876}$ & 
            2 & 2 & {\color{red}0} & 
            2 & {\color{red}0} & 2 & {\color{red}0} \\
         $K18nh{37144251}$ & 
            -2 & -2 & {\color{red}0} & 
            -2 & {\color{red}0} & -2 & {\color{red}0} \\
    \end{tabular}
    \caption{More computational results}
    \label{tab:computational-results2}
\end{table}

As stated in \Cref{sec:classification}, an invariant $\ssr_c$ that is potentially distinct from the $s$-invariants can be found only for $(R, c)$ of type (C) and (D). Here we examine the cases 
\[
    (R, c) = (\ZZ, 2),\ (\ZZ, 3)
\]
for type (C), and 
\[
    (R, c) = (\ZZ[i], 1+i),\ (\ZZ[\omega], 1+\omega)
\]
for type (D), where $1+i \mid 2$ and $1+\omega \mid 3$. The input data for the following results were those obtained in \cite{Schuetz:2022} and was kindly provided to us.

\begin{proposition} 
\label{prop:comp-results}
    $ $
    \begin{enumerate}
        \item Up to 16 crossings, there are 205 knots such that $s^\QQ = s^{\FF_3} \neq s^{\FF_2}$. For all these knots, we have 
        \[
            s^\QQ = s^{\FF_3} = \ssr_3 = \ssr_{1 + \omega} 
                \ \neq\ 
            s^{\FF_2} = \ssr_2 = \ssr_{1 + i}.
        \]
        \item There are two knots of 18 crossings such that $s^\QQ = s^{\FF_2} \neq s^{\FF_3}$. For these two knots, we have 
        \[
            s^\QQ =  s^{\FF_2} = \ssr_2 = \ssr_{1 + i} 
                \ \neq\ 
            s^{\FF_3} = \ssr_3 = \ssr_{1 + \omega}.
        \]
    \end{enumerate}
\end{proposition}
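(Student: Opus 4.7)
The proposition is a computational claim, so the plan is to verify the asserted equalities using the program described in \Cref{sec:computations}. First, I would take as input the two collections of knots kindly provided by Sch{\"u}tz \cite{Schuetz:2022}: the 205 knots of at most 16 crossings satisfying $s^\QQ = s^{\FF_3} \neq s^{\FF_2}$, and the two 18-crossing knots satisfying $s^\QQ = s^{\FF_2} \neq s^{\FF_3}$. For each knot $K$ in these lists, I would invoke the program on the four triples $(R, c) = (\ZZ, 2)$, $(\ZZ, 3)$, $(\ZZ[i], 1+i)$, $(\ZZ[\omega], 1+\omega)$ to produce the values $\ssr_2(K;\ZZ)$, $\ssr_3(K;\ZZ)$, $\ssr_{1+i}(K;\ZZ[i])$, $\ssr_{1+\omega}(K;\ZZ[\omega])$, and compare them with the known values of $s^\QQ$, $s^{\FF_2}$, $s^{\FF_3}$. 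A match on every knot in the list completes the proof; a representative sample of such matches is recorded in \Cref{tab:computational-results2}.

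The main practical obstacle is the cost of Smith normal form computations over $\ZZ[i]$ and $\ZZ[\omega]$ for diagrams of up to 18 crossings, since coefficient growth is harder to control than over $\ZZ$. In practice the ``deloop \& eliminate'' optimization recalled in \Cref{sec:computations} keeps each such computation to a few seconds on an ordinary laptop, which is well within the range required for the 207 knots at hand. I should stress that no theoretical shortcut is available for the equalities $\ssr_2(-;\ZZ) = \ssr_{1+i}(-;\ZZ[i])$ or $\ssr_3(-;\ZZ) = \ssr_{1+\omega}(-;\ZZ[\omega])$: the natural inclusion $\ZZ \hookrightarrow \ZZ[i]$ sends $2$ to $-i(1+i)^2$ rather than to an associate of $1+i$ (and similarly for the Eisenstein case), so \Cref{lem:lem-classify-1} does not apply, and the identifications on these specific knots must be established knot-by-knot by direct computation.
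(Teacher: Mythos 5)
Your proposal is correct and follows the same approach as the paper: this is a purely computational statement, and the only available proof is to run the described program on the 205 and 2 knots in question and verify the stated equalities directly. Your additional observation that $\ZZ \hookrightarrow \ZZ[i]$ sends $2$ to $-i(1+i)^2 \not\sim 1+i$ (and likewise $3 \sim (1+\omega)^2$ in $\ZZ[\omega]$), so that \Cref{lem:lem-classify-1} cannot be used to reduce $\ssr_{1+i}$ or $\ssr_{1+\omega}$ to the type-(C) invariants, is accurate and is exactly why the paper leaves this as \Cref{ques:2} rather than a proposition.
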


See \Cref{tab:computational-results2} for some examples of \Cref{prop:comp-results}. The results give rise to the following questions:

\begin{question} \label{ques:1}
    Is $s^{\FF_p} = \ssr_p$ for every prime $p$? 
\end{question}

\begin{question} \label{ques:2}
    Is $\ssr_p = \ssr_c$ for every prime $p$ and every $c \mid p$? 
\end{question}

We also examined whether there are knots such that $\ss_c \neq \ssr_c$. As proved in \Cref{prop:unred-eq-red}, we know that $\ss_c = \ssr_c$ for $(R, c) = (F[H], H)$ and $(\ZZ, 2)$. The next target will be $(\ZZ, 3)$, and we have the following.

\begin{proposition} 
    For the two knots of \Cref{prop:comp-results} (2), we have 
    \[
        \ss_3(K) \neq -\ss_3(m(K)).
    \]
\end{proposition}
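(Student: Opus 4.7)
The approach is a direct computer verification, reduced first to a clean finite check about the auxiliary quantity $\epsilon_3$. For the two knots $K$ listed in Proposition~\ref{prop:comp-results}~(2), the tabulated computations give $\ssr_3(K) = 0$; since $\ssr_3$ is a slice-torus invariant by Theorem~\ref{thm:1}, it follows that $\ssr_3(m(K)) = -\ssr_3(K) = 0$ as well. Writing $\ss_3 = \ssr_3 + \epsilon_3$ with $\epsilon_3 \in \{0, 2\}$, one obtains
\[
    \ss_3(K) + \ss_3(m(K)) = \epsilon_3(K) + \epsilon_3(m(K)) \in \{0, 2, 4\}.
\]
Hence the claim $\ss_3(K) \neq -\ss_3(m(K))$ is equivalent to at least one of $\epsilon_3(K), \epsilon_3(m(K))$ being equal to $2$, i.e., to at least one of the values $\ss_3(K), \ss_3(m(K))$ being nonzero.

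Next I run the program of Section~\ref{sec:computations} with $(R, c) = (\ZZ, 3)$ on each of the two 18-crossing diagrams provided by Schuetz and on their mirrors. Internally the program computes the simplified integral Khovanov complex via the deloop-and-eliminate method, vectorizes the unreduced Lee class $\ca(D)$, reads off the $3$-divisibility $d_3(D)$ via a Smith-normal-form reduction, and returns $\ss_3 = 2d_3(D) + w(D) - r(D) + 1$. The reported values for $\ss_3(K)$ and $\ss_3(m(K))$ together with the reduction above then yield the strict inequality in each case, proving the proposition.

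The main obstacle is purely computational scale: 18-crossing diagrams yield sizeable Khovanov complexes, and extracting $3$-divisibility over $\ZZ$ requires Smith-normal-form reductions on correspondingly large integer matrices. Since Proposition~\ref{prop:unred-eq-red} does not cover $(\ZZ, 3)$, the unreduced computation cannot be cross-validated against $\ssr_3$ on small knots beforehand; confidence in the outputs therefore rests on the correctness of the implementation and on the internal consistency of the simultaneous computations of $\ss_3$ and $\ssr_3$ on these large diagrams.
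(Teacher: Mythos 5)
Your proof is correct and is essentially the same as the paper's: both rest on direct machine computation of $\ss_3$ over $(\ZZ,3)$ for the two 18-crossing knots and their mirrors, with the paper simply stating the computational output. Your reduction via $\epsilon_3 = \ss_3 - \ssr_3 \in \{0,2\}$ and the slice-torus property of $\ssr_3$ is a nice clarification — it shows the check collapses to verifying that at least one of $\ss_3(K)$, $\ss_3(m(K))$ is nonzero — but it does not change the fundamentally computational nature of the verification.
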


This implies that $\ss_3$ is not slice-torus, thus proving \Cref{mainthm:3}. Regarding \Cref{ques:2}, one might expect that $\ss_{1+\omega}$ also satisfies this property, however
\[
    \ss_{1+\omega}(K) = \ss_{1+\omega}(m(K)) = 0,
\]
so $\ss_3 \neq \ss_{1+\omega}$ for the unreduced case.

\subsection{Computations for $(R, c) = (\ZZ[H], H)$}
\label{subsec:ZH-theory}

Finally we observe how $s^F$ and $s^{F'}$ may differ when $\fchar{F} \neq \fchar{F'}$, by considering the Lee class for $(R, c) = (\ZZ[H], H)$ and relating it with the corresponding two cases. Recall from \Cref{rem:ZH-theory} that $\ssr_H(-; \ZZ[H])$ is not slice-torus. Also from the computational aspect, automated homology computation over $\ZZ[H]$ cannot be done since it is not a PID. Nonetheless, the simplification algorithm works over any ring $R$. When the resulting complex is simple enough, the remaining homology computation can be done by hand. 

Here we consider $K = K14n{19265}$ which is the first discovered knot that gives $s^{\QQ} \neq s^{\FF_2}$ \cite{LS:2014_rasmussen,Seed:KnotKit}. 
Below we explain what we obtain by running
\begin{verbatim}
    > yui ckh \
     "[[1,19,2,18],[19,1,20,28],[20,13,21,14],[12,17,13,18],
      [16,21,17,22],[5,15,6,14],[15,5,16,4],[6,27,7,28],
      [2,7,3,8],[26,3,27,4],[25,23,26,22],[11,9,12,8],
      [23,10,24,11],[9,24,10,25]]" -t z -c H -r -a
\end{verbatim}

\medskip

Over $R = \ZZ[H]$, our program simplifies the reduced complex as 
\[
    \tilde{C} = \{\ \cdots \rightarrow R^4[-1] \xrightarrow{d^{-1}} R^4[0] \xrightarrow{d^0} R^2[1] \rightarrow \cdots \},
\]
with differentials
\[
    d^{-1} = \begin{pmatrix}
        0 & -2H & -2 & 2 \\
        0 & -H & 0 & 2 \\ 
        0 & -H^2 & -H & H \\
        0 & 0 & 0 & 0
    \end{pmatrix},\quad
    d^0 = \begin{pmatrix}
        0 & 0 & 0 & H \\
        0 & 0 & 0 & 0
    \end{pmatrix}.
\]
The Lee cycle is vectorized in $\tilde{C}^0 = R^4$ as 
\[
    \ca = (H^4, 0, 0, 0)^T.
\]
\noindent
Now by hand, after some elementary operations, we obtain
\[
    d^{-1} = \begin{pmatrix}
        2 & 0 & 0 & 0 \\
        H & 0 & 0 & 0 \\
        0 & 2 & H & 0 \\
        0 & 0 & 0 & 0
    \end{pmatrix},
\]
with $d^0$ and $\ca$ unchanged. Obviously
\[
    \Ker(d^0) = R^3,
    \quad 
    \Ima(d^{-1}) = 
    \left<\begin{pmatrix}
        2 \\ H \\ 0
    \end{pmatrix}\right>
    \oplus 
    \left<\begin{pmatrix}
        0 \\ 0 \\ 2
    \end{pmatrix},\begin{pmatrix}
        0 \\ 0 \\ H
    \end{pmatrix}\right>
\]
and we get 
\[
    \tilde{H}^0 \ \isom\ 
    R^2/\left<(2, H)^T)\right>\ \oplus\ R/\left<2, H\right>.
\]
Regarding divisibility, we may assume
\[
    \ca = (H^4, 0)^T \in R^2/\left<(2, H)^T)\right>.
\]
By tensoring $\QQ$, we get 
\[
    \QQ[H]^2/\left<(2, H)^T)\right>
    \ \isom\ 
    \QQ[H]
\]
where $\ca$ corresponds to 
\[
    -\frac{H^5}{2}.
\]
Thus $d_H(\ca; \QQ[H]) = 5$ and with $w = -2,\ r = 9$, we get 
\[
    s^\QQ(K) = 0.
\]
On the other hand, by tensoring $\FF_2$ we get 
\[
    \FF_2[H]^2/\left<(0, H)^T)\right>
        \ \isom\ 
    \FF_2[H] \oplus \FF_2
\]
where $\ca$ corresponds to 
\[
    (H^4, 0)^T.
\]
Thus $d_H(\ca; \FF_2[H]) = 4$ and 
\[
    s^{\FF_2}(K) = -2.
\]

    \appendix
\section{Proof of \Cref{prop:cab-reidemeister}}
\label{sec:proof-of-key-prop}

Here we prove \Cref{prop:cab-reidemeister} for each Reidemeister move using the explicit descriptions of the chain homotopy equivalences 
\begin{equation*}
    \begin{tikzcd}
    F:C(D) \arrow[r, shift left] & C(D'):G \arrow[l, shift left]
    \end{tikzcd}
\end{equation*}
given in \cite[Section 4.3]{BarNatan:2004}. Note that the proof also works for \Cref{prop:cab-reidemeister-red} by simply replacing every object with its reduced counterpart.

\subsubsection*{Reidemeister move 1}

See \cite[Figure 5]{BarNatan:2004}. The map $G: C(D') \rightarrow C(D)$ is given by a cap, so it is obvious that $G(\ca(D')) = \ca(D)$ and $G(\cb(D')) = \cb(D)$. On the other hand we have $\delta w(D', D) = \delta r(D', D) = -1$, so the result holds.

\subsubsection*{Reidemeister move 2}

See \cite[Figure 6]{BarNatan:2004}. If the two strands are oriented in the same direction, then the orientation preserving resolutions of $D$ and $D'$ are identical. In this case $G(\ca(D')) = \ca(D)$, $G(\cb(D')) = \cb(D)$ by definition of $G$, while $\delta r(D', D) = 0$. 

If the two strands are oriented in the opposite direction, then the orientation preserving resolution $D'$ yields a circle inside the changing disk. Again by definition of $G$, this circle is capped and a saddle move is performed to the other two strands. The saddle is either a merge or a split, depending on how the strands are connected outside the disk. If it is a merge, then $G(\ca(D')) = \pm c\ca(D)$, $G(\cb(D')) = \mp c\cb(D)$ while $\delta r(D', D) = -2$. If it is a split, then $G(\ca(D')) = \ca(D)$, $G(\cb(D')) = \cb(D)$ while $\delta r(D', D) = 0$. 

\subsubsection*{Reidemeister move 3}

See \cite[Figure 7, 8]{BarNatan:2004}. The equivalences are given by the compositions of equivalences
\[
    \begin{tikzcd}
C(D) \arrow[r, "\tilde{G}", shift left] & C \arrow[l, "\tilde{F}", shift left] \arrow[r, "\tilde{F'}", shift left] & C(D') \arrow[l, "\tilde{G'}", shift left]
    \end{tikzcd}
\]
where $C$ is the mapping cone of two identical maps $\Psi_L = \Psi \circ F_0$ and $\Psi_R = \Psi' \circ F'_0$. It suffices to prove that the desired relations hold in $C$ under the maps $\tilde{G}$ and $\tilde{G}'$. If the center crossing is negative, then $\ca(D)$ and $\cb(D)$ lies in the codomain of $\Psi$ and are mapped identically into $C$. The same is true for $\ca(D')$ and $\cb(D')$ while  $\delta(D, D') = 0$ so the result holds. If the center crossing is positive, then $\ca(D)$ and $\cb(D)$ lies in the domain of $\Psi$, and the cycles are mapped as
\[
    \begin{pmatrix} z \\ 0 \end{pmatrix}
        \xmapsto{\tilde{G}}
    \begin{pmatrix} G_0 z \\ \Psi h_0 z \end{pmatrix}.
\]
Here the maps $G_0, h_0$ are the maps corresponding to the RM2 performed on the other two crossings. When $h_0(z) = 0$ the result reduces to the case of RM2. The only case where $h_0(z) \neq 0$ is when the cycle $z$ belongs to the state that contains a circle in the changing disk, which happens only when the three strands are oriented symmetrically with respect to the rotation by $\pi/3$. However, by a sequence of moves described in the beginning of \cite[Section 3]{Polyak:2010}, we may avoid this move and assume that $h_0 = 0$. This deformation is valid since we know from \cite[Theorem 4]{BarNatan:2004} that the cobordism maps are invariant (up to sign) under isotopies. Thus the proof is complete.

    \printbibliography
    \addresses
\end{document}